\def\replacecolorred{{}}
\theoremstyle{plain}
\newtheorem{theorem}{Theorem}[section]
\newtheorem{prop}[theorem]{Proposition}
\newtheorem{corollary}[theorem]{Corollary}
\newtheorem{lemma}[theorem]{Lemma}
\newtheorem{proposition}[theorem]{Proposition}
\theoremstyle{remark}
\theoremstyle{definition}
\newtheorem{remark}[theorem]{Remark}
\newtheorem{definition}[theorem]{Definition}
\newtheorem{example}[theorem]{Example}
\def\RR{\mathbb{R}}
\def\EE{\mathbb{E}}
\def\NN{\mathbb{N}}
\def\cF{{\mathcal F}}
\def\cH{{\mathcal H}}
\def\be{{\beta}}
\def\de{{\delta}}
\def\la{{\lambda}}
\def\si{{\sigma}}
\def\cP{{\mathcal  P}}
\def\De{{\Delta}}
\def\Om{{\Omega}}
\def\om{{\omega}}
\def\al{{\alpha}}
\def\be{{\beta}}
\def\ga{{\gamma}}
\def\de{{\delta}}
\def\De{{\Delta}}
\def\si{{\sigma}}
\def\la{{\lambda}}
\def\vare{{\varepsilon}}
\def \eref#1{\hbox{(\ref{#1})}}
\def\th{{\theta}}
\def\th{{\theta}}
\def\si{{\sigma}}
\def\al{{\alpha}}
\renewcommand{\theequation}{\arabic{section}.\arabic{equation}}
\newcommand{\Rd}{{\RR^d}}
\newcommand{\rE}{\mathcal E}
\newcommand{\KK}{\mathcal K}
\newcommand{\HH}{\mathcal H}
\newcommand{\DD}{\mathbb D}
\newcommand{\JJ}{\mathcal J}
\renewcommand{\SS}{\mathcal S}
\def\LL{\mathcal L}
\newcommand{\DDD}{\DD^{1,2}(\HH)}
\newcommand{\ep}{\ensuremath{\varepsilon}}
\newcommand{\ah}{{\alpha_H}}
\renewcommand{\d}{d}
\newcommand{\dcir}{ \!\mathrm{d}^\circ\!}
\newcommand{\dsym}{ \!\mathrm{d}^{\mathrm{sym}}}
\newcommand{\loc}{\mathrm{loc}}
\newcommand{\dom}{\mathrm{Dom\,}}
\newcommand{\sprod}[2]{\left\langle #1,#2 \right\rangle}
\newcommand{\yint}[1]{\int_{\RR^d}\! #1\,\mathrm{d}y}
\newcommand{\sd}{{}^\circ\!\d}
\newcommand{\test}{C_c^\infty(\RR^{d+1})}
\def\dt{d t}
\def\dx{d x}
\def\dy{d y}
\def\dr{d r}
\def\ds{d s}
\newcommand{\Div}{\mathrm{Div\,}}
\renewcommand{\theequation}{\arabic{section}.\arabic{equation}}
\let\Section=\section
\def\section{\setcounter{equation}{0}\Section}
\title{Nonlinear Young integrals and  differential systems in H\"older media}
\date{April 2014}
\author[Y. Hu]{Yaozhong Hu}
\thanks{Y. Hu is partially supported by a grant from the Simons Foundation \#209206
and by a General Research Fund of University of Kansas.}
\address{Department of Mathematics \\
The University of Kansas \\
Lawrence, Kansas, 66045}
\email{yhu@ku.edu, khoale@ku.edu}
\author[K. L\^e]{Khoa N. L\^e}
\subjclass[2000]{Primary 60H30; Secondary 60H10, 60H15, 60H07, 60G17}
\keywords{Gaussian random field;  sample path property;  majorizing measure;   nonlinear Young integral;
nonlinear It\^o-Skorohod  integral; transport equation;  stochastic parabolic equation; multiplicative noise; Feynman-Kac formula; Malliavin calculus; diffusion process;
exponential integrability of the H\"older norm of diffusion process. }
\begin{document}
\begin{abstract} For   H\"older continuous  random field  $W(t,x)$ and stochastic process
$\varphi_t$, we define nonlinear integral $\int_a^b W(dt, \varphi_t)$ in
various senses, including pathwise and It\^o-Skorohod.  We study
their properties and relations. The stochastic flow in a time dependent
rough  vector field  associated
 with
$\dot \varphi_t=(\partial _tW)(t, \varphi_t)$ is also studied and its
applications to the   transport equation $\partial _t u(t,x)-\partial _t
W(t,x)\nabla u(t,x)=0$ in rough media is   given. The Feynman-Kac solution to the
stochastic partial differential equation with random coefficients
$\partial _t u(t,x)+Lu(t,x) +u(t,x) \partial_t W(t,x)=0$ are given,  where $L$
is a second order elliptic differential operator with random
coefficients (dependent on $W$).  To establish such formula  the main
difficulty is the exponential integrability of some  nonlinear
integrals, which is proved to be true  under some mild conditions on
the covariance of $W$ and on the coefficients of $L$.
Along the way, we also obtain an upper bound for increments of stochastic processes on  multidimensional rectangles by majorizing measures.
\end{abstract}
\maketitle
\tableofcontents
\section{Introduction}
Feynman integral  is an important tool in quantum physics. The
Feynman-Kac formula is a  variant of Feynman integral and plays very
important role in the study of (parabolic) partial differential
equations (see \cite{freidlin} and \cite{simon}).  Recently, there have been several
successes in extending the Feynman-Kac formula to the following
stochastic partial differential equations with noisy (random)
potentials on $[0, T]\times \RR^d$  (see e.g.  \cite{hu-hu-nu-ti},
\cite{hu-lu-nualart12}, and \cite{hunualartsong}): $
\partial _t u(t,x)=\frac12 \Delta u(t,x)+u(t,x)\partial_tW(t,x)
$,  \ where $\Delta$ is the
Laplacian  with respect to spatial variable and $\{\partial_t W(t,x)\,, 0\le
t\le T\,, x\in \RR^d\}$ is a Gaussian noise {\replacecolorred  (the derivatives in the
sense of Schwartz distribution of a Gaussian field)}. As indicated in
the aforementioned papers, there are three tasks  to accomplish for
establishing the Feynman-Kac formula. The first one is to give a
meaning to the nonlinear stochastic integral $\int_a^b  W(\ds,
x+B_s)$ for a $d$-dimensional Brownian motion (whose generator is
$\frac12 \Delta$), independent of $W$. The second one is to
establish the exponential integrability of $\int_a^b W(\ds, x+B_s)$
and hence   the Feynman-Kac expression (which we may
call the Feynman-Kac solution)   has a rigorous meaning. The final task is
to show that  the Feynman-Kac expression is indeed a solution to   the equation
in   certain sense. It
should be emphasized that the independence between $B$ and $W$ plays
crucial role in  previous studies.

In many applications,  one needs to study more general  stochastic
partial differential equations. For example,  in   modeling of   the
pressure in an oil reservoir in the Norwegian sea
with a log normal stochastic
permeability one was led  to study the stochastic partial differential
equation on some bounded domain in $\RR^d$ of the form ${\rm div}
(k(x)\nabla u(x))=f(x)$, where the permeability $k(x)$ is the (Wick)
exponential of white noise,  ${\rm div}$ is the divergence operator,
and $\nabla $ is the gradient operator, see
  \cite{holdenhu} and  in particular the references therein.
Recently, there have been a great amount of research on {\it
uncertainty quantification}.  Among the huge literature on this topic
let us just mention the  books \cite{grigoriu}, \cite{xin}, and  the
references therein. Many different types of stochastic partial
differential equations with random coefficients have been studied.

This motivates us to study the Feynman-Kac formula for general
stochastic partial differential equations with random coefficients,
namely,
\begin{equation}
\partial _t u(t,x)+Lu(t,x) +u(t,x)\partial_tW(t,x)=0\,,\label{e.generalspde}
\end{equation}
where
\[
Lu(t,x)=\frac12 \sum_{i,j=1}^d a_{ij}(t,x, W) \partial _{x_ix_j}^2
u(t,x) +\sum_{i =1}^d b_{i }(t,x, W) \partial _{x_i } u(t,x)
\]
and for notational simplicity and up to a time change we assume that
the terminal condition $u(T, x)=u_T(x)$ is given. {\replacecolorred  The product
$u(t,x)\partial_tW(t,x)$ in \eref{e.generalspde} is the ordinary product}.   If
$\si(t,x)=(\si_{ij}(t,x,W))_{1\le i,j\le d}$ satisfies
$a=\si \si^T$ and if $X_t^{r,x}$ is the solution of the following
stochastic differential equation
\begin{equation}
\d X_t^{r,x}=\si(t,X_t^{r,x}, W)) \delta B_t+b(t,X_t^{r,x}, W) \dt\,, \quad r\le
t\le T\,, \quad X_r^{r,x}=x\,, \label{e.X-trx}
\end{equation}
then $u(r,x)=\EE ^B \left\{ u_T(X_T^{r,x})\exp\left[\int_r^T W(\ds,
X_s^{r,x})\right]\right\}$  should be  the Feynman-Kac solution to
\eref{e.generalspde} with $u(T, x)=u_T(x)$.   As indicated above,
there are three tasks to complete to justify the above claim. The
first task   to give a meaning to the nonlinear stochastic integral
$\int_r^T W(\ds, X_s^{r,x})$ is much more challenging than what has
been accomplished before (see for instance \cite{hu-hu-nu-ti},
\cite{hu-lu-nualart12}, and  \cite{hunualartsong}).  Although  the major
focus of the work
\cite{hu-lu-nualart12} is to give a meaning to the nonlinear integral
$\int_r^T W(\ds, X_s^{r,x})$.  However, in that paper
$X_s^{r,x}=B_s$ is a Brownian  motion  {\it independent of $W$} and
then we can consider $X_s^{r,x}$ as ``deterministic".
In our current situation
  since $X_s^{r,x}$ and $W$ are correlated,
  the nonlinear integral is a true stochastic one. In addition,   the noise $W$ may enter to
$X_s^{r,x}$ in an  anticipative way. Thus, the general stochastic
calculus for semimartingales cannot be applied in a straightforward
way due to the lack of adaptedness.

If $W(t,x)$ is only continuous
in $t$ (without any H\"older continuity in $t$) but  has  certain
differentiability on $x$,  then we can use semimartingale structure
of $X_t^{r,x}$ plus some   new techniques developed in Section
\ref{sec.feykac} to define $\int_r^T W(\ds, X_s^{r,x})$ and study
the corresponding Feynman-Kac solution to \eref{e.generalspde}. This
result   extends  the work of \cite{hu-lu-nualart12} in two aspects.
One is that the Laplacian
 is replaced by general second order elliptic operator with general  and in particular  random
 coefficients. The other one  is that in \cite{hu-lu-nualart12},   the
 Hurst parameter $H$ in time is assumed to be greater than $1/4$, while the
  result of this paper is applicable to fractional Brownian field
  whose  Hurst parameter $H$ in time can be any number between $0$ and $1$.

When $W(t,x)  $ has certain (H\"older)
regularity in time variable,  it is natural to see whether  one can
 reduce its   regularity in spatial variable
$x$ to define  $\int_r^T W(\ds, X_s^{r,x})$. Having in mind the
recent development on rough path analysis and encouraged by the
previous success in the case when $X_s^{r,x}$ is the Brownian
motion (\cite{hu-hu-nu-ti},
\cite{hu-lu-nualart12}, and  \cite{hunualartsong}),  we dedicate ourselves to
a systematic study of the   nonlinear integral $\int_a^b W(\ds,
\varphi_s)$, where $W(s,x)$ is a H\"older continuous function on $s$
and $x$ and $\varphi_s$ is also a H\"older continuous function.
Some elementary
properties of the integral are obtained as well. These results are presented in
 Section \ref{sec.pathint}.  Let us emphasize  that this nonlinear
integral $\int_a^b W(\ds, \varphi_s)$ is defined in a purely
deterministic way.  In fact, it is an extension of integration of Young type
(\cite{young}).

{\replacecolorred  For Gaussian noise a very important (linear) stochastic integral
is the It\^o (or It\^o-Skorohod) integral.  It is also called divergence
 integral.  In  probability theory, this integral is a central concept in stochastic analysis.
For our  stochastic partial differential equation  \eref{e.generalspde}
it  is needed  if the product $u(t,x)\partial_tW(t,x)$ there is Wick product.
   We shall introduce   the nonlinear   It\^o-Skorohod  integral $\int_a^b W(\d
s, \varphi_s)$ ($\varphi_s$ depends on $W$)  by using  Malliavin
calculus.  This is done  in    Appendix \ref{sec.prelim}.
 The relation of
this integral with other types of integrals
is  also discussed in  this section.}
Naturally,  readers may ask the question to study the It\^o-Skorohod type stochastic
differential equation $\partial _t u(t,x)+Lu(t,x)
+u(t,x)\diamond\partial_t W(t,x)=0$, where $u(t,x)\diamond W(t,x)$
denotes the Wick product between $u(t,x) $ and $\partial_t W(t,x)$.
However,   this seems to be very complex since $L$ depends on $W$ in
a sophisticated way and {\replacecolorred  will  not be considered in this work.}

When  $W(t,x)$ is a semimartingale in $t$ for any
fixed $x$ and is smooth in $x$ for any fixed $t$, there has been many studies on
stochastic flows which contributes significantly to the study of
stochastic partial differential equations (see \cite{kunita} and the
references therein).  {\replacecolorred  The important tool there is the nonlinear
stochastic integral (with respect to semimartingale) and the corresponding flow.
After defining the nonlinear Young integral and}
motivated by this aspect, we   study the
pathwise flow associated with time dependent   rough vector field
$W(t,x)$. That is, we study the differential equation $
\varphi_t=x+\int_0^t W(\ds, \varphi_s)$ under joint H\"older continuity assumptions of
$W(t,x)$.  We shall
study the flow and other properties of the solution $\varphi_t$.
This is presented in Section \ref{sec.diffeqn}. The applications to the
transport equation in rough media of the form $\partial_t u
(t,x)-\partial_t W(t,x)\nabla u(t,x) =0$ are also investigated in
Subsection \ref{subsec.transport}.

After completion of the {\replacecolorred  first}  task of defining the nonlinear integral
another major difficulty {\replacecolorred  (the above mentioned second task)}
to overcome in the construction of the
Feynman-Kac solution is the exponential integrability of $\int_r^T
W(\ds, X_s^{r,x})$.  In the previous work of \cite{hu-hu-nu-ti},
\cite{hu-lu-nualart12}, and \cite{hunualartsong}, this is achieved
by showing $\EE \left[u^2(r, x)\right]$ is finite.  If we continue
to follow the idea in aforementioned papers, then we are led   to show
\[
 \EE ^{B, \tilde B} \EE^ W\left\{ u_T(X_T^{r,x}) u_T(\tilde
X_T^{r,x})\exp\left[\int_r^T W(\ds, X_s^{r,x})+\int_r^T W(\ds, \tilde
X_s^{r,x})\right]\right\}\,,
\]
is finite,  where $\tilde X_t^{r,x}$ is the solution to the equation
\eref{e.X-trx} with a Brownian motion $\tilde B$, independent of $B$
and $W$.   It seems to us that in our situation, due to the
dependence of $X_{t}^{ r,x}$ on $W$,  it is   hard to show the
above quantity is finite. To get around this difficulty, our strategy is then to show that
$u(r,x)=\EE ^B \left\{ u_T(X_T^{r,x})\exp\left[\int_r^T W(\ds,
X_s^{r,x})\right]\right\}$ is finite for every fixed path of $W$,
assuming some mild pathwise conditions on $W$  {\replacecolorred  (see for instance
  \eqref{e.w-fk}).
    The third (and the last)  task to  show that the Feynman-Kac solution is indeed
a solution to \eref{e.generalspde}  is relatively easier  and will be completed by using
approximation technique.
All these will be done Section \ref{sec.feykac}.
}

{\replacecolorred  Intentionally,  the paper is divided into two parts. The first three  chapters
can be read without knowledge of probability theory.  A single (rough) sample $W(t,x)$
satisfying some joint H\"older continuity and growth conditions is considered. For instance, the
  (stochastic) partial
differential equation \eref{e.generalspde}, the nonlinear Young integral (Definition \ref{sub:definition}),
   and the transport equation
\eref{eq:transport} are considered for every fixed sample path $W(t,x)$.   Since
$W(t,x)$  is fixed, we also drop the dependence of $a_{ij}(t,x)$ and $b_{i}(t,x)$ on
$W$ throughout the paper. So,  the integrals and  equations are defined and studied for a
(fixed) rough function.  The stochastic partial differential
equation considered  in Section \ref{sec.feykac} is for a single
rough sample path.  But Brownian motion is used to represent
the solution.
}


{\replacecolorred  As a probabilist,} one may ask whether a stochastic process satisfies
the joint H\"older continuity conditions together with the growth conditions   assumed throughout the paper. For instance, condition \eref{e.w-fk} in Section \ref{sec.feykac}   requires the paths of $W$ to satisfy
\begin{equation}\label{eqn.intro.hder}
    |W(s,x)-W(s,y)-W(t,x)+W(t,y)|\le C (1+|x|^\beta+|y|^\beta) |t-s|^{\tau}|x-y|^{\lambda}
\end{equation}
for all $s,t\in[0,T]$ and $x,y\in\RR^d$. We give a partial answer for this problem in Section \ref{sec.cov-path}, where an extra assumption $|x-y|\le \delta$ for a fixed constant $\delta$ is imposed. Pathwise boundedness and  pathwise  regularity (H\"older continuity) have been extensively studied  in the literature (see Section \ref{sec.cov-path} for more detailed discussions.)  However,  estimates similar to \eqref{eqn.intro.hder} has not been studied thoroughly. Comparing with the existing literature (e.g. \cite{marcus-rosen}, \cite{talagrand-book}), where estimates for increments over one parameter interval  are obtained, the left side of \eqref{eqn.intro.hder} is an increment over   two parameter rectangle. Difficulties arise because the increments behave differently when the number of parameters get large. For instance, the corresponding entropic volumetric to the left side of \eqref{eqn.intro.hder}, $d((s,x),(t,y))=(\EE|W(s,x)-W(s,y)-W(t,x)+W(t,y)|^2)^{1/2}$, does not satisfy the triangular inequality. Therefore, classical estimates (such as those appear in \cite{talagrand-book}) are no longer applicable, new tools are needed to prove \eqref{eqn.intro.hder}.
If in \eqref{eqn.intro.hder}, $x,y$ are restricted in a compact set, a similar problem has been considered by the authors
by extending the Garsia-Rodemich-Rumsey inequality (\cite{hule2012}). Nevertheless, the exact growth rate when $x,y$ get large is not discussed in that paper.
Motivated by this requirement,  we extend and sharpen our previous work in \cite{hule2012}
so that it is applicable to our current situation.
 Since in many applications, $W$ will be a
Gaussian noise, we focus   on the case $W$ satisfies normal
concentration inequalities to obtain the desirable pathwise property
from the covariance structure of the process.
As is well-known it is usually hard to obtain   properties for each  sample path  in the theory of stochastic processes.
We hope this work will shed some light along this direction.


\textbf{Notations:}
We collect here some notations that we will use throughout the entire paper. $A\lesssim B$ means there is a constant $C$ such $A\le CB$. We represent a vector $x$ in $\RR^d$ as  a matrix of dimension $d\times 1$, $A^T$ represents the transpose of a matrix $A$. Sometimes we write $x_\bullet$  for column vector $x^T$ and   $x^\bullet$ for the row vector $x$. We use the Einstein convention on summation over repeated  indices. For instance, $b_ic_i$ abbreviates for $\sum_{i=1}^db_ic_i$

\setcounter{equation}{0}
\section{Nonlinear Young integral}\label{sec.pathint}
\global\long\def\ctau{C^{\tau}\left([t_{0}-T,t_{0}+T]\right)}



Let $W$ and $\varphi$ be $\RR^d$-valued functions defined on $\RR\times\RR^d$ and $\RR^d$ respectively. We define in the current section the nonlinear Young integration $\int W(ds,\varphi_s)$.

We make the following assumption on the regularity of $W$
\begin{enumerate}[label=\bm{$(W)$}]
\item\label{cond.w} There are constants $\tau\,, \la\in (0, 1]$, $\beta\ge0$ such that
for all $a<b$, the seminorm
\begin{equation}\label{eq:cond.w2}
\begin{split}
  &\|W\|_{\be, \tau, \la; a, b }\\
  :&=\sup_{\substack{a\le s<t\le b\\ x,y\in \RR^d;x\neq y}}\frac{
\left|W(s,x)-W(t,x)-W(s,y) + W(t,y)\right|}{
 (1+|x|+|y| )^\be  |t-s|^{\tau}|x-y|^{\lambda}}\\
 &\quad+ \sup_{\substack{a\le s<t\le b\\ x\in \RR^d}}\frac{
\left|W(s,x)-W(t,x)\right|}{
 (1+|x| )^{\be+\lambda}  |t-s|^{\tau}}+\sup_{\substack{a\le t\le b\\ x,y\in \RR^d;x\neq y}}\frac{
\left|W(t,y) - W(t,x)\right|}{
 (1+|x|+|y| )^\be  |x-y|^{\lambda}} \,,
\end{split}
\end{equation}
is finite.
\end{enumerate}
About the function $\varphi$, we assume
\begin{enumerate}[label=$\bm{(\phi)}$]
  \item\label{cond.phi} $\varphi$ is locally H\"older continuous of order $\ga\in (0, 1]$. That is the seminorm
  \[
  \varphi_{\ga; a, b}=\sup_{a\le s<t\le b}\frac{|\varphi(t)-\varphi(s)|}{|t-s|^\ga}\,,
  \]
  is finite for every $a<b$.
\end{enumerate}
Throughout the current section, we assume that $\tau+\lambda \gamma>1$. Among three terms appearing in \eqref{eq:cond.w2}, we will pay special attention to the first term. Thus, we denote
\begin{equation*}
  [W]_{\beta,\tau,\lambda;a,b}=\sup_{\substack{a\le s<t\le b\\ x,y\in \RR^d;x\neq y}}\frac{
\left|W(s,x)-W(t,x)-W(s,y) + W(t,y)\right|}{
 (1+|x|+|y| )^\be  |t-s|^{\tau}|x-y|^{\lambda}}\,.
\end{equation*}
 When $\be=0$, then we denote $\|W\|_{ \tau, \la; a,b}:= \|W\|_{0,  \tau, \la; a,b}$.
If $a, b$ are clear in the context, we frequently omit the dependence on $a, b$.
For instance,   $\|W\|_{\be,  \tau, \la}$ is an abbreviation for $\|W\|_{\be,  \tau, \la; a,b}$, $\|\varphi\|_{\gamma}$ is an abbreviation for $\|\varphi\|_{\gamma;a,b}$  and so on.
We shall assume that $a$ and $b$ are finite.  It is easy
to see that for any $c\in [a, b]$
\[
\sup_{a\le t\le b}|\varphi(t)|=
\sup_{a\le t\le b}| \varphi(c)+\varphi(t)-\varphi(c)|
\le |\varphi(c)|+\| \varphi\|_{\ga} |b-a|^\ga<\infty\,.
\]
Thus assumption \ref{cond.phi} also implies that
\[
\|\varphi\|_{\infty; a, b}
:=\sup_{a\le t\le b}|\varphi(t)|<\infty\,.
\]
For the results presented in this section, the condition \ref{cond.w} can be relaxed to
\begin{enumerate}[label=\bm{$(W')$}]
\item \label{cond.wprime} There are constants $\tau\,, \la\in (0, 1]$, such that
for all $a<b$ and compact set $K$ in $\RR^d$, the seminorm
\begin{equation*}
\begin{split}
  &\sup_{\substack{a\le s<t\le b\\ x,y\in K;x\neq y}}\frac{
\left|W(s,x)-W(t,x)-W(s,y) + W(t,y)\right|}{
   |t-s|^{\tau}|x-y|^{\lambda}}\\
 &\quad+ \sup_{\substack{a\le s<t\le b\\ x\in K}}\frac{
\left|W(s,x)-W(t,x)\right|}{
  |t-s|^{\tau}}+\sup_{\substack{a\le t\le b\\ x,y\in K;x\neq y}}\frac{
\left|W(t,y) - W(t,x)\right|}{
  |x-y|^{\lambda}} \,,
\end{split}
\end{equation*}
is finite.
\end{enumerate}
However, the polynomial growth rate is needed in the following sections to solve
 differential equations.

For later purpose, we denote $C^{(\tau,\lambda)}_{\beta}(\RR\times\RR^d)$ (respectively $C^{(\tau,\lambda)}_\loc(\RR\times\RR^d)$) the collection of all functions $W$ satisfying condition \ref{cond.w} (respectively \ref{cond.wprime}). $\kappa$ denotes a universal generic constant depending only on $\la, \tau, \al$
and independent of $W$, $\varphi$ and $a, b$. The value
of $\kappa$ may vary from one  occurrence to another.
\subsection{Definition} 
\label{sub:definition}
We define the nonlinear integral $\int W(ds,\varphi_s)$ as follows.
\begin{definition}
	Let $a,b$ be two fixed real numbers, $a<b$. Let $\pi=\{a=t_0<t_1<\cdots<t_m=b\}$ be a partition of $[a,b]$ with mesh size $|\pi|=\max_{0\le i\le m-1}|t_{i+1}-t_i|$. The Riemann sum corresponding to $\pi$ is
	\begin{equation}\label{eq.JW}
	 	J_{\pi}=\sum_{i=1}^{m-1}W(t_{i+1},\varphi_i)-W(t_i,\varphi_i)\,.
	\end{equation}
	If the sequence of Riemann sums $J_\pi$'s is convergent when $|\pi|$ shrinks to 0, we denote the limit as the nonlinear integral $\int_a^b W(ds,\varphi_s)$.
\end{definition}
We observe that in the particular case when $W(t,x)=g(t)x$  for some functions $g:\RR\to\RR$, the nonlinear integral $\int_a^b W(ds,\varphi_s)$ defined above, if exists, coincides with the Riemann-Stieltjes integral $\int_a^b \varphi_s dg(s)$. It is well known that if $\varphi$ and $g$ are H\"older continuous with exponents $\alpha,\beta$ respectively and $\alpha+\beta>1$, then the Riemann-Stieltjes integral $\int_a^b \varphi_s dg(s)$ exists and is called Young integral (\cite{young}).

More generally, for each partition $\pi$ of an interval $[a,b]$, one can consider the (abstract) Riemann sum
\begin{equation}\label{eq.jmu}
	J_\pi(\mu) =\sum_{i=1}^{m-1}\mu(t_i,t_{i+1})
\end{equation}
where $\mu$ is a function defined on $[a,b]^2$ with values in a Banach space. A sufficient condition for convergence of the limit $\lim_{|\pi|\downarrow0}J_\pi(\mu)$ is obtained by Gubinelli in \cite{Gubinelli04} via the so-called sewing map. This point of view has important contributions to  Lyons' theory of rough paths (\cite{lyons94,Lyons98}).  Since we will apply Gubinelli's sewing lemma, we restate the result as follows.

\begin{lemma}[Sewing lemma]\label{sew.lem}
  Let $\mu$ be a continuous function on $[a,b]^2$ with values in a Banach space $(B,\|\cdot\|)$  and
  let $\ep>0$. Suppose that $\mu$ satisfies
  \[
 \|\mu(s,t)-\mu(s,c)-\mu(c,t)\|\le K|t-s|^{1+\ep}\quad \forall \ a\le s\le c\le t\le b \,.
  \]
  Then there exists a function $\JJ\mu(t)$ unique up to an additive constant such that
  \begin{equation}\label{est.sew}
  \|\JJ\mu(t)-\JJ\mu(s)-\mu(s,t)\|\le K (1-2^{-\ep})^{-1} |t-s|^{1+\ep}\quad \forall \  a\le s\le t\le b\,.
  \end{equation}
  In addition, when $|\pi|$ shrinks to 0, the Riemann sums \eqref{eq.jmu} converge to $\JJ \mu(b)-\JJ \mu(a)$.
\end{lemma}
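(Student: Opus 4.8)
The plan is a dyadic construction of the primitive $\JJ\mu$, followed by an upgrade to arbitrary Riemann sums that yields additivity and the stated convergence. Write $\delta\mu(s,c,t):=\mu(s,t)-\mu(s,c)-\mu(c,t)$, so the hypothesis reads $\|\delta\mu(s,c,t)\|\le K|t-s|^{1+\ep}$; taking $s=c=t$ gives $\mu(s,s)=0$, and since $\mu$ is continuous, $\mu(u,v)\to0$ as $v-u\to0$. For a partition $\mathcal P=\{s=u_0<\cdots<u_N=t\}$ of a subinterval $[s,t]\subseteq[a,b]$ put $\mathcal I_{\mathcal P}:=\sum_{i=0}^{N-1}\mu(u_i,u_{i+1})$. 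The basic tool is the a priori bound
\[
 \|\mathcal I_{\mathcal P}-\mu(s,t)\|\le \kappa\,K\,(t-s)^{1+\ep},\qquad \kappa:=2^{1+\ep}\sum_{j\ge1}j^{-(1+\ep)}<\infty ,
\]
proved by deleting interior points one at a time, each step removing the point $u_j$ of the current partition that minimises $u_{j+1}-u_{j-1}$: if $r$ subintervals remain this minimum is $\le 2(t-s)/(r-1)$ (since $\sum_j(u_{j+1}-u_{j-1})\le2(t-s)$), the incurred error is $\|\delta\mu(u_{j-1},u_j,u_{j+1})\|\le K(2(t-s)/(r-1))^{1+\ep}$, and the sum over $r$ converges. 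Crucially this bound does not depend on the number of points deleted.

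Let $\mathcal P^{n}$ be the partition of $[s,t]$ into $2^n$ equal pieces. Bisecting each piece gives $\mathcal I_{\mathcal P^{n+1}}-\mathcal I_{\mathcal P^{n}}=-\sum_k\delta\mu(w_k,m_k,w_{k+1})$ over the $2^n$ subintervals, so $\|\mathcal I_{\mathcal P^{n+1}}-\mathcal I_{\mathcal P^{n}}\|\le 2^nK(2^{-n}(t-s))^{1+\ep}=K(t-s)^{1+\ep}2^{-n\ep}$, and $(\mathcal I_{\mathcal P^{n}})_n$ is Cauchy in the complete space $B$; denote its limit $\mathcal I_{s,t}$ (and $\mathcal I_{s,s}:=0$). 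Summing the geometric series from $\mathcal I_{\mathcal P^{0}}=\mu(s,t)$ gives exactly $\|\mathcal I_{s,t}-\mu(s,t)\|\le K(1-2^{-\ep})^{-1}(t-s)^{1+\ep}$. The core step is to show that for \emph{every} partition $\mathcal P$ of $[s,t]$, $\mathcal I_{\mathcal P}\to\mathcal I_{s,t}$ as $|\mathcal P|\to0$. Let $\mathcal Q$ be the common refinement of $\mathcal P$ and $\mathcal P^{n}$. Applying the a priori bound on each subinterval of $\mathcal P$ gives $\|\mathcal I_{\mathcal P}-\mathcal I_{\mathcal Q}\|\le \kappa K\sum_i(u_{i+1}-u_i)^{1+\ep}\le \kappa K|\mathcal P|^{\ep}(t-s)$, independently of $n$; applying it on each of the $2^n$ subintervals of $\mathcal P^{n}$ gives $\|\mathcal I_{\mathcal P^{n}}-\mathcal I_{\mathcal Q}\|\le \kappa K(t-s)^{1+\ep}2^{-n\ep}\to0$; since also $\mathcal I_{\mathcal P^{n}}\to\mathcal I_{s,t}$, letting $n\to\infty$ yields $\|\mathcal I_{\mathcal P}-\mathcal I_{s,t}\|\le \kappa K|\mathcal P|^{\ep}(t-s)$.

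From the core step $\mathcal I$ is additive: for $s\le c\le t$, take the dyadic partitions of $[s,c]$ and of $[c,t]$ of level $n$; their union is a partition of $[s,t]$ of mesh $2^{-n}(t-s)\to0$ containing $c$, whence $\mathcal I_{s,c}+\mathcal I_{c,t}=\lim_n\mathcal I_{\mathcal P^{n}_{[s,c]}\cup\mathcal P^{n}_{[c,t]}}=\mathcal I_{s,t}$. Set $\JJ\mu(t):=\mathcal I_{a,t}$; then $\JJ\mu(t)-\JJ\mu(s)=\mathcal I_{s,t}$, so \eqref{est.sew} is precisely the bound on $\|\mathcal I_{s,t}-\mu(s,t)\|$ above. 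For a partition $\pi$ of $[a,b]$, $\|J_\pi(\mu)-(\JJ\mu(b)-\JJ\mu(a))\|\le\|\mathcal I_\pi-\mathcal I_{a,b}\|+\|\mu(t_0,t_1)\|\le \kappa K|\pi|^{\ep}(b-a)+\|\mu(t_0,t_1)\|\to0$, where $\mu(t_0,t_1)\to0$ accounts for the boundary term by which $J_\pi(\mu)$ in \eqref{eq.jmu} differs from $\mathcal I_\pi$. Uniqueness up to an additive constant is immediate: if $\JJ_1,\JJ_2$ both satisfy \eqref{est.sew} then $g:=\JJ_1-\JJ_2$ obeys $\|g(t)-g(s)\|\le 2K(1-2^{-\ep})^{-1}|t-s|^{1+\ep}$, and summing over any partition of $[s,t]$ forces $g(t)=g(s)$. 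The step I expect to be the main obstacle is the core step — comparing a general Riemann sum with the dyadic limit, and hence learning that $\mathcal I_{s,t}$ arises from a one-variable function — which is made possible by the feature of the a priori bound that subdividing a subinterval, however finely, costs only $O((\text{length})^{1+\ep})$.
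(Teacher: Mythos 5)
The paper does not prove this lemma: it restates the result, citing \cite{Gubinelli04} and adopting the formulation of \cite{FeyelPradelle06}, so there is no in-paper argument to compare against. Your proof is correct and is essentially the standard sewing argument behind those references: dyadic bisection produces the limit with exactly the constant $K(1-2^{-\ep})^{-1}$, the Young-type maximal point-removal bound $\|\mathcal I_{\mathcal P}-\mu(s,t)\|\le \kappa K(t-s)^{1+\ep}$ upgrades this to convergence of arbitrary Riemann sums and hence to additivity of $\mathcal I_{s,t}$, and uniqueness follows by summing \eqref{est.sew} over fine partitions. You also correctly noticed and handled the boundary term $\mu(t_0,t_1)$ by which the sum in \eqref{eq.jmu} (which starts at $i=1$) differs from the full Riemann sum; it vanishes as $|\pi|\to 0$ by continuity of $\mu$ and $\mu(u,u)=0$, so the stated convergence to $\JJ\mu(b)-\JJ\mu(a)$ holds as claimed.
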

In what follows, we adopt the notation $\JJ_a^b \mu=\JJ \mu(b)-\JJ \mu(a)$. The map $\mu\mapsto \JJ \mu$ is called the sewing map. The setting of Lemma \ref{sew.lem} is adopted from \cite{FeyelPradelle06}. In several occasions, one needs to prove a relation between two or more integrals. The following result provides a simple method for this problem.
\begin{lemma}\label{lemma.arbi}
    Suppose $\mu_1$ and $\mu_2$ are two functions as in Lemma \ref{sew.lem}. In addition, assume that
    \begin{equation*}
        |\mu_1(s,t)-\mu_2(s,t)|\le C |t-s|^{1+\varepsilon'}\quad\forall a\le s\le t\le b
    \end{equation*}
    for some positive constant $\ep'$. Then $\JJ \mu_1$ and $\JJ \mu_2$ are different by an absolute constant. That is $\JJ_s^t \mu_1=\JJ_s^t \mu_2$ for all $s,t$.
\end{lemma}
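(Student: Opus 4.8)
The plan is to apply the Sewing lemma (Lemma \ref{sew.lem}) to each of $\mu_1$ and $\mu_2$ separately, and then compare the resulting sewing maps $\JJ\mu_1$ and $\JJ\mu_2$ via the uniqueness statement. First I would invoke Lemma \ref{sew.lem} for $\mu_1$ with exponent $\ep_1$ (the one coming from its defining inequality) to obtain $\JJ\mu_1$ satisfying the estimate \eqref{est.sew}, and likewise for $\mu_2$ with exponent $\ep_2$. Without loss of generality I may replace both $\ep_1,\ep_2$ and $\ep'$ by their common minimum $\ep_0>0$, since all three inequalities only get weaker (the right-hand sides only get larger on $|t-s|\le b-a$, up to an absolute constant depending on $b-a$). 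Set $F=\JJ\mu_1-\JJ\mu_2$. This is a well-defined continuous function on $[a,b]$.

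Next I would estimate the increments of $F$. For $a\le s\le t\le b$, write
\[
F(t)-F(s)=\bigl(\JJ\mu_1(t)-\JJ\mu_1(s)-\mu_1(s,t)\bigr)-\bigl(\JJ\mu_2(t)-\JJ\mu_2(s)-\mu_2(s,t)\bigr)+\bigl(\mu_1(s,t)-\mu_2(s,t)\bigr),
\]
and apply the triangle inequality together with \eqref{est.sew} for each $\JJ\mu_i$ and the hypothesis $|\mu_1(s,t)-\mu_2(s,t)|\le C|t-s|^{1+\ep'}$. This gives
\[
\|F(t)-F(s)\|\le C'|t-s|^{1+\ep_0}
\]
for a constant $C'$ depending on $K$, $C$, $\ep_0$. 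Thus $F$ is a continuous function whose increments are of order $|t-s|^{1+\ep_0}$ with $\ep_0>0$.

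The final step is to conclude that such an $F$ must be constant. Fix $s<t$; for any partition $\pi=\{s=r_0<r_1<\cdots<r_n=t\}$ with mesh $|\pi|$, the telescoping identity $F(t)-F(s)=\sum_{k=0}^{n-1}\bigl(F(r_{k+1})-F(r_k)\bigr)$ combined with the increment bound yields
\[
\|F(t)-F(s)\|\le C'\sum_{k=0}^{n-1}|r_{k+1}-r_k|^{1+\ep_0}\le C'|\pi|^{\ep_0}\sum_{k=0}^{n-1}|r_{k+1}-r_k|=C'|\pi|^{\ep_0}(t-s),
\]
and letting $|\pi|\downarrow0$ forces $F(t)=F(s)$. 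Hence $\JJ\mu_1$ and $\JJ\mu_2$ differ by an additive constant, so $\JJ_s^t\mu_1=\JJ\mu_1(t)-\JJ\mu_1(s)=\JJ\mu_2(t)-\JJ\mu_2(s)=\JJ_s^t\mu_2$ for all $s,t$, as claimed. I do not expect any genuine obstacle here; the only minor point requiring care is the harmonization of the three exponents and the constants depending on $b-a$, which is routine. (Alternatively, one could observe that $F$ itself satisfies the hypothesis of the Sewing lemma with $\mu\equiv0$, and invoke the uniqueness clause directly, but the telescoping argument above is just as quick and fully self-contained.)
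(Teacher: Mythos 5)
Your proof is correct and follows essentially the same route as the paper: the paper applies the sewing lemma once to the difference $\mu_1-\mu_2$ and uses linearity of $\JJ$, whereas you apply the estimate \eqref{est.sew} to each $\mu_i$ separately and combine by the triangle inequality — a cosmetic difference. You also spell out the telescoping argument showing that a function whose increments are $O(|t-s|^{1+\ep_0})$ with $\ep_0>0$ must be constant, a step the paper leaves implicit.
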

\begin{proof}
    From Lemma \ref{sew.lem}, $\JJ(\mu_1- \mu_2)=\JJ \mu_1-\JJ \mu_2$ and
    \begin{align*}
        |\JJ_s^t(\mu_1- \mu_2)|&\lesssim|\mu_1(s,t)- \mu_2(s,t)|+|t-s|^{1+\ep}
        \\&\lesssim |t-s|^{1+\ep'}+|t-s|^{1+\ep}
    \end{align*}
    for all $s,t$. This implies $\JJ_s^t(\mu_1- \mu_2)=0$ for all $s,t$.
\end{proof}

Returning to our main objective of the current section, we consider
 \[
 \mu(s,t)=W(t,\varphi_s)-W(s,\varphi_s).
 \]
Then the condition in  Lemma \ref{sew.lem}  is guaranteed by  \ref{cond.w},  and \ref{cond.phi}. Indeed, for every $s<c<t$,
\begin{align*}
  |\mu(s,t)&-\mu(s,c)-\mu(c,t)|\\
  &=|W(t, \varphi_s)-W(c, \varphi_s)-W(t, \varphi_c)+W(c, \varphi_c)|\\
  &\le [W]_{\beta,\tau,\lambda} (1+\|\varphi\|_\infty^\beta)(t-s)^\tau|\varphi_s- \varphi_c|^\lambda\\
  &\le [W]_{\beta,\tau,\lambda}(1+\|\varphi\|_\infty^\beta)\|\varphi\|^{\lambda}_{\gamma} (t-s)^{\tau+\lambda \gamma}\,.
\end{align*}
Hence, by combining the sewing lemma and the previous estimate, we obtain
\begin{proposition}\label{prop.riemann.w} Assuming the conditions \ref{cond.w}, \ref{cond.phi}  with  $\tau+\lambda \gamma>1$, the sequence of Riemann sums \eqref{eq.JW} is convergent when $|\pi|$ goes to $0$. In other words, the nonlinear integral $\int_a^bW(ds,\varphi_s)$ is well-defined.

In addition, the following estimate holds
\begin{multline}
\label{est.W.c}
\left| \int_s^{t}W(dr,\varphi_{r})-W(t,\varphi_c)+W(s,\varphi_c) \right|\\
\le\kappa  \|W\|_{\tau, \la\,;  a, b}
(1+\|\varphi\|_{\infty }^\be) \|\varphi\|_{\ga\,;a, b}^\la (t-s)^{\tau+\la \ga}
\end{multline}
for all $a\le s \le c \le t\le b$.
\end{proposition}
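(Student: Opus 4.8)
The plan is to read the Riemann sums \eqref{eq.JW} as the abstract Riemann sums \eqref{eq.jmu} of the two–parameter map
\[
\mu(s,t)=W(t,\varphi_s)-W(s,\varphi_s),\qquad a\le s\le t\le b,
\]
and to invoke the sewing lemma (Lemma \ref{sew.lem}). With $\varphi_i=\varphi_{t_i}$ the sums \eqref{eq.JW} and \eqref{eq.jmu} coincide term by term, so it suffices to check the hypothesis of Lemma \ref{sew.lem} for $\mu$. But this is exactly the second–order estimate displayed just before the proposition: for $a\le s\le c\le t\le b$,
\[
|\mu(s,t)-\mu(s,c)-\mu(c,t)|\le [W]_{\beta,\tau,\lambda;a,b}\,(1+\|\varphi\|_{\infty;a,b}^\beta)\,\|\varphi\|_{\gamma;a,b}^\lambda\,(t-s)^{\tau+\lambda\gamma},
\]
which has the form $K|t-s|^{1+\varepsilon}$ with $\varepsilon=\tau+\lambda\gamma-1>0$ (here is the only place the standing assumption $\tau+\lambda\gamma>1$ is used) and $K=[W]_{\beta,\tau,\lambda;a,b}(1+\|\varphi\|_\infty^\beta)\|\varphi\|_\gamma^\lambda$, the latter finite by \ref{cond.phi}. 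Hence Lemma \ref{sew.lem} produces $\JJ\mu$, unique up to a constant, with $\JJ_s^t\mu:=\JJ\mu(t)-\JJ\mu(s)$ satisfying $|\JJ_s^t\mu-\mu(s,t)|\le K(1-2^{-\varepsilon})^{-1}(t-s)^{1+\varepsilon}$, and $J_\pi\to\JJ_a^b\mu$ as $|\pi|\to0$. By the Definition this proves the integral $\int_a^bW(dr,\varphi_r)$ exists and equals $\JJ_a^b\mu$; running the same argument on $[s,t]$ (and using uniqueness in Lemma \ref{sew.lem}) gives $\int_s^tW(dr,\varphi_r)=\JJ_s^t\mu$ for all $a\le s\le t\le b$.

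It remains to pass from the anchor $\varphi_s$ inside $\mu(s,t)$ to the arbitrary anchor $\varphi_c$ in \eqref{est.W.c}. I would write, for $a\le s\le c\le t\le b$,
\[
\int_s^tW(dr,\varphi_r)-W(t,\varphi_c)+W(s,\varphi_c)=\bigl(\JJ_s^t\mu-\mu(s,t)\bigr)+\bigl(W(t,\varphi_s)-W(s,\varphi_s)-W(t,\varphi_c)+W(s,\varphi_c)\bigr).
\]
The first bracket is the sewing estimate above. For the second bracket I apply the first ($\beta$–weighted) seminorm of \ref{cond.w} once more, to the spatial increment from $\varphi_s$ to $\varphi_c$: it is at most
\[
[W]_{\beta,\tau,\lambda;a,b}(1+|\varphi_s|+|\varphi_c|)^\beta(t-s)^\tau|\varphi_s-\varphi_c|^\lambda\le \kappa\,[W]_{\beta,\tau,\lambda;a,b}(1+\|\varphi\|_\infty^\beta)\|\varphi\|_\gamma^\lambda(t-s)^\tau|c-s|^{\lambda\gamma},
\]
using $1+|\varphi_s|+|\varphi_c|\le 1+2\|\varphi\|_\infty$ and $(1+2r)^\beta\le\kappa(1+r^\beta)$, together with $|\varphi_s-\varphi_c|\le\|\varphi\|_\gamma|c-s|^\gamma$. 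Since $|c-s|\le t-s$, the right–hand side is $\le\kappa[W]_{\beta,\tau,\lambda;a,b}(1+\|\varphi\|_\infty^\beta)\|\varphi\|_\gamma^\lambda(t-s)^{\tau+\lambda\gamma}$. Adding the two bounds and collecting $(1-2^{-\varepsilon})^{-1}$ (which depends only on $\tau,\lambda,\gamma$) and the numerical factors into one constant $\kappa=\kappa(\tau,\lambda,\gamma,\beta)$ gives \eqref{est.W.c} (with $[W]_{\beta,\tau,\lambda;a,b}\le\|W\|_{\beta,\tau,\lambda;a,b}$ in place of the seminorm written there).

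I do not expect a real obstacle here: once the sewing lemma is available the argument is essentially bookkeeping, and its only substantive input — the sub‑additivity defect bound $|\mu(s,t)-\mu(s,c)-\mu(c,t)|\lesssim|t-s|^{1+\varepsilon}$ — is already established before the statement from the joint‑Hölder second‑difference control in \ref{cond.w} and $\varphi\in C^\gamma$. The only mild points to be careful about are keeping the polynomial growth factor $(1+|x|+|y|)^\beta$ under control uniformly (handled by replacing it with $1+\|\varphi\|_{\infty;a,b}^\beta$, finite by \ref{cond.phi}) and the change of anchor point, which costs one extra application of \ref{cond.w} as above.
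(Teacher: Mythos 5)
Your proposal is correct and follows essentially the same route as the paper: the germ $\mu(s,t)=W(t,\varphi_s)-W(s,\varphi_s)$, the defect bound from \ref{cond.w} and \ref{cond.phi} with $\varepsilon=\tau+\lambda\gamma-1>0$, and then Lemma \ref{sew.lem}. The only step the paper leaves implicit — changing the anchor from $\varphi_s$ to $\varphi_c$ by one further application of the weighted seminorm in \ref{cond.w} — is exactly what you spell out, so your write-up is a slightly more detailed version of the same argument.
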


 \begin{remark}{\replacecolorred  After the completion of this work,  we are brought to the attention of the work \cite{catellier2012averaging}  (and  also \cite{chouk2013nonlinear,chouk2014nonlinear,GubinelliTindel}),
where   a similar nonlinear Young integral is studied.    The objective of that paper is to define the averaging of the form $\int_0^t f(X_u) du$ for some process $X_u$ and for
some irregular function $f$.
The sewing lemma that
we follow is from  \cite{FeyelPradelle06} , which  is  after the work of  \cite{Gubinelli04}.}
\end{remark}

\begin{remark} (i) In the particular case when $W(t,x)=g(t)x$, Proposition \ref{prop.riemann.w} reduces to the existence of the Young integral $\int \varphi_s dg(s)$. Hence, from now on we refer   the integral $\int W(ds,\varphi_s)$ as nonlinear Young integral.

	(ii) In Proposition \ref{prop.riemann.w}, we can also consider the Riemann sums with right-end points \[J_\pi^+=\sum_{i=0}^{m-1}[W(t_{i+1},\varphi_{t_{i+1}} )-W(t_i,\varphi_{t_{i+1}})]\,.\] Then the corresponding limit exists and equals to $\int_a^b W(ds,\varphi_s)$. This is a straightforward consequence of Lemma \ref{lemma.arbi}.
\end{remark}
It is evident that
\[
\int_s^t W(d  r, \varphi_r)=\int_s^c W(d  r, \varphi_r) +\int_c^t W(d
r, \varphi_r)\qquad \forall \
s<c<t\,.
\]
This together with \eref{est.W.c} imply easily the following.
\begin{proposition}\label{prop.frac} Assume that   \ref{cond.w} and \ref{cond.phi}
hold with $\lambda\gamma+\tau>1$.  As a function of $t$,  the indefinite
integral $\left\{ \int_{a}^t W(\d s,\varphi_{s})\,, \ a\le t\le b\right\}$ is H\"older continuous
of exponent $\tau$.
\end{proposition}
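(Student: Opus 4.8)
The plan is to reduce everything to the local estimate \eqref{est.W.c} of Proposition \ref{prop.riemann.w}, together with the additivity of the integral noted just before the statement. Fix $a\le s\le t\le b$. By additivity,
\[
\int_{a}^{t} W(\d r,\varphi_{r})-\int_{a}^{s} W(\d r,\varphi_{r})=\int_{s}^{t} W(\d r,\varphi_{r}),
\]
so it suffices to bound $\left|\int_{s}^{t} W(\d r,\varphi_{r})\right|$ by a constant multiple of $(t-s)^{\tau}$, the constant being independent of $s,t$ (but allowed to depend on $W$, $\varphi$, and $a,b$).

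First I would apply \eqref{est.W.c} with the choice $c=s$, which gives
\[
\left|\int_{s}^{t} W(\d r,\varphi_{r})-W(t,\varphi_{s})+W(s,\varphi_{s})\right|\le\kappa\,\|W\|_{\tau,\la;a,b}\,(1+\|\varphi\|_{\infty}^{\be})\,\|\varphi\|_{\ga;a,b}^{\la}\,(t-s)^{\tau+\la\ga}.
\]
Next I would estimate the boundary term $|W(t,\varphi_{s})-W(s,\varphi_{s})|$ using the second supremum in the definition \eqref{eq:cond.w2} of the seminorm: it is at most $\|W\|_{\be,\tau,\la;a,b}(1+|\varphi_{s}|)^{\be+\la}(t-s)^{\tau}\le\|W\|_{\be,\tau,\la;a,b}(1+\|\varphi\|_{\infty;a,b})^{\be+\la}(t-s)^{\tau}$, which is finite because assumption \ref{cond.phi} forces $\|\varphi\|_{\infty;a,b}<\infty$ (as already observed in the text).

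Combining the two bounds by the triangle inequality, and using that $t-s\le b-a$ together with $\tau+\la\ga>\tau$ to write $(t-s)^{\tau+\la\ga}\le(b-a)^{\la\ga}(t-s)^{\tau}$, I obtain $\left|\int_{s}^{t} W(\d r,\varphi_{r})\right|\le C\,(t-s)^{\tau}$ with $C$ depending only on $\|W\|_{\be,\tau,\la;a,b}$, $\|\varphi\|_{\infty;a,b}$, $\|\varphi\|_{\ga;a,b}$, $b-a$, and $\kappa$, hence independent of $s,t$. This is exactly the assertion that $t\mapsto\int_{a}^{t} W(\d r,\varphi_{r})$ is H\"older continuous of exponent $\tau$ on $[a,b]$. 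There is no genuine obstacle here; the only points needing a little care are keeping the two different powers $(t-s)^{\tau}$ and $(t-s)^{\tau+\la\ga}$ separate and absorbing the larger one using the finiteness of $b-a$, and recalling that \ref{cond.phi} already supplies the uniform bound $\|\varphi\|_{\infty;a,b}<\infty$ used for the boundary term.
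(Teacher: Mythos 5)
Your proposal is correct and follows exactly the route the paper intends: the paper derives Proposition \ref{prop.frac} directly from the additivity of the integral together with the estimate \eqref{est.W.c}, which is precisely your decomposition (with the boundary term $W(t,\varphi_s)-W(s,\varphi_s)$ controlled by the second supremum in \eqref{eq:cond.w2} and the $(t-s)^{\tau+\la\ga}$ term absorbed into $(t-s)^{\tau}$ on the bounded interval). You have merely written out the details that the paper leaves as "evident."
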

Fractional calculus is very useful in the study of (linear) Young integral.
It leads to some detailed properties of
the integral and solution of a differential equation (see \cite{hunualart},
\cite{hunualart09},    and the references therein).
It is interesting  to extend this approach to nonlinear Young integral. In fact, the authors obtain in \cite{hule2015}  the following presentation for the nonlinear Young integral by
using fractional calculus. Since this method is not pursued in the current paper, we refer the readers to \cite{hule2015} for further details.

\begin{theorem}\label{fractional}
	Assume the conditions  \ref{cond.w} and \ref{cond.phi} are satisfied. In addition, we suppose that $\la \ga+\tau>1$. Let $\alpha\in(1- \tau,\lambda \tau)$. Then the following identity holds
	\begin{equation}\label{eq:def.w}
		\begin{split}
			&\int_{a}^{b}W(\dt,\varphi_{t})
			\\& = -\frac{1}{\Gamma(\alpha)\Gamma(1-\alpha)}\left\{ \int_{a}^{b}\frac{W_{b-}(t,\varphi_{t})}{(b-t)^{1-\alpha}(t-a)^{\alpha}}\dt\right.\\
			 &\left.+\alpha\int_{a}^{b}\int_{a}^{t}\frac{W_{b-}(t,\varphi_{t})-W_{b-}(t,\varphi_{r})}{(b-t)^{1-\alpha}(t-r)^{\alpha+1}}\dr\dt\right. \\
   			&+ (1-\alpha)\int_{a}^{b}\int_{t}^{b}\frac{W(t,\varphi_{t})-W(s,\varphi_{t})}
 			{(s-t)^{2-\alpha}(t-a)^{\alpha}}\ds\dt \\
   			& + \left.\alpha(1-\alpha)\int_{a}^{b}\int_{a}^{t}\int_{t}^{b}
 			\frac{W(t,\varphi_{t})-W(s,\varphi_{t})-W(t,\varphi_{r})+W(s,\varphi_{r})}{(s-t)^{2-\alpha}
 			(t-r)^{\alpha+1}}\ds\dr\dt\right\},
		\end{split}
	\end{equation}
	where $W_{b-}\left(t,x\right)=W\left(t,x\right)-W\left(b,x\right)$.
\end{theorem}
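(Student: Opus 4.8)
Our plan is to recognise the right-hand side of \eqref{eq:def.w} as the nonlinear counterpart of Z\"ahle's fractional integration-by-parts formula for the Young integral, to prove it first for $W$ that is $C^1$ in time, and then to pass to the general case by approximation. For the linear model $W(t,x)=g(t)x$ (where $\varphi$ plays the role of the integrand and $g$ of the integrator) one has, for $\alpha\in(1-\tau,\lambda\gamma)$,
\[
\int_a^b\varphi_t\,dg(t)=-\frac1{\Gamma(\alpha)\Gamma(1-\alpha)}\int_a^b\!\Big(\tfrac{\varphi_t}{(t-a)^\alpha}+\alpha\!\int_a^t\!\tfrac{\varphi_t-\varphi_r}{(t-r)^{\alpha+1}}\,dr\Big)\Big(\tfrac{g(t)-g(b)}{(b-t)^{1-\alpha}}+(1-\alpha)\!\int_t^b\!\tfrac{g(t)-g(s)}{(s-t)^{2-\alpha}}\,ds\Big)dt,
\]
and expanding the product of the two bracketed Weyl--Marchaud derivatives into its four summands and then replacing every occurrence of $\varphi_u\,g(v)$ by $W(v,\varphi_u)$ produces exactly $T_1,T_2,T_3,T_4$, the four summands of \eqref{eq:def.w}. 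Thus the asserted identity is a ``joint'' fractional integration by parts in which the two one-parameter fractional derivatives --- one carried through $\varphi$, the other through the time slot of $W$ --- intertwine because $W$ is jointly nonlinear, the mixed second difference in the last term being the contribution of the product of the two singular kernels. A preliminary (and routine) step is to check that under \ref{cond.w}, \ref{cond.phi} and $1-\tau<\alpha<\lambda\gamma$ each of the four integrals converges absolutely: near the diagonals the numerators are bounded by $(\cdot)^\tau(\cdot)^{\lambda\gamma}$ using the mixed H\"older seminorm and $\|\varphi\|_\gamma<\infty$, while the boundary weights $(t-a)^{-\alpha}$ and $(b-t)^{\alpha-1}$ are Beta-type integrable; the same bounds show that each $T_i$ is a continuous functional of $W$ for the seminorm $\|W\|_{\tau',\lambda}$ whenever $\tau'>1-\lambda\gamma$.

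Assume now in addition that $W(\cdot,x)\in C^1(\RR)$ for each $x$ and that $W$ is $\lambda$-H\"older in $x$ with $\lambda\gamma>\alpha$. The left side of \eqref{eq:def.w} is then elementary: by the mean value theorem each Riemann increment equals $\partial_tW(\xi_i,\varphi_{t_i})(t_{i+1}-t_i)$ with $\xi_i\in(t_i,t_{i+1})$, so uniform continuity of $\partial_tW$ and of $\varphi$ gives $\int_a^bW(ds,\varphi_s)=\int_a^b\partial_tW(t,\varphi_t)\,dt$. For the right side I write all time increments of $W$ as integrals of $\partial_tW$ and apply Fubini. Two elementary cancellations occur: carrying out the inner $s$-integration in $T_3$ (resp.\ $T_4$) produces $\tfrac1{1-\alpha}\big[(w-t)^{\alpha-1}-(b-t)^{\alpha-1}\big]$, and the stray summand $-(b-t)^{\alpha-1}$ cancels the weight $1/(b-t)^{1-\alpha}=(b-t)^{\alpha-1}$ carried by $T_1$ (resp.\ $T_2$). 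Collecting the remainder one arrives at
\[
T_1+T_2+T_3+T_4=-\Gamma(1-\alpha)\int_a^b\!\bigg[\int_a^w\frac{\big(D_{a+}^{\alpha}h_w\big)(t)}{(w-t)^{1-\alpha}}\,dt\bigg]\,dw,\qquad h_w(t):=\partial_tW(w,\varphi_t),
\]
where $D_{a+}^\alpha$ is the left Riemann--Liouville derivative of order $\alpha$ acting in the $t$-variable. Since $\alpha<\lambda\gamma$, the function $h_w$ is $\lambda\gamma$-H\"older and therefore lies in the domain of the fundamental identity $I_{a+}^\alpha D_{a+}^\alpha=\mathrm{id}$, so the inner integral equals $\Gamma(\alpha)\,(I_{a+}^\alpha D_{a+}^\alpha h_w)(w)=\Gamma(\alpha)h_w(w)=\Gamma(\alpha)\partial_tW(w,\varphi_w)$. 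Hence $T_1+\cdots+T_4=-\Gamma(\alpha)\Gamma(1-\alpha)\int_a^b\partial_tW(t,\varphi_t)\,dt$, which gives \eqref{eq:def.w} for such $W$.

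For a general $W\in C^{(\tau,\lambda)}_\beta(\RR\times\RR^d)$ I would mollify in the time variable only, $W^n:=W*_t\rho_n$ (extending $W$ in time to a slightly larger interval). Mollification commutes with time increments, so $\|W^n\|_{\beta,\tau,\lambda}\le\|W\|_{\beta,\tau,\lambda}$ and $W^n$ retains the $\lambda$-H\"older regularity in $x$; also $W^n\to W$ locally uniformly, whence by interpolation $\|W^n-W\|_{\beta,\tau',\lambda}\to0$ for every $\tau'<\tau$. Fix $\tau'\in(\max\{1-\alpha,\,1-\lambda\gamma\},\,\tau)$, a nonempty interval since $\alpha>1-\tau$ and $\tau+\lambda\gamma>1$. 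The identity \eqref{eq:def.w} holds for every $W^n$ by the previous paragraph (with the same $\alpha$, which indeed satisfies $0<\alpha<\lambda\gamma$); letting $n\to\infty$, the left sides converge by Proposition \ref{prop.riemann.w} and \eqref{est.W.c} applied to $W^n-W$, and each $T_i(W^n)\to T_i(W)$ by the absolute-convergence bounds of the first paragraph applied to $W^n-W$. This proves \eqref{eq:def.w} in general.

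The step I expect to be the main obstacle is the bookkeeping in the smooth case: one must carry out the several Fubini exchanges over the simplices $\{a\le\rho\le t\le w\le b\}$ and $\{a\le t\le s\le b\}$, verify absolute convergence at each stage, and establish $I_{a+}^\alpha D_{a+}^\alpha h_w=h_w$ for the relevant H\"older class --- this last point being precisely where $\alpha<\lambda\gamma$ enters. An alternative route that avoids the smoothing is to set $\Phi(s,t)$ equal to the right side of \eqref{eq:def.w} with $[a,b]$ replaced by $[s,t]$, to verify by the same fractional-calculus manipulations that $\Phi$ is exactly additive and that $|\Phi(s,t)-W(t,\varphi_s)+W(s,\varphi_s)|\le\kappa\|W\|_{\tau,\lambda}(1+\|\varphi\|_\infty^\beta)\|\varphi\|_\gamma^\lambda|t-s|^{\tau+\lambda\gamma}$, and then to conclude from Lemma \ref{lemma.arbi}, Proposition \ref{prop.riemann.w} and the definition of $\JJ$ that $\int_a^bW(ds,\varphi_s)=\JJ_a^b\Phi=\Phi(a,b)$.
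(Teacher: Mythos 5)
The paper itself contains no proof of Theorem \ref{fractional}: the result is quoted from the companion work \cite{hule2015}, to which the reader is explicitly referred, so there is no in-paper argument to compare yours against. Judged on its own, your proof is correct and essentially complete. The smooth-case bookkeeping you flag as the main labour does go through exactly as you describe: writing the temporal increments as integrals of $\partial_tW$ and applying Fubini, the inner $s$-integration in the third (resp.\ fourth) term produces $\frac{1}{1-\alpha}\left[(w-t)^{\alpha-1}-(b-t)^{\alpha-1}\right]$, the $(b-t)^{\alpha-1}$ pieces cancel the first (resp.\ second) term, and the four terms collapse to $-\Gamma(\alpha)\Gamma(1-\alpha)\int_a^b\bigl(I^{\alpha}_{a+}D^{\alpha}_{a+}h_w\bigr)(w)\,dw$ with $h_w(t)=\partial_tW(w,\varphi_t)$; since mollification in time only preserves the joint H\"older condition \ref{cond.w}, $h_w$ is $\la\ga$-H\"older in $t$, and the composition identity $I^{\alpha}_{a+}D^{\alpha}_{a+}=\mathrm{id}$ applies precisely because $\alpha<\la\ga$. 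The limit passage is also sound: the interpolation bound $\|W*_t\rho_n-W\|_{\beta,\tau',\la}\to 0$ for $\tau'<\tau$ is the same device the paper uses in the last proposition of Appendix \ref{sec.prelim}, Proposition \ref{prop.int.w12} gives convergence of the left-hand sides, and your absolute-convergence estimates give convergence of the four fractional integrals. One small slip: continuity of the third and fourth terms in the seminorm at exponent $\tau'$ requires $\tau'>1-\alpha$, not $\tau'>1-\la\ga$ as stated in your first paragraph; your eventual choice $\tau'\in(\max\{1-\alpha,1-\la\ga\},\tau)$ repairs this, and that interval is nonempty exactly because $\alpha>1-\tau$ and $\tau+\la\ga>1$.

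One point you should make explicit: you establish the identity for $\alpha\in(1-\tau,\la\ga)$, whereas the statement reads $\alpha\in(1-\tau,\la\tau)$. Your range is the natural one --- when $\varphi$ is only $\ga$-H\"older the second and fourth integrals require $\alpha<\la\ga$ for absolute convergence --- and the two ranges coincide in the case $\ga=\tau$ relevant to Section \ref{sec.diffeqn}; the stated upper bound $\la\tau$ appears to presuppose $\ga=\tau$ (or is a typographical slip), so state clearly which range your argument covers. Your alternative sewing route (showing the right-hand side, as a function of the interval, is additive and within $O(|t-s|^{\tau+\la\ga})$ of the germ, then invoking Lemma \ref{lemma.arbi}) would also work, but the additivity verification there requires essentially the same fractional-calculus manipulations, so it is not a shortcut.
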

\subsection{Mapping properties} 
\label{sub:mapping_properties}
Let $\mu$ be a function as in Lemma \ref{sew.lem}. Let us define the quality
\begin{equation*}
  [\mu]_{1+\varepsilon;I}=\sup_{s,c,t\in I:s<c<t}\frac{|\mu(s,t)-\mu(s,c)-\mu(c,t)|}{|t-s|^{1+\varepsilon}}\,.
\end{equation*}
In several occasions, given two functions $\mu_1$ and $\mu_2$ such that $[\mu_1]_{1+\varepsilon} $ and $[\mu_2]_{1+\varepsilon}$ are finite, one would like to compare the  integrals $\JJ \mu_1$ and $\JJ \mu_2$.
The following result answers this question.
\begin{lemma}\label{lem.mu12}
  Let $\mu_1$ and $\mu_2$ be two continuous functions on $[a,b]^2$ such that $[\mu_1]_{\alpha} $ and $[\mu_2]_ \alpha $ are  finite for some $\alpha>1$. Then for every $s,t\in[a,b]$
  \begin{equation*}
    |\JJ_s^t \mu_1-\JJ_s^t \mu_2|\le|\mu_1(s,t)-\mu_2(s,t)|+(1-2^{1- \alpha})^{-1} [\mu_1- \mu_2]_{\alpha;[s,t]}|t-s|^\alpha
  \end{equation*}
\end{lemma}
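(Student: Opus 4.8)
The plan is to reduce everything to the linearity of the sewing map plus estimate \eqref{est.sew}, applied to the difference $\mu:=\mu_1-\mu_2$. First I would check that $\mu$ falls under the scope of Lemma \ref{sew.lem}. Writing $\ep=\alpha-1>0$, the reverse triangle inequality gives, for every subinterval $[s,t]\subseteq[a,b]$,
\[
[\mu]_{\alpha;[s,t]}\le [\mu_1]_{\alpha;[s,t]}+[\mu_2]_{\alpha;[s,t]}\le [\mu_1]_{\alpha}+[\mu_2]_{\alpha}<\infty,
\]
so $\mu$ satisfies $\|\mu(s,t)-\mu(s,c)-\mu(c,t)\|\le [\mu]_{\alpha;[s,t]}|t-s|^{1+\ep}$ for $s\le c\le t$, i.e. the hypothesis of the sewing lemma with exponent $1+\ep$ and constant $K=[\mu]_{\alpha;[s,t]}$.

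Next I would record two elementary facts about the sewing map. By the uniqueness-up-to-a-constant statement in Lemma \ref{sew.lem}, $\JJ$ is linear, hence $\JJ_s^t(\mu_1-\mu_2)=\JJ_s^t\mu_1-\JJ_s^t\mu_2$; alternatively this follows directly from $J_\pi(\mu_1-\mu_2)=J_\pi(\mu_1)-J_\pi(\mu_2)$ together with the convergence assertion of Lemma \ref{sew.lem}. Second, $\JJ_s^t\mu$ can be computed by applying the sewing construction on the subinterval $[s,t]$ rather than on all of $[a,b]$: the Riemann sums of $\mu$ over partitions of $[s,t]$ converge to $\JJ\mu(t)-\JJ\mu(s)$ and also to the corresponding increment of the sewing map built on $[s,t]$, so the two increments coincide. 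This is the point where one must be a little careful, because it is precisely what allows the \emph{restricted} seminorm $[\mu_1-\mu_2]_{\alpha;[s,t]}$ (not the global one) to appear in the final bound.

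Combining these, I would apply \eqref{est.sew} to $\mu$ on $[s,t]$ to get
\[
\bigl|\JJ_s^t(\mu_1-\mu_2)-\bigl(\mu_1(s,t)-\mu_2(s,t)\bigr)\bigr|\le (1-2^{-\ep})^{-1}[\mu_1-\mu_2]_{\alpha;[s,t]}\,|t-s|^{1+\ep}=(1-2^{1-\alpha})^{-1}[\mu_1-\mu_2]_{\alpha;[s,t]}\,|t-s|^{\alpha},
\]
using $\ep=\alpha-1$. The conclusion then follows from the triangle inequality,
\[
|\JJ_s^t\mu_1-\JJ_s^t\mu_2|=|\JJ_s^t(\mu_1-\mu_2)|\le |\mu_1(s,t)-\mu_2(s,t)|+\bigl|\JJ_s^t(\mu_1-\mu_2)-\bigl(\mu_1(s,t)-\mu_2(s,t)\bigr)\bigr|.
\]
There is no serious obstacle here; the only subtleties are the localization to $[s,t]$ noted above and the linearity of the sewing map, both of which are immediate consequences of Lemma \ref{sew.lem}. (Lemma \ref{lemma.arbi} is then the special case $\ep'>0$ of this estimate, forcing $\JJ_s^t\mu_1=\JJ_s^t\mu_2$.)
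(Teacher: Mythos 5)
Your proof is correct and follows essentially the same route as the paper: set $\mu=\mu_1-\mu_2$, note $[\mu]_{\alpha}\le[\mu_1]_{\alpha}+[\mu_2]_{\alpha}<\infty$, apply the sewing estimate \eqref{est.sew} to $\mu$, and use linearity $\JJ\mu=\JJ\mu_1-\JJ\mu_2$ together with the triangle inequality. The only difference is that you make explicit the localization to $[s,t]$ (so that the restricted seminorm $[\mu_1-\mu_2]_{\alpha;[s,t]}$ appears), a point the paper leaves implicit.
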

\begin{proof} The proof is rather trivial thanks to the linearity nature of Lemma \ref{sew.lem}.
  Put $\mu=\mu_1- \mu_2$. Notice that $[\mu]_ \alpha\le [\mu_1]_ \alpha+[\mu_2]_ \alpha<\infty$.
  Thus we can apply Lemma \ref{sew.lem} to $\mu$. The claim follows after observing that $\JJ \mu=\JJ \mu_1-\JJ \mu_2$.
\end{proof}

As an application, we study the dependence of the nonlinear Young integration $\int W(\ds,\varphi_s)$ with respect to the medium $W$ and the integrand $\varphi$.
\begin{proposition}\label{prop.int.w12}
  Let $W_1$ and $W_2$ be real valued functions on $\RR\times\RR^d$ satisfying the condition \ref{cond.w}.  Let $\varphi$ be a function in $C^\gamma(\RR;\RR^d)$ and let   $\tau+\lambda \gamma>1$. Then
  \begin{multline*}
    |\int_a^b W_1(\ds,\varphi_s)-\int_a^b W_2(\ds,\varphi_s)|\le |W_1(b,\varphi_a)- W_1(a,\varphi_a)-W_2(b,\varphi_a)+W_2(a,\varphi_a)|\\
    +c(\|\varphi\|_\infty) [W_1-W_2]_{\beta,\tau,\lambda}\|\varphi\|_{\gamma}|b-a|^{\tau+\lambda \gamma}
  \end{multline*}
\end{proposition}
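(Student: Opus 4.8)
The plan is to realize both nonlinear Young integrals as values of the sewing map and then invoke the linear comparison estimate of Lemma \ref{lem.mu12}. For $i=1,2$ set $\mu_i(s,t)=W_i(t,\varphi_s)-W_i(s,\varphi_s)$. As recorded in the computation preceding Proposition \ref{prop.riemann.w}, conditions \ref{cond.w} and \ref{cond.phi} give $[\mu_i]_{\tau+\lambda\gamma}\le [W_i]_{\beta,\tau,\lambda}(1+\|\varphi\|_\infty^\beta)\|\varphi\|_\gamma^\lambda<\infty$, and by the sewing lemma $\JJ_a^b\mu_i=\int_a^bW_i(\ds,\varphi_s)$. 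Since $\tau+\lambda\gamma>1$, we may take $\alpha=\tau+\lambda\gamma$ in Lemma \ref{lem.mu12}, whose hypotheses are therefore met.

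Next I would estimate the second-order increment of the difference $\mu:=\mu_1-\mu_2$. Write $V:=W_1-W_2$, which again satisfies \ref{cond.w} with $[V]_{\beta,\tau,\lambda}=[W_1-W_2]_{\beta,\tau,\lambda}$. For $s<c<t$ there is the telescoping identity
\[
\mu(s,t)-\mu(s,c)-\mu(c,t)=V(t,\varphi_s)-V(c,\varphi_s)-V(t,\varphi_c)+V(c,\varphi_c),
\]
which is exactly a rectangular increment of $V$ in the variables $(c,t)$ and $(\varphi_c,\varphi_s)$. Bounding it by the first term of the seminorm \eqref{eq:cond.w2}, and then using $|\varphi_s-\varphi_c|\le\|\varphi\|_\gamma(c-s)^\gamma$ and $1+|\varphi_s|+|\varphi_c|\le 1+2\|\varphi\|_\infty$, yields
\[
|\mu(s,t)-\mu(s,c)-\mu(c,t)|\le [W_1-W_2]_{\beta,\tau,\lambda}(1+2\|\varphi\|_\infty)^\beta\|\varphi\|_\gamma^\lambda(t-s)^{\tau+\lambda\gamma},
\]
so that $[\mu_1-\mu_2]_{\alpha;[a,b]}\le c(\|\varphi\|_\infty)[W_1-W_2]_{\beta,\tau,\lambda}\|\varphi\|_\gamma^\lambda$ with $c(r)=(1+2r)^\beta$.

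Finally I would plug these into Lemma \ref{lem.mu12}, noting that its boundary term is $|\mu_1(a,b)-\mu_2(a,b)|=|V(b,\varphi_a)-V(a,\varphi_a)|=|W_1(b,\varphi_a)-W_1(a,\varphi_a)-W_2(b,\varphi_a)+W_2(a,\varphi_a)|$, to obtain
\[
\Big|\int_a^bW_1(\ds,\varphi_s)-\int_a^bW_2(\ds,\varphi_s)\Big|\le |W_1(b,\varphi_a)-W_1(a,\varphi_a)-W_2(b,\varphi_a)+W_2(a,\varphi_a)|+\widetilde c(\|\varphi\|_\infty)[W_1-W_2]_{\beta,\tau,\lambda}\|\varphi\|_\gamma^\lambda|b-a|^{\tau+\lambda\gamma},
\]
with $\widetilde c(r)=(1-2^{1-\alpha})^{-1}(1+2r)^\beta$, which is the asserted inequality (one should read $\|\varphi\|_\gamma^\lambda$ in place of $\|\varphi\|_\gamma$ in the statement unless $\lambda=1$). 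There is no genuine obstacle here: the linearity of the sewing map reduces the comparison of the two integrals to a single second-order estimate for $W_1-W_2$, and the only points requiring care are identifying the correct boundary term coming from Lemma \ref{lem.mu12} and tracking the polynomial growth factor $(1+\|\varphi\|_\infty)^\beta$.
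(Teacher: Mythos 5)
Your proposal is correct and follows essentially the same route as the paper: defining $\mu_i(s,t)=W_i(t,\varphi_s)-W_i(s,\varphi_s)$, observing that the second-order increment of $\mu_1-\mu_2$ is a rectangular increment of $W_1-W_2$ bounded via the computation preceding Proposition \ref{prop.riemann.w}, and concluding with Lemma \ref{lem.mu12}. Your explicit telescoping identity and the remark that the stated bound should read $\|\varphi\|_\gamma^\lambda$ (as the paper's own preliminary estimate produces) are just slightly more detailed renditions of the paper's two-line argument.
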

\begin{proof}
  Let $a<c<b$. Put
  \begin{align*}
    & \mu_1(a,b)=W_1(b,\varphi_a)-W_1(a,\varphi_a)\,,\\
    & \mu_2(a,b)=W_2(b,\varphi_a)-W_2(a,\varphi_a)\,,\\
    & \mu=\mu_1- \mu_2\,.
  \end{align*}
  The argument before Proposition \ref{prop.riemann.w} shows that
  \begin{align*}
    & [\mu]_{\tau+\lambda \gamma}\le[W_1-W_2]_{\beta,\tau,\lambda}(1+\|\varphi\|_{\infty}^\beta)\|\varphi\|_{\gamma}\,.
  \end{align*}
  The proposition follows from Lemma \ref{lem.mu12}.
\end{proof}
\begin{proposition}\label{prop.int.phi12}
  Let $W$ be a function on $\RR\times\RR^d$ satisfying the condition \ref{cond.w}. Let $\varphi^1$ and $\varphi^2$ be two functions in $C^\gamma(\RR;\RR^d)$ and let  $\tau+\lambda \gamma>1$. Let $\theta\in(0,1)$ such that $\tau+\theta \lambda \gamma>1$. Then for any $u<v$
  \begin{multline*}
    |\int_u^v W(\ds,\varphi_s^1)-\int_u^v W(\ds,\varphi_s^2)|\\
    \le C_1[W]_{\beta,\tau,\lambda}\|\varphi^1- \varphi^2\|_{\infty}^\lambda|v-u|^\tau \\
    +C_2[W]_{\beta,\tau,\lambda}\|\varphi^1- \varphi^2\|_{\infty}^{\lambda(1-\theta)} |v-u|^{\tau+\theta\lambda \gamma}\,,
  \end{multline*}
  where $C_1=1+\|\varphi^1\|_{\infty}^\beta+\|\varphi^2\|_{\infty}^\beta$ and $C_2=2^{1-\theta} C_1(\|\varphi^1\|_{\gamma}^\lambda+\|\varphi^1\|_{\gamma}^\lambda)^\theta$.
\end{proposition}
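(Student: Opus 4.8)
The plan is to represent both nonlinear Young integrals through the sewing map and then invoke Lemma \ref{lem.mu12}. Fix $u<v$ and, for $i=1,2$, put $\mu_i(s,t)=W(t,\varphi^i_s)-W(s,\varphi^i_s)$ on $[u,v]^2$. By the computation carried out right before Proposition \ref{prop.riemann.w}, each $\mu_i$ satisfies the hypothesis of the sewing Lemma \ref{sew.lem} and $\int_u^v W(\ds,\varphi^i_s)=\JJ_u^v\mu_i$; moreover $[\mu_i]_{\tau+\lambda\gamma}\le[W]_{\beta,\tau,\lambda}(1+\|\varphi^i\|_\infty^\beta)\|\varphi^i\|_\gamma^\lambda<\infty$, whence also $[\mu_i]_{\tau+\theta\lambda\gamma}\le(v-u)^{\lambda\gamma(1-\theta)}[\mu_i]_{\tau+\lambda\gamma}<\infty$ because $\theta<1$. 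So Lemma \ref{lem.mu12} will apply with $\alpha=\tau+\theta\lambda\gamma>1$, and it remains to estimate the two quantities $|\mu_1(u,v)-\mu_2(u,v)|$ and $[\mu_1-\mu_2]_{\tau+\theta\lambda\gamma}$ that feed into that lemma.

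First I would handle $|\mu_1(u,v)-\mu_2(u,v)|$. One has $\mu_1(u,v)-\mu_2(u,v)=W(v,\varphi^1_u)-W(u,\varphi^1_u)-W(v,\varphi^2_u)+W(u,\varphi^2_u)$, which is exactly a rectangular increment of $W$ over the times $u<v$ and the points $\varphi^1_u,\varphi^2_u$; the first term of the seminorm \eqref{eq:cond.w2} then gives
\[
|\mu_1(u,v)-\mu_2(u,v)|\le[W]_{\beta,\tau,\lambda}(1+|\varphi^1_u|+|\varphi^2_u|)^\beta(v-u)^\tau|\varphi^1_u-\varphi^2_u|^\lambda\le C_1[W]_{\beta,\tau,\lambda}\|\varphi^1-\varphi^2\|_\infty^\lambda(v-u)^\tau,
\]
using the elementary bound $(1+|x|+|y|)^\beta\lesssim 1+|x|^\beta+|y|^\beta$. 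This already produces the first summand in the asserted estimate.

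Next I would bound $[\mu_1-\mu_2]_{\tau+\theta\lambda\gamma}$, which is the heart of the matter. Writing $g(x):=W(t,x)-W(c,x)$ for fixed $s<c<t$ in $[u,v]$, one has
\[
(\mu_1-\mu_2)(s,t)-(\mu_1-\mu_2)(s,c)-(\mu_1-\mu_2)(c,t)=\bigl[g(\varphi^1_s)-g(\varphi^1_c)\bigr]-\bigl[g(\varphi^2_s)-g(\varphi^2_c)\bigr],
\]
a mixed double increment of $g$. I would estimate it in two complementary ways, each via the first term of \eqref{eq:cond.w2}: regrouping as $[g(\varphi^1_s)-g(\varphi^2_s)]-[g(\varphi^1_c)-g(\varphi^2_c)]$ bounds it by $B:=2C_1[W]_{\beta,\tau,\lambda}(t-c)^\tau\|\varphi^1-\varphi^2\|_\infty^\lambda$, while regrouping as $[g(\varphi^1_s)-g(\varphi^1_c)]-[g(\varphi^2_s)-g(\varphi^2_c)]$ bounds it by $A:=C_1[W]_{\beta,\tau,\lambda}(t-c)^\tau(\|\varphi^1\|_\gamma^\lambda+\|\varphi^2\|_\gamma^\lambda)(c-s)^{\lambda\gamma}$. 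Interpolating via $|\,\cdot\,|\le B^{1-\theta}A^\theta$ and using $(t-c)^\tau(c-s)^{\theta\lambda\gamma}\le(t-s)^{\tau+\theta\lambda\gamma}$ yields $[\mu_1-\mu_2]_{\tau+\theta\lambda\gamma}\le C_2[W]_{\beta,\tau,\lambda}\|\varphi^1-\varphi^2\|_\infty^{\lambda(1-\theta)}$ with $C_2$ as in the statement. Feeding the two bounds into Lemma \ref{lem.mu12} (its factor $(1-2^{1-\alpha})^{-1}$ being absorbed into the constant) then gives the proposition.

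The step I expect to be the real obstacle is the last estimate. The quantity to control is a mixed double increment of $W$, and the two natural one-sided bounds $A$ and $B$ have incompatible shapes: $A$ uses the time-regularity of the two paths but not their closeness, $B$ uses their closeness but not their regularity, so neither alone produces a time exponent exceeding $1$ together with a genuine power of $\|\varphi^1-\varphi^2\|_\infty$. Interpolating between $A$ and $B$ precisely with the exponent $\theta$ for which $\tau+\theta\lambda\gamma>1$ is what makes the construction work; the rest is routine bookkeeping with \eqref{eq:cond.w2}.
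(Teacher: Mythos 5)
Your proposal is correct and follows essentially the same route as the paper: the same germs $\mu_i(s,t)=W(t,\varphi^i_s)-W(s,\varphi^i_s)$, the comparison Lemma \ref{lem.mu12}, the boundary term estimated via the rectangular-increment part of \eqref{eq:cond.w2}, and the interpolation with exponent $\theta$ between the "closeness" bound and the "H\"older-in-time" bound for $\mu_1-\mu_2$. Your pointwise interpolation on each triple $(s,c,t)$ is just a rephrasing of the paper's combination of the two seminorm bounds, and your remark about absorbing the sewing factor $(1-2^{1-\alpha})^{-1}$ matches the (implicit) treatment of constants in the paper's own proof.
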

\begin{proof}
  We put $\mu_1(a,b)=W(b,\varphi_a^1)-W(a,\varphi_a^1)$, $\mu_2(a,b)=W(b,\varphi_a^2)-W(a,\varphi_a^2)$
  and $\mu=\mu_1- \mu_2$. Applying Lemma \ref{lem.mu12}, we obtain, for any $\theta\in(0,1)$ such that $\tau+\theta \lambda \gamma>1$
  \begin{multline*}
    |\int_u^v W(\ds,\varphi_s^1)-\int_u^v W(\ds,\varphi_s^2)|\\
    \le |W(v,\varphi_u^1)- W(u,\varphi_u^1)-W(v,\varphi_u^2)+W(u,\varphi_u^2)|\\
    +[\mu]_{\tau+\theta\lambda \gamma} |v-u|^{\tau+\theta\lambda \gamma}\,.
  \end{multline*}
  Notice that
  \begin{equation*}
    |W(v,\varphi_u^1)- W(u,\varphi_u^1)-W(v,\varphi_u^2)+W(u,\varphi_u^2)|
    \le C_1[W]_{\beta,\tau,\lambda}|u-v|^\tau \|\varphi^1- \varphi^2\|_{\infty}^\lambda\,.
  \end{equation*}
  It remains to estimate $[\mu]_{\tau+\theta\lambda \gamma}$. It is obvious that for $i=1,2$
  \begin{equation*}
    [\mu_i]_{\tau+\lambda \gamma}\le[W]_{\beta,\tau,\lambda}(1+\|\varphi^i\|_{\infty}^\beta)\|\varphi^i\|_{\gamma}^\lambda\le C_1 [W]_{\beta,\tau,\lambda}\|\varphi^i\|_{\gamma}^\lambda
  \end{equation*}
  and hence
  \begin{align*}
    [\mu]_{\tau+\lambda \gamma}\le[\mu_1]_{\tau+\lambda \gamma}+[\mu_2]_{\tau+\lambda \gamma}
    \le C_1[W]_{\beta,\tau,\lambda}\sum_{i=1}^2\|\varphi^i\|_{\gamma}^\lambda\,.
  \end{align*}
  On the other hand
  \begin{align*}
    |\mu(a,b)&-\mu(a,c)-\mu(c,b)|\\
    &\le|W(b,\varphi^1_{a})-W(b,\varphi^2_{a})-W(c,\varphi^1_{a})+W(c,\varphi^2_{a})| \\
    &\qquad\quad +|W(b,\varphi^1_{c})-W(b,\varphi^2_{c})-W(c,\varphi^1_{c})+W(c,\varphi^2_{c})| \\
    & \le 2C_1[W]_{\beta,\tau,\lambda} |b-c|^{\tau}\|\varphi^1-\varphi^2\|_{\infty}^{\lambda}\,.
  \end{align*}
  Combining the two bounds for $\mu$ we get for any $\theta\in(0,1)$ such that $\tau+\theta \lambda \gamma>1$,
  \begin{equation*}
    [\mu]_{\tau+\theta \lambda \gamma}\le C_2 [W]_{\beta,\tau,\lambda}\|\varphi^1- \varphi^2\|_\infty^{\lambda(1-\theta)}\,.
  \end{equation*}
  This completes the proof.
\end{proof}
\begin{corollary}\label{cor.compact} Let $I$ be a nonempty closed, bounded and connected interval. Let $t_0$ be in $I$.
  Assuming condition \ref{cond.w} with $\tau+\lambda \gamma>1$. Then the map
  \begin{align*}
     M&:C^{\gamma}(I) \to C^{\tau}(I)\\
     &Mx(t)=\int_{t_0}^t W(\ds,x_s)
  \end{align*}
  is continuous and compact.
\end{corollary}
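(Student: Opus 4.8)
The plan is to deduce both assertions from the quantitative continuity estimate in Proposition~\ref{prop.int.phi12}, exploiting the simple fact that each $Mx$, and each difference $Mx^1-Mx^2$, vanishes at $t_0$; hence on the bounded interval $I$ one has $\|Mx^1-Mx^2\|_{\infty}\le|I|^{\tau}\,\|Mx^1-Mx^2\|_{\tau;I}$, so it suffices throughout to control the $\tau$-H\"older seminorm of differences. (That $M$ does map into $C^{\tau}(I)$ follows from Proposition~\ref{prop.riemann.w} together with Proposition~\ref{prop.frac}.)

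For \textbf{continuity}, fix $\varphi^1,\varphi^2$ and pick $\theta\in(0,1)$ with $\tau+\theta\lambda\gamma>1$, which is possible because $\tau+\lambda\gamma>1$. Applying Proposition~\ref{prop.int.phi12} on every subinterval $[s,t]\subset I$ and dividing by $|t-s|^{\tau}$ yields
\[
\|M\varphi^1-M\varphi^2\|_{\tau;I}\le C_1[W]_{\beta,\tau,\lambda}\|\varphi^1-\varphi^2\|_{\infty}^{\lambda}+C_2[W]_{\beta,\tau,\lambda}\|\varphi^1-\varphi^2\|_{\infty}^{\lambda(1-\theta)}|I|^{\theta\lambda\gamma},
\]
where $C_1,C_2$ depend only on $\theta,\lambda,\beta$ and, polynomially, on $\|\varphi^1\|_{\infty},\|\varphi^2\|_{\infty}$ and on $\|\varphi^1\|_{\gamma},\|\varphi^2\|_{\gamma}$. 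When $\varphi^1\to\varphi^2$ in $C^{\gamma}(I)$ these constants stay bounded while $\|\varphi^1-\varphi^2\|_{\infty}\le\|\varphi^1-\varphi^2\|_{C^{\gamma}(I)}\to0$; together with the first paragraph this gives $\|M\varphi^1-M\varphi^2\|_{C^{\tau}(I)}\to0$, i.e.\ $M$ is continuous.

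For \textbf{compactness}, let $B\subset C^{\gamma}(I)$ be bounded. The key observation is that $\tau+\lambda\gamma>1$ leaves room to choose $\gamma'\in(0,\gamma)$ with still $\tau+\lambda\gamma'>1$. By Arzel\`{a}--Ascoli, $B$ is relatively compact in $C^{0}(I)$, and the interpolation inequality $\|g\|_{\gamma'}\le(2\|g\|_{\infty})^{1-\gamma'/\gamma}\|g\|_{\gamma}^{\gamma'/\gamma}$ upgrades this to relative compactness of $B$ in $C^{\gamma'}(I)$. Given a sequence $x_n\in B$, extract a subsequence $x_{n_k}\to x^{*}$ in $C^{\gamma'}(I)$; since $\tau+\lambda\gamma'>1$, $Mx^{*}$ is well defined and lies in $C^{\tau}(I)$, and the estimate above applied with $\gamma'$ in place of $\gamma$ (its constants are bounded because $\sup_k\|x_{n_k}\|_{\gamma'}<\infty$ and $\|x^{*}\|_{\gamma'}<\infty$) gives $\|Mx_{n_k}-Mx^{*}\|_{C^{\tau}(I)}\to0$. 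Hence every sequence in $M(B)$ has a subsequence converging in $C^{\tau}(I)$, so $M(B)$ is relatively compact and $M$ is compact.

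The \textbf{main obstacle} is that one cannot argue compactness in the naive way — bounding $M(B)$ in some $C^{\tau'}(I)$ with $\tau'>\tau$ and invoking a compact embedding — since $Mx$ is generically no more regular than $\tau$-H\"older in time: the increment $Mx(t)-Mx(s)$ equals, to leading order, $W(t,x_s)-W(s,x_s)$, and $W$ is only assumed $\tau$-H\"older in time. One must therefore import compactness from the domain, as above, and the point requiring care is precisely that the constants of Proposition~\ref{prop.int.phi12} remain uniformly bounded over $B$, which holds because they depend only polynomially on $\|x\|_{\infty}$ and power-law on $\|x\|_{\gamma}$, both uniformly bounded on $B$.
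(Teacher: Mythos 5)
Your proof is correct and follows essentially the same route as the paper: continuity comes directly from Proposition~\ref{prop.int.phi12}, and compactness is obtained by passing to a $C^{\gamma'}$-convergent subsequence of the bounded set $B$ (with $\gamma'<\gamma$ still satisfying $\tau+\theta\lambda\gamma'>1$) and upgrading to $C^{\tau}$-convergence via the same estimate with uniformly bounded constants. The only difference is cosmetic: the paper first applies Arzel\`a--Ascoli to the image $M(B)$ in $C^{\tau'}(I)$, $\tau'<\tau$, before importing compactness from the domain, a step your argument simply bypasses.
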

\begin{proof}
  Continuity follows immediately from Proposition \ref{prop.int.phi12}.
  For compactness, suppose
  $B$ is  a bounded subset of $C^{\gamma}(I)$. The estimate in Proposition \ref{prop.int.phi12} implies that ${\{Mx\}}_{x\in B}$ is bounded in $C^{\tau}(I)$.
  By the Arzel\`a-Ascoli theorem, the set ${\{Mx\}}_{x\in B}$ is relatively
  compact in $C^{\tau'}(I)$ for every $\tau'<\tau$.
  We show that $\{{Mx\}}_{x\in B}$ is indeed relatively compact in
  $C^{\tau}(I)$. More precisely, suppose $\left\{ Mx^{n}\right\} $
  is a convergent sequence in $M(B)$ in the norm of $C^{\tau'}(I)$,
  by taking further subsequence, we can assume that the sequence $\left\{ x^{n}\right\} $
  converges to $x$ in $C^{\gamma'}(I)$, for some $\gamma'<\gamma$ (this is possible
  since $B$ is bounded). It is sufficient to show that $Mx^{n}$ converges
  to $Mx$ in $C^{\tau}(I)$.  To prove this, we choose $\theta\in(0,1)$ and $\gamma'<\gamma$ such that $\tau+\theta \lambda \gamma'>1$, and then we apply Proposition \ref{prop.int.phi12} to obtain
    \begin{equation*}
  \|Mx-Mx^n\|_{\tau}
  \le c\|W\|_{\beta,\tau,\lambda} (\|x-x^n\|_{\infty}^{\lambda}+\|x-x^n\|_{\infty}^{\lambda(1-\theta)}) \,.
  \end{equation*}
  The constant $c$ depends only on $\|x\|_\infty,\|x\|_{\gamma'}$ and $\|x^n\|_\infty,\|x^n\|_{\gamma'}$ which is uniformly bounded with respect to $n$. This shows $Mx^{n}$ converges to $Mx$ in $C^{\tau}(I)$ and completes the proof.
\end{proof}

\section{Dif{}ferential equations}\label{sec.diffeqn}
Let   $W:\RR\times \RR^d\rightarrow \RR^d$
satisfy the condition
\ref{cond.w} stated at  the beginning of Section \ref{sec.pathint}  with $\tau(1+\lambda)>1$.
In this section we  consider the following  differential equation
\begin{equation}
\varphi_{t}=\varphi_{t_{0}}+\int_{t_{0}}^{t}W(\ds,\varphi_{s})\,. \label{eq:ode}
\end{equation}
We are concerned with the existence,  uniqueness, boundedness and
the flow property  of the solution. We shall also study the dependence
of the solution on the initial conditions.  {Some related results on this direction are also obtained independently by Catellier and Gubinelli \cite{catellier2012averaging}. } Applications of the results obtained
are represented in Subsections \ref{subsec.reg.flow} and \ref{subsec.transport} where we consider a transport equation of the type
\begin{align*}
    u(dt,x)=\nabla u(t,x)W(dt,x)\,.
\end{align*}
Literature on transport equations is vast and  mostly focuses
 on irregularity of the spatial variables of the vector field (see for instance \cite{diperna-lions} for Sobolev vector fields, \cite{ambrosio} for BV vector fields and \cite{bahouri} for Besov vector fields). In the case $W$ being a semi-martingale, the above equation is treated in \cite{kunita}. It appears to be new in the context of nonlinear Young integration.

\subsection{Existence and uniqueness}\label{subsec.2.1}
\begin{theorem}[Existence]
\label{thm:2e}Suppose that $W$ satisfies the assumption
\ref{cond.w}   with   $\tau(1+\lambda)>1$ and $\beta+\lambda\le1$.
Then the equation
(\ref{eq:ode}) has a solution in the space of H\"older continuous
functions $C^{\tau}\left([t_{0}-T,t_{0}+T]\right)$ for any $T>0$.
Moreover, if $\varphi$ is a solution in $C^{\tau}\left([t_{0}-T,t_{0}+T]\right)$,
then
\begin{equation}
\sup_{t\in[t_{0}-T,t_{0}+T]}|\varphi_{t}|+\sup_{t_0-T\le s<t\le t_0+T  }\frac{|\varphi_{t}-\varphi_s|}{|t-s|^{\tau}}
\le C
 _{\tau, \la,T} e^{  \kappa_{\tau, \la,T}  \|W\|_{\tau, \la} ^{\frac{1-\tau+\tau\la}{\tau \la}  } }
 \left(1\vee|\varphi_{t_{0}}|  \right) \,,   \label{eq:est.supphi}
\end{equation}
where the constant $k_{\tau, \la,T} $ and $C_{\tau, \la,T}$ depend   only on $\lambda$, $\tau$ and $T$.
\end{theorem}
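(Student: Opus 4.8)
The plan is to construct a solution by the usual Euler/Picard scheme combined with a compactness argument, and to extract the a priori bound \eqref{eq:est.supphi} first, since that bound is exactly what drives the compactness. I would work on $[t_0, t_0+T]$ (the interval $[t_0-T,t_0]$ being symmetric, up to reversing time). Fix a partition and define the Euler approximations $\varphi^n$ by $\varphi^n_t = \varphi_{t_0} + \int_{t_0}^t W(\d s, \varphi^n_{\eta_n(s)})$ where $\eta_n(s)$ is the left endpoint of the partition interval containing $s$; by Proposition \ref{prop.riemann.w} these are well defined and lie in $C^\tau$. The first step is to derive, for \emph{any} $C^\tau$ solution $\varphi$ of \eqref{eq:ode} (and analogously for the $\varphi^n$), a local estimate: applying \eqref{est.W.c} with $c=s$ on a subinterval $[s,t]$ of length $h$,
\[
|\varphi_t-\varphi_s|\le |W(t,\varphi_s)-W(s,\varphi_s)| + \kappa\|W\|_{\tau,\lambda}(1+\|\varphi\|_{\infty;s,t}^\beta)\|\varphi\|_{\gamma;s,t}^\lambda h^{\tau+\lambda\gamma},
\]
and using the $\|W\|_{\tau,\lambda}$ seminorm to bound the first term by $\|W\|_{\tau,\lambda}(1+|\varphi_s|)^{\beta+\lambda}h^\tau$. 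Taking $\gamma=\tau$ and using $\beta+\lambda\le 1$, this yields on a small interval a closed inequality for the local H\"older seminorm $\|\varphi\|_{\tau;s,t}$ of the form $N \le A(1+\|\varphi\|_\infty)h^0 + B N^\lambda h^{\tau\lambda}$ after dividing by $h^\tau$; choosing the interval length $h=h_*$ small enough (depending only on $\tau,\lambda$ and $\|W\|_{\tau,\lambda}$, with the stated exponent $\frac{1-\tau+\tau\lambda}{\tau\lambda}$ coming from solving $B h_*^{\tau\lambda}$-type balance against the sublinear power $N^\lambda$) absorbs the $N^\lambda$ term and gives $\|\varphi\|_{\tau;s,t}\le \kappa(1+\|\varphi\|_{\infty;s,t})$.

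The second step is to globalize: on an interval of length $h_*$, $\|\varphi\|_{\infty;s,s+h_*}\le |\varphi_s| + \|\varphi\|_{\tau;s,s+h_*}h_*^\tau \le |\varphi_s| + \kappa h_*^\tau(1+\|\varphi\|_{\infty;s,s+h_*})$, so for $h_*$ small the sup over the block is at most $(1+\kappa h_*^\tau)(1+|\varphi_s|)$ (a constant times $1\vee|\varphi_s|$). Iterating over the $\lceil T/h_*\rceil$ blocks that tile $[t_0,t_0+T]$ gives a geometric growth $\sup_{[t_0,t_0+T]}|\varphi_t| \le C^{T/h_*}(1\vee|\varphi_{t_0}|)$; since $1/h_*$ is (a constant times) $\|W\|_{\tau,\lambda}^{(1-\tau+\tau\lambda)/(\tau\lambda)}$ once $\|W\|_{\tau,\lambda}\ge 1$ — and for $\|W\|_{\tau,\lambda}\le 1$ one absorbs into the constant — this is precisely the claimed $C_{\tau,\lambda,T}e^{\kappa_{\tau,\lambda,T}\|W\|_{\tau,\lambda}^{(1-\tau+\tau\lambda)/(\tau\lambda)}}(1\vee|\varphi_{t_0}|)$ bound. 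Feeding the resulting sup bound back into the local H\"older estimate gives the matching bound for $\sup_{s<t}|\varphi_t-\varphi_s|/|t-s|^\tau$, proving \eqref{eq:est.supphi}.

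For existence, the same a priori reasoning applies uniformly to the Euler approximations $\varphi^n$ (the frozen argument $\varphi^n_{\eta_n(s)}$ differs from $\varphi^n_s$ by $O(|\pi_n|^\tau)$, which is harmless), so $\{\varphi^n\}$ is bounded in $C^\tau([t_0-T,t_0+T])$ and hence, by Arzel\`a--Ascoli, relatively compact in $C^{\tau'}$ for any $\tau'<\tau$. Passing to a subsequence $\varphi^n\to\varphi$ in $C^{\tau'}$, I would show $\varphi$ solves \eqref{eq:ode} by passing to the limit in $\varphi^n_t=\varphi_{t_0}+\int_{t_0}^t W(\d s,\varphi^n_{\eta_n(s)})$: the continuity of the map $x\mapsto\int_{t_0}^\cdot W(\d s,x_s)$ with respect to uniform convergence of $C^{\gamma'}$-bounded families is exactly Proposition \ref{prop.int.phi12} (equivalently the compactness statement of Corollary \ref{cor.compact}), after checking that replacing $\varphi^n_{\eta_n(s)}$ by $\varphi^n_s$ changes the integral by $o(1)$, again via Proposition \ref{prop.int.phi12} and the $|\pi_n|\to 0$ bound on $\|\varphi^n_{\eta_n(\cdot)}-\varphi^n_\cdot\|_\infty$. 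Finally the limit $\varphi$ lies in $C^\tau$ (not merely $C^{\tau'}$) because it satisfies \eqref{eq:ode} and Proposition \ref{prop.frac} upgrades any fixed point of the integral map to $C^\tau$ regularity.

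The main obstacle is the first step: getting the self-improving local estimate to close with the \emph{correct} power of $\|W\|_{\tau,\lambda}$ in the exponent. One must be careful that the nonlinearity enters only through the sublinear term $N^\lambda$ (this is where $\tau(1+\lambda)>1$ is used, to make $\tau+\lambda\gamma=\tau+\lambda\tau>1$ so the sewing estimate applies with $\gamma=\tau$) and through the factor $(1+|\varphi_s|)^{\beta+\lambda}$ with $\beta+\lambda\le1$ (this keeps the dependence on $\|\varphi\|_\infty$ genuinely linear, so the block-iteration produces a clean geometric — hence exponential in the number of blocks — bound rather than a worse super-exponential one). The bookkeeping that turns "number of blocks $\asymp \|W\|^{(1-\tau+\tau\lambda)/(\tau\lambda)}$" is the delicate accounting and is where I would spend the most care; uniqueness is not asserted in this theorem so no contraction estimate is needed here.
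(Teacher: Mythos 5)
Your a priori estimate is essentially the paper's own: the paper likewise starts from \eqref{est.W.c}, uses $\beta+\lambda\le 1$ through the Young-type inequality $\|x\|_{\infty}^{\beta}\|x\|_{\tau}^{\lambda}\le \|x\|_{\infty}^{\beta+\lambda}+\|x\|_{\tau}^{\beta+\lambda}$ to keep the dependence on the solution linear, absorbs the H\"older seminorm on windows of length $\Delta$ chosen by $A\|W\|\Delta^{\tau\lambda}=\tfrac12$, and iterates over $\asymp T/\Delta$ blocks to reach the exponential bound with exponent $\|W\|_{\tau,\lambda}^{(1-\tau+\tau\lambda)/(\tau\lambda)}$. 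Where you genuinely differ is the existence mechanism: you construct solutions by an Euler scheme with frozen arguments and pass to the limit via Arzel\`a--Ascoli, Proposition \ref{prop.int.phi12} and Proposition \ref{prop.frac}, whereas the paper applies the Leray--Schauder theorem to the map $Mx=x_{0}+\int_{t_0}^{\cdot}W(\ds,x_s)$, whose continuity and compactness is exactly Corollary \ref{cor.compact}, the a priori bound being used only to show boundedness of $\{x:\,x=\sigma Mx,\ 0\le\sigma\le1\}$. Your route is constructive and yields a convergent approximation scheme; the paper's route avoids discretization altogether at the price of a topological fixed point theorem.

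Two points in your write-up need more care than you allow. First, the uniform $C^{\tau}$ bound for the Euler iterates is not as ``harmless'' as stated: on a block $[s,t]$ you cannot estimate the integral of the frozen path termwise, since $\sum_i(r_{i+1}-r_i)^{\tau}$ blows up as the mesh refines; you must run the sewing/comparison argument only down to the mesh scale $|\pi_n|$, using that $\varphi^n_{\eta_n(\cdot)}$ is H\"older only up to resolution $|\pi_n|$, and sum exactly below that scale. This is standard but it is precisely the step your sentence glosses over (alternatively, work with the continuous map $M$ as the paper does and skip discretization entirely). Second, a bookkeeping slip: $1/h_*\asymp\|W\|^{1/(\tau\lambda)}$, not $\|W\|^{(1-\tau+\tau\lambda)/(\tau\lambda)}$; the stated exponent arises because the per-block multiplicative factor is $1+\kappa\|W\|h_*^{\tau}\le e^{\kappa\|W\|h_*^{\tau}}$, so the total exponent is $\asymp (T/h_*)\,\|W\|h_*^{\tau}=T\|W\|h_*^{\tau-1}\asymp T\|W\|^{(1-\tau+\tau\lambda)/(\tau\lambda)}$, which is exactly the accounting carried out in the paper's Step 2. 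With these two repairs your argument goes through and delivers the same estimate \eqref{eq:est.supphi}.
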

\begin{proof}
Fix $T>0$, we denote $\|W\|= \|W\|_{\beta,\tau,\lambda;[t_0-T,t_0+T]}$. We define a  mapping $M$ acting  on
$C^{\tau}([t_{0}-T,t_{0}+T])$ as  follows
\[
Mx=x_{0}+\int_{t_{0}}^{\cdot}W(\ds,x_{s})\,, \quad \forall x \in  C^{\tau}([t_{0}-T,t_{0}+T]) \,.
\]
We shall verify that $M$ satisfies the hypothesis of Leray-Schauder theorem (see \cite[Theorem 11.3]{Gilbarg-Trudinger}).

\textbf{Step 1.} $M$ is well-defined, continuous and compact. This  immediately
 follows from    Corollary \ref{cor.compact}.

\textbf{Step 2.} Now we explain that the
set $\left\{ {x\in C^{\tau}([t_{0}-T,t_{0}+T]):x=\sigma Mx,0\le\sigma\le1}\right\} $
is bounded.   Let $x$ satisfy $x=\sigma Mx$
for some $\sigma\in[0,1]$. By definition of $M$, we see
$x=\sigma Mx$ can be written as
\[
x_{b}-x_{a} =\sigma \int_{a}^{b}W(\ds,x_{s})\,.
\]
From \eqref{est.W.c}, it follows that for any $a,b\in[t_{0}-T,t_{0}+T]$,
we have
\begin{align*}
|x_{b}-x_{a}|
&=\sigma\left|\int_{a}^{b}W(\ds,x_{s})\right|\\
&\le  \si    \|W\|(1+\|x\|_{\infty;a,b} ^\beta) \|x\|_{\infty;a,b}  ^{\lambda}(b-a)^\tau \\
&\quad+\sigma\kappa \|W\|(1+\|x\|_{\infty;a,b}^\beta)   \|x\|_{\tau; a,b}^{\lambda}  |b-a|^{\tau+\la\tau}\,.
\end{align*}
Since $\sigma\le1$, this yields
\begin{equation*}
  \|x\|_{\tau;a,b}\le\|W\|(1+\|x\|_{\infty;a,b} ^\beta) \|x\|_{\infty;a,b}  ^{\lambda}+\kappa \|W\|(1+\|x\|_{\infty;a,b}^\beta)   \|x\|_{\tau; a,b}^{\lambda}  |b-a|^{\la\tau}\,,
\end{equation*}
for every $a,b$ in $[t_0,t_0+T]$ with $a<b$. We emphasize that the constant $\kappa$ appears in the previous inequality is independent of  $\sigma$.
An application of Young inequality gives
\begin{equation*}
  \|x\|_{\infty;a,b} ^\beta\|x\|_{\tau;a,b}^{\lambda}\le\|x\|_{\infty;a,b} ^{\beta+\lambda}+\|x\|_{\tau;a,b}^{\beta+\lambda}\,.
\end{equation*}
Thus
\begin{align*}
  \|x\|_{\tau;a,b}&\le\|W\|(\|x\|_{\infty;a,b}^{\lambda}+\|x\|_{\infty;a,b} ^{\beta+\lambda}) +\kappa \|W\|\|x\|_{\infty;a,b}^{\beta+\lambda}|b-a|^{\la\tau}\\
  &\quad +\kappa\|W\|  ( \|x\|_{\tau; a,b}^{\lambda}+ \|x\|_{\tau; a,b}^{\beta+\lambda})|b-a|^{\lambda \tau}   \,.
\end{align*}
Applying the inequality $z^\theta\le 1\vee z$ ($\theta\in[0,1]$ and $z\ge0$),   we  obtain
\begin{align*}
    \|x\|_{\tau;a,b}&\le\|W\|(2+\kappa|b-a|^{\la\tau} )(1\vee\|x\|_{\infty;a,b})
   +\kappa\|W\|  ( 1\vee\|x\|_{\tau; a,b})|b-a|^{\lambda \tau}   \,.
\end{align*}
We further use
\begin{equation*}
  \|x\|_{\infty;a,b}\le|x_a|+\|x\|_{\tau;a,b}|b-a|^{\tau}
\end{equation*}
to obtain
\begin{equation}\label{est.xtau}
    \|x\|_{\tau;a,b}\le A\|W\|(1\vee|x_a|)+A\|W\|  ( 1\vee\|x\|_{\tau; a,b})|b-a|^{\lambda \tau}   \,,
\end{equation}
where $A$ is a constant depending only on $\tau,\lambda$ and $T$.
Let $\Delta$ be a positive number such that
\begin{equation}
A\|W\| \Delta^{\tau\lambda}=\frac{1}{2}.\label{eq:delta-1}
\end{equation}
If $|b-a|\le\Delta$, then from \eqref{est.xtau}
\begin{equation}\label{est.xinf1}
    \|x\|_{\tau;a,b}\le2A\|W\|(1\vee|x_a|)\,.
\end{equation}
Hence, we obtain
\begin{equation}
(1\vee\|x\|_{\infty,a,b})\le( 2A\|W\| \Delta^{\tau}+1)(1\vee|x_{a}|) \,.\label{est.xinf2}
\end{equation}
Divide the interval $[t_{0},t_{0}+T]$ into $n=[T/\Delta]+1$ subintervals
of length less or equal than $\Delta$. Applying  the inequality (\ref{est.xinf2})
on the intervals $[t_{0},t_{0}+\Delta]$, $[t_{0}+\Delta]$,..., $[t_{0}+(n-1)\Delta,t_{0}+n\Delta\wedge T]$,
recursively, we obtain
\begin{equation}
(1\vee\|x\|_{\infty,t_{0},t_{0}+T})\le (2A\|W\|\Delta^\tau+1)^n(1\vee|x_a|)\,.
\label{est.supx1}
\end{equation}
We can also assume that $\De\le T$. Thus $n\le 2T/\De$. We use the bound $2A\|W\|\Delta^\tau+1\le \exp(2A\|W\|\Delta^\tau)$.  Then  \eref{est.supx1} yields
\begin{align*}
    (1\vee\|x\|_{\infty,t_{0},t_{0}+T})\le \exp(2A\|W\|\Delta^\tau\frac{2T}{\Delta})(1\vee|x_{t_0}|)\,.
\end{align*}
Using \eref{eq:delta-1}, namely,
\[
\De= (2A\|W\|) ^{-\frac1{\tau \la}}\,,
\]
we have
\begin{equation*}
(1\vee\|x\|_{\infty;t_{0},t_{0}+T})
 \le e^{    T(2A\|W\|) ^{\frac{1-\tau+\tau\la}{\tau \la}  } }
 \left(1\vee|x_{t_{0}}|  \right) \,,
\end{equation*}
where $C  _{\tau, \la  }$ and $ \kappa_{\tau, \la } $ are uniformly bounded
in $\si\in [0, 1]$.
The argument goes similarly on the other interval $[t_{0}-T,t_{0}]$.
Thus
\begin{equation}
(1\vee\|x\|_{\infty;t_{0}-T,t_{0}+T})
 \le e^{  T (2A\|W\|) ^{\frac{1-\tau+\tau\la}{\tau \la}  } }
 \left(1\vee|x_{t_{0}}| \right) \,.\label{e.4.16}
\end{equation}
Together with the estimate (\ref{est.xinf1}), this inequality \eref{e.4.16}  implies that the
set
\[
\left\{ {x\in C^{\tau}([t_{0}-T,t_{0}+T]):x=\sigma Lx,0\le\sigma\le1}\right\}
\]
 is bounded in $C^{\tau}([t_{0}-T,t_{0}+T])$.

\textbf{Step 3.} Applying  Leray-Schauder theorem, we see that the equation (\ref{eq:ode})
has a solution $\left\{\varphi_t\,, t\in [t_{0}-T,t_{0}+T]\right\}$
in $C^{\tau}([t_{0}-T,t_{0}+T])$ for every $T$. The
estimate (\ref{eq:est.supphi}) comes from (\ref{e.4.16})  together with
 \eref{est.xinf1}.
\end{proof}

Next, we study some stability result.  In particular, we want to know
how the solution depends on the initial condition $x_{t_0}$.

\begin{theorem}
\label{thm:unique} Let  the condition \ref{cond.w} be
 satisfied with $\tau+\tau \lambda>1$. In addition, we assume that   $W(t,x)$ is differentiable with respect to $x$
for every $t$ and  the spatial gradient matrix  of $W$ is denoted by  $\nabla W(t, x)=
\left(\frac{\partial W_i(t, x)}{\partial x_j}\right)_{1\le i,j\le d}$.
Suppose
\begin{eqnarray*}
&& \|\nabla W\|_{\tau,\la; [t_0-T,t_0+T]\times K}
 :=\sup_{\substack{
 t_0-T\le s<t\le t_0+T \\x  \in K} }
\frac{ |\nabla W(t,x)-\nabla W(s,x)|}{ |t-s|^{\tau}}
\\
&&\quad +\sup_{\substack{
t_0-T\le s<t\le t_0+T\\ x, y\in K\,, x\not=y}}
\frac{|\nabla W(t,x)-\nabla W(s,x)-\nabla W(t,y)+\nabla W(s,y)|}{ |t-s|^{\tau}|x-y|^{\lambda}}
\label{eq:cond.w1}
\end{eqnarray*}
is finite for all compact set $K$ in $\RR^d$. Let $x_{t}$ and $y_{t}$
be two solutions in $C^{\tau}([t_{0}-T,t_{0}+T])$ to the integral
equation (\ref{eq:ode}) with initial conditions $x_{0}$ and $y_{0}$
respectively. Then the following estimate holds
\begin{equation}
\sup_{t\in[t_{0}-T,t_{0}+T]}|x_{t}-y_{t}|\le
2^{\kappa  TA^{\frac{1}{\tau}}} |x_{0}-y_{0}|,\label{eq:estxy}
\end{equation}
where $A$ is a constant depending on $\nabla W$, $x,y$ and $T$ (precise formula is given in \eqref{cons.A} below).
\end{theorem}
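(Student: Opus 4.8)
The plan is to estimate the difference $\varphi_t := x_t - y_t$ by writing it as a nonlinear Young integral and then applying a Gronwall-type argument on small subintervals, exactly mirroring Step 2 of Theorem~\ref{thm:2e}. First I would fix a subinterval $[a,b] \subseteq [t_0-T, t_0+T]$ and write
\[
x_b - x_a - (y_b - y_a) = \int_a^b W(\ds, x_s) - \int_a^b W(\ds, y_s).
\]
To this difference of two nonlinear Young integrals I would apply Proposition~\ref{prop.int.phi12} with $\varphi^1 = x$ and $\varphi^2 = y$, but this only gives control in terms of $\|x-y\|_\infty$ with a H\"older-in-time gain; it does not by itself produce a linear-in-$|x_0-y_0|$ bound because the exponent $\lambda < 1$ appears on $\|x-y\|_\infty$. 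This is why the extra hypothesis on $\nabla W$ is needed: the correct comparison is not between $W(\cdot, x_s)$ and $W(\cdot, y_s)$ directly but rather via the fundamental-theorem-of-calculus representation
\[
W(t, x_s) - W(s, x_s) - W(t, y_s) + W(s, y_s) = \int_0^1 \big(\nabla W(t, y_s + \theta(x_s - y_s)) - \nabla W(s, y_s + \theta(x_s-y_s))\big)\,(x_s - y_s)\,d\theta,
\]
which is linear in $x_s - y_s$ and whose increments in time are controlled by $\|\nabla W\|_{\tau,\lambda}$ on the relevant compact set $K$ (determined by the a priori bound \eqref{eq:est.supphi} applied to both $x$ and $y$).

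Concretely, I would set $\mu(s,t) = \int_0^1 \big(\nabla W(t, y_s + \theta(x_s-y_s)) - \nabla W(s, y_s + \theta(x_s-y_s))\big)(x_s - y_s)\,d\theta$ and recognize, via Lemma~\ref{lemma.arbi} or a direct sewing-lemma computation, that $\int_a^b[W(\ds,x_s) - W(\ds,y_s)]$ equals $\JJ_a^b \mu$. Then I would estimate the second-order increment $|\mu(s,t) - \mu(s,c) - \mu(c,t)|$: the terms split into one piece involving increments of $\nabla W$ in time against $|\varphi_s - \varphi_c|^\lambda$ (using the H\"older regularity of both $x$ and $y$, so $\le C|t-s|^{\tau + \lambda\gamma}$ with $\gamma = \tau$) times $\|x-y\|_\infty$, plus a piece involving $|\varphi_s - \varphi_c|$ times increments of $\nabla W$ in time. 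Feeding this into the sewing lemma yields
\[
|x_b - x_a - (y_b - y_a)| \le |\nabla W|\text{-terms}\cdot\Big(\|x-y\|_{\infty;a,b}\,|b-a|^{\tau}\Big) + \big(\text{higher order in }|b-a|\big)\cdot\|x-y\|_{\infty;a,b},
\]
from which, writing $\|x-y\|_{\infty;a,b} \le |x_a - y_a| + \|x-y\|_{\tau;a,b}|b-a|^\tau$ and choosing $|b-a| = \Delta$ small enough that the coefficient of $\|x-y\|_{\tau;a,b}$ is at most $1/2$, one absorbs and gets $\|x-y\|_{\infty;a,b} \le 2|x_a - y_a|$ on each interval of length $\Delta$. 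The constant $A$ is exactly this choice of $1/\Delta^\tau$ up to universal factors, made explicit in \eqref{cons.A}.

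Finally I would iterate over the $n = \lceil 2T/\Delta\rceil$ subintervals partitioning $[t_0-T, t_0+T]$: each step multiplies the sup-norm bound by $2$, giving $\sup_t |x_t - y_t| \le 2^n |x_0 - y_0| \le 2^{\kappa T A^{1/\tau}}|x_0 - y_0|$, which is \eqref{eq:estxy}. The main obstacle I anticipate is the bookkeeping around which compact set $K$ to use in the hypothesis on $\nabla W$: one must first invoke the a priori estimate \eqref{eq:est.supphi} to know that both $x$ and $y$ (hence all the interpolants $y_s + \theta(x_s-y_s)$) stay in a fixed ball whose radius depends on $\|W\|_{\tau,\lambda}$, $|x_0|$, $|y_0|$ and $T$; only then does $\|\nabla W\|_{\tau,\lambda;[t_0-T,t_0+T]\times K}$ become a finite constant usable in the estimates. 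Verifying that $\mu$ is continuous and that the sewing lemma genuinely applies (i.e. the second-order bound has exponent strictly above $1$, which holds since $\tau + \lambda\tau > 1$) is routine given $\tau(1+\lambda) > 1$.
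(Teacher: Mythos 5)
Your proposal is correct and follows essentially the same route as the paper's proof: you linearize the difference of the two nonlinear Young integrals via the gradient of $W$ evaluated along the interpolant $\theta x_s+(1-\theta)y_s$, bound the second-order increments using the mixed H\"older norm of $\nabla W$ on the compact set containing both solutions, apply the sewing lemma, absorb the $\|x-y\|_{\tau}$ contribution on intervals of length $\Delta\sim A^{-1/\tau}$, and iterate to get the factor $2^{n}$. The only (harmless) imprecision is that in your increment estimate the quantity raised to the power $\lambda$ should be the increment of the interpolated point rather than of $x-y$, and your displayed sewing bound omits the explicit $\|x-y\|_{\tau;a,b}$ term, but your subsequent absorption step handles exactly this term as in the paper.
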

\begin{proof} We put $R=\max\{\|x\|_{\infty;[t_0-T,t_0+T]},\|y\|_{\infty;[t_0-T,t_0+T]}\}$, $K=\{x\in\RR^d:|x|\le R\}$ and $\|\nabla W\|  =\|\nabla W\|_{\tau, \lambda;  [t_0-T, t_0+T]\times K} $. We also denote $z_{t}=x_{t}-y_{t}$,
$\rho_{\tau}=(\|x\|_{\tau}+\|y\|_{\tau})^{\lambda}$
and $\eta_{t}=\eta x_{t}+(1-\eta)y_{t}$
for each $\eta\in(0,1)$. For every $s,t$ and $x$, we use the notation $W([s,t],x)=W(t,x)-W(s,x)$.

We shall obtain estimate for $z$ in $C([t_0-T,t_0+T])$. Fix $a<b$ in $[t_0-T,t_0+T]$.  We then write
\begin{equation*}
  z_b-z_a=\int_a^bW(\ds,x_s)-\int_a^bW(\ds,y_s)=\JJ_a^b \mu\,,
\end{equation*}
where $\mu$ is the function
\begin{equation*}
	\mu(s,t)= W([s,t],x_s)-W([s,t],y_s)=\int_0^1 \nabla W([s,t],\eta_s)z_s d \eta \,.
\end{equation*}
For every $s\le c\le t$ in $[a,b]$, we can write
\begin{multline*}
	\mu(s,t)-\mu(s,c)-\mu(c,t)
	\\= \int_0^1 \left(\left[\nabla W([c,t],\eta_s)-\nabla W([c,t],\eta_c) \right]z_s+\nabla W([c,t],\eta_c)(z_t-z_c)\right) d \eta\,.
\end{multline*}
We note that $|\eta_{t}-\eta_{s}|^{\lambda}=  |\eta(x_{t}-x_{s})+(1-\eta)(y_{t}-y_{s})|^{\lambda} \le \rho_{\tau}|u-v|^{\tau\lambda}$. It follows that
\begin{align*}
	[\mu]_{\tau(1+\lambda);[a,b]}\le \|\nabla W\|(\rho_ \tau\|z\|_{\infty;a,b}+|b-a|^{\tau(1- \lambda)}\|z\|_{\tau;a,b})\,.
\end{align*}
On the other hand, it is obvious
 that $|\mu(a,b)|\le\|\nabla W\||b-a|^\tau\|z\|_{\infty;a,b}$. Hence, the estimate \eqref{est.sew} implies
\begin{equation*}
	|z_b-z_a|\le  \|\nabla W\||b-a|^\tau\|z\|_{\infty;a,b}+ \kappa\|\nabla W\||b-a|^{\tau+\lambda \tau} (\rho_ \tau\|z\|_{\infty;a,b}+|b-a|^{\tau(1- \lambda)}\|z\|_{\tau;a,b})\,.
\end{equation*}
In other words,
\begin{equation*}
  \|z\|_{\tau; a,b}
  \le A\|z\|_{\infty;a,b}+A\|z\|_{\tau;a,b}(b-a)^{ \tau}\,,
\end{equation*}
where
\begin{equation}\label{cons.A}
  A=\kappa\|\nabla W\|[1+\rho_{\tau}T^{\lambda \tau}] \,.
\end{equation}
Therefore, using the bound $\|z\|_{\infty;a,b}\le|z_a|+\|z\|_{\tau;a,b}$ one gets
\begin{equation}\label{main-est}
  \|z\|_{\tau; a,b}
  \le A|z_a|+2A\|z\|_{\tau;a,b}(b-a)^{ \tau}\,.
\end{equation}
Now we shall use the above inequality to show our theorem.
Choose $a,b$ such that
\[
|b-a|\le\Delta=\left(\frac{1}{4A}\right)^{\frac{1}{\tau}}\,.
\]
Then inequality \eref{main-est} implies $\|z\|_{\tau,a,b}\le 2A|z_{a}|$
for all $a<b$.  By the definition of the H\"older norm, we see that  if
$|b-a|\le\Delta$,  then
\begin{align*}
\|z\|_{\infty,a,b} &\le  |z_{a}|+\|z\|_{\tau,a,b} (b-a)^\tau \\
 &\le  |z_{a}|+2A |z_{a}|\Delta^{\tau} \\
 &\le  2|z_{a}|.
\end{align*}
Divide the interval $[t_0,t_0+T]$ into $n=[T/\Delta]$+1 subintervals of
length less or equal than $\Delta$. Applying the previous inequality
on the intervals $[t_0,t_0+\Delta],[t_0+\Delta,t_0+2\Delta],\dots,[t_0+(n-1)\Delta,t_0+n\Delta\wedge T],$
recursively, we obtain
\[
\|z\|_{\infty,t_0,t_0+T}\le 2^{n}|z_{t_0}|\,.
\]
We can assume $\De\le T$. Thus
\[
n=[T/\Delta]+1\le \frac{2T}{\De}=2T \left(4A \right)^{\frac{1}{\tau}} \,.
\]
This implies
\[
\|z\|_{\infty,t_0,t_0+T} \le 2^{2^{1+2/\tau} TA^{\frac{1}{\tau}}}|z_{t_0}|\,.
\]
which yields the bound \eqref{eq:estxy} on the interval $[t_0,t_0+T]$.
Estimates on $[t_0-T,t_0]$ are analogous.
\end{proof}
An immediate consequence  of the theorem  is the following uniqueness result.
\begin{corollary}  Under the hypothesis of Theorem \ref{thm:unique} the equation
\eref{eq:ode} has a unique solution.
\end{corollary}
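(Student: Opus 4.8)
The plan is to read off the uniqueness directly from the quantitative dependence-on-initial-data estimate \eqref{eq:estxy}, which is precisely the content of Theorem \ref{thm:unique}. Concretely, I would take two solutions $x$ and $y$ of \eqref{eq:ode} in $C^{\tau}([t_0-T,t_0+T])$ having the \emph{same} initial value $x_{t_0}=y_{t_0}$, and apply Theorem \ref{thm:unique} to this pair. The output is
\[
\sup_{t\in[t_0-T,t_0+T]}|x_t-y_t|\le 2^{\kappa TA^{1/\tau}}\,|x_{t_0}-y_{t_0}|=0 ,
\]
where $A$ is the finite constant from \eqref{cons.A}; hence $x_t=y_t$ for all $t\in[t_0-T,t_0+T]$. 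Since $T>0$ is arbitrary, any two solutions agree wherever both are defined, which is the asserted uniqueness. Combined with the existence statement of Theorem \ref{thm:2e} (valid under the extra condition $\beta+\lambda\le1$ appearing there), this produces a genuinely unique solution.

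The only point needing a word of justification before invoking \eqref{eq:estxy} is the finiteness of the constant $A=\kappa\|\nabla W\|[1+\rho_\tau T^{\lambda\tau}]$ for the particular pair $(x,y)$. This is immediate: both $x$ and $y$ lie in $C^{\tau}$ of the bounded interval $[t_0-T,t_0+T]$, so $R:=\max\{\|x\|_{\infty},\|y\|_{\infty}\}<\infty$ and $\rho_\tau=(\|x\|_\tau+\|y\|_\tau)^\lambda<\infty$, while $\|\nabla W\|_{\tau,\lambda;[t_0-T,t_0+T]\times K}<\infty$ on the compact set $K=\{|x|\le R\}$ by the standing hypothesis on $\nabla W$ in Theorem \ref{thm:unique}. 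With $A$ finite the factor $2^{\kappa TA^{1/\tau}}$ is a finite number multiplying $|x_{t_0}-y_{t_0}|=0$, so the bound collapses to $0$.

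There is essentially no obstacle here: the corollary is a one-line consequence because all the analytic work — the sewing-lemma estimate for $\mu(s,t)=W([s,t],x_s)-W([s,t],y_s)$, the Gr\"onwall-type iteration over subintervals of length $\Delta=(4A)^{-1/\tau}$, and the resulting doubling bound — has already been carried out in the proof of Theorem \ref{thm:unique}. If one prefers a self-contained argument, the same reasoning can be rerun directly with $z=x-y$: since $z_{t_0}=0$, inequality \eqref{main-est} forces $\|z\|_{\tau;a,b}\le 2A\|z\|_{\tau;a,b}(b-a)^\tau$ on every subinterval on which $z$ vanishes at the left endpoint, hence $z\equiv0$ there as soon as $(b-a)^\tau<1/(2A)$, and one then propagates this across $[t_0-T,t_0+T]$ in finitely many steps; but quoting \eqref{eq:estxy} is the cleanest route.
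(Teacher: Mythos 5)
Your proposal is correct and matches the paper's intent exactly: the paper gives no separate argument, stating the corollary as an immediate consequence of the stability estimate \eqref{eq:estxy}, which is precisely what you do by taking $x_{t_0}=y_{t_0}$ and letting the right-hand side collapse to zero. Your remarks on the finiteness of $A$ and the extra hypothesis $\beta+\lambda\le1$ needed for existence via Theorem \ref{thm:2e} are accurate but not needed beyond what the paper already assumes.
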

\subsection{Compositions}

Given a function $G:\RR^2\to\RR^d$, we may define the Riemann-Stieltjes integral $\int_a^b G(\ds,s)$ as the limit of Riemann sums
\begin{equation*}
    \sum_{i}G(t_i,t_{i-1})-G(t_{i-1},t_{i-1})\,.
\end{equation*}
The sewing lemma (Lemma \ref{sew.lem}) gives a sufficient condition so that the aforementioned limit exists, namely $G$ satisfies
\begin{equation*}
    |G(s,s)-G(s,t)-G(t,s)+G(t,t)|\lesssim |t-s|^{1+\ep}
\end{equation*}
for some $\ep>0$. In such case, Lemma \ref{lemma.arbi} also allows one to choose Riemann sums with right-end points. In other words, the Riemann sums with right-end points
\begin{equation*}
    \sum_{i}G(t_i,t_{i})-G(t_{i-1},t_{i})
\end{equation*}
also converges to the Riemann-Stieltjes integral $\int_a^b G(ds,s)$. In what follows, all integrals are understood as Riemann-Stieltjes integration, except for a few occasions, which we will indicate. The following result can be regarded as It\^o formula or chain rule for compositions of functions in the context of nonlinear Young integration.
\begin{theorem}
Let   $F $ be a function in $C^{(\tau_F,\lambda_F)}_\loc(\RR\times\RR^d)$ (i.e. $F$ satisfies the condition \ref{cond.wprime} with $\tau_F$ and $\la_F$), $g$ and $x$ be H\"older continuous functions with exponents $\tau_g$ and $\tau$ respectively. We suppose that $\tau_F+\lambda_F \tau>1$ and $\tau_g+\tau_F>1$. The following integration by parts formula holds
\begin{equation}\label{eqn.intbypart}
    \int_0^T g(t)d F(t,x_t)=\int_0^T g(t)F(dt,x_t)+\int_0^T g(t)F(t,dx_t)\,.
\end{equation}

In particular, suppose that $F$ belongs to $C^{\tau_F}_\loc(\RR;C^{1+\lambda_F}_\loc(\RR^d))$, $x$ is of the form $x_t=\int_a^t W(\ds, \phi_s)$, where
$W$ satisfy the condition \ref{cond.wprime} with $\tau$ and $\la$, $\phi$ satisfy \ref{cond.phi} with $\ga$, $\tau+\lambda \gamma>1$ and $\tau \lambda_F+\tau>1$.
Then \eqref{eqn.intbypart} becomes
\begin{equation}
\int_0^T g(t) dF(t, x_{t}) =\int_0^T g(t) F(\dt,  x_{t})
+\int_0^T g(t) (\nabla F)(t,x_{t})W(\dt,\phi_{t})\,. \label{mainito}
\end{equation}
An important consequence of \eqref{mainito} is when $g$ is  a constant function
\begin{equation}
F(b,x_{b})-F(a,x_{a})=\int_{a}^{b} F(\dt,  x_{t})
+\int_{a}^{b}(\nabla F)(t,x_{t})W(\dt,\phi_{t})\,. \label{ito}
\end{equation}
\end{theorem}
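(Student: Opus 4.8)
The plan is to prove the integration by parts formula \eqref{eqn.intbypart} first, and then deduce \eqref{mainito} and \eqref{ito} as special cases. For \eqref{eqn.intbypart}, I would work at the level of the sewing lemma (Lemma \ref{sew.lem}) together with Lemma \ref{lemma.arbi}. The natural germ to consider is
\[
\mu(s,t) = g(s)\bigl(F(t,x_s) - F(s,x_s)\bigr) + g(s)\bigl(F(s,x_t) - F(s,x_s)\bigr),
\]
which is, term by term, a Riemann-sum increment for the right-hand side of \eqref{eqn.intbypart}: the first piece builds $\int g\,F(dt,x_t)$ (this is the nonlinear Young integral of Section \ref{sec.pathint}, legitimate because $\tau_F + \lambda_F\tau>1$), and the second builds $\int g\,F(t,dx_t)$ (an ordinary Young/Riemann--Stieltjes integral of $g(s)F(s,\cdot)$ against $x$, legitimate because $x\in C^\tau$ and $F(s,\cdot)$ composed with $x$ is again $C^\tau$, with $\tau_g+\tau>1$ automatic from $\tau_g+\tau_F>1$ and $\tau_F\le 1$ — or more carefully one uses $\tau+\tau>1$ is not needed; the relevant exponent pairing is $\tau_g$ against $\tau$, which I would check holds). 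On the other side, the Riemann-sum germ for $\int g\,dF(t,x_t)$ is $\nu(s,t)=g(s)\bigl(F(t,x_t)-F(s,x_s)\bigr)$. The key algebraic identity is that $\nu(s,t)-\mu(s,t) = g(s)\bigl(F(t,x_t)-F(t,x_s)-F(s,x_t)+F(s,x_s)\bigr)$, and by condition \ref{cond.wprime} applied to $F$ on the compact set $K=\overline{\{x_r:r\in[0,T]\}}$ this is bounded by $\lesssim |t-s|^{\tau_F}|x_t-x_s|^{\lambda_F}\lesssim |t-s|^{\tau_F+\lambda_F\tau}$, an exponent $>1$. Hence by Lemma \ref{lemma.arbi} (or rather its germ-level analogue, applying the sewing lemma to $\mu$, to $\nu$, and to their difference) the two indefinite integrals coincide, giving \eqref{eqn.intbypart}.

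Next I would verify that each germ indeed satisfies the hypothesis of the sewing lemma with exponent $>1$, so that all three integrals in \eqref{eqn.intbypart} are well-defined. For $\mu$: the nonlinear-Young part has the required bound by exactly the computation preceding Proposition \ref{prop.riemann.w} (with $F$ in place of $W$, $x$ in place of $\varphi$, noting $g$ is bounded and $\tau_F+\lambda_F\tau>1$); the Young-Stieltjes part has the required bound by the classical Young estimate since $s\mapsto g(s)F(s,x_s)$ is $\min(\tau_g,\tau_F,\tau)$-Hölder on the compact range and is integrated against $x\in C^\tau$, with the sum of exponents exceeding $1$ (this is where $\tau_g+\tau_F>1$ is used, after absorbing the composition $s\mapsto F(s,x_s)$ which is $\min(\tau_F,\tau_F\lambda_F\ldots)$; I will need to track exponents carefully here). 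For $\nu$: one shows $|\nu(s,t)-\nu(s,c)-\nu(c,t)|\lesssim |t-s|^{1+\varepsilon}$ directly, which follows once we know the other two integrals exist, or alternatively one simply \emph{defines} $\int g\,dF(t,x_t)$ via \eqref{eqn.intbypart} — but it is cleaner to observe that $t\mapsto F(t,x_t)$ is itself Hölder of some positive order (combine \ref{cond.wprime} for $F$ and $x\in C^\tau$) so $\int g\,d(F(\cdot,x_\cdot))$ is a genuine Young integral when $\tau_g$ plus that order exceeds $1$; checking this inequality is a bookkeeping step.

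For the ``in particular'' part, I would substitute the explicit structure $x_t = \int_a^t W(ds,\phi_s)$ and compute $\int_0^T g(t)\,F(t,dx_t)$. The point is that $x$ is now differentiable in the Young sense with ``derivative'' $W(dt,\phi_t)$, and for $F(t,\cdot)\in C^{1+\lambda_F}_{\loc}$ the ordinary Young integral $\int g(t)F(t,dx_t)$ equals $\int g(t)(\nabla F)(t,x_t)\,dx_t$ by the chain rule for Young integrals; then substituting $dx_t = W(dt,\phi_t)$ and using the composition/associativity property of the nonlinear Young integral (the integrand $t\mapsto g(t)(\nabla F)(t,x_t)$ is Hölder continuous of positive order, so $\int g(t)(\nabla F)(t,x_t)\,W(dt,\phi_t)$ makes sense and equals $\int g(t)(\nabla F)(t,x_t)\,dx_t$) yields \eqref{mainito}. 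This last substitution step — that $\int h(t)\,d\bigl(\int_a^\cdot W(ds,\phi_s)\bigr)_t = \int h(t)\,W(dt,\phi_t)$ for Hölder $h$ — is itself a small sewing-lemma argument comparing the germs $h(s)(x_t-x_s)$ and $h(s)(W(t,\phi_s)-W(s,\phi_s))$, which differ by $O(|t-s|^{\tau+\lambda\gamma})$ thanks to \eqref{est.W.c}. Finally \eqref{ito} is just \eqref{mainito} with $g\equiv 1$, using that $\int_a^b F(dt,x_t)\big|_{g\equiv1}$ telescopes together with the fundamental theorem of (Young) calculus to give $F(b,x_b)-F(a,x_a)$ from the $\int g\,dF(t,x_t)$ side.

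I expect the main obstacle to be the exponent bookkeeping: showing that all the pairings of Hölder exponents that arise (for the ordinary Young integral $\int g(t)F(t,dx_t)$, for the composition $t\mapsto F(t,x_t)$, and for the final substitution of $W(dt,\phi_t)$) do in fact exceed $1$ under the stated hypotheses $\tau_F+\lambda_F\tau>1$, $\tau_g+\tau_F>1$, and (for the second formula) $\tau+\lambda\gamma>1$, $\tau\lambda_F+\tau>1$. In particular the Hölder order of $t\mapsto F(t,x_t)$ is $\min(\tau_F, \lambda_F\tau)$-ish, and one must confirm $\tau_g + \min(\tau_F,\lambda_F\tau)>1$; this needs $\tau_g+\lambda_F\tau>1$ which is not literally one of the hypotheses and may require using $\tau_g+\tau_F>1$ together with $\tau\ge$ something, or may indicate that the cleanest route is to avoid forming $\int g\,d(F(\cdot,x_\cdot))$ as an a priori object and instead take \eqref{eqn.intbypart} as the \emph{definition} of its left-hand side, proving only that the right-hand side is well-defined. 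The other potentially delicate point is justifying the chain rule $\int g(t)F(t,dx_t)=\int g(t)(\nabla F)(t,x_t)\,dx_t$ for a \emph{time-dependent} $F$ with only $C^{1+\lambda_F}$ spatial regularity — this is standard for Young integrals but the time-dependence requires the joint regularity encoded in $C^{\tau_F}_\loc(\RR;C^{1+\lambda_F}_\loc(\RR^d))$, and I would isolate it as a lemma.
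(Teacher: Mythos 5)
Your proposal is correct and follows essentially the same route as the paper: both prove \eqref{eqn.intbypart} by comparing sewing-lemma germs for the three integrals via Lemma \ref{lemma.arbi} (your all-left-endpoint germs versus the paper's mixed-endpoint choice is immaterial), and both obtain \eqref{mainito} by a mean-value expansion of $F$ in space combined with the estimate \eqref{est.W.c} to replace the increment $x_b-x_a$ by $W(b,\phi_a)-W(a,\phi_a)$, using exactly the exponents $\tau\lambda_F+\tau>1$ and $\tau+\lambda\gamma>1$. The exponent issue you flag (the appearance of $\tau_g+\lambda_F\tau$ in the germ bound for $\int_0^T g(t)F(t,dx_t)$) is present in the paper's own estimate as well, so it is not a defect particular to your route.
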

\begin{proof}
    We choose a compact set $K$ such that $K$ contains $\{x_t,0\le t\le T\}$ and denote $\|F\|=\|F\|_{\tau_F,\lambda_F;[0,T]\times K}$. We put
    \begin{align*}
        \mu(a,b)&=g(b)F(b,x_b)-g(b)F(a,x_b)\,,
        \\\nu(a,b)&=g(a)F(a,x_b)-g(a)F(a,x_a)\,,
        \\\vartheta(a,b)&=g(a)F(b,x_b)-g(a)F(a,x_a) \,.
    \end{align*}
    For every $a<c<b$, we have
    \begin{align*}
        &|\mu(a,b)-\mu(a,c)-\mu(c,a)|
        \\&=|-g(b)F(a,x_b)-g(c)F(c,x_c)+g(c)F(a,x_c)+g(b)F(c,x_b)|
        \\&\le|g(c)||F(a,x_b)-F(c,x_c)+F(a,x_c)+F(c,x_b)|
        \\&\quad+|g(c)-g(b)||F(c,x_c)-F(a,x_c)|
        \\&\le\|g\|_\infty\|F\|\|x\|_{\tau}^{\lambda_F} |b-a|^{\tau_F+\lambda_F \tau}+\|g\|_{\tau_g}\|F\||b-a|^{\tau_g+\tau_F}\,,
    \end{align*}
    and
    \begin{align*}
        &|\nu(a,b)-\nu(a,c)-\nu(c,a)|
        \\&=|g(a)F(a,x_b)-g(a)F(a,x_c)-g(c)F(c,x_b)+g(c)F(c,x_c)|
        \\&\le|g(c)||F(a,x_b)-F(a,x_c)-F(c,x_b)+F(c,x_c)|
        \\&\quad+|g(a)-g(c)||F(a,x_b)-F(a,x_c)|
        \\&\lesssim\|g\|_{\infty}\|F\||b-a|^{\tau_F+\lambda_F \tau}+\|g\|_{\tau_g}\|F\|\|x\|_{\tau}^{\lambda_F}|b-a|^{\tau_g+\lambda_F \tau}\,.
    \end{align*}
    Hence, from Lemmas \ref{sew.lem} and \ref{lemma.arbi}, $\JJ_0^T\mu=\int_0^T g(t)F(dt,x_t)$ and $\JJ_0^T \nu =g(t)F(t,dx_t)$. On the other hand,
    \begin{align*}
        &|\vartheta(a,b)-\mu(a,b)-\nu(a,b)|
        \\&=|[g(a)-g(b)][F(b,x_b)-F(a,x_b)]|
        \le\|g\|_{\tau_g}\|F\||b-a|^{\tau_g+\tau_F}\,.
    \end{align*}
    This together with Lemma \ref{lemma.arbi} implies \eqref{eqn.intbypart}.

    To prove \eqref{mainito}, it suffices to show
    \begin{equation}\label{eqn.FdxW}
        \int_0^T g(t)F(t,dx_t)=\int_0^Tg(t)(\nabla F)(t,x_t)W(dt,\phi_t)\,.
    \end{equation}
    We put
    \begin{equation*}
        \tilde \nu(a,b)=g(a)\nabla F(a,x_a)[W(b,\phi_a)-W(a,\phi_a)]\,.
    \end{equation*}
    Then we write
    \begin{align*}
        \nu(a,b)&=g(a)\int_0^1\nabla F(a,\eta x_a+(1- \eta)x_b)d \eta(x_a-x_b)
        \\&=g(a)\int_0^1\nabla F(a,\eta x_a+(1- \eta)x_b)d \eta\int_a^b W(ds,\phi_s)\,.
    \end{align*}
    Using the estimate \eqref{est.W.c}, we obtain
    \begin{align*}
        &|\nu(a,b)-\tilde \nu(a,b)|
        \\&\le| g(a)\int_0^1[\nabla F(a,\eta x_a+(1- \eta)x_b)-\nabla F(a,x_a)]d \eta\int_a^b W(ds,\phi_s)|
        \\&\quad+|g(a)\nabla F(a,x_a)[\int_a^bW(ds,\phi_s)-W(b,\phi_b)+W(a,\phi_a)]|
        \\&\lesssim |b-a|^{\lambda_F \tau+\tau}+|b-a|^{\tau+\lambda \gamma}\,.
    \end{align*}
    Identity \eqref{eqn.FdxW} follows from Lemma \ref{lemma.arbi} and the previous estimate.
\end{proof}

\subsection{Regularity of flow}\label{subsec.reg.flow}
In the rest of the current section, we assume the hypothesis of Theorem \ref{thm:unique}. This assumption guarantees that $\varphi(t,x)$, the solution to
\begin{equation*}
\varphi(t,x)=x+\int_0^tW(\ds,\varphi(s,x))
\end{equation*}
is unique. Moreover, by the result in Subsection \ref{subsec.2.1}, for fixed $t$, $\varphi(t,\cdot)$ is an automorphism on $\RR^d$, its inverse is $\varphi(t,\cdot)^{-1}=\varphi({-t},\cdot)$. Hence, the family $\{\varphi(t,\cdot):t\in\RR\}$ forms a flow of homeomorphism, i.e. it satisfies the following properties:
\begin{itemize}
  \item  $\varphi({t+s},\cdot)=\varphi(t,\varphi(s,\cdot))$ holds for all $s,t$,
  \item $\varphi(0,\cdot)$ is the identity map,
  \item the map $\varphi(t,\cdot):\RR^d\to\RR^d$ is a homeomorphism for all $t$.
\end{itemize}
Moreover, one can show that $\varphi(t,\cdot)$ is indeed a diffeomorphism.
\begin{theorem} \label{t.phi-flow}
Assume the hypothesis of Theorem \ref{thm:unique}. For any $t$ in $\RR$, the map $\varphi(t,\cdot)$ is a diffeomorphism. The following conclusions hold
\begin{enumerate}[label=(\roman*)]
  \item\label{flow.grad}   The gradient of $\varphi_t$ at $x$, denoted by $\nabla \varphi(t,x)=\{\partial_j \varphi^i(t,x) \}_{i,j}$ satisfies the equation
  \begin{equation}\label{eqn.flow.grad}
    \partial_i \varphi^\bullet(t,x)=\delta_{\bullet i} +\int_0^t \partial_k W^\bullet(\ds,\varphi(s,x))\partial_k \varphi^i(s,x)
 \end{equation} where $\delta_{ij}$ is the Kronecker symbol. Equation \eqref{eqn.flow.grad} can be written in short
 \begin{equation*}
    \nabla \varphi(t,x)=I_d+\int_0^t \nabla W(ds,\varphi(s,x))\nabla \varphi(s,x)\,.
 \end{equation*}
 \item \label{flow.inverse} For every $t$ and $x$, the matrix $\nabla \varphi(t,x)$ is invertible and its inverse $M(t,x)=[\nabla \varphi(t,x)]^{-1} $ satisfies the equation
 \begin{equation}\label{eqn.flow.inverse}
    M(t,x)^{j\bullet}=\delta_{j\bullet}-\int_0^tM(s,x)^{jk}\partial_\bullet W^k(ds,\varphi(s,x))
 \end{equation}
 or in short
 \begin{equation*}
    M(t,x)=I_d-\int_0^t M(s,x)\nabla W(ds,\varphi(s,x)\,.
 \end{equation*}
 \item\label{flow.jointholder} $\varphi$ is jointly H\"older continuous of order $(\tau,1)$. That is
 \begin{equation}\label{phi.jointholder}
    |\varphi(s,x)-\varphi(s,y)-\varphi(t,x)+\varphi(t,y)|\lesssim|t-s|^\tau|x-y|
 \end{equation}
\item\label{flow.det}   Let $J(t,x)$ denote the determinant of $\nabla \varphi(t,x)$. Then $J$ satisfies the following scalar linear equation \begin{equation}\label{eqn.det}
    J(t,x)=1+\int_0^t J(s,x)\Div(W(\ds,\varphi(s,x)))\,.
  \end{equation}
\item\label{flow.lagrange}  The flow $\varphi(t,x)$ is a Lagrangian flow,
  namely there exists a constant $L$ such that \begin{equation}\label{eqn.Lflow}
    \LL^d(\varphi(t,\cdot)^{-1}(A))\le L\LL^d (A) \quad \mbox{for every Borel set } A\subseteq \RR^d
  \end{equation} where $\LL^d$ is the Lebesgue measure on $\RR^d$.
\end{enumerate}
\end{theorem}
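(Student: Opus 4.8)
The plan is to prove the five items in the order (iii), (i), (ii), (iv), (v), treating \eqref{eq:ode} and each of its matrix- or scalar-valued linearizations as a nonlinear Young equation and re-running, for these linear equations, the a priori estimates behind Theorems \ref{thm:2e} and \ref{thm:unique}. For item (iii), start from $\varphi(t,x)-\varphi(s,x)=\int_s^t W(\d r,\varphi(r,x))$, so that
\[
\varphi(s,x)-\varphi(s,y)-\varphi(t,x)+\varphi(t,y)=\int_s^t\big[W(\d r,\varphi(r,y))-W(\d r,\varphi(r,x))\big];
\]
by the spatial mean value theorem the bracket equals $\int_0^1\nabla W(\d r,\eta^\theta_r)\psi_r\,d\theta$ with $\psi_r=\varphi(r,y)-\varphi(r,x)$ and $\eta^\theta_r=\varphi(r,x)+\theta\psi_r$. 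Apply the sewing lemma to $\mu(a,b)=\int_0^1[\nabla W(b,\eta^\theta_a)-\nabla W(a,\eta^\theta_a)]\psi_a\,d\theta$ exactly as for the function $\mu$ in the proof of Theorem \ref{thm:unique}: the first-order term is bounded by $\|\nabla W\|\,|t-s|^\tau\|\psi\|_\infty$ and the defect $[\mu]_{\tau(1+\lambda);[s,t]}$ by a constant times $\|\nabla W\|(\|\psi\|_\infty+\|\psi\|_\tau)$, using that $\eta^\theta$ is H\"older-$\tau$. Since \eqref{eq:estxy} and the reasoning around \eqref{main-est} give $\|\psi\|_\infty+\|\psi\|_{\tau;[t_0-T,t_0+T]}\lesssim|x-y|$ and $\tau(1+\lambda)>1$, this yields \eqref{phi.jointholder}.

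For item (i), fix $j$ and set $z^h_t=h^{-1}(\varphi(t,x+he_j)-\varphi(t,x))$. Subtracting the two copies of \eqref{eq:ode} and linearizing in the spatial variable gives
\[
z^h_t=e_j+\int_0^t A^h(\d s)\,z^h_s,\qquad A^h(\d s)=\int_0^1\nabla W\big(\d s,\varphi(s,x)+\theta(\varphi(s,x+he_j)-\varphi(s,x))\big)\,d\theta .
\]
By \eqref{eq:estxy}, $\|z^h\|_\infty$ is bounded uniformly in small $h$, and the same $\mu$-estimate used for Theorem \ref{thm:unique} (now with $z^h$ in the role of $z$ and $A^h$ as medium, whose defining points have H\"older norms bounded uniformly in $h$ by \eqref{eq:est.supphi}) bounds $\|z^h\|_{\tau;[t_0-T,t_0+T]}$ uniformly in $h$; hence $\{z^h\}$ is precompact in $C^{\tau'}$ for $\tau'<\tau$. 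Any limit point $z$ satisfies the linear equation $z_t=e_j+\int_0^t\nabla W(\d s,\varphi(s,x))\,z_s$ — one passes to the limit using a stability estimate for the nonlinear Young integral in its medium and integrand (a combination of Propositions \ref{prop.int.w12} and \ref{prop.int.phi12}, since $\varphi(s,x+he_j)\to\varphi(s,x)$ uniformly at rate $|h|$). That linear equation has a unique solution (being linear, apply the Gronwall argument around \eqref{main-est} to the difference of two solutions), so the whole family $z^h$ converges; thus $\partial_j\varphi(t,x)$ exists and equals $z_t$, which is \eqref{eqn.flow.grad}. Continuity of $x\mapsto\nabla\varphi(t,x)$ follows from stability of this linear equation with respect to the medium $\nabla W(\cdot,\varphi(\cdot,x))$, which depends continuously on $x$ uniformly in $s$ by \eqref{eq:estxy}; hence $\varphi(t,\cdot)\in C^1$.

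For items (ii), (iv), (v): the linear equation \eqref{eqn.flow.inverse} has a unique solution $M$ as above, and to identify $M=[\nabla\varphi]^{-1}$ one estimates directly the increments of $t\mapsto M_t\nabla\varphi_t$. By the sewing estimate \eqref{est.sew} applied to the equations for $M$ and $\nabla\varphi$, $M_t-M_s=-M_s[\nabla W(t,\varphi_s)-\nabla W(s,\varphi_s)]+O(|t-s|^{1+\ep})$ and $\nabla\varphi_t-\nabla\varphi_s=[\nabla W(t,\varphi_s)-\nabla W(s,\varphi_s)]\nabla\varphi_s+O(|t-s|^{1+\ep})$, so the first-order terms in $M_t\nabla\varphi_t-M_s\nabla\varphi_s$ cancel, leaving an increment of order $|t-s|^{\min(1+\ep,\,2\tau)}$ with exponent $>1$; hence $M_t\nabla\varphi_t\equiv M_0\nabla\varphi_0=I_d$, which proves invertibility, and \eqref{eqn.flow.inverse} is then the defining equation of $M$. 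For item (iv), Taylor expand $\det\nabla\varphi_t$ around $\nabla\varphi_s$ and use Jacobi's formula $\partial_{A_{ij}}\det A=\det A\,(A^{-1})_{ji}$ together with the expansion of $\nabla\varphi_t-\nabla\varphi_s$ to obtain $J_t-J_s=J_s\,\tr\!\big(\nabla W(t,\varphi_s)-\nabla W(s,\varphi_s)\big)+O(|t-s|^{1+\ep'})=J_s\big[\Div W(t,\varphi_s)-\Div W(s,\varphi_s)\big]+O(|t-s|^{1+\ep'})$, which by the sewing lemma means $J_t-J_s=\int_s^t J_r\,\Div W(\d r,\varphi_r)$, i.e. \eqref{eqn.det}; alternatively \eqref{eqn.det} follows from the chain rule \eqref{ito} applied to $F=\det$, a polynomial hence in $C^{1+\lambda_F}_\loc$. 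Finally, for item (v), solving \eqref{eqn.det} gives $J(t,x)=\exp\big(\int_0^t\Div W(\d s,\varphi(s,x))\big)$, whose exponent is bounded on compact time intervals by \eqref{est.W.c} (medium $\Div W$, integrand $1$), so $0<c\le|J(t,x)|\le C$; then the change of variables formula gives $\LL^d(\varphi(t,\cdot)^{-1}(A))=\int_A|\det\nabla(\varphi(t,\cdot)^{-1})(y)|\,\d y=\int_A|J(t,\varphi(t,\cdot)^{-1}(y))|^{-1}\,\d y\le L\,\LL^d(A)$ with $L=1/c$, i.e. \eqref{eqn.Lflow}. The diffeomorphism claim follows from (i), (ii) and the inverse function theorem, $\varphi(t,\cdot)$ being already a homeomorphism.

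The main obstacle is item (i): one must show that the difference quotients $z^h$ actually converge and identify the limit with the solution of the matrix linear Young equation \eqref{eqn.flow.grad}. This requires a stability estimate for the nonlinear Young integral under perturbations of the driving field, together with a priori H\"older bounds on $z^h$ that are uniform in the increment $h$ — essentially an extension of the estimates of Theorems \ref{thm:2e} and \ref{thm:unique} to matrix-valued linear equations whose medium $\nabla W$ is only H\"older in the space variable. Once item (i) is in place, items (ii), (iv), (v) reduce to the Leibniz rule, Jacobi's formula, and the change of variables formula, each made rigorous by the sewing lemma or the chain rule \eqref{ito}.
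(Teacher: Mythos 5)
Your proposal is correct and takes essentially the same route as the paper's proof: difference quotients with uniform H\"older bounds, Arzel\`a--Ascoli compactness and uniqueness of the linear Young equation for \eqref{eqn.flow.grad}; the stability estimate \eqref{eq:estxy} for the joint H\"older bound \eqref{phi.jointholder}; the chain rule \eqref{ito} together with Jacobi's formula for \eqref{eqn.det}; and the explicit exponential formula for $J$ combined with the change-of-variables (area) formula for \eqref{eqn.Lflow}. The only genuine addition is your explicit verification of item \ref{flow.inverse} by showing that the first-order increments of $t\mapsto M(t,x)\nabla\varphi(t,x)$ cancel, forcing this product to be constantly $I_d$ — a detail the paper states but does not prove.
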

\begin{proof}
     Let $e$ be a unit vector in $\RR^d$. For each $h$ in $\RR$, we denote $$\eta^h_t=\frac1h(\varphi(t,x+he)-\varphi(t,x)).$$  To prove \ref{flow.grad}, it is sufficient to show that for every sequence $h_n$ converging  to 0, there is a subsequence $h_{n_k}$ such that $\eta^{h_{n_k}}$ converges to the solution of the following equation
     \begin{equation}\label{eqn.eta}
       \eta_t=e+\int_0^t \nabla W(\ds,\varphi(s,x))\eta_s.
     \end{equation}
     We remark that the equation \eqref{eqn.eta} is linear and the existence and uniqueness of solution in $C^\tau(\RR)$ follows from our method discussed in Subsection \ref{subsec.2.1}. From Theorem \ref{thm:unique} we see that \[\|\eta^h\|_{\tau;K}\le \kappa_K\] uniformly in $h$ for every compact interval $K$ in $\RR$. Hence, by  the
      Arzel\`a-Ascoli theorem, there is a subsequence, still denoted by $h_n$ such that $\eta^{h_n}$ converges to $\eta$ in $C^{\tau'}(K)$ for any arbitrary $\tau'<\tau$. On the other hand, we notice that $\eta^h$ satisfies
     \begin{equation}\label{eqb.etah}
       \eta^h_t=e+\int_0^1d\tau\int_0^t \nabla W(\ds,\tau \varphi(s,x+he)-(1-\tau)\varphi(s,x))\eta^h_s.
     \end{equation}
     Passing through the limit $h_n\to0$, we see that $\eta$ satisfies the equation \eqref{eqn.eta} and then \ref{flow.grad} follows. Assertion \ref{flow.jointholder} is a consequence of the estimate \eqref{eq:estxy} in Theorem \ref{thm:unique}. In fact,
     \begin{align*}
        |\varphi(s,x)-\varphi(s,y)-\varphi(t,x)+\varphi(t,y)|&\le\|\varphi(\cdot,x)-\varphi(\cdot,y)\|_{\tau;[s,t]}|t-s|^\tau
        \\&\lesssim|t-s|^\tau|x-y|\,.
     \end{align*}
     Assertion \ref{flow.det}  follows from the It\^o formula \eqref{ito}
     applied to   $J(t,x)=\det(\nabla \varphi(t,x))$ and the Jacobi's formula \[\d\det(M)=\det(M) tr(M^{-1}dM).\]
     To prove \ref{flow.lagrange}, we notice that the equation \eqref{eqn.det} can be solved explicitly thanks to \eqref{ito}
     \begin{equation}
       J(t,x)=\exp \int_0^t\Div (W(dt,\varphi(t,x))).
     \end{equation}  Therefore, from \eqref{est.W.c}, we obtain
     \begin{equation*}
       |J(t,x)^{-1}|\le e^{\kappa |t|^\tau}.
      \end{equation*} Together with the area formula
      \begin{equation*}
        \LL^d(\varphi(t,\cdot)^{-1}(A))=\int_{\varphi(-t,A)}dx=\int_{A}|\det (\nabla \varphi)(-t,x)|dx
      \end{equation*}
       this estimate implies \eqref{eqn.Lflow}.
\end{proof}

\subsection{Transport differential equation}\label{subsec.transport}

As an application of the above It\^o formula \eref{mainito} and flow property (Theorem \ref{t.phi-flow}),
we study the following
  transport differential equation in  {\it H\"older media}.  Specifically,
  let $W:\RR_+\times \RR^d\rightarrow \RR^d$ satisfy the conditions in Theorem \ref{thm:unique}. Consider the following first order
  partial differential equations (transport equation in H\"older media $W$)
\begin{equation}
\frac{\partial }{\partial t} u (t, x)+\left(\frac{\partial }{\partial t}W(t,x)\right) \cdot \nabla u(t, x) =0.\label{eq:transport}
\end{equation}
Here $\nabla$ is the gradient operator  (with respect to
spatial variables). Since $W$ is only H\"older continuous in time, the equation \eqref{eq:transport} is only formal. We can however define solutions in integral
 form. More precisely, a continuous function $u:\mathbb{R}_+\times\mathbb{R}^{d}\to\mathbb{R}$ is called a solution
to   (\ref{eq:transport}) with the initial condition $u(0, x)=h(x)$
 if it is differentiable with respect to $x\in \RR^d$ and the following equation holds.
\begin{equation}\label{eqn.transport}
u(t, x)=h(x)-\int_0^t   \nabla u(s, x)W(\ds,x) \quad \forall \ t\ge 0\,, \ x\in \RR^d\,.
\end{equation}
\begin{theorem} \label{thm.transport.existence}
    Assuming $W$ satisfies the conditions in Theorem \ref{thm:unique}. Let $h$ be a function in $C^{1+\lambda_0}_\loc(\RR^d)$ where $\lambda_0$ satisfies    $(1+\lambda_0)\tau>1$. Let $\varphi(t,x)$ be the unique solution to
\begin{equation*}
\varphi(t, x)=x+\int_0^t W(\ds, \varphi(s, x))\,, \ \forall t\ge 0 \,.
\end{equation*}
Let $\psi(t,x)$ be the inverse of $\varphi$ as a function $x\in \RR^d$ to $ \RR^d$.
Namely, $\varphi(t, \psi(t, x))=x$ for all $t\ge 0\,, \ x\in \RR^d$.
Then the function $u$ defined by
\begin{equation*}
u(t,x)=h(\psi(t, x))
\end{equation*}
is a solution to the above transport equation.
\end{theorem}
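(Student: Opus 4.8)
The plan is to verify the integral identity \eqref{eqn.transport} by the comparison Lemma \ref{lemma.arbi}. Fix $x\in\RR^d$ and, on an interval $[a,b]$, set
\[
\nu(s,t)=u(t,x)-u(s,x),\qquad \mu(s,t)=\nabla u(s,x)\bigl(W(t,x)-W(s,x)\bigr).
\]
Since $\nu(s,t)-\nu(s,c)-\nu(c,t)=0$, one has $\JJ_a^t\nu=u(t,x)-u(a,x)$, while $\mu$ has the form $g(s)(\xi(t)-\xi(s))$ with $g=\nabla u(\cdot,x)$, $\xi=W(\cdot,x)$, so $\JJ_a^t\mu=\int_a^t\nabla u(r,x)W(\dr,x)$. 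Thus it suffices to show
\[
\bigl|\nu(s,t)+\mu(s,t)\bigr|\le C_x\,|t-s|^{1+\ep}\qquad(a\le s\le t\le b)
\]
for some $\ep>0$, locally uniformly in $x$: Lemma \ref{lemma.arbi} then gives $u(t,x)-u(a,x)=-\int_a^t\nabla u(r,x)W(\dr,x)$, and taking $a=0$ with $u(0,\cdot)=h(\psi(0,\cdot))=h$ yields \eqref{eqn.transport}. Since $u=h\circ\psi$ is continuous and differentiable in $x$ (by Theorem \ref{t.phi-flow}, $\psi(t,\cdot)$ is a $C^1$ diffeomorphism, and $h\in C^{1+\lambda_0}_{\loc}$), this is precisely the statement that $u$ solves the transport equation.

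First I would collect the needed regularity. By Theorem \ref{t.phi-flow}, $\psi(t,\cdot)=\varphi(t,\cdot)^{-1}$ is a $C^1$ diffeomorphism with $\nabla_x\psi(t,x)=\bigl[\nabla_y\varphi(t,\psi(t,x))\bigr]^{-1}=M(t,\psi(t,x))$; hence $\nabla u(t,x)=\nabla h(\psi(t,x))\,\nabla_x\psi(t,x)$, and the characteristic identity $u(t,\varphi(t,x))=h(x)$ holds. From \eqref{est.W.c}, Proposition \ref{prop.frac} and \eqref{eqn.flow.grad}, the maps $t\mapsto\varphi(t,\cdot)$ and $t\mapsto\nabla\varphi(t,\cdot)$ are $\tau$-H\"older in time on compacts; feeding the mixed-H\"older assumption on $\nabla W$ from Theorem \ref{thm:unique} into \eqref{eqn.flow.grad} and \eqref{eqn.flow.inverse} and running the Gronwall-type argument of that theorem, one gets that $\nabla\varphi(t,\cdot)$ and $M(t,\cdot)$ are locally $\lambda$-H\"older in the spatial variable. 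Consequently $s\mapsto\psi(s,x)$ is $\tau$-H\"older, $s\mapsto\nabla_x\psi(s,x)=M(s,\psi(s,x))$ is $\tau\lambda$-H\"older, and $s\mapsto\nabla u(s,x)$ is $\tau\min\{\lambda,\lambda_0\}$-H\"older in time; as $\tau(1+\lambda)>1$ and $(1+\lambda_0)\tau>1$, the sum of this exponent with $\tau$ exceeds $1$, so $\mu$ satisfies the hypothesis of Lemma \ref{sew.lem} and $\int_a^t\nabla u(r,x)W(\dr,x)$ is well-defined.

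The heart of the matter is the germ estimate above. A Taylor expansion of $h\in C^{1+\lambda_0}_{\loc}$ reduces it, up to the error $O\bigl(|\psi(t,x)-\psi(s,x)|^{1+\lambda_0}\bigr)=O\bigl(|t-s|^{\tau(1+\lambda_0)}\bigr)$, to
\[
\bigl|\psi(t,x)-\psi(s,x)+\nabla_x\psi(s,x)\bigl(W(t,x)-W(s,x)\bigr)\bigr|\lesssim|t-s|^{1+\ep}.
\]
I would obtain this by differentiating the identity $\varphi(s,\psi(s,x))=x=\varphi(t,\psi(t,x))$: writing $w_s=\psi(s,x)$, $w_t=\psi(t,x)$,
\[
0=\varphi(t,w_t)-\varphi(s,w_s)=\bigl[\varphi(t,w_t)-\varphi(t,w_s)\bigr]+\bigl[\varphi(t,w_s)-\varphi(s,w_s)\bigr].
\]
The second bracket equals $W(t,x)-W(s,x)+O\bigl(|t-s|^{\tau(1+\lambda)}\bigr)$ by \eqref{est.W.c} applied to the solution $\varphi(\cdot,w_s)$, using $\varphi(s,w_s)=x$. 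For the first bracket, the a priori bound $|w_t-w_s|\lesssim|t-s|^{\tau}$ (valid because $\psi(t,\cdot)$ is locally Lipschitz and $\varphi$ is $\tau$-H\"older in time) together with the local $\lambda$-H\"older continuity of $\nabla\varphi(t,\cdot)$ give $\varphi(t,w_t)-\varphi(t,w_s)=\nabla_y\varphi(t,w_s)(w_t-w_s)+O\bigl(|t-s|^{\tau(1+\lambda)}\bigr)$. Solving for $w_t-w_s$, using $\bigl[\nabla_y\varphi(t,w_s)\bigr]^{-1}=\nabla_x\psi(t,x)$, replacing $\nabla_x\psi(t,x)$ by $\nabla_x\psi(s,x)$ at the cost of a $\tau\lambda$-H\"older error, and collecting remainders produces the displayed bound with $\ep=\tau(1+\min\{\lambda,\lambda_0\})-1>0$.

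I expect the main obstacle to be exactly this germ estimate: one has to extract from the hypotheses of Theorem \ref{thm:unique} the spatial H\"older regularity of $\nabla\varphi(t,\cdot)$ and $M(t,\cdot)$ so that inverting the finitely-smooth diffeomorphism $\varphi(t,\cdot)$ yields a genuine power $|t-s|^{1+\ep}$ and not merely $o(|t-s|)$, and then organize the several remainder terms so that every exponent appearing is $>1$ --- which is where both standing assumptions $\tau(1+\lambda)>1$ and $(1+\lambda_0)\tau>1$ are used. A tempting shortcut is to apply the It\^o formula \eqref{ito} to the time-independent function $h$ along the characteristic $\{\varphi(s,\psi(t,x))\}_{0\le s\le t}$, which gives $u(t,x)-h(x)=-\int_0^t\nabla h(\varphi(s,\psi(t,x)))W(\ds,\varphi(s,\psi(t,x)))$; but the integrand there is evaluated along a moving point, not at the fixed $x$, so Lemma \ref{lemma.arbi} does not directly identify it with the pointwise integral in \eqref{eqn.transport}, and one is driven back to an estimate of the same kind.
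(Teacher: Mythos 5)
Your route is genuinely different from the paper's. The paper never forms a germ estimate for $u$ itself: it applies the composition/It\^o formulas \eqref{mainito}--\eqref{ito} to the constant path $\rho(r)=\varphi(r,\psi(r,x))\equiv x$ to obtain \eqref{touse}, then applies \eqref{ito} to $h(\psi(t,x))$, uses $(\nabla\psi(t,x))^{-1}=(\nabla\varphi)(t,\psi(t,x))$, and finally identifies $\varphi(\dr,\psi(r,x))=W(\dr,\varphi(r,\psi(r,x)))=W(\dr,x)$; all the analytic work is delegated to Theorem \ref{t.phi-flow} and the composition theorem. You instead verify \eqref{eqn.transport} directly through Lemma \ref{lemma.arbi} by proving the local expansion $|u(t,x)-u(s,x)+\nabla u(s,x)(W(t,x)-W(s,x))|\lesssim|t-s|^{1+\varepsilon}$, which you reduce, via Taylor expansion of $h$ and the identity $\varphi(t,\psi(t,x))=\varphi(s,\psi(s,x))=x$ together with \eqref{est.W.c}, to inverting the spatial linearization of $\varphi(t,\cdot)$. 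This is a legitimate and in some ways more transparent plan, and it correctly isolates where $\tau(1+\lambda)>1$ and $\tau(1+\lambda_0)>1$ enter.

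The gap is the regularity input you assert in passing: that ``running the Gronwall-type argument'' on \eqref{eqn.flow.grad} and \eqref{eqn.flow.inverse} gives that $\nabla\varphi(t,\cdot)$ and $M(t,\cdot)$ are locally $\lambda$-H\"older in space. The hypotheses of Theorem \ref{thm:unique} control only the time increments and the mixed time--space increments of $\nabla W$ (not $\nabla W(t,\cdot)$ at fixed $t$), so when you compare the linear equations for $\nabla\varphi(\cdot,x)$ and $\nabla\varphi(\cdot,y)$ the forcing term is itself a Young integral whose germ defect must be interpolated exactly as in Proposition \ref{prop.int.phi12}: the sewing hypothesis forces a time exponent $\tau+\theta\tau\lambda>1$ and in exchange degrades the spatial modulus to $|x-y|^{\lambda(1-\theta)}$. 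Your step (c) then needs $\tau\bigl(1+\lambda(1-\theta)\bigr)>1$ as well, and the two requirements together force $\tau(2+\lambda)>2$, which does not follow from $\tau(1+\lambda)>1$ (e.g.\ $\tau=0.6$, $\lambda=0.7$). So as written, the claimed exponent $\varepsilon=\tau(1+\min\{\lambda,\lambda_0\})-1$ is not reached, and the same loss infects your time-H\"older bound for $\nabla u(\cdot,x)$ needed to make $\int\nabla u(r,x)W(\dr,x)$ well defined. To close your argument you would need a genuinely sharper spatial regularity statement for $\nabla\varphi(t,\cdot)$ than the stability argument of Theorem \ref{thm:unique} delivers (it is fair to note that the paper's own appeal to \eqref{mainito} with $F=\varphi$ quietly relies on comparable regularity of $\nabla\varphi(t,\cdot)$, but your proof makes this the load-bearing step and must supply it explicitly).
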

\begin{proof} From Theorem \ref{t.phi-flow} such $\psi(t,x)$
exists and both $\varphi(t,x)$ and $\psi(t,x)$ are differentiable with respect to $x$.
Differentiate $\varphi(t, \psi(t,x))=x$ with respect to $x$ and    we see that
\[
(\nabla \varphi)(t, \psi(t,x))\nabla \psi(t, x)=I\,,
\]
or
\[
(\nabla \psi(t, x))^{-1}=(\nabla \varphi)(t, \psi(t,x))\,.
\]
Let $\rho(r)=\varphi(r, \psi(r,x))$, $0\le r< \infty$. Thanks to Theorem \ref{t.phi-flow}\ref{flow.jointholder}, It\^o formula
\eref{mainito}  is applicable. More precisely, for any $C^\tau$-function $g(r)$, we have
\begin{equation*}
\int_0^t g(r) d\rho(r)= \int_0^t g(r) \varphi(\dr, \psi(r, x))+\int_0^t g(r) (\nabla \varphi)(r, \psi(r, x)) \psi(\dr, x)\,.
\end{equation*}
Since $\rho(r)=x$, we have $d\rho(r)=0$.  Thus
\begin{equation}
\int_0^t g(r) (\nabla \varphi)(r, \psi(r, x)) \psi(\dr, x)= -\int_0^t g(r) \varphi(\dr, \psi(r, x))\,.\label{touse}
\end{equation}
Now the It\^o formula \eref{ito}  applied to $h(\psi(t,x))$ yields
\begin{align*}
u(t,x)&= h(\psi(t,x))
=h(x)+\int_0^t (\nabla h)(\psi(r,x))\psi(\dr, x)\\
&=h(x)+\int_0^t  \nabla\left[  h (\psi(r,x))
\right] \left(\nabla \psi(r, x)\right)^{-1} \psi(\dr, x)\\
&=h(x)+\int_0^t  \nabla u(r,x)   \left(\nabla \psi(r, x)\right)^{-1} \psi(\dr, x)\\
&=h(x)+\int_0^t  \nabla u(r,x)   \left(\nabla \varphi\right) (r, \psi(r,x)) \psi(\dr, x)\,.
\end{align*}
Using the  equation \eref{touse} for $g(r)=\nabla u(r,x) $,  we have
\begin{align*}
u(t,x)
&=h(x)-\int_0^t  \nabla u(r,x) \varphi(\dr, \psi(r, x))\\
&=h(x)-\int_0^t  \nabla u(r,x) W(\dr, \varphi(r, \psi(r, x)))\\
&=h(x)-\int_0^t  \nabla u(r,x) W(\dr, x)     \,.
\end{align*}
This completes the proof of the theorem.
\end{proof}
We also have the following uniqueness result.
\begin{theorem}Assuming $W$ satisfies the conditions in Theorem \ref{thm:unique}.
    Let $\lambda_0$ be in $(0,1]$ such that $(\lambda_0+1)\tau>1$.  Equation \eqref{eqn.transport} has unique solution in the class $C^{(\tau,\lambda_0)}_\loc(\RR\times\RR^d)$. More precisely, suppose $u$ belongs to $C^{(\tau,\lambda_0)}_\loc(\RR\times\RR^d)$ and satisfies \eqref{eqn.transport}, then $u$ is uniquely defined by the relation $u(t,x)=h(\psi(t,x))$, where $\varphi$ and $\psi$ are the functions defined in Theorem \ref{thm.transport.existence}.
\end{theorem}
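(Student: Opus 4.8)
The plan is to prove uniqueness by the method of characteristics, realized through the nonlinear It\^o/chain rule formula of the Compositions subsection. Let $u\in C^{(\tau,\lambda_0)}_\loc(\RR\times\RR^d)$ be an arbitrary solution of \eqref{eqn.transport}. Putting $t=0$ in \eqref{eqn.transport} gives $u(0,\cdot)=h$; moreover, since $u$ is differentiable in $x$ and the right-hand side of \eqref{eqn.transport} is meaningful, $\nabla u(\cdot,x)$ must be H\"older in time with some exponent $\tau'$ satisfying $\tau'+\tau>1$. Fix $z\in\RR^d$ and let $\varphi$ and $\psi$ be the flow and its spatial inverse from Theorem \ref{thm.transport.existence}. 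I would apply the chain rule \eqref{mainito} with the constant function $g\equiv1$ (equivalently \eqref{ito}) to $F=u$ along the path $x_t=\phi_t=\varphi(t,z)$: this is legitimate because $\varphi(t,z)=z+\int_0^tW(\ds,\varphi(s,z))$ with $W$ as in Theorem \ref{thm:unique}, $\varphi(\cdot,z)\in C^\tau$, and $(1+\lambda_0)\tau>1$. It yields, for every $t$,
\begin{equation*}
u(t,\varphi(t,z))-h(z)=\int_0^t u(\dr,\varphi(r,z))+\int_0^t(\nabla u)(r,\varphi(r,z))\,W(\dr,\varphi(r,z))\,.
\end{equation*}

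The heart of the proof is that the two integrals on the right cancel. I would argue this through the sewing description of each integral combined with Lemma \ref{lemma.arbi}. By Proposition \ref{prop.riemann.w}, the nonlinear Young integral $\int_0^t u(\dr,\varphi(r,z))$ is the sewing of the germ $\mu_1(s,t)=u(t,\varphi(s,z))-u(s,\varphi(s,z))$, while $-\int_0^t(\nabla u)(r,\varphi(r,z))W(\dr,\varphi(r,z))$ is the sewing of $\mu_2(s,t)=-(\nabla u)(s,\varphi(s,z))[W(t,\varphi(s,z))-W(s,\varphi(s,z))]$, i.e.\ (minus) the germ $\tilde\nu$ used in the derivation of \eqref{eqn.FdxW}. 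Evaluating \eqref{eqn.transport} at the \emph{frozen} spatial point $y=\varphi(s,z)$ gives $\mu_1(s,t)=u(t,y)-u(s,y)=-\int_s^t\nabla u(r,y)\,W(\dr,y)$, and the elementary first-order estimate for the (linear) Young integral---the sewing bound \eqref{est.sew} applied to the germ $\nabla u(s,y)[W(t,y)-W(s,y)]$, with constant controlled by $\|\nabla u(\cdot,y)\|_{\tau'}\|W(\cdot,y)\|_\tau$ uniformly over the compact range of $\varphi(\cdot,z)$---shows $\int_s^t\nabla u(r,y)W(\dr,y)=\nabla u(s,y)[W(t,y)-W(s,y)]+O(|t-s|^{\tau'+\tau})$. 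Hence $|\mu_1(s,t)-\mu_2(s,t)|\lesssim|t-s|^{1+\ep}$ with $\ep=\tau'+\tau-1>0$, so Lemma \ref{lemma.arbi} identifies the two sewings and the cancellation follows. Substituting back gives $u(t,\varphi(t,z))=h(z)$ for every $t$ and $z$; choosing $z=\psi(t,x)$ and using $\varphi(t,\psi(t,x))=x$ (Theorem \ref{t.phi-flow}) yields $u(t,x)=h(\psi(t,x))$, the asserted formula. Since $h\circ\psi$ is itself a solution by Theorem \ref{thm.transport.existence}, this establishes uniqueness.

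The step I expect to be the main obstacle is the regularity bookkeeping needed to make the chain rule \eqref{mainito} genuinely applicable to $u$: one must extract from \eqref{eqn.transport} both the time-H\"older exponent $\tau'$ of $\nabla u$ (so that $\tau'+\tau>1$) and the local spatial-gradient regularity of $u$ that allows $\int_0^tu(r,\d\varphi(r,z))$ to be identified with $\int_0^t\nabla u(r,\varphi(r,z))W(\dr,\varphi(r,z))$, exactly as in the passage to \eqref{eqn.FdxW}, and then check that the exponent conditions $(1+\lambda_0)\tau>1$ and $\tau'+\tau>1$ are simultaneously in force. Here the restriction $\lambda_0\le1$ is convenient, since it forces $\tau>1/2$ and hence $2\tau>1$, a bound needed when estimating germs built from the joint H\"older regularity of $\varphi$ in Theorem \ref{t.phi-flow}\ref{flow.jointholder}. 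A secondary subtlety is that $\mu_1$ and $\mu_2$ are germs of \emph{nonlinear}, not linear, Young integrals, so one has to verify that the defect $\mu_1-\mu_2$ carries a power of $|t-s|$ strictly larger than $1$ with constants uniform in the base point $s$; this is precisely where the extracted regularity of $\nabla u$ together with $(1+\lambda_0)\tau>1$ is used. Everything else---the flow identity $\varphi(t,\psi(t,x))=x$, the diffeomorphism property, and $u(0,\cdot)=h$---is already available from Theorems \ref{thm.transport.existence} and \ref{t.phi-flow}.
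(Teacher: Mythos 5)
Your proof is correct and follows essentially the same route as the paper: apply the chain rule \eqref{ito} to $u(t,\varphi(t,x))$, then cancel the two resulting nonlinear Young integrals by comparing the germs $\mu_1(a,b)=u(b,\varphi_a)-u(a,\varphi_a)$ and $\mu_2(a,b)=\nabla u(a,\varphi_a)[W(b,\varphi_a)-W(a,\varphi_a)]$ through the equation evaluated at the frozen point $\varphi_a$, the sewing estimate \eqref{est.sew}, and Lemma \ref{lemma.arbi}, which yields $u(t,\varphi(t,x))=h(x)$ and hence $u=h\circ\psi$. The only difference is cosmetic: the paper records the defect bound as $|b-a|^{2\tau}$ (using $2\tau>1$, since $\lambda\le 1$ forces $\tau>1/2$), i.e.\ it tacitly grants $\nabla u(\cdot,y)$ the time-H\"older regularity you call $\tau'$, so the ``regularity bookkeeping'' you flag as the main obstacle is handled there by the same implicit assumption rather than by an extra argument.
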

\begin{proof}
    Let $u$ be a solution to \eqref{eqn.transport}.
    Applying It\^o formula \eqref{ito} for the function $u(t,\varphi(t,x))$ we have
    \begin{align*}
        u(t,\varphi(t,x))-h(x)=\int_0^t u(ds,\varphi(s,x))+\int_0^t\nabla u(s,\varphi(s,x))W(ds,\varphi(s,x))\,.
    \end{align*}
    It suffices to show the right hand side vanishes. In other words the following relation between the two nonlinear Young integrals holds
    \begin{equation}\label{eqn.pr.uniq}
        \int_0^t u(ds,\varphi(s,x))=-\int_0^t\nabla u(s,\varphi(s,x))W(ds,\varphi(s,x))\,.
    \end{equation}
    For clarity, we will omit $x$ in the notations. We put
    \begin{align*}
        \mu_1(a,b)&=u(b,\varphi_a)-u(a,\varphi_a)\,,
        \\ \mu_2(a,b)&=\nabla u(a,\varphi_a)[W(b,\varphi_a)-W(a,\varphi_a)]\,.
    \end{align*}
    Since $u$ satisfies the equation \eqref{eqn.transport}, we can write
    \begin{align*}
        \mu_1(a,b)=-\int_a^b \nabla u(s,\varphi_a)W(ds,\varphi_a)\,.
    \end{align*}
    Thus
    \begin{multline*}
        \mu_1(a,b)+\mu_2(a,b)
        \\=-\int_a^b \nabla u(s,\varphi_a)W(ds,\varphi_a)+\nabla u(a,\varphi_a)[W(b,\varphi_a)-W(a,\varphi_a)]\,.
    \end{multline*}
    The estimate \eqref{est.sew} (or \eqref{est.W.c}) implies
    \begin{equation*}
        |\mu_1(a,b)+\mu_2(a,b)|\lesssim |b-a|^{2 \tau}\,.
    \end{equation*}
    Since $2 \tau>1$, Lemma \ref{lemma.arbi} yields $\JJ_0^t \mu_1=-\JJ_0^t \mu_2$. This completes the proof after observing that the aforementioned identity is exactly the same as \eqref{eqn.pr.uniq}.
\end{proof}
\begin{remark}
    In the context of ordinary differential equation of the type
    \[\frac{dX}{dt}(t,x)=b(t,X(t,x))\,, \]
    with non-regular vector field $b$, existence and uniqueness and stability of regular Lagrangian flows were proved by R.J. DiPerna and P.-L. Lions (\cite{diperna-lions}) for Sobolev vector fields with bounded divergence. This result has been extended by L. Ambrosio (\cite{ambrosio}) to BV coefficients with bounded divergence.  In \cite{crippa}, it is shown that under slightly relaxed assumptions many of the ODE results of DiPerna-Lions theory can be recovered, from a priory estimates, similar to \eqref{eqn.Lflow}. The current paper proposes another extension of this theory, where the vector field is distribution (rough) in time (derivative of a H\"older continuous function) and smooth in space. It is also
interesting to extend the results presented here for vector fields which are rougher in time
(see e.g. \cite{hunualart09} for the linear case) or which are both rough in time and   in space.
\end{remark}

\setcounter{equation}{0}
\section{Feynman-Kac formula - A pathwise approach}\label{sec.feykac}
\def\Cgr{C^{0,1+\alpha}_\beta([0,T]\times\RR^d)}
In this section we shall study the stochastic parabolic equation
  with H\"older continuous noise in a H\"older random media (see
equation \eref{eqn.Ldw} below).  A feature of this problem is that
for the noise we
 don't assume any H\"older continuity in time
 variable.  To make up for lack of regularity in time, we assume some regularity on spatial variables. In this case, the method presented in this section works for each sample path of the noise.

{\replacecolorred  Throughout the current section, $T$ is a fixed positive time.} To describe  the noise,   we introduce the following space.
Let $\beta$ be a fixed non-negative number.
We say that  $f$ is in $\Cgr$ if it belongs to $C([0,T],C^{1+\alpha}_\loc(\RR^d))$ and
satisfies the following   condition
\begin{equation}\label{growth.daf}
    [\nabla f]_{\beta,\alpha} :=\sup_{\substack {t\in[0,T];\\ x,y\in\RR^d;x\neq y}}\frac{|\nabla
    f(t,x)-\nabla f(t,y) |} {|x-y|^\alpha (1+|x|^{\beta}
    +|y|^{\beta}) }<\infty\,.
\end{equation}
We notice that the condition \eqref{growth.daf} implies the growth   conditions on $\nabla f$ and $f$. More precisely, one has
\begin{equation}\label{growth.df}
    [\nabla f]_{\alpha+\beta,\infty}:= \sup_{t\in[0,T],x\in\RR^d}\frac{|\nabla f(t,x)|}{1+|x|^{\alpha+\beta}} <\infty\,,
\end{equation}
and
\begin{equation}\label{growth.f}
 [f]_{\alpha+\beta+1,\infty} := \sup_{t\in[0,T],x\in\RR^d}\frac{|f(t,x)|}{1+|x|^{\alpha+\beta+1}} <\infty   \,.
\end{equation}

It is easy to see that $\|f\|_{C^{0,1+\alpha}_\beta}:=[f]_{\alpha+\beta+1,\infty}+[\nabla f]_{\alpha+\beta,\infty}+[\nabla f]_{\beta,\alpha}
$ forms a norm on $\Cgr$. In the rest of this section, we denote
\begin{equation*}
    C^{0,1+\alpha^-}_{\beta}=\bigcap_{0<\alpha'<\alpha}C^{0,1+\alpha'}_{\beta}([0,T]\times\RR^d)\,.
\end{equation*}

Similar to the classical H\"older spaces, the space of smooth functions  is not dense  in $\Cgr$. However, we can still approximate a function in $\Cgr$ by smooth functions with a little trade off in spatial regularity. More precisely, let $\eta$ be function in $C^\infty_c(\RR^{d+1})$ supported in $(-1,1)^{d+1}$ and $\iint \eta(t,x)  dtdx=1$. For $\epsilon>0$, we put $\eta_{\epsilon}(t,x)=\epsilon^{-d-1}\eta(\epsilon^{-1}(t,x))$. Let $f$ be in $\Cgr$, we define $f_{\epsilon}(t,x) =(f * \eta_{\epsilon})(t,x)$.
It is clear that $f_{\epsilon}$ belongs to $C_c^\infty(\RR^{d+1})$. In addition, we have the following result.
\begin{lemma}\label{lem.fepf}
    For every $\alpha'<\alpha$, $[\nabla f_{\epsilon}-\nabla f ]_{\beta,\infty}$ and $[\nabla f_{\epsilon}-\nabla f ]_{\beta,\alpha'}$ converge to 0 as $\epsilon$ goes to 0.
\end{lemma}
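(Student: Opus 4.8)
The plan is to prove the mollification estimate in Lemma \ref{lem.fepf} by splitting the difference $\nabla f_\epsilon - \nabla f$ into a translation-averaging term and exploiting the uniform regularity bound \eqref{growth.daf}. First I would write, for fixed $t$ and $x$,
\[
\nabla f_\epsilon(t,x)-\nabla f(t,x)=\iint \eta_\epsilon(s,z)\bigl(\nabla f(t-s,x-z)-\nabla f(t,x)\bigr)\,\mathrm{d}s\mathrm{d}z,
\]
using $\iint\eta_\epsilon=1$. Since $\eta_\epsilon$ is supported in $(-\epsilon,\epsilon)^{d+1}$, the inner increment is over $|s|\le\epsilon$, $|z|\le\epsilon$. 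To estimate it I would insert the intermediate point $\nabla f(t,x-z)$ and use: (i) the spatial H\"older bound, $|\nabla f(t,x-z)-\nabla f(t,x)|\le [\nabla f]_{\beta,\alpha}|z|^\alpha(1+|x|^\beta+|x-z|^\beta)\lesssim \epsilon^\alpha(1+|x|^\beta)$ for $|z|\le\epsilon\le1$; and (ii) the temporal continuity, $|\nabla f(t-s,x-z)-\nabla f(t,x-z)|\to0$ uniformly. The temporal part is where one must be slightly careful: $f\in C([0,T],C^{1+\alpha}_\loc)$ gives continuity in $t$ only locally uniformly in $x$, so to get the growth-weighted bound I would use the equicontinuity coming from the uniform spatial modulus — that is, $\nabla f(\cdot,x)$ has values in a fixed bounded subset of $C^\alpha$ on balls, hence by Arzel\`a--Ascoli the map $t\mapsto\nabla f(t,\cdot)$ is continuous into $C^{\alpha'}$ on each ball for $\alpha'<\alpha$, which is exactly what produces the loss of spatial regularity in the statement.

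For the $[\,\cdot\,]_{\beta,\infty}$ convergence this already suffices: dividing by $1+|x|^\beta$ and taking the supremum over $x$, the $z$-contribution is $O(\epsilon^\alpha)$ uniformly, and the $s$-contribution tends to $0$ by uniform continuity in $t$ of $\nabla f(t,\cdot)$ measured in the weighted sup-norm. Hence $[\nabla f_\epsilon-\nabla f]_{\beta,\infty}\to0$.

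For the second claim, the H\"older seminorm $[\nabla f_\epsilon-\nabla f]_{\beta,\alpha'}$, I would again write $\nabla f_\epsilon-\nabla f$ as a translation average of $\nabla f(t-s,\cdot-z)-\nabla f(t,\cdot)$, and estimate the second-order increment
\[
g_{s,z}(x,y):=\bigl(\nabla f(t-s,x-z)-\nabla f(t-s,y-z)\bigr)-\bigl(\nabla f(t,x)-\nabla f(t,y)\bigr).
\]
Here I would interpolate between two bounds: on one hand $|g_{s,z}(x,y)|\lesssim [\nabla f]_{\beta,\alpha}|x-y|^\alpha(1+|x|^\beta+|y|^\beta)$ just from \eqref{growth.daf} applied to both terms; on the other hand $|g_{s,z}(x,y)|\lesssim [\nabla f_\epsilon-\nabla f]_{\beta,\infty}(1+|x|^\beta+|y|^\beta)$, up to the already-controlled first-part bounds, simply by the triangle inequality into two sup-norm increments. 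Interpolating these as $|g_{s,z}(x,y)|\lesssim (|x-y|^\alpha)^{\alpha'/\alpha}\bigl([\nabla f_\epsilon-\nabla f]_{\beta,\infty}\bigr)^{1-\alpha'/\alpha}(1+|x|^\beta+|y|^\beta)$ and dividing by $|x-y|^{\alpha'}(1+|x|^\beta+|y|^\beta)$ gives $[\nabla f_\epsilon-\nabla f]_{\beta,\alpha'}\lesssim \bigl([\nabla f_\epsilon-\nabla f]_{\beta,\infty}\bigr)^{1-\alpha'/\alpha}$, which tends to $0$ by the first part. The main obstacle, as noted, is making the temporal-continuity step deliver a \emph{weighted} (polynomial-growth) uniform modulus rather than merely local uniform continuity; this is handled by combining the uniform spatial H\"older control with the growth bound \eqref{growth.df} and a covering/Arzel\`a--Ascoli argument, accepting the $\alpha\to\alpha'$ loss. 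The remaining computations are the routine mollifier estimates sketched above.
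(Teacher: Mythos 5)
Your overall strategy is the one the paper uses: a mollifier estimate for the weighted sup-seminorm, followed by interpolation against the uniform spatial H\"older bound \eqref{growth.daf} to control the $\alpha'$-seminorm. Your interpolation step, giving $[\nabla f_{\epsilon}-\nabla f]_{\beta,\alpha'}\lesssim [\nabla f]_{\beta,\alpha}^{\alpha'/\alpha}\,[\nabla f_{\epsilon}-\nabla f]_{\beta,\infty}^{1-\alpha'/\alpha}$, is the same device as the paper's bound $\epsilon^{\alpha-\alpha'}|x-y|^{\alpha'}(1+|x|^{\beta}+|y|^{\beta})$, and it is fine once the first claim is in hand; likewise your spatial estimate $|\nabla f(t,x-z)-\nabla f(t,x)|\lesssim [\nabla f]_{\beta,\alpha}\epsilon^{\alpha}(1+|x|^{\beta})$ is exactly the paper's computation.

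The gap is in your treatment of the time-shift term. You split $\nabla f(t-s,x-z)-\nabla f(t,x)$ into a spatial and a temporal increment and claim the temporal one is $o(1)$ in the weighted sup-norm via Arzel\`a--Ascoli on balls, a covering, and the growth bound \eqref{growth.df}. This does not close: continuity of $t\mapsto \nabla f(t,\cdot)$ into $C^{\alpha'}$ is only available on each ball, so a covering of $\{|x|\le R\}$ yields a modulus depending on $R$, while on the tail $\{|x|>R\}$ the only bound at your disposal is \eqref{growth.df}, which after division by the weight $1+|x|^{\beta}$ still leaves a factor of order $|x|^{\alpha}$ --- not small, however $R$ and $\epsilon$ are coupled. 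Indeed, from the literal hypotheses ($f\in C([0,T],C^{1+\alpha}_{\loc}(\RR^d))$ plus \eqref{growth.daf}) no weighted, uniform-in-$x$ temporal modulus follows: in $d=1$, $\beta=0$, take $f(t,x)=(1-\cos(tx))/t$, $f(0,\cdot)=0$, so that $\nabla f(t,x)=\sin(tx)$; this $f$ satisfies \eqref{growth.daf}, yet $\sup_x|\nabla f(t,x)-\nabla f(t',x)|$ does not tend to $0$ as $t'\to t$, and for such $f$ the space--time mollification averages out the oscillation at large $|x|$, so the temporal contribution to $[\nabla f_{\epsilon}-\nabla f]_{\beta,\infty}$ genuinely does not vanish. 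Note that the paper's own proof never meets this term: its first display estimates only the fixed-time spatial increment $|\nabla f(t,z)-\nabla f(t,x)|$ by \eqref{growth.daf}, i.e.\ it implicitly treats continuity in time \emph{in the weighted norms} as part of what membership in $C^{0,1+\alpha}_{\beta}([0,T]\times\RR^d)$ means. So either argue as the paper does (estimate the spatial increment only, with the temporal averaging taken as harmless under that implicit uniformity), or state explicitly the additional assumption --- uniform continuity of $t\mapsto f(t,\cdot)$ in the weighted norms --- that your temporal step actually requires; the covering/Arzel\`a--Ascoli route as written cannot deliver it.
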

\begin{proof}
	We have
	\begin{align*}
	    |\nabla f_{\epsilon}(t,x)-\nabla f(t,x)|&\le \iint |\nabla f(t,z)-\nabla f(t,x)|\eta_{\epsilon}(t,x-z)dtdz\\
	    &\le [\nabla f]_{\beta,\alpha}\iint |x-z|^\alpha(1+|x|^\beta+|z|^\beta) \eta_{\epsilon}(t,x-z)dtdz\\
	    &\lesssim [\nabla f]_{\beta,\alpha}\epsilon^\alpha(1+|x|^\beta)\,,
	\end{align*}
	which implied $[\nabla f_{\epsilon}-\nabla f]_{\beta,\infty}\to0$. This also implies
	\begin{equation*}
	    |\nabla f_{\epsilon}(t,x)-\nabla f_{\epsilon}(t,y) -\nabla f(t,x)+\nabla f(t,y)|
	    \lesssim [\nabla f]_{\beta,\alpha}\epsilon^\alpha(1+|x|^\beta+|y|^\beta)\,.
	\end{equation*}
	On the other hand
	\begin{align*}
	    |\nabla f_{\epsilon}(t,x)-\nabla f_{\epsilon} (t,y)|&\le \iint |\nabla f(t,x-z)-\nabla f(t,y-z)|\eta_{\epsilon}(t,z)dtdz\\
	    &\le [\nabla f]_{\beta,\alpha}|x-y|^\alpha\iint (1+|x-z|^\beta+|y-z|^\beta) \eta_{\epsilon}(t,z)dtdz\\
	    &\lesssim [\nabla f]_{\beta,\alpha}|x-y|^\alpha(1+|x|^\beta+|y|^\beta)\,,
	\end{align*}
	thus
	\begin{equation*}
	    |\nabla f_{\epsilon}(t,x)-\nabla f_{\epsilon}(t,y) -\nabla f(t,x)+\nabla f(t,y)|\lesssim [\nabla f]_{\beta,\alpha}|x-y|^\alpha(1+|x|^\beta+|y|^\beta)\,.
	\end{equation*}
	Interpolating these two bounds, we get
	\begin{multline*}
	    |\nabla f_{\epsilon}(t,x)-\nabla f_{\epsilon}(t,y) -\nabla f(t,x)+\nabla f(t,y)|
	    \\\lesssim [\nabla f]_{\beta,\alpha}\epsilon^{\alpha- \alpha'}|x-y|^{\alpha'} (1+|x|^\beta+|y|^\beta)
	\end{multline*}
	for every $\alpha'<\alpha$. This implies $[\nabla f_{\epsilon}-\nabla f]_{\beta,\alpha'}\to0$.
\end{proof}


In Section \ref{sec.cov-path} we shall give conditions on the covariance of a
 Gaussian field $W(t,x)$ such that it is in $\Cgr$.


Assume that $W$ belongs to the space $\Cgr$, throughout this section, we denote $W_{n}=W*\eta_{1/n}$. We consider the
following parabolic equation with multiplicative noise:
\begin{equation}\label{eqn.Ldw}
  \partial_t u+Lu +u \partial_t W=0\,,
  \qquad{} u(T,x)=u_T(x)\,,
  \end{equation}
where the terminal function $u_T$ is assumed to be measurable with
polynomial growth and
 $L$ is a second order differential operator of the form
\begin{equation}
  L=\frac12\sum_{i,j=1}^da^{ij}(t,x)\partial_{x_i}\partial_{x_j}+\sum_{i=1}^d b^i(t,x)\partial_{x_i}\,.
  \label{eqn:L}
\end{equation}
 Here the novelty   is that we allow the coefficients $a^{ij}(t,x)=a^{ij}(t,x, W)
$ and $b^i(t,x)=b^i(t,x, W)$  depend  on $W$.  Since we are going to
solve the equation and   to establish a Feynman-Kac type formula
pointwise for   $W$,  we omit the explicit dependence of $a^{ij}$ and
$b^i$ on $W$. Notice that with a time reversal $t\rightarrow T-t$,
we can solve the stochastic parabolic equation with initial
condition:
\begin{equation*}
  \partial_t u=Lu -u \partial_t W \,,
  \qquad{} u(0,x)=u_0(x)\,.
  \end{equation*}

The stochastic differential equations with random coefficients have
been studied in a large amount of  papers. For example,  it has been
used in the modeling of the pressure in an oil reservoir with a log
normal random permeability in
  \cite{holdenhu}  (see in particular the references therein).
Recently, there have been great amount of research work on
uncertainty quantization from the numerical computation
community.   Many different types of stochastic partial
differential  equations with random coefficients  have been studied.
Let us only mention the books \cite{grigoriu}, \cite{xin},
and the references therein.     {\replacecolorred  Since the classical Feynman-Kac formula  has already experienced many applications including the so-called
Monte-Carlos
particle approximation (see \cite{delmoralbook, delmoralbook2})},
we expect that the Feynman-Kac formula we obtained
will be a significant addition to this literature
 in particular
in the use of Monte-Carlo method  for the computations.

 We assume the following conditions on the operator $L$ appearing in
 the equation  \eref{eqn.Ldw}.
\begin{enumerate}[label=\textbf{(L\arabic*)}]
  \item\label{cond.L.elliptic}   $L$ is uniformly  elliptic, that is there exist positive numbers $\lambda$ and $\Lambda$
  such that
  \begin{equation*}
    \lambda |\xi|^2\le \sum_{i,j=1}^da^{ij}(t,x)\xi^i \xi^j\le \Lambda |\xi|^2\,,  \quad \forall \ \xi
        \in \RR^d\,.
  \end{equation*}
  \item\label{cond.L.areg}   For every $t$, the coefficients $a(t,\cdot)$ belong to $C^{2+\alpha}_b(\RR^d) $ with bounded derivatives uniformly in $t$.  That is
  \begin{equation*}
    \sup_{t}\|a(t,\cdot)\|_{C^{2+\alpha}_b(\RR^d)}\le \Lambda\,.
  \end{equation*}
  \item\label{cond.Lbreg} $b$ is Lipschitz continuous and has linear growth, that is,  there exists a positive constant $\kappa(b)$ such that
  \begin{eqnarray*}
 &&   \sup_{t}|b^i(t,x)|\le \kappa(b)(1+|x|)\,,\quad \forall \xi\in\RR^d\,,\\
 &&   \sup_{t} |b^i(t, y)-b^i(t, x)|  \le   \kappa(b) |y-x |\,,
  \quad \forall \ \ x, \ y\in \RR^d\,.
  \end{eqnarray*}

\end{enumerate}

Under our conditions on $W$, it turns out that we can define the Feynman-Kac solution to equation
\eqref{eqn.Ldw}, namely,
\[u(r,x)=\EE^B \left[u_T(X_T^{r,x}) \exp\left\{\int_r^TW(ds,X_s^{r,x})\right\} \right]
\,,
\]
where $\{X_s^{r,x},s\ge r\}$ is the diffusion process generated by $L$ starting from $x$ at time $r$. More precisely, for every $r\le t\le T$ and $x\in\RR^d$, let $X_t^{r,x}$ be  the
diffusion process given by the stochastic differential equation
\begin{equation}
  \d X_t^{i,r,x}=\sigma^{ij}(t,X_t^{r,x})\de B_t^j+b^i(t,X_t^{r,x})\dt\,,\quad
  X_r^{r,x}=x\,,
  \label{eqn.diffusion}
\end{equation}
where $\sigma$ is the square root matrix of $a$, namely,
$a^{ij}=\sum_{k=1}^d \sigma^{ik}\si^{jk}$ and $\de B_t$ denotes
the It\^o differential. We will occasionally omit the index ${r,x}$ and write $X_s$ for $X_s^{r,x}$. Under conditions \ref{cond.L.elliptic}-\ref{cond.Lbreg}, it is well-known that the diffusion process $X_t^{r,x}$ exists and has finite moments of all orders.

Equation \eqref{eqn.Ldw} with $W$ replaced by $W_n$ is classic and one can obtain a smooth solution $u_n$ (see for instance \cite{krylov-priola10} where a more
general situation is studied). The main result of the current section is to show that $u_n$ converges to the Feynman-Kac solution $u$ defined above. There are three main tasks to be accomplished:
\begin{enumerate}[label=(\roman*)]
    \item One needs to define the nonlinear integration $\int W(ds,X_s)$. Since here $W$ is only continuous in time, this integration is different from the Young integration considered in Section \ref{sec.pathint}.
    \item One needs to show exponential integrability of $\int W(ds,X_s)$. In particular, the function $u$ defined by Feynman-Kac formula is well-defined.
    \item One needs to show that the exponential functional of this integration is stable under approximations by smooth functions.
\end{enumerate}
The outline of this section is as follows.  In subsection \ref{subsec.intwx}, we
define the nonlinear stochastic integration $\int W(\ds,
X_s)$ and show that it has finite moment of all orders. Exponential integrability is obtained if $W$ has strictly sub-quadratic growth, namely,
if    $\al$ and $\beta$ in
\eref{growth.daf}-\eref{growth.f} satisfy  $\beta+\alpha<1$.  In subsection \ref{fey.fey}, we show that the Feynman-Kac solution is indeed a solution in certain sense.
When $W$ has more regularity in time such as in the case of
Brownian sheets or fractional Brownian sheets, one can use this regularity to
reduce the regularity requirement in space.
This  case is considered in subsection \ref{sec.feykacii} when $W$ satisfies the conditions in Section \ref{sec.pathint}. {\replacecolorred  Along the way, we will make use of some fundamental estimates for exponential moment of various norms of the diffusion $X$ on finite intervals. These estimates are stated and proved in Appendix \ref{app.est.diff}.}


In what follows, $\EE$ denotes the expectation with respect to a Brownian motion $B$, $\|\cdot\|_p$ denotes the $L^p$ norm corresponding to $\EE$.
\subsection{Nonlinear Stochastic integral}\label{subsec.intwx}

Let $X_t^{r,x}$ satisfy \eref{eqn.diffusion} and let $W$ be in $C_{\beta}^{0,
1+\al}([0, T]\times \RR^d)$.   We shall define  a new  nonlinear
integration $\int_r^T W(\ds,X_s^{r,x})$.  If  $W$ is differentiable
in time, the natural definition for this type of integration is
$\int_r^T
\partial_tW(s,X_s^{r,x} )\ds$.  If $W$ satisfies \ref{cond.w}
then we can define it as in Section \ref{sec.pathint}. However, in this section, H\"older continuity of $W$ on $t$ is not required.   On the other
hand,  we shall use the crucial fact that $\left\{X_t^{r,x}\,, t\ge
r\right\} $ is a semimartingale. We first give the following
definition.
\begin{definition}\label{def.intw}
  Let $W_n$ be a sequence of smooth functions with compact support  converging to $W$ in $\Cgr$. We define
  \begin{equation}\label{id.defintW}
    \int_r^T  W(\ds,X_s^{r,x})=\lim_{n}\int_r^T \partial_sW_n(s,X_s^{r,x})\ds
  \end{equation}
  if the above limit exists in probability.
\end{definition}

Of course, at the first glance, there is no reason for the  limit in
\eqref{id.defintW} to converge. We will show, however, that the
above definition is well-defined, thanks to smoothing effect of the
diffusion process $X_s^{r,x}$.
Our first task is to find an appropriate representation for the
integration $\int_r^T \partial_tW_n(s,X_s^{r,x})ds$. To accomplish
this, we consider the partial differential equation
\begin{equation*}
  (\partial_t+L_0)v_n(r,x)=-\partial_tW_n(r,x)\,,\quad
  v(T,x)=-W_n(T,x)\,,
\end{equation*}
where we recall that $L$ is defined  by  \eref{eqn:L}
 and $$ L_0=L-b\nabla=\frac12\sum_{i,j=1}^da^{ij}(t,x)\partial_{x_i}\partial_{x_j}\,.$$
We could have chosen $L_0=L$ but the above choice of $L_0$ will allow  us to show exponential
integrability later.  Since $W_n$ is a smooth function, the
solution $v_n$ is a strong solution which is at least twice
differentiable in space and once differentiable in time. We then
apply It\^o formula to obtain
\begin{equation*}
  \begin{split}
    \d v_n(s,X_s^{r,x})&=(\partial_t+L)v_n(s,X_s^{r,x})\ds+\sigma^{ij}(s,X_s^{r,x})\partial_{x_i}v_n(s,X_s^{r,x})\de B_s^j\\
  &=-\partial_tW_n(s,X_s^{r,x})\ds-b(s,X_s^{r,x})\cdot\nabla v_n(s,X_s^{r,x})\ds\\
  &\quad+ \sigma^{ij}(s,X_s^{r,x})\partial_{x_i}v_n(s,X_s^{r,x})\de B_s^j\,.
  \end{split}
\end{equation*}
Thus, it follows that
\begin{equation}\label{itotrick}
  \begin{split}
    \int_r^T\partial_t&W_n(s,X_s^{r,x})\ds\\
    &=W_n(T,X_T^{r,x})+v_n(r,x)-\int_r^Tb(s,X_s^{r,x})\cdot\nabla v_n(s,X_s^{r,x})\ds\\
  &\quad+\int_r^T \sigma^{ij}(s,X_s^{r,x})\partial_{x_i}v_n(s,X_s^{r,x})\de
  B_s^j\,.
  \end{split}
\end{equation}
Notice that the time derivative in $W_n$ is transferred to the
spatial derivative in $v_n$. The next task is to show that $v_n$ and
its derivative $\nabla v_n$ converge. This is accomplished by some
estimates which are in the same spirit of the well-known Schauder
estimates for parabolic equations in H\"older spaces. More
precisely, we have
\begin{lemma}\label{lem.estv}
  Suppose that $W$ belongs to $C^2_\loc(\RR^{d+1})$ and satisfies
  \begin{equation*}
  	[W]_{\beta_1,\infty}:=\sup_{0\le t\le T} \sup_{x\in \RR^d}
  	\frac{|\nabla W(t,x)|}{1+|x|^{\beta_1}}<\infty
  \end{equation*}
  and
  \begin{equation*}
  	[W]_{\beta_2,\alpha} := \sup_{0\le t\le T} \sup_{x\not=y  }
  \frac{|\nabla W(t,x)-\nabla W(t,y)|}{|x-y|^{\al} (1+|x|^{\beta_2}
  +|y|^{\beta_2})}
   <\infty
  \end{equation*}
  for some non-negative numbers $\beta_1,\beta_2$.
  Let $v$ be a strong solution with polynomial growth to the partial differential equation
  \begin{equation}
    (\partial_t+L_0)v=-\partial_t W\,,\quad v(T,x)=-W(T,x)\,.
    \label{e.v.definition}
  \end{equation}

  Let $t\mapsto\varphi_t$ be the diffusion process generated by $L_0$, that is
  \begin{equation}\label{diff.phi}
    \varphi_t^{r,x}=x+\int_r^t \sigma(s,\varphi_s^{r,x})\delta B_s\,,\quad t\ge r\,.
  \end{equation}
  Then $v$ is uniquely defined and verifies
  \begin{equation}\label{eqn.vw}
    (v+W)(r,x)=-\EE \int_r^T L_0W(s,\varphi_s^{r,x})\ds\,.
  \end{equation}
  In addition, the following estimates hold
  \begin{equation}\label{est.vw}
    \sup_{x\in\RR^d}\frac{|(v+W)(r,x)|}{1+|x|^{\beta_1}}\le c(\beta_1,\lambda,\Lambda)[(T-r)^{1/2}+(T-r)] [\nabla W]_{\beta_1,\infty} \,,
  \end{equation}
  \begin{equation}\label{est.dvw}
    \sup_{x\in\RR^d}\frac{|\nabla(v+W)(r,x)| }{1+|x|^{\beta_2}}
    \le c(\alpha,\beta_2,\lambda,\Lambda)[(T-r)^{\alpha/2}+(T-r)^{\alpha/2+1/2}] [\nabla W]_{\beta_2,\alpha} \,,
  \end{equation}
  and for every $\alpha'\in(0,\alpha)$,
  \begin{multline}\label{est.dvwa}
    \sup_{x\in\RR^d}\frac{|\nabla(v+W)(r,x)-\nabla(v+W)(r,y)| }{(1+|x|^{\beta_2}+|y|^{\beta_2} ) |x-y|^{\alpha'}}\\
    \le c(\alpha',\alpha,\beta_2,\lambda,\Lambda)[(T-r)^{(\alpha- \alpha')/2}+(T-r)^{(\alpha- \alpha')/2+1/2}] [\nabla W]_{\beta_2,\alpha} \,.
  \end{multline}
\end{lemma}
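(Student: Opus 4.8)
The identity \eqref{eqn.vw} will be established first: since $v+W$ solves $(\partial_t+L_0)(v+W)=L_0W$ with terminal condition $(v+W)(T,\cdot)=0$, the Feynman-Kac representation for the (linear, non-degenerate) parabolic problem governed by $L_0$ gives $(v+W)(r,x)=-\EE\int_r^TL_0W(s,\varphi_s^{r,x})\,\ds$. The polynomial-growth hypothesis on $v$, together with the moment bounds on $\varphi^{r,x}$ coming from conditions \ref{cond.L.elliptic}--\ref{cond.L.areg}, justifies applying It\^o's formula to $(v+W)(s,\varphi_s^{r,x})$ and taking expectations; the martingale part vanishes because the integrand has at most polynomial growth and $\varphi$ has moments of all orders. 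Uniqueness of $v$ in the polynomial-growth class follows from the same representation applied to the difference of two solutions. Throughout, I will use $L_0W=\frac12 a^{ij}\partial_{x_ix_j}W = \tfrac12\,\mathrm{tr}(a\,\nabla^2 W)$, but I will \emph{not} want to differentiate $W$ twice in the final estimates, so the key is to rewrite this expression in terms of $\nabla W$ only.

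\textbf{The main device.} The crucial observation is that $\EE\int_r^T L_0W(s,\varphi_s^{r,x})\,\ds$ can be re-expressed using only $\nabla W$ by integrating by parts against the law of $\varphi_s^{r,x}$. More precisely, writing $p_s(x,z)$ for the transition density of $\varphi^{r,\cdot}$ (which exists, is smooth in $z$, and has Gaussian-type bounds with Gaussian-type bounds on $\nabla_z p_s$ of the standard form $|\nabla_z p_s(x,z)|\lesssim (s-r)^{-1/2}\,(s-r)^{-d/2}e^{-c|x-z|^2/(s-r)}$, by \ref{cond.L.elliptic}--\ref{cond.L.areg}), one has
\begin{equation*}
\EE\, L_0W(s,\varphi_s^{r,x})=\tfrac12\yint{a^{ij}(s,z)\,\partial_{z_iz_j}W(s,z)\,p_s(x,z)}=-\tfrac12\yint{\partial_{z_i}\bigl(a^{ij}(s,z)p_s(x,z)\bigr)\,\partial_{z_j}W(s,z)}.
\end{equation*}
Since $\partial_{z_i}\bigl(a^{ij}p_s\bigr)=(\partial_{z_i}a^{ij})p_s+a^{ij}\partial_{z_i}p_s$, and $\|\partial_{z_i}a^{ij}\|_\infty\le\Lambda$ while $\int|\nabla_z p_s(x,z)|\,\dz\lesssim(s-r)^{-1/2}$, the integral is controlled by $[(s-r)^{-1/2}+1]$ times a weighted sup-norm of $\nabla W$. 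Plugging back the growth $|\nabla W(s,z)|\le[\nabla W]_{\beta_1,\infty}(1+|z|^{\beta_1})$ and using $\EE(1+|\varphi_s^{r,x}|^{\beta_1})\lesssim 1+|x|^{\beta_1}$, integrating $ds$ over $[r,T]$ produces exactly the factor $(T-r)^{1/2}+(T-r)$ and yields \eqref{est.vw}.

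\textbf{Derivative estimates.} For \eqref{est.dvw} I differentiate \eqref{eqn.vw} in $x$: $\nabla_x(v+W)(r,x)=-\EE\int_r^T \nabla_x\bigl[L_0W(s,\varphi_s^{r,x})\bigr]\,\ds$, and move the $x$-derivative onto the density, $\partial_{x_k}p_s(x,z)$, again a Gaussian kernel with an extra $(s-r)^{-1/2}$; combined with the integration-by-parts in $z$ above this gives an integrable singularity $(s-r)^{-1/2}\cdot(s-r)^{-1/2}$ only if we are not careful, so instead I split: one $z$-integration by parts plus one $x$-derivative on $p_s$, giving $(s-r)^{-1}$ which is \emph{not} integrable. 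To avoid this I instead keep $L_0W$ \emph{unintegrated} and use the H\"older modulus of $\nabla W$: write $L_0W(s,z)=\tfrac12 a^{ij}(s,z)\partial_{z_iz_j}W(s,z)$ and note $\EE\,\partial_{x_k}[L_0W(s,\varphi_s^{r,x})]$, after one integration by parts in $z$ against $\partial_{x_k}p_s$, becomes $-\tfrac12\int \partial_{z_i}(a^{ij}\partial_{x_k}p_s)\,\partial_{z_j}W\,\dz$; now replace $\partial_{z_j}W(s,z)$ by $\partial_{z_j}W(s,z)-\partial_{z_j}W(s,x)$, legitimate because $\int\partial_{z_i}(a^{ij}\partial_{x_k}p_s)\,\dz=0$ up to boundary terms that vanish, so the bound picks up $[\nabla W]_{\beta_2,\alpha}|z-x|^\alpha(1+|x|^{\beta_2}+|z|^{\beta_2})$; the kernel $\partial_{z_i}(a^{ij}\partial_{x_k}p_s)$ is of order $(s-r)^{-1}$ in $L^1$ but is now paired against $|z-x|^\alpha$, and $\int|z-x|^\alpha|\partial_{z_i}(a^{ij}\partial_{x_k}p_s)|\,\dz\lesssim (s-r)^{\alpha/2-1}+(s-r)^{\alpha/2-1/2}$, which is integrable in $s$ since $\alpha/2-1>-1$; integrating over $[r,T]$ gives $(T-r)^{\alpha/2}+(T-r)^{\alpha/2+1/2}$, i.e. \eqref{est.dvw}. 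For \eqref{est.dvwa} I take the difference $\nabla_x(v+W)(r,x)-\nabla_x(v+W)(r,y)$, split the time integral at $s-r\sim|x-y|^2$: on the near part $[r,r+|x-y|^2]$ I estimate the two terms separately using \eqref{est.dvw}-type bounds to get $(|x-y|^2)^{(\alpha-\alpha')/2}\cdot|x-y|^{\alpha'}$; on the far part I use the Lipschitz-type bound $|\partial_{x_k}p_s(x,z)-\partial_{x_k}p_s(y,z)|\lesssim |x-y|(s-r)^{-1}\cdot(\text{Gaussian})$ together with the $|z-\cdot|^\alpha$ gain as before, producing $\int_{|x-y|^2}^{T-r}|x-y|(s-r)^{\alpha/2-3/2}\,ds\lesssim |x-y|\cdot(|x-y|^2)^{\alpha/2-1/2}=|x-y|^{\alpha}\le|x-y|^{\alpha'}(\mathrm{const})$ when $x,y$ stay in a bounded set, but to match the stated power $(\alpha-\alpha')/2$ in $T-r$ I carry the $(T-r)^{(\alpha-\alpha')/2}$ factor out of the far integral directly; collecting terms gives \eqref{est.dvwa}.

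\textbf{Main obstacle.} The delicate point is precisely the competition between the order-$(s-r)^{-1}$ singularity generated by taking an $x$-derivative of $p_s$ and performing a $z$-integration by parts, and the $(s-r)^{\alpha/2}$ gain obtained by exploiting the H\"older modulus $[\nabla W]_{\beta_2,\alpha}$ of $\nabla W$ — one must organize the integrations by parts so that exactly \emph{one} derivative lands on $W$ (never two, since $W$ need only be $C^{1+\alpha}$ in the limit) and the $|z-x|^\alpha$ factor is available to tame the kernel. Equivalently, one may run the whole argument at the level of Schauder estimates for $(\partial_t+L_0)(v+W)=\tfrac12\,\mathrm{tr}(a\,\nabla^2W)$ by writing the right-hand side in divergence form $\tfrac12\,\mathrm{Div}(a\,\nabla W)-\tfrac12(\mathrm{Div}\,a)\cdot\nabla W$ and invoking interior Schauder/Calder\'on--Zygmund estimates for parabolic equations in divergence form with $C^{2+\alpha}_b$ coefficients, tracking the dependence of constants on $T-r$ by scaling; the probabilistic and PDE routes are equivalent and I would present whichever keeps the weighted growth bookkeeping cleanest, namely the probabilistic one above.
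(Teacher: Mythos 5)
Your proposal is sound in outline and reaches the same estimates, but it travels a genuinely different road from the paper. The paper never touches the transition density: it proves \eqref{eqn.vw} exactly as you do (It\^o plus expectation), but then obtains \eqref{est.vw}--\eqref{est.dvwa} by Malliavin calculus. The $(s-r)^{-1/2}$ gain comes from a Bismut-type formula, $\nabla\EE f(s,\varphi_s^{r,x})=\frac1{s-r}\EE[f(s,\varphi_s^{r,x})\int_r^s(\sigma^{-1}Y)^T\delta B]$, applied after writing $L_0W=\partial_i(\tfrac12 a^{ij}\partial_jW)+c^j\partial_jW$; the gradient estimate \eqref{est.dvw} comes from representing $\partial^2_{ij}W(s,\varphi_s)$ through $D_\tau[\partial_jW(s,\varphi_s)]$ and applying the duality twice, which produces a factor $(s-r)^{-2}$ times a Skorohod-type random variable $G(s;r,x)$ with $\|G\|_p\lesssim (s-r)+(s-r)^{3/2}$; and the key cancellation you implement via $\int\partial_{z_i}(a^{ij}\partial_{x_k}p_s)\,dz=0$ appears there as the mean-zero property of $G$, which lets one replace $\partial_jW(s,\varphi_s)$ by $\partial_jW(s,\varphi_s)-\partial_jW(s,x)$ and cash in $[\nabla W]_{\beta_2,\alpha}$ to get the integrable singularity $(s-r)^{\alpha/2-1}$ — structurally the same device as yours. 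For \eqref{est.dvwa} the paper interpolates two bounds on the four-point difference of $\nabla W$ plus H\"older continuity of $G$ in $x$, which is equivalent to your splitting of the time integral at $s-r\sim|x-y|^2$. What your route buys is conceptual transparency (everything is a Gaussian-kernel computation) and no Malliavin machinery; what it costs is a nontrivial analytic input you currently assert rather than justify: existence of $p_s(x,z)$ together with Gaussian bounds on $\nabla_zp_s$, $\nabla_xp_s$, the mixed derivative $\nabla_x\nabla_zp_s$, and the Lipschitz dependence of $\nabla_xp_s$ on $x$, for an operator whose coefficients are only \emph{measurable in time} (condition \ref{cond.L.areg} imposes $a(t,\cdot)\in C^{2+\alpha}_b$ uniformly in $t$ but no time regularity, so the classical Friedman parametrix does not apply verbatim); such bounds do exist in the literature for time-measurable, spatially H\"older coefficients, but they must be cited or proved, and the weighted versions $\int(1+|z|^{\beta})|z-x|^{\alpha}|\nabla_x\nabla_zp_s(x,z)|\,dz\lesssim(1+|x|^{\beta})(s-r)^{\alpha/2-1}$ spelled out. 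The paper's probabilistic route avoids this entirely, needing only the moment bounds on $Y$, $Y^{-1}$, $DY$ from its appendices. Also, your parenthetical ``when $x,y$ stay in a bounded set'' in the far-field part of \eqref{est.dvwa} is unnecessary and should be removed: for $|x-y|\ge (T-r)^{1/2}$ the claim follows directly from \eqref{est.dvw} via $(T-r)^{\alpha/2}\le(T-r)^{(\alpha-\alpha')/2}|x-y|^{\alpha'}$, so the estimate is global as stated.
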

The proof of this result, even though lengthy, is straight forward and is provided in details in Appendix \ref{sec:schauder_estimates}.
\begin{proposition}\label{prop.v}
  Suppose that $W$ belongs $\Cgr$. Then there exists a $C^1$-generalized solution $v$
  to the parabolic partial differential equation
  \begin{equation}\label{eqn.v}
    (\partial_t+L_0)v=-\partial_tW\,,\quad v(T,x)=-W(T,x)\,,
  \end{equation}
  such that for every $0<\alpha'<\alpha$, the following estimates hold
  \begin{align}
    &[v+W]_{\alpha+\beta+1,\infty}\le c(\alpha,\beta,\lambda,\Lambda) [\nabla W]_{\alpha+\beta,\infty}\label{est.v}\,,\\
    &[\nabla(v+W)]_{\beta,\infty}\le c(\alpha,\beta\lambda,\Lambda)[\nabla W]_{\beta,\alpha}\label{est.dv} \,,\\
    &[\nabla(v+W)]_{\beta,\alpha'}\le c(\alpha,\alpha',\beta,\lambda,\Lambda)[\nabla W]_{\beta,\alpha}\label{est.dav}\,.
  \end{align}
  As a consequence, $v$ belongs to the space $C^{0,1+\alpha^-}_{\beta}([0,T]\times \RR^d)$.
\end{proposition}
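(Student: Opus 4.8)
The plan is to obtain $v$ as a locally uniform limit of the classical solutions of the mollified equations and to push the estimates of Lemma~\ref{lem.estv} through that limit. First I would extend $W$ to a neighbourhood of $[0,T]$ in the time variable (e.g., constant in $t$ outside $[0,T]$, which affects none of the spatial seminorms), and put $W_n=W*\eta_{1/n}$. Then $W_n\in C^\infty(\RR^{d+1})\subset C^2_\loc(\RR^{d+1})$, and, since the mollifier is supported at scale $\le1$,
\begin{equation*}
[\nabla W_n]_{\alpha+\beta,\infty}\le c\,[\nabla W]_{\alpha+\beta,\infty}\,,\qquad [\nabla W_n]_{\beta,\alpha}\le c\,[\nabla W]_{\beta,\alpha}\qquad\text{uniformly in }n\,,
\end{equation*}
both right-hand sides being finite because $W\in\Cgr$ (recall \eqref{growth.df}); moreover $W_n\to W$ and $\nabla W_n\to\nabla W$ locally uniformly on $[0,T]\times\RR^d$.

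For each $n$ I would apply Lemma~\ref{lem.estv} to $W_n$ with $\beta_1=\alpha+\beta$ and $\beta_2=\beta$. This yields the unique strong solution $v_n$ of $(\partial_t+L_0)v_n=-\partial_tW_n$, $v_n(T,\cdot)=-W_n(T,\cdot)$, the representation $(v_n+W_n)(r,x)=-\EE\int_r^TL_0W_n(s,\varphi_s^{r,x})\,ds$, and the bounds \eqref{est.vw}--\eqref{est.dvwa}. Since $T$ is fixed and $1+|x|^{\alpha+\beta}\le2(1+|x|^{\alpha+\beta+1})$, these bounds together with the uniform control of the seminorms of $W_n$ give, for every $\alpha'\in(0,\alpha)$,
\begin{equation*}
[v_n+W_n]_{\alpha+\beta+1,\infty}\le c\,[\nabla W]_{\alpha+\beta,\infty}\,,\qquad [\nabla(v_n+W_n)]_{\beta,\infty}\le c\,[\nabla W]_{\beta,\alpha}\,,\qquad [\nabla(v_n+W_n)]_{\beta,\alpha'}\le c\,[\nabla W]_{\beta,\alpha}\,,
\end{equation*}
with $c$ independent of $n$. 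In particular, on every ball $\{v_n+W_n\}$ and $\{\nabla(v_n+W_n)\}$ are bounded and $\{\nabla(v_n+W_n)\}$ is equi-H\"older of order $\alpha'$, so (via the resulting local Lipschitz bound on $v_n+W_n$) the family $\{v_n+W_n\}$ is equicontinuous in $x$, uniformly in $n$.

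To identify the limit I would exploit the representation rather than the PDE, since the forcing $L_0W_n$ itself diverges as $n\to\infty$. Writing $L_0W_n=\frac12\partial_j(a^{ij}\partial_iW_n)-\frac12(\partial_ja^{ij})\partial_iW_n$ and integrating by parts in the spatial variable against the transition density $p^{r,x}_s$ of the diffusion $\varphi$ of \eqref{diff.phi}, one rewrites $(v_n+W_n)(r,x)$ as an integral involving only $\nabla W_n$, $p^{r,x}_s$ and $\nabla_yp^{r,x}_s$; by \ref{cond.L.elliptic}--\ref{cond.L.areg} these kernels obey Gaussian bounds, so the $dy\,ds$–integral converges absolutely, with only an integrable $(s-r)^{-1/2}$ time-singularity. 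Since $\nabla W_n\to\nabla W$ locally uniformly with uniform polynomial growth, dominated convergence shows that $(v_n+W_n)(r,x)$ converges, for every $(r,x)$, to the function $u(r,x)$ obtained by replacing $W_n$ by $W$ there; with the uniform $x$-equicontinuity this convergence is locally uniform and $u$ is continuous. Differentiating in $x$ under the integral — legitimate after the splitting $(a^{ij}\partial_iW_n)(s,y)=\big[(a^{ij}\partial_iW_n)(s,y)-(a^{ij}\partial_iW_n)(s,x)\big]+(a^{ij}\partial_iW_n)(s,x)$, which turns the non-integrable $(s-r)^{-1}$ singularity of $\nabla_x\nabla_yp$ into the integrable $(s-r)^{-1+\alpha/2}$ one, exactly as inside the proof of Lemma~\ref{lem.estv} — gives $\nabla(v_n+W_n)\to\nabla u$ locally uniformly with $\nabla u$ continuous. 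Hence $v:=u-W$ is continuous, $C^1$ in $x$, is the locally uniform limit of the $v_n$ (not merely of a subsequence), is independent of the mollifier, and is the required $C^1$-generalized solution of \eqref{eqn.v}.

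It remains to pass the three displayed bounds to $v$. Each is a supremum of ratios that converge under the locally uniform convergence of $v_n+W_n$ and of $\nabla(v_n+W_n)$, so by lower semicontinuity the same bounds hold for $v+W$; these are exactly \eqref{est.v}--\eqref{est.dav}. Combined with the continuity of $v+W$ and $\nabla(v+W)$ in $(t,x)$, they give $v+W\in C^{0,1+\alpha'}_\beta$ for every $\alpha'<\alpha$, which yields the stated membership $v\in C^{0,1+\alpha^-}_\beta([0,T]\times\RR^d)$. The main obstacle is precisely this limiting step: Lemma~\ref{lem.estv} delivers only spatial estimates at each fixed time while $L_0W_n$ blows up, so controlling the time variable and pinning down the limit both have to be routed through the integrated representation and the heat-kernel bounds for $\varphi$, and one must be careful with the integrable-in-time singularities, especially when differentiating in $x$.
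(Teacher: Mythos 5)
Your proposal is correct in substance, but it takes a genuinely different and considerably heavier route than the paper. The paper's proof is a short exploitation of linearity: since $(\partial_t+L_0)(v_n-v_m)=-\partial_t(W_n-W_m)$ with terminal value $-(W_n-W_m)(T,\cdot)$, Lemma \ref{lem.estv} applied to the \emph{difference} $W_n-W_m$ (with $\beta_1=\beta_2=\beta$), combined with Lemma \ref{lem.fepf}, shows at once that $\{v_n\}$ is Cauchy in $C([0,T],C^{1}(K))$ for every compact $K$; this identifies the limit, gives convergence of the gradients with no further analysis, and the estimates \eqref{est.v}--\eqref{est.dav} then follow by the same limiting/lower-semicontinuity argument you use at the end. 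You instead pin down the limit through the representation \eqref{eqn.vw}, rewritten against the transition density of the $L_0$-diffusion, and pass to the limit by dominated convergence after an integration by parts and the standard cancellation splitting for the gradient. What this buys is an explicit kernel formula for $v+W$ (and convergence of the whole sequence, independent of the mollifier — which the paper's Cauchy argument also delivers); what it costs is the Gaussian bounds on $p$, $\nabla_y p$ and $\nabla_x\nabla_y p$, which you invoke as if automatic under \ref{cond.L.elliptic}--\ref{cond.L.areg}. Be careful there: the coefficients $a^{ij}(t,x)$ carry no regularity assumption in $t$, so the classical parametrix statements do not apply verbatim; one needs the version with coefficients merely measurable in time and H\"older in space (this is precisely the situation the paper's Malliavin-calculus proof of Lemma \ref{lem.estv} in Appendix C is designed to handle without kernel estimates). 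Since the cancellation trick you quote ``from inside the proof of Lemma \ref{lem.estv}'' can equally well be run on $W_n-W_m$ directly, the paper's shortcut makes your kernel machinery unnecessary. Two smaller points to tighten: locally uniform convergence of $\nabla(v_n+W_n)$ jointly in $(t,x)$ (not only in $x$ at fixed $t$) needs the uniform integrability of the $(s-r)^{-1+\alpha/2}$ time singularity to yield equicontinuity in time; and the final membership $v\in C^{0,1+\alpha^-}_{\beta}([0,T]\times\RR^d)$ requires continuity of $t\mapsto\nabla v(t,\cdot)$, which does follow from your representation but should be stated rather than absorbed into ``continuity in $(t,x)$''.
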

\begin{proof}
 We recall that $\eta$ is the bump function defined at the beginning
 of this section and $W_{n}=W*\eta_{1/n}$. Lemma \ref{lem.fepf} yields $[W_n-W]_{\beta,\infty}$ and
 $[W_n-W]_{\beta,\alpha}$ converge to 0 as $n\to\infty$.
  Thanks to linearity of the equation \eref{eqn.v},  $v_n-v_m$ is a strong solution to
  \begin{equation*}
    (\partial_t+L_0)(v_n-v_m)=-\partial_t(W_n-W_m)\,,\quad (v_n-v_m)(T,x)=(W_n-W_m)(T,x)\,.
  \end{equation*}
  The results in Lemma \ref{lem.estv} (with $\beta_1=\beta_2=\beta$) imply
  \begin{equation*}
    [(v_n+W_n)-(v_m+W_m)]_{\beta,\infty} \lesssim [\nabla W_n-\nabla W_m]_{\beta,\infty} \,,
  \end{equation*}
  \begin{equation*}
    [\nabla(v_n+W_n)-\nabla(v_m+W_m)]_{\beta,\infty}
    \lesssim [\nabla W_n-\nabla W_m]_{\beta,\alpha} \,,
  \end{equation*}
  and for every $\alpha'\in(0,\alpha)$,
  \begin{equation*}
    [\nabla(v_n+W_n)-\nabla(v_m+W_m)]_{\beta,\alpha'}
    \lesssim[\nabla W_n-\nabla W_m]_{\beta,\alpha} \,.
  \end{equation*}
  As a consequence, $v_n$ is a Cauchy
   sequence in $C([0,T],C^{1}(K))$ for every compact set $K$ in $\RR^d$.  Thus $v_n$
   converges to $v$ in $C([0,T],C^{1}(K))$ for every compact set $K$. It is then straightforward
   to verify that $v$ is a weak solution to \eqref{eqn.v}. The estimates \eqref{est.v}, \eqref{est.dv} and \eqref{est.dav}  follow  from a limiting argument.
\end{proof}

\begin{theorem}\label{thm.wx}
  Suppose that $W$ belongs to $\Cgr$. Let $v$ be the $C^{0,1+\alpha'}_{\beta}$-generalized solution to \eqref{eqn.v} constructed
   in Proposition \ref{prop.v}. Then for every $t\in[r,T]$,  the integration $\int_r^tW(\ds,X_s^{r,x})$
     is well-defined (in the sense of Definition \ref{def.intw}).  Moreover, it
      has moment of all positive orders and satisfies
  \begin{multline}\label{rep.intw}
      \int_r^tW(\ds,X_s^{r,x})=v(r,x)-v(t,X_t^{r,x})\\-\int_r^tb(s,X_s^{r,x})\cdot\nabla v(s,X_s^{r,x})\ds
    +\int_r^t \sigma^{ij}(s,X_s^{r,x})\partial_{x_i}v(s,X_s^{r,x})\delta B_s^j\,.
  \end{multline}
\end{theorem}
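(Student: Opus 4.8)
Following the derivation of \eqref{itotrick} but integrating only up to $t$ — that is, applying It\^o's formula to $s\mapsto v_n(s,X_s^{r,x})$ on $[r,t]$ and using that $v_n$ solves \eqref{eqn.v} with $W_n$ in place of $W$ — one obtains, for every $t\in[r,T]$,
\begin{multline*}
\int_r^t\partial_sW_n(s,X_s^{r,x})\ds=v_n(r,x)-v_n(t,X_t^{r,x})-\int_r^tb(s,X_s^{r,x})\cdot\nabla v_n(s,X_s^{r,x})\ds\\+\int_r^t\sigma^{ij}(s,X_s^{r,x})\partial_{x_i}v_n(s,X_s^{r,x})\delta B_s^j\,.
\end{multline*}
By Definition \ref{def.intw} it then suffices to show that each of the four terms on the right converges, in $L^p(\Omega)$ for every $p\ge1$, to the corresponding term in \eqref{rep.intw}: the left side will converge in probability to the right side of \eqref{rep.intw}, and that expression manifestly has finite moments of all positive orders (using that $X^{r,x}$ has moments of all orders, the polynomial growth of $v$ and $\nabla v$, the boundedness of $\sigma$, and the Burkholder--Davis--Gundy inequality for the martingale term).

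First I would collect the uniform-in-$n$ estimates. Lemma \ref{lem.fepf} shows that $[\nabla W_n]_{\beta,\alpha}$, $[\nabla W_n]_{\beta,\infty}$ and hence $[\nabla W_n]_{\alpha+\beta,\infty}$ stay bounded as $n\to\infty$, while $W_n\to W$ and $\nabla W_n\to\nabla W$ locally uniformly. Inserting these facts into the Schauder-type bounds \eqref{est.v}--\eqref{est.dav} of Proposition \ref{prop.v}, applied to $v_n$ (and, for a Cauchy argument, to $v_n-v_m$), produces constants independent of $n$ with
\[
|v_n(s,x)|\lesssim 1+|x|^{\alpha+\beta+1}\,,\qquad |\nabla v_n(s,x)|\lesssim 1+|x|^{\alpha+\beta}\qquad\text{on }[0,T]\times\RR^d\,,
\]
together with $v_n\to v$ and $\nabla v_n\to\nabla v$ uniformly on $[0,T]\times K$ for every compact $K\subset\RR^d$. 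Since $\sup_{r\le s\le T}\EE|X_s^{r,x}|^q<\infty$ for all $q$, a dominated convergence argument then gives $v_n(t,X_t^{r,x})\to v(t,X_t^{r,x})$ in $L^p$ for every $p$, and, dominating $|b(s,X_s^{r,x})\cdot\nabla v_n(s,X_s^{r,x})|\lesssim 1+|X_s^{r,x}|^{\alpha+\beta+1}$ via the linear growth of $b$ (condition \ref{cond.Lbreg}), also $\int_r^t b(s,X_s^{r,x})\cdot\nabla v_n(s,X_s^{r,x})\ds\to\int_r^t b(s,X_s^{r,x})\cdot\nabla v(s,X_s^{r,x})\ds$ in $L^p$; the deterministic term $v_n(r,x)\to v(r,x)$ is immediate.

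For the stochastic integral I would use that $\sigma$ is bounded (by \ref{cond.L.elliptic}--\ref{cond.L.areg}), so that the Burkholder--Davis--Gundy inequality and H\"older's inequality in time reduce the desired $L^p$-convergence to showing $\int_r^t\EE|(\nabla v_n-\nabla v)(s,X_s^{r,x})|^p\,\ds\to0$; this in turn follows by dominated convergence, the integrand tending to $0$ almost surely (local uniform convergence of $\nabla v_n$ and finiteness of $X_s^{r,x}$) and being dominated, uniformly in $s$, by the integrable random variable $c(1+|X_s^{r,x}|^{\alpha+\beta})^p$. Combining the four limits identifies $\int_r^tW(\ds,X_s^{r,x})$ with the right side of \eqref{rep.intw}, whence the moment claim is read off that expression. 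Moreover, since $v$ is the unique generalized solution of \eqref{eqn.v}, the right side of \eqref{rep.intw}, and hence the value of the integral, does not depend on the chosen mollifying sequence, so the object is well defined in the sense of Definition \ref{def.intw}.

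The genuinely hard analytic input — the Schauder estimates \eqref{est.v}--\eqref{est.dav} and the construction of the generalized solution $v$ — is already packaged in Proposition \ref{prop.v} and Lemma \ref{lem.estv}; given these, the only obstacle that remains is the bookkeeping of growth exponents: one must be sure that $v_n$ grows no faster than $|x|^{\alpha+\beta+1}$ and $\nabla v_n$ no faster than $|x|^{\alpha+\beta}$, uniformly in $n$, so that after multiplication by the linearly growing $b$ and the bounded $\sigma$ everything stays integrable against the all-order moments of the diffusion $X^{r,x}$, and that the stochastic-integral limit is handled with the right exponent of $p$ in Burkholder--Davis--Gundy rather than the naive It\^o isometry.
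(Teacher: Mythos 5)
Your proposal is correct and follows essentially the same route as the paper: apply It\^o's formula to $v_n(s,X_s^{r,x})$ to get the representation \eqref{itotrick} on $[r,t]$, use the uniform Schauder-type bounds of Lemma \ref{lem.estv} and Proposition \ref{prop.v} together with the moments of $X^{r,x}$ to pass each term to the limit in $L^p$, and identify the limit with \eqref{rep.intw}. You merely make explicit (dominated convergence, Burkholder--Davis--Gundy, independence of the mollifying sequence) what the paper compresses into a single sentence.
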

\begin{proof}
We consider $W_n=W*\eta_{1/n}$ as in the proof of the previous proposition.
  It follows from It\^o formula that (see \eqref{itotrick})
  \begin{equation*}
  \begin{split}
    \int_r^t\partial_t&W_n(s,X_s^{r,x})\ds\\
    &=v_n(r,x)-v_n(t,X_t^{r,x})-\int_r^tb(s,X_s^{r,x})\cdot\nabla v_n(s,X_s^{r,x})\ds\\
  &\quad+\int_r^t \sigma^{ij}(s,X_s^{r,x})\partial_{x_i}v_n(s,X_s^{r,x})\delta B_s^j\,.
  \end{split}
\end{equation*}
Lemma \ref{lem.estv} and Proposition \ref{prop.v} say that $v_n$
(and its derivatives) has polynomial growth and converges in
$C([0,T];C^{1+\alpha'}_\loc(\RR^d)) $ to $v$ for every $\alpha'<\alpha$. Hence, the right hand side of the above formula is
convergent in $L^p(\Omega)$ for every $p>1$. Passing through  the limit in
$n$ yields the equation  \eqref{rep.intw}.
\end{proof}
{\replacecolorred
\begin{remark} To define $ \int_r^tW(\ds,X_s^{r,x})$, usually one needs some regularity of $W$ on  the temporal  variable  $t$.   The equation
\eref{rep.intw}  states that the requirement of the regularity
  on   $t$  can be transformed to the one  on spatial
variable $x$ of another function $v$ (defined by \eref{e.v.definition}).
The use of $v$ appears in many situations. If $L_0$ is replaced by
$L$ in the definition of $v$ (e.g. equation
\eref{e.v.definition}) and the terminal condition
is replaced $v(0, x)=\de(x-y)$ for any fixed $y$,  then $v$ corresponds
to the transition density of the process $X_s$. This transition density
 is   a fundamental
concept in Markov processes and some other fields. It has also been used to simplify
the  proofs  of a number of  inequalities (see e.g. \cite{driverhu1996},
\cite{huchaos1997}).  The reason to use  $L_0$ instead of $L$ is
that  we don't need to assume condition on $b$ to define $v$ and  that
$\partial _iv$ will appear  in \eref{rep.intw}  even we use  $L$.
The removal of temporal regularity also appears in
other context. For example,  to study the equation $dX_t=b(X_t)+dB_t$, the transformation $Y_t=X_t-B_t$ will satisfy $\dot Y_t=b(Y_t+B_t)$. The map $(t,x)\mapsto \int_0^t b(x+B_s)ds$, averaging along the trajectories of a Brownian motion, then has better regularity than that of $b$. In the field of stochastic differential equations, this phenomena has been observed by A. M. Davie in \cite{davie} and is recently studied in more depth in \cite{catellier2012averaging}.
\end{remark}
}

As a direct consequence, we obtain
\begin{corollary}\label{cor.wnw}
  Let  $W$ be in $\Cgr$. Then for every $\alpha'<\alpha$, $p>2$ and $K$ compact subset of $\RR^d$,
  \begin{align*}
    &\|\int_r^TW(\ds,X_s^{r,x})-\int_r^TW(\ds,X_s^{r,y})-\int_r^TW_n(\ds,X_s^{r,x})+\int_r^TW_n(\ds,X_s^{r,y})\|_p
    \\&\le C(\alpha,\alpha',\beta,\lambda,\Lambda,K,T,p)([\nabla(W-W_n)]_{\beta,\infty}+[\nabla(W-W_n)]_{\beta,\alpha})|x-y|^{\alpha'}
  \end{align*}
\end{corollary}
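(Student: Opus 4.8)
The plan is to reduce the estimate to the representation \eqref{rep.intw} of Theorem~\ref{thm.wx}, applied to the increment $\bar W:=W-W_n$, which again belongs to $\Cgr$ since $W\in\Cgr$ and $W_n=W*\eta_{1/n}\in C_c^\infty(\RR^{d+1})\subset\Cgr$. First I would note that the nonlinear stochastic integral of Definition~\ref{def.intw} is linear in the medium, so that
\[
\int_r^TW(\ds,X_s^{r,x})-\int_r^TW_n(\ds,X_s^{r,x})=\int_r^T\bar W(\ds,X_s^{r,x});
\]
indeed $(W-W_n)*\eta_{1/m}=W_m-W_n*\eta_{1/m}$ and $W_n*\eta_{1/m}\to W_n$ in $C^1_\loc(\RR^{d+1})$ (with controlled growth) as $m\to\infty$. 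Applying Theorem~\ref{thm.wx} and Proposition~\ref{prop.v} to the datum $\bar W$, there is a generalized solution $w$ of \eqref{eqn.v} (with $W$ replaced by $\bar W$) obeying $[\nabla(w+\bar W)]_{\beta,\infty}\lesssim[\nabla\bar W]_{\beta,\alpha}$ and $[\nabla(w+\bar W)]_{\beta,\alpha'}\lesssim[\nabla\bar W]_{\beta,\alpha}$, and such that, for $z=x$ and $z=y$,
\begin{multline*}
\int_r^T\bar W(\ds,X_s^{r,z})=w(r,z)-w(T,X_T^{r,z})\\
-\int_r^T(b\cdot\nabla w)(s,X_s^{r,z})\,\ds+\int_r^T\sigma^{ij}(s,X_s^{r,z})\,\partial_{x_i}w(s,X_s^{r,z})\,\delta B_s^j.
\end{multline*}
Subtracting the identities for $z=x$ and $z=y$ expresses the quantity to be bounded as a sum of four terms: $w(r,x)-w(r,y)$; $w(T,X_T^{r,x})-w(T,X_T^{r,y})$; the difference of the two drift integrals; and the difference of the two It\^o integrals.

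I would then estimate these four terms in $L^p$. The building blocks are: the two Schauder bounds above, which via $w=(w+\bar W)-\bar W$ and the finiteness of $[\nabla\bar W]_{\beta,\infty},[\nabla\bar W]_{\beta,\alpha}$ yield a growth bound $|\nabla w(s,z)|\lesssim([\nabla\bar W]_{\beta,\infty}+[\nabla\bar W]_{\beta,\alpha})(1+|z|^\beta)$ and a Hölder-in-$z$ modulus for $\nabla w$ with the same prefactor, of order $|z_1-z_2|^{\alpha'}+|z_1-z_2|^{\alpha}$ times $(1+|z_1|^\beta+|z_2|^\beta)$; the linear growth and Lipschitz continuity of $b$ (condition~\ref{cond.Lbreg}) and the boundedness and Lipschitz continuity of $\sigma$ (from \ref{cond.L.elliptic}--\ref{cond.L.areg}); the classical diffusion moment estimates $\sup_{z\in K}\big\|\sup_{r\le t\le T}|X_t^{r,z}|\big\|_q<\infty$ and $\|X_t^{r,x}-X_t^{r,y}\|_q\lesssim_{q,K}|x-y|$ for all $q$, uniformly in $t$ (Appendix~\ref{app.est.diff}); and Hölder's, Burkholder--Davis--Gundy's and Minkowski's integral inequalities (the last two, for which $p>2$ is used, handle the It\^o term). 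The first term is deterministic and is controlled by the growth bound on $\nabla w$ together with the compactness of $K$. For the drift and It\^o terms one splits $(b\cdot\nabla w)(s,X_s^x)-(b\cdot\nabla w)(s,X_s^y)=(b(s,X_s^x)-b(s,X_s^y))\cdot\nabla w(s,X_s^x)+b(s,X_s^y)\cdot(\nabla w(s,X_s^x)-\nabla w(s,X_s^y))$, and likewise for $\sigma^{ij}\partial_{x_i}w$, bounding each piece by the ingredients just listed. In every term the diffusion increments enter only as $|X_t^{r,x}-X_t^{r,y}|$, $|X_t^{r,x}-X_t^{r,y}|^{\alpha}$ or $|X_t^{r,x}-X_t^{r,y}|^{\alpha'}$, so after taking $L^p$-norms (via Hölder's inequality with the polynomial weights $1+|z|^\beta$) one is left with $|x-y|$, $|x-y|^{\alpha}$ or $|x-y|^{\alpha'}$; since $x,y\in K$ and $\alpha'<\alpha<1$, each of these is $\le\mathrm{diam}(K)^{\,1-\alpha'}|x-y|^{\alpha'}$ (respectively $\mathrm{diam}(K)^{\,\alpha-\alpha'}|x-y|^{\alpha'}$), and collecting constants gives the asserted bound, with a constant depending on $\alpha,\alpha',\beta,\lambda,\Lambda,\kappa(b),T,p$ and $K$.

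I expect the only genuinely delicate point to be the $\nabla\bar W$ contribution to the Hölder modulus of $\nabla w$: since $\bar W$ is only assumed $(\beta,\alpha)$-Hölder in space and not $(\beta,\alpha')$, the differences $|\nabla\bar W(s,X_t^{r,x})-\nabla\bar W(s,X_t^{r,y})|$ carry the factor $|X_t^{r,x}-X_t^{r,y}|^{\alpha}$ rather than $|X_t^{r,x}-X_t^{r,y}|^{\alpha'}$; one must therefore absorb the surplus power $|X_t^{r,x}-X_t^{r,y}|^{\alpha-\alpha'}$ into the (finite, and $\lesssim_{q,K}|x-y|$) higher moments of the diffusion increment before turning $|x-y|^{\alpha}$ into $|x-y|^{\alpha'}$ on $K$. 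Apart from this bookkeeping, the argument is a routine combination of Theorem~\ref{thm.wx}, Proposition~\ref{prop.v}, the Lipschitz hypotheses on $a$ and $b$, and the standard diffusion moment estimates of Appendix~\ref{app.est.diff}.
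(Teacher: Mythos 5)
Your proposal is correct and follows essentially the same route as the paper: the paper sets $h=v-v_n$ (which, by linearity, coincides with your solution $w$ for $\bar W=W-W_n$), subtracts the two representations \eqref{rep.intw} at $x$ and $y$ to get the same four terms $I_1,\dots,I_4$, and bounds them with the Schauder estimates of Proposition \ref{prop.v}, the Lipschitz/growth hypotheses on $b,\sigma$, the diffusion moment bounds, and H\"older plus Burkholder--Davis--Gundy. Your remark about converting the $\alpha$-H\"older modulus of $\nabla\bar W$ into an $\alpha'$-modulus is exactly the bookkeeping the paper performs via the weight exponent $\beta'=\beta+\alpha-\alpha'$.
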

\begin{proof}
    Fix $\alpha'<\alpha$, $p>2$ and $K$ compact subset of $\RR^d$.
    We put $g(r,x)=\int_r^TW(ds,X_s^{r,x})$, $g_n(r,x)=\int_r^TW_n(ds,X_s^{r,x})$ and $h=v-v_n$.
    From \eqref{rep.intw},
    \begin{align*}
        \|g(r,x)-g(r,y)-g_n(r,x)+g_n(r,y) \|_p\le I_1+I_2+I_3+I_4\,,
    \end{align*}
    where
    \begin{align*}
        I_1&= |h(r,x)-h(r,y)|
        \\I_2&=\|h(T,X_T^{r,x})-h(T,X_T^{r,y})\|_p
        \\I_3&=\int_r^T\| (b\cdot\nabla h)(s,X_s^{r,x})-(b\cdot\nabla h)(s,X_s^{r,y})\|_pds
        \\I_4&=\| \int_r^T (\sigma \nabla h)(s,X_s^{r,x})-(\sigma \nabla h)(s,X_s^{r,y})\cdot \delta B_s\|_p\,.
    \end{align*}
    Proposition \ref{prop.v} implies
    \begin{align*}
        |\nabla h(z)|\lesssim ([\nabla(W-W_n)]_{\beta,\infty}+[\nabla(W-W_n)]_{\beta,\alpha})(1+|z|^\beta)\,,
    \end{align*}
    and
    \begin{align*}
        |\nabla h(x)-\nabla h(y)|\lesssim[\nabla (W-W_n)]_{\beta,\alpha}(1+|x|^{\beta'}+|y|^{\beta'})|x-y|^{\alpha'}
    \end{align*}
    where $\beta'=\beta+\alpha- \alpha'$.
    Therefore we can estimate
    \begin{align*}
        I_1=|\int_0^1 \nabla h(\tau x+(1- \tau)y)d \tau(x-y)| \lesssim \|W-W_n\| |x-y|\,,
    \end{align*}
    \begin{align*}
        I_2=\|\int_0^1 \nabla h(\tau X_T^{r,x}+(1- \tau)X_T^{r,y})d \tau(X_T^{r,x}-X_T^{r,y})\|_p
        \lesssim \|W-W_n\||x-y|\,,
    \end{align*}
    \begin{align*}
        I_3&\le \int_r^T \|[b(s,X_s^{r,x})-b(s,X_s^{r,y})]\nabla h(s,X_s^{r,x})\|_pds
        \\&\quad+\int_r^T\|b(s,X_s^{r,y})[\nabla h(s,X_s^{r,x})-\nabla h(s,X_s^{r,y})]\|_pds
        \\&\lesssim \|W-W_n\||x-y|^{\alpha'}\,,
    \end{align*}
    where we have used H\"older inequality. Similarly, we can estimate $I_4$ using Burkholder-Davis-Gundy inequality    to get $I_4\lesssim |x-y|^{\alpha'}$.
    From these bounds, the result follows.
\end{proof}
\begin{proposition}\label{prop.expw}
 Suppose $W$ belongs to $\Cgr$ with   $\alpha~+~\beta<~1$. Then
$\int_r^tW(\ds,X_s^{r,x})$ is exponentially integrable uniformly over compact sets. More precisely, for every $\gamma>0$, $K$ compact subset of $\RR^d$
  \begin{equation}
     \sup_{x\in K} \EE \exp \left\{\gamma\int_r^tW(\ds,X_s^{r,x})\right\} <\infty\,
  \end{equation}
  for all $\gamma>0$.
\end{proposition}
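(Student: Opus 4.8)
The starting point is the representation \eqref{rep.intw} obtained in Theorem \ref{thm.wx},
\[
    \int_r^tW(\ds,X_s^{r,x})=v(r,x)-v(t,X_t^{r,x})-\int_r^tb(s,X_s^{r,x})\cdot\nabla v(s,X_s^{r,x})\ds+\int_r^t \sigma^{ij}(s,X_s^{r,x})\partial_{x_i}v(s,X_s^{r,x})\delta B_s^j\,,
\]
where $v$ is the generalized solution to \eqref{eqn.v} produced in Proposition \ref{prop.v}. The whole point of the hypothesis $\alpha+\beta<1$ is that it forces every term on the right to grow strictly sub-quadratically in $\sup_{r\le s\le T}|X_s^{r,x}|$. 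Indeed, since $v\in C^{0,1+\alpha^-}_\beta([0,T]\times\RR^d)$ we have $|v(s,z)|\lesssim 1+|z|^{\alpha+\beta+1}$ and $|\nabla v(s,z)|\lesssim 1+|z|^{\alpha+\beta}$ on $[0,T]\times\RR^d$; together with $\sigma$ bounded (condition \ref{cond.L.areg}) and $b$ of linear growth (condition \ref{cond.Lbreg}) this gives
\[
    |v(s,z)|+|b(s,z)\cdot\nabla v(s,z)|\lesssim 1+|z|^{\alpha+\beta+1},\qquad |\sigma^{ij}(s,z)\partial_{x_i}v(s,z)|\lesssim 1+|z|^{\alpha+\beta}\,,
\]
and the exponents $\alpha+\beta+1$ and $2(\alpha+\beta)$ are both strictly less than $2$.

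The second ingredient is exponential integrability of the diffusion itself. By the estimates collected in Appendix \ref{app.est.diff}, for every $p<2$, every $\gamma>0$ and every compact $K\subset\RR^d$,
\[
    \sup_{x\in K}\EE\exp\Big\{\gamma\sup_{r\le s\le T}|X_s^{r,x}|^{p}\Big\}<\infty\,,
\]
which is the standard Gaussian-type tail bound for a diffusion with Lipschitz, linearly growing coefficients. Combined with the growth bounds above this takes care of three of the four terms: $v(r,x)$ is bounded for $x\in K$; $|v(t,X_t^{r,x})|\lesssim 1+\sup_s|X_s^{r,x}|^{\alpha+\beta+1}$, so $\EE\exp\{\gamma|v(t,X_t^{r,x})|\}$ is finite uniformly in $x\in K$; and likewise $\big|\int_r^t b\cdot\nabla v\,\ds\big|\le \kappa T\big(1+\sup_s|X_s^{r,x}|^{\alpha+\beta+1}\big)$ has a uniform exponential moment.

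The remaining, and main, point is the local martingale $M_t:=\int_r^t \sigma^{ij}(s,X_s^{r,x})\partial_{x_i}v(s,X_s^{r,x})\,\delta B_s^j$. Its bracket obeys $\langle M\rangle_T\lesssim\int_r^T(1+|X_s^{r,x}|^{\alpha+\beta})^2\,\ds\lesssim 1+\sup_s|X_s^{r,x}|^{2(\alpha+\beta)}$, so $\exp\{c\langle M\rangle_T\}$ has a uniform (in $x\in K$) exponential moment because $2(\alpha+\beta)<2$. To pass from $\langle M\rangle_T$ to $M_T$ I would use that $\exp\{\lambda M_t-\tfrac{\lambda^2}{2}\langle M\rangle_t\}$ is a nonnegative local martingale, hence a supermartingale with expectation $\le 1$, and then Cauchy--Schwarz:
\[
    \EE\exp\{\gamma M_T\}\le\Big(\EE\exp\{2\gamma M_T-2\gamma^2\langle M\rangle_T\}\Big)^{1/2}\Big(\EE\exp\{2\gamma^2\langle M\rangle_T\}\Big)^{1/2}\le\Big(\EE\exp\{2\gamma^2\langle M\rangle_T\}\Big)^{1/2}<\infty\,,
\]
uniformly for $x\in K$. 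Finally, decomposing $\exp\{\gamma\int_r^tW(\ds,X_s^{r,x})\}$ as the product of the exponentials of $\gamma$ times the four terms of \eqref{rep.intw} and applying the generalized H\"older inequality reduces the claim to the four uniform exponential bounds just described (each of which holds for every positive constant, so the extra H\"older exponents cause no trouble). Only the stochastic integral $M_T$ requires a genuine argument; the rest is bookkeeping with the strictly sub-quadratic growth and the diffusion moment estimates.
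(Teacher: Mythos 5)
Your argument is correct and is essentially the paper's own proof: both start from the representation \eqref{rep.intw}, reduce to exponential moments of the three random terms via H\"older, handle the stochastic integral with the exponential-supermartingale plus Cauchy--Schwarz trick $\EE e^{M}\le(\EE e^{2\langle M\rangle})^{1/2}$, and exploit that the growth exponents $1+\alpha+\beta$ and $2(\alpha+\beta)$ are strictly below $2$ together with the Gaussian-type moment bounds for $X$ from Appendix \ref{app.est.diff}. The only cosmetic difference is that you bound the time integrals by $\sup_s|X_s^{r,x}|$ while the paper uses Jensen's inequality to reduce to pointwise moments; both rest on the same estimate \eqref{exp.supX}.
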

\begin{proof}
 From \eref{rep.intw}  it suffices to show for every $\gamma>0$,
  \begin{align}
    \label{exp.sb}& \sup_{x\in K} \EE \exp \left\{ \gamma\int_r^t \sigma^{ij}(s,X_s^{r,x})\partial_i v(s,X_s^{r,x}) \d B_s^j\right\}
    <\infty\,,\\
    \label{exp.bv}& \sup_{x\in K} \EE \exp \left\{ \gamma \int_r^t b(s,X_s^{r,x})\cdot\nabla v(s,X_s^{r,x})\ds\right\}<\infty\,,\\
    \label{exp.v}& \sup_{x\in K} \EE \exp\left\{\gamma|v(t,X_t^{r,x})|\right\} <\infty\,.
  \end{align}

  Let $0<\theta<2$.   We claim that
  \begin{equation}
    \sup_{x\in K}\EE \exp\left\{\gamma\int_r^t|X_s^{r,x}|^\theta\ds\right\} <\infty\,,\quad\forall \gamma>0\,.
  \end{equation}
  {\replacecolorred
  In fact, by Jensen inequality
  \begin{equation*}
  	\EE \exp\left\{\gamma\int_r^t|X_s^{r,x}|^\theta\ds\right\} \le  (T-r)^{-1}\int_r^T \EE e^{\gamma(T-r)|X_s^{r,x}|^{\theta}}ds\,.
  \end{equation*}
  The quality on the right hand side is finite thanks to \eqref{exp.supX}.}

For any martingale $M_t$ with $\EE e^{ 2\langle M\rangle_t}<\infty$
we have
\begin{align*}
\EE e^{M_t}&=\EE e^{M_t-\langle M\rangle_t}e^{\langle
M\rangle_t}\\
&\le \left\{\EE e^{2M_t-2\langle M\rangle_t}\right\}^{1/2} \left\{
\EE e^{2\langle M\rangle _t}\right\}^{1/2}=\left\{ \EE e^{2\langle
M\rangle _t}\right\}^{1/2}\,.
\end{align*}
Thus we have
\begin{multline*}
      \EE \exp \left\{\gamma\int_r^t \sigma^{ij}(s,X_s^{r,x})\partial_i
      v(s,X_s^{r,x}) \delta B_s^j\right\}\\
       \le \left\{\EE  \exp \left[2\gamma^2
      \int_r^t(a^{ij}\partial_i v\partial_j v)(s,X_s^{r,x})\ds\right]\right\}^{1/2}\,.
  \end{multline*}
  Taking into account the growth property of $\nabla v$ (see \eqref{est.dv})  and $a$, we have
  \begin{equation*}
   \sup_{x\in K}\EE  \exp \left[2\gamma^2
      \int_r^t(a^{ij}\partial_i v\partial_j v)(s,X_s^{r,x})\ds \right] \lesssim \sup_{x\in K}\EE\exp \left[c\int_r^t|X_s^{r,x}|^{2(\alpha+\beta)}\ds\right] \,,
  \end{equation*}
  which together with the previous claim shows \eqref{exp.sb} since $2(\alpha+\beta)<2$.
  Similarly, since $b$ has linear growth
  \begin{equation*}
    \sup_{x\in K}\EE \exp\left[ \gamma \int_r^t b(s,X_s^{r,x})\cdot\nabla v(s,X_s^{r,x})\ds
    \right] \lesssim \sup_{x\in K}\EE \exp \left[c\int_r^t|X_s^{r,x}|^{1+\alpha+\beta}\right]
    \,,
  \end{equation*}
  which shows \eqref{exp.bv} since $1+\alpha+\beta<2$.

  Using the growth property of $v$, i.e. the estimate \eqref{est.v},
  \begin{equation*}
     \EE \exp
     \left[\gamma|v(t,X_t^{r,x})| \right] \lesssim \EE \exp
     \left[c|X_t^{r,x}|^{1+\alpha+\beta}\right]  \,,
  \end{equation*}
  which shows \eqref{exp.v}\,.
\end{proof}
\begin{lemma}\label{lem.expw}
Let $W$ be in $\Cgr$. Suppose $\alpha+ \beta<1$. For every $\gamma>0$ and $r\in[0,T]$, we put $u(r,x) =\EE\exp
\left[ \gamma\int_r^T W(ds,X_s^{r,x})\right]$ and $u_n(r,x) =\EE\exp
\left[\gamma\int_r^T W_n(ds,X_s^{r,x})\right]$. Then $u_n$ converges to $u$ in $C^{0,\alpha'}([0,T]\times K)$ for every $\alpha'<\alpha$
and $K$ compact in $\RR^d$.

\end{lemma}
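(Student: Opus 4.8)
The plan is to reduce the convergence to a few $L^p$-estimates for the nonlinear integrals $I(r,x):=\int_r^T W(\ds,X_s^{r,x})$ and $I_n(r,x):=\int_r^T W_n(\ds,X_s^{r,x})$ together with a uniform-in-$n$ exponential integrability bound, and then transfer these to $u_n-u$ by the elementary identity $e^a-e^b=(a-b)\int_0^1 e^{(1-t)b+ta}\,dt$. First I would collect the ingredients. By Lemma~\ref{lem.fepf} the quantities $\epsilon_n:=[\nabla(W-W_n)]_{\beta,\infty}+[\nabla(W-W_n)]_{\beta,\alpha'}$ tend to $0$ for every $\alpha'<\alpha$, and the $\Cgr$-seminorms of $W_n$ are bounded uniformly in $n$. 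Since the constants produced in Propositions~\ref{prop.v} and~\ref{prop.expw} depend only on $\gamma$, on the compact set $K$, on $T$, and on the $\Cgr$-data of the medium, applying Proposition~\ref{prop.expw} to the media $\pm W$ and $\pm W_n$ gives
\[
\sup_n\ \sup_{x\in K}\ \sup_{0\le r\le T}\ \EE\exp\big(\gamma|I_n(r,x)|\big)<\infty,\qquad \sup_{x\in K}\ \sup_{0\le r\le T}\ \EE\exp\big(\gamma|I(r,x)|\big)<\infty
\]
for every $\gamma\in\RR$ (the sign change turns $\int W$ into $-\int W$, so the two-sided bound and $\EE\exp(\gamma|I|)$ are covered).

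Next, using the representation \eqref{rep.intw} for $I$ and for $I_n$ — so that $I_n-I$ is expressed through $h:=v_n-v$, whose growth and spatial Hölder seminorms are controlled by $\epsilon_n$ thanks to Proposition~\ref{prop.v} — together with the finiteness of all moments of $X_s^{r,x}$ and the Burkholder--Davis--Gundy inequality, I obtain, for every $p\ge1$ and uniformly over $x,y\in K$ and $r\in[0,T]$,
\[
\|I_n(r,x)-I(r,x)\|_p\le C_p\,\epsilon_n,\qquad \|I(r,x)-I(r,y)\|_p+\sup_n\|I_n(r,x)-I_n(r,y)\|_p\le C_p\,|x-y|,
\]
the second bound because $v$ and all $v_n$ lie in $C^{0,1+\alpha'}_\beta$ with uniformly bounded norms. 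Finally, the four-point bound $\|I_n(r,x)-I_n(r,y)-I(r,x)+I(r,y)\|_p\le C_p\,\epsilon_n\,|x-y|^{\alpha'}$ is exactly Corollary~\ref{cor.wnw}.

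The uniform convergence $u_n\to u$ on $[0,T]\times K$ is then immediate: writing $u_n(r,x)-u(r,x)=\gamma\,\EE\big[(I_n(r,x)-I(r,x))\int_0^1 e^{(1-t)\gamma I(r,x)+t\gamma I_n(r,x)}\,dt\big]$, bounding the inner integral by $e^{\gamma|I(r,x)|}+e^{\gamma|I_n(r,x)|}$ and applying the Cauchy--Schwarz inequality with the uniform exponential bound yields $\sup_{0\le r\le T}\sup_{x\in K}|u_n(r,x)-u(r,x)|\le C\epsilon_n\to0$; in particular $u$ is continuous.

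For the spatial $\alpha'$-Hölder part of the $C^{0,\alpha'}([0,T]\times K)$-norm I would set $p_1=\gamma I_n(r,x)$, $q_1=\gamma I(r,x)$, $p_2=\gamma I_n(r,y)$, $q_2=\gamma I(r,y)$ and $\phi_i=\int_0^1 e^{(1-t)q_i+tp_i}\,dt$, so that
\[
e^{p_1}-e^{p_2}-e^{q_1}+e^{q_2}=(p_1-q_1)\phi_1-(p_2-q_2)\phi_2=\big[(p_1-q_1)-(p_2-q_2)\big]\phi_1+(p_2-q_2)(\phi_1-\phi_2).
\]
Here $(p_1-q_1)-(p_2-q_2)=\gamma\big(I_n(r,x)-I_n(r,y)-I(r,x)+I(r,y)\big)$ has $L^p$-norm $\lesssim\epsilon_n|x-y|^{\alpha'}$; $p_2-q_2=\gamma(I_n(r,y)-I(r,y))$ has $L^p$-norm $\lesssim\epsilon_n$; each $\phi_i$ is bounded in every $L^p$; and, by the mean value theorem applied to the exponential, $|\phi_1-\phi_2|\le(|q_1-q_2|+|p_1-p_2|)\big(e^{\gamma|I_n(r,x)|}+e^{\gamma|I_n(r,y)|}+e^{\gamma|I(r,x)|}+e^{\gamma|I(r,y)|}\big)$ has $L^p$-norm $\lesssim|x-y|$. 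Taking expectations, applying Hölder's inequality with suitably chosen exponents, and using $|x-y|\le(\operatorname{diam}K)^{1-\alpha'}|x-y|^{\alpha'}$, I get
\[
|u_n(r,x)-u_n(r,y)-u(r,x)+u(r,y)|\le C\,\epsilon_n\,|x-y|^{\alpha'}
\]
uniformly over $r\in[0,T]$ and $x,y\in K$, which together with the uniform convergence gives $u_n\to u$ in $C^{0,\alpha'}([0,T]\times K)$. I expect the main obstacle to be precisely this last step: organizing the expansion of the second difference so that every summand factors into one genuinely small quantity ($\epsilon_n$ or $\epsilon_n|x-y|^{\alpha'}$) and factors that are merely bounded in every $L^p$ or carry an innocuous $|x-y|$, and then closing the estimate by Hölder's inequality while keeping track of the exponential moments; a secondary delicate point is that the Hölder exponent in Corollary~\ref{cor.wnw} must be taken equal to the $\alpha'<\alpha$ appearing in the conclusion, which is why one works with the interpolated seminorm $[\nabla(W-W_n)]_{\beta,\alpha'}\to0$ furnished by Lemma~\ref{lem.fepf}.
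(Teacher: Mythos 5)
Your proof is correct and follows essentially the same route as the paper: both expand the second difference $u_n(r,x)-u_n(r,y)-u(r,x)+u(r,y)$ of exponentials via an interpolation/Taylor identity, bound the resulting factors by the four-point estimate of Corollary \ref{cor.wnw}, the $L^p$-smallness of $I_n-I$, and the uniform-in-$n$ exponential integrability coming from Propositions \ref{prop.v} and \ref{prop.expw}, and close with H\"older's inequality. The only deviations are cosmetic: your first-order identity with the factors $\phi_i$ plus the mean value theorem replaces the paper's mixed second-order Taylor formula (and you spell out the sup-norm convergence the paper leaves implicit), while your claimed Lipschitz bound $\|I(r,x)-I(r,y)\|_p\lesssim |x-y|$ should really be $|x-y|^{\alpha'}$ (the drift and stochastic-integral terms in \eqref{rep.intw} only see the $\alpha'$-H\"older seminorm of $\nabla v$), which is harmless since that weaker bound is all your argument actually uses.
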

\begin{proof}
    For a smooth function $f$, using fundamental theorem of calculus, we obtain
    \begin{align*}
        f(x)-f(a)-f(y)+f(b)&=\int_0^1\int_0^1 f''(\xi)[\tau(x-y)+(1- \tau)(a-b)]d \eta d \tau(x-a)
        \\&\quad+\int_0^1f'(\theta)d \tau(x-a-y-b)\,,
    \end{align*}
    where
    \begin{align*}
        \xi&=\tau \eta x+(1- \tau)\eta a+\tau(1- \eta)y+(1-\tau)(1- \eta)b \,,
        \\ \theta&=\tau y+(1- \tau)b\,.
    \end{align*}
    Thus, for every $x,y$ in $K$, with $f(w)=\exp (\gamma w)$, we have
    \begin{align}
        &u(r,x)-u_n(r,x)-u(r,y)+u_n(r,y)
        \nonumber\\&=\gamma^2\EE\int_0^1\int_0^1 f(\xi)[\tau A(x,y)+(1- \tau)A_n(x,y)]d \eta d \tau B_n(x)
        \nonumber\\&\quad+\gamma\EE\int_0^1 f(\theta)d \tau C_n(x,y)\,,
        \label{id.u4point}
    \end{align}
    where
    \begin{align*}
        &A(x,y)=\int_r^T W(ds,X_s^{r,x})-\int_r^T W(ds,X_s^{r,y})\,,
        \\&A_n(x,y)=\int_r^T W_n(ds,X_s^{r,x})-\int_r^T W_n(ds,X_s^{r,y})\,,
        \\&B_n(x)=\int_r^T W(ds,X_s^{r,x})-\int_r^T W_n(ds,X_s^{r,x})\,,
        \\&C_n(x,y)=A(x,y)-A_n(x,y) \,.
    \end{align*}
    The random variables $\xi$ and $\eta$ are linear combinations of these terms. From Proposition \ref{prop.expw}, we know that moments of $f(\xi)$ and $f(\theta)$ are bounded uniformly in $x$ and $\tau,\eta$. On the other hand, from Corollary \ref{cor.wnw}, for every $\alpha'<\alpha$ and $p>2$
    \begin{align*}
        &\| A(x,y)\|_p\lesssim |x-y|^{ \alpha'}\,,
        \\ &\sup_n\| A_n(x,y)\|_p\lesssim |x-y|^{ \alpha'}\,,
        \\&\lim_{n\to0}\sup_{x\in K} \| B_n(x)\|=0\,,
    \end{align*}
    and
    \begin{equation*}
        \| C_n(x,y)\|_p\lesssim ([\nabla(W-W_n)]_{\beta,\infty}+[\nabla(W-W_n)]_{\beta,\alpha})|x-y|^{ \alpha'}\,.
    \end{equation*}
    From \eqref{id.u4point}, applying H\"older inequality and the above estimates for $A, B, C$ we obtain
    \begin{multline*}
        |u(r,x)-u_n(r,x)-u(r,y)+u_n(r,y)|
        \\\lesssim [\sup_{x\in K}\| B_n(x)\|_p+[\nabla(W-W_n)]_{\beta,\infty}+[\nabla(W-W_n)]_{\beta,\alpha}]|y-x|^{\alpha'}
    \end{multline*}
    for all $x,y$ in $K$ and $\alpha'<\alpha$. This completes the proof.
\end{proof}

\subsection{Feynman-Kac formula I}\label{fey.fey}
\def\d{{\de}}\def\ds{{ ds}}

If $W$ is a smooth function, then the classical Feynman-Kac formula
asserts that
\begin{equation}\label{id.FK}
  u(r,x)=\EE^B \left[u_T(X_T^{r,x})\exp\left(\int_r^T
  W(\ds,X_s^{r,x})\right) \right]
\end{equation}
is the unique strong solution to \eqref{eqn.Ldw}.  Indeed, suppose $W$ is smooth and
$u$ is a strong solution to \eqref{eqn.Ldw}.  Applying It\^o formula
to  the process
\begin{equation*}
  t\mapsto u(t,X_t^{r,x}) \exp\left\{\int_r^t\partial_tW(s,X_s^{r,x})\ds \right\}
\end{equation*}
we obtain
\begin{align*}
  \d  u(t,X_t^{r,x})& \exp\left\{\int_r^t\partial_tW(s,X_s^{r,x})\ds \right\}\\
  &=\exp\left\{\int_r^t\partial_tW(s,X_s^{r,x})\ds \right\}  (\partial_t+L+\partial_t W)u(t,X_t^{r,x}) \dt\\
  &\quad+\exp\left\{\int_r^t\partial_tW(s,X_s^{r,x})\ds \right\} \sigma^{ij}(t,X_t^{r,x})\partial_{x_i}u(t,X_t^{r,x})\d B_t^j
\end{align*}
Taking into account that $(\partial_t+L)u+\partial_tW u=0$ and
integrating over $[r,T]$, we have
\begin{multline*}
  u_T(X_T^{r,x})\exp\left\{\int_r^t\partial_tW(s,X_s^{r,x})\ds \right\}-u(r,x)\\
  =\int_r^T \exp\left\{\int_r^t\partial_tW(s,X_s^{r,x})\ds \right\}\sigma^{ij}
  (t,X_t^{r,x})\partial_{x_i}u(t,X_t^{r,x})\d B^j_t
\end{multline*}
Formula \eqref{id.FK} is deduced by taking expectation on both
sides.

\begin{theorem}\label{thm.unuctau}
  Assume $W$ belongs to $\Cgr$ with $\alpha+\beta<1$. Let $W_n=W*\eta_{1/n}$.
  Let $u_n$ be the solution to the parabolic equation
  \begin{equation*}
     \partial_tu_n+Lu_n+u_n\partial_tW_n=0\,,\quad u_n(T,x)=u_T(x)\,.
   \end{equation*}
  Let $u$ be the function defined in \eqref{id.FK}. Then $u_n$ converges to $u$ in $C^{0,\alpha'}([0,T]\times K)$ for every $\alpha'<\alpha$ and $K$ compact set in $\RR^d$. As a consequence, $u$ belongs to $C^{0,\alpha'}_\loc([0,T]\times \RR^d) $ for all $\alpha'<\alpha$.
\end{theorem}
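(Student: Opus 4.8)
The plan is to exploit the representation of $\int_r^T W(\ds, X_s^{r,x})$ obtained in Theorem \ref{thm.wx} together with the classical Feynman-Kac formula for the smooth noise $W_n$. First I would recall that since $W_n = W * \eta_{1/n}$ is smooth with compact support, the classical argument recalled just before the statement shows that $u_n(r,x) = \EE^B[u_T(X_T^{r,x}) \exp(\int_r^T W_n(\ds, X_s^{r,x}))]$ is the (strong) solution to the parabolic equation with noise $W_n$; here $\int_r^T W_n(\ds, X_s^{r,x}) = \int_r^T \partial_s W_n(s, X_s^{r,x})\,\ds$ is an ordinary integral. So both $u$ and $u_n$ are given by Feynman-Kac type expectations, and the problem reduces to showing
\[
\EE^B\left[u_T(X_T^{r,x}) \exp\left(\int_r^T W_n(\ds, X_s^{r,x})\right)\right] \longrightarrow \EE^B\left[u_T(X_T^{r,x}) \exp\left(\int_r^T W(\ds, X_s^{r,x})\right)\right]
\]
in $C^{0,\alpha'}([0,T]\times K)$.

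The key analytic inputs are already in place. Definition \ref{def.intw} and Theorem \ref{thm.wx} give that $\int_r^T W_n(\ds, X_s^{r,x}) \to \int_r^T W(\ds, X_s^{r,x})$ in probability (indeed in $L^p$ for all $p$), with uniform moment bounds; Proposition \ref{prop.expw} (using $\alpha+\beta<1$) gives exponential integrability of $\int_r^T W(\ds, X_s^{r,x})$, uniformly over $x$ in compacts, and the same argument applied to $W_n$ — whose relevant seminorms $[\nabla W_n]_{\beta,\infty}$, $[\nabla W_n]_{\beta,\alpha}$ are bounded uniformly in $n$ by Lemma \ref{lem.fepf} — yields $\sup_n \sup_{x\in K} \EE \exp(\gamma \int_r^T W_n(\ds, X_s^{r,x})) < \infty$ for all $\gamma>0$. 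Combined with the polynomial growth of $u_T$ and the finiteness of all moments of $\sup_{s\le T}|X_s^{r,x}|$, these give uniform integrability of the family $u_T(X_T^{r,x})\exp(\int_r^T W_n(\ds, X_s^{r,x}))$, so pointwise-in-probability convergence upgrades to convergence of expectations, i.e. $u_n(r,x)\to u(r,x)$ pointwise. For the convergence in Hölder norm, I would follow exactly the scheme of Lemma \ref{lem.expw}: write the second difference $u_n(r,x)-u_n(r,y)-u(r,x)+u(r,y)$ (now carrying the extra factor $u_T$, and also estimating the first difference $u_n(r,x)-u(r,x)$) using the fundamental theorem of calculus on $w\mapsto e^{\gamma w}$ as in \eqref{id.u4point}, then bound each resulting term by Hölder's inequality using: the $L^p$ Hölder-in-$x$ estimate for $A(x,y)$ and $A_n(x,y)$ and the $L^p$-smallness of $B_n(x)$, $C_n(x,y)$ from Corollary \ref{cor.wnw}; the uniform exponential moment bounds above; and the standard fact (conditions \ref{cond.L.elliptic}--\ref{cond.Lbreg}) that $X_s^{r,x}$ depends on $x$ in a Hölder (indeed Lipschitz in $L^p$) manner with all moments finite, so that $u_T(X_T^{r,x})$ contributes controllably. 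This yields $\|u_n - u\|_{C^{0,\alpha'}([0,T]\times K)}\to 0$; Hölder continuity in $t$ of $u$ and $u_n$, hence in the joint norm, follows from the same representation \eqref{rep.intw} combined with the regularity in time of the stochastic integral terms — or more simply, since $C^{0,\alpha'}$-convergence of $u_n$ together with $u_n$ being bona fide solutions transfers the joint regularity — and the last sentence (membership of $u$ in $C^{0,\alpha'}_\loc$) is then immediate by letting $K$ exhaust $\RR^d$.

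The main obstacle, as in Lemma \ref{lem.expw}, is the bookkeeping of the uniform-in-$n$ exponential integrability: one must check that the auxiliary function $v_n$ solving $(\partial_t + L_0) v_n = -\partial_t W_n$ inherits growth bounds that are uniform in $n$, which is where Lemma \ref{lem.estv} (with the seminorms of $W_n$ controlled uniformly via Lemma \ref{lem.fepf}) and Proposition \ref{prop.v} do the heavy lifting; with those in hand the exponential moments of $\int_r^T W_n(\ds, X_s^{r,x})$ are controlled by exponential moments of $\int_r^t |X_s^{r,x}|^\theta\,\ds$ with $\theta = \max(2(\alpha+\beta), 1+\alpha+\beta) < 2$, uniformly in $n$ and in $x\in K$. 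Everything else is a routine application of Hölder's inequality and the estimates already established in Corollary \ref{cor.wnw} and Proposition \ref{prop.expw}.
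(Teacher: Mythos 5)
Your proposal is correct and follows essentially the same route as the paper: the paper's proof simply writes $u_n(r,x)-u(r,x)=\EE\{u_T(X_T^{r,x})[\exp(\int_r^T W_n(\ds,X_s^{r,x}))-\exp(\int_r^T W(\ds,X_s^{r,x}))]\}$ (using the classical Feynman--Kac representation for the smooth noise $W_n$) and then invokes the scheme of Lemma \ref{lem.expw}, i.e.\ the fundamental-theorem-of-calculus expansion of the exponential combined with the uniform-in-$n$ exponential moments of Proposition \ref{prop.expw} and the $L^p$ H\"older estimates of Corollary \ref{cor.wnw}. Your write-up just makes explicit the bookkeeping (uniformity in $n$ via Lemma \ref{lem.fepf}, handling the $u_T$ factor and the Lipschitz-in-$L^p$ dependence of $X^{r,x}$ on $x$) that the paper leaves implicit.
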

\begin{proof}
  We notice that
  \begin{multline*}
   u_n(r,x)-u(r,x)\\
  =\EE\left\{
    u_T(X_T^{r,x})\left[\exp\left(\int_r^TW_n(\ds,X_s^{r,x})\right)
    -\exp\left(\int_r^TW(\ds,X_s^{r,x})\right)\right]\right\} \,.
  \end{multline*}
 This together with   Lemma \ref{lem.expw} yield  the theorem.
\end{proof}
We notice that if $f$ and $g$ are locally H\"older continuous functions on $\RR^d$ with exponents $\alpha$ and $\gamma$ respectively. Suppose that $f$ has compact support and $\alpha+\gamma>1$. Then we can define the Young integral
\begin{align*}
        \int_{\RR^d}f(x)g(d^jx)=\int_{\RR^d}f(x)g(x_1,\dots,x_{j-1},dx_j,x_{j+1},\dots,x_n)d\hat x_j
\end{align*}
 where $\hat x_j=(x_1,\dots,x_{j-1},x_{j+1},\dots,x_n)$.

We now show that if $W$ is sufficiently  regular in space, the Feynman-Kac solution $u$ in \eqref{id.FK} satisfies an equation derived from \eqref{eqn.Ldw} by a change of variable. To better explain our
procedure, let us first assume that $W$ is smooth in space and time
and $u_T$ is also smooth. In such case, the equation
\eqref{eqn.Ldw} has unique smooth
solution $u$  such that
\begin{equation*}
    \partial_t u(t,x)+Lu(t,x)+u\partial_tW(t,x)=0
\end{equation*}
for every $t\ge0$ and $x\in\RR^d$. We would like to obtain an equation of $u$
such that the time derivative of $W$ does not appear.
To this end, we notice that
\[\partial_t u+u\partial_t W=e^{-W}\partial_t(ue^{W})\,. \]
Hence, multiply the equation with $e^W$ and integrate in time, we obtain
\begin{equation}\label{eqn.uew}
  u_t=e^{W_T-W_t}u_T+\int_t^T e^{W_s-W_t}Lu_s\ds\,.
\end{equation}
In contrast with \eqref{eqn.Ldw}, the equation \eqref{eqn.uew} does
not contain the time derivative of $W$. One can also interpret
\eqref{eqn.uew} in weak sense. More precisely, the following result holds.
\begin{theorem}\label{thm.feykac}
  Assume $W$ belongs to $\Cgr$ with $\alpha+\beta<1$.
  Let $u$ be the function defined in \eqref{id.FK}. Then there is a
  sequence of smooth functions  $W_n$  with compact supports   convergent to $W$ in $\Cgr$
  and a sequence of $u_n $ such that $u_n$ converges to $u$ uniformly over all compact
  sets.  Moreover, for every test function $\varphi\in C^\infty_c(\RR^d)$
 the sequence  $$\int_t^T \int_{\RR^d} \partial_i[e^{W_n(s,x)-W_n(t,x)}\varphi(x)]  a^{ij}(s,x) \partial_ju_n(s,x)\dx
    \ds$$ is convergent.
    If $\alpha>1/2$, then we can identify the limit as
    \[\int_t^T\int_{\RR^d} \partial_i(e^{W(s,x)-W(t,x)}\varphi(x)) a^{ij}(s,x) u(s,d^jx)\ds\,.\]
     In such case, $u$ verifies the equation
    \begin{multline}\label{weak.eqn.u}
       \int_{\RR^d} u(t,x)\varphi(x)\dx =\int_{\RR^d} e^{W(T,x)-W(t,x)} u_T(x)\varphi(x)\dx\\
       +\frac{1}{2} \int_t^T\int_{\RR^d} \partial_i(e^{W(s,x)-W(t,x)}\varphi(x)) a^{ij}(s,x) u(s,d^jx)\ds\\
       -\int_t^T \int_{\RR^d} \partial_i\left(e^{W(s,x)-W(t,x)}\varphi(x)\left[
           b^i(s,x)-\frac12 \partial_j a^{ij}(s,x)\right]\right)  u(s,x)\dx\ds
       \,.
     \end{multline}
\end{theorem}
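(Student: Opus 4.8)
The plan is to take the sequence $W_n = W*\eta_{1/n}$ from Lemma \ref{lem.fepf} and the corresponding smooth solutions $u_n$ from Theorem \ref{thm.unuctau}, for which $u_n \to u$ locally uniformly. Since each $W_n$ is smooth, $u_n$ is a classical solution of \eqref{eqn.Ldw} with $W$ replaced by $W_n$, and hence satisfies the identity \eqref{eqn.uew} with $W_n$ in place of $W$. Multiplying that identity by a test function $\varphi\in C_c^\infty(\RR^d)$, integrating over $\RR^d$, and integrating the $Lu_n$-term by parts in the spatial variable, I would rewrite the second-order part as
\[
\frac12\int_t^T\int_{\RR^d}\partial_i\bigl(e^{W_n(s,x)-W_n(t,x)}\varphi(x)\bigr)\,a^{ij}(s,x)\,\partial_j u_n(s,x)\,\dx\,\ds,
\]
plus a first-order term of the form $-\int_t^T\int_{\RR^d}\partial_i\bigl(e^{W_n(s,x)-W_n(t,x)}\varphi(x)[b^i-\tfrac12\partial_j a^{ij}]\bigr)u_n(s,x)\,\dx\,\ds$, using conditions \ref{cond.L.areg} and the compact support of $\varphi$ to justify all boundary terms vanish. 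The first-order term and the left/right sides of the weak identity converge directly: $u_n\to u$ locally uniformly, $e^{W_n-W_n}\to e^{W-W}$ locally uniformly (from $W_n\to W$ in $\Cgr$, in particular uniform convergence on compacts), and the factors $b^i$, $\partial_j a^{ij}$, $\partial_i a^{ij}$ are bounded by \ref{cond.L.areg}, \ref{cond.Lbreg}, so dominated convergence over the compact set $t\le s\le T$, $x\in\operatorname{supp}\varphi$ applies.

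The crux is the second-order term, since $\partial_j u_n$ need not converge in any strong sense. The key observation is that, by Lemma \ref{lem.estv} and Proposition \ref{prop.v} applied to the Feynman--Kac representation of $u_n$, one can transfer the $x$-derivative off $u_n$: writing $g_n(s,x)=\int_s^T W_n(\dr,X_r^{s,x})$, the function $u_n(s,x)=\EE[u_T(X_T^{s,x})e^{g_n(s,x)}]$ and its spatial differences are controlled, uniformly in $n$, by the $\Cgr$-norm of $W$ together with the polynomial growth and exponential moment estimates for the diffusion $X$ from Appendix \ref{app.est.diff}. Concretely I would show that $u_n$ is, uniformly in $n$, in $C^{0,\alpha'}_{\mathrm{loc}}$ with a modulus controlled via Corollary \ref{cor.wnw}-type estimates (bounding $\|u_n(s,x)-u_n(s,y)\|$), so that the spatial increments of $u_n$ satisfy a uniform $\alpha'$-Hölder bound on compacts. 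Provided $\alpha>1/2$, one may pick $\alpha'\in(1/2,\alpha)$; then the products $\partial_i(e^{W_n(s,x)-W_n(t,x)}\varphi(x))$ (which inherit $\alpha'$-Hölder regularity in $x$ from $W_n\to W$ in $\Cgr$, $\alpha'<\alpha$) together with the $\alpha'$-Hölder functions $u_n(s,\cdot)$ satisfy $\alpha'+\alpha'>1$, so the inner spatial integral converges, for each fixed $s$, to the Young integral $\int_{\RR^d}\partial_i(e^{W(s,x)-W(t,x)}\varphi(x))a^{ij}(s,x)\,u(s,\dd^j x)$ by the stability of Young integrals under joint Hölder convergence of integrand and integrator (this is exactly the one-parameter version of Proposition \ref{prop.int.w12}--Proposition \ref{prop.int.phi12}, applied coordinate-by-coordinate). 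An $s$-integration via dominated convergence, using the uniform-in-$n$, uniform-in-$s$ bounds on the Young integrals over $x\in\operatorname{supp}\varphi$, then passes the limit inside $\int_t^T\cdots\ds$.

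The main obstacle I anticipate is making the uniform-in-$n$ Hölder control of $u_n$ in the spatial variable rigorous and quantitative enough to feed into the Young-integral stability argument — in particular tracking the constants through the Schauder-type estimates of Lemma \ref{lem.estv} and the exponential-moment bounds of Appendix \ref{app.est.diff}, and verifying that the $\alpha'$-Hölder seminorms of $u_n(s,\cdot)$ are bounded uniformly in both $n$ and $s\in[t,T]$. Once this is in place, identifying the limit as a Young integral is routine, and the weak equation \eqref{weak.eqn.u} follows by reassembling the three converged pieces; I would also remark that when $\alpha\le 1/2$ the limit still exists (by the uniform bounds) but cannot in general be written as a classical Young integral, which is why the identification is stated only for $\alpha>1/2$.
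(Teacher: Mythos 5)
Your route is essentially the paper's: the same mollified sequence $W_n=W*\eta_{1/n}$, the same weak identity obtained from \eqref{eqn.uew} after testing against $\varphi$ and integrating by parts so that the first-order term carries no derivative of $u_n$, and the same identification of the second-order term as a Young integral in $x_j$ when $\alpha>1/2$, using convergence of $u_n$ to $u$ in spatial H\"older norm together with the $\alpha'$-H\"older regularity of $\partial_i(e^{W(s,\cdot)-W(t,\cdot)}\varphi)a^{ij}$ and the condition $\alpha'+\alpha'>1$. Two points, however, should be tightened. First, the assertion that the sequence of second-order terms converges is claimed in the theorem for every $\alpha$ with $\alpha+\beta<1$, and your parenthetical justification for $\alpha\le 1/2$ (``the limit still exists by the uniform bounds'') is not a proof: uniform H\"older bounds on $u_n(s,\cdot)$ give no compactness or Cauchy property for $\int\varphi_i\,a^{ij}\partial_j u_n\,dx$ when the Young pairing is unavailable. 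The correct argument is already sitting in your own identity: solve it for the second-order term; every other term (the left side, the terminal term, and the first-order term, which involves only $u_n$ and not $\nabla u_n$) converges by locally uniform convergence of $u_n$ and $W_n$ and compact support of $\varphi$, hence the second-order term converges for all admissible $\alpha$. This is exactly how the paper argues.

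Second, the ``main obstacle'' you anticipate --- uniform-in-$n$, uniform-in-$s$ spatial H\"older control of $u_n$ on compacts --- is not an open step: Theorem \ref{thm.unuctau} (built on Lemma \ref{lem.expw} and Corollary \ref{cor.wnw}) already gives $u_n\to u$ in $C^{0,\alpha'}([0,T]\times K)$ for every $\alpha'<\alpha$ and every compact $K$, which is precisely the convergence of integrators needed for the stability of the Young integrals, so there is nothing to retrace through Lemma \ref{lem.estv} or Appendix \ref{app.est.diff}. With these two adjustments your argument coincides with the paper's proof of \eqref{weak.eqn.u}.
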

\begin{proof} We recall that $W_n=W*\eta_{1/n}$ defined at the beginning of this section. Let $u_n$ be the solution to the parabolic equation
  \begin{equation*}
     \partial_tu_n+Lu_n+u_n\partial_tW_n=0\,,\quad u_n(T,x)=-W_n(T,x)\,.
   \end{equation*}
 Then it is easily verified that
 \begin{multline*}
    \int_{\RR^d} u_n(t,x)\varphi(x)\dx =\int_{\RR^d} e^{W_n(T,x)-W_n(t,x)} u_n(T, x)\varphi(x)\dx\\
    +\frac12\int_t^T \int_{\RR^d} \partial_i[e^{W_n(s,x)-W_n(t,x)}\varphi(x)]  a^{ij}(s,x) \partial_ju_n(s,x)\dx \ds\\
    +\int_t^T \int_{\RR^d} e^{W_n(s,x)-W_n(t,x)}\varphi(x)\left[
    b^i(s,x)-\frac12 \partial_j a^{ij}(s,x)\right] \partial_i u_n(s,x)\dx\ds
    \,.
  \end{multline*}
 In other words,
   \begin{multline*}
  \frac12\int_t^T \int_{\RR^d} \partial_i[e^{W_n(s,x)-W_n(t,x)}\varphi(x)]  a^{ij}(s,x) \partial_ju_n(s,x)\dx \ds\\
   =  \int_{\RR^d} u_n(t,x)\varphi(x)\dx -\int_{\RR^d} e^{W_n(T,x)-W_n(t,x)} u_n(T, x)\varphi(x)\dx\\
         +\int_t^T \int_{\RR^d} \partial_i\left(e^{W_n(s,x)-W_n(t,x)}\varphi(x)\left[
    b^i(s,x)-\frac12 \partial_j a^{ij}(s,x)\right]\right)    u_n(s,x)\dx\ds
    \,.
  \end{multline*}
  Since $\varphi$ has compact support, it is clear that all  the terms on the right hand side
  are convergent.   This implies that $$ \int_t^T \int_{\RR^d}
    \partial_i[e^{W_n(s,x)-W_n(t,x)}\varphi(x)]  a^{ij}(s,x) \partial_ju_n(s,x)\dx \ds$$ is convergent.
  In case $\alpha>1/2$, by Theorem \ref{thm.unuctau}, this limit is convergent  in the context of Young integrations. Hence, taking the limit yields \eqref{weak.eqn.u}.
\end{proof}

\begin{remark}
(i) The use of It\^o formula in subsection \ref{subsec.intwx} is inspired
from the work  \cite{flandoli10}. In that work, an It\^o-Tanaka trick is applied to obtain some estimates to the commutator related to DiPerna-Lions' theory (\cite{diperna-lions}).

(ii) In the case $W$ belongs to $C^{0,2}_\loc([0,T]\times \RR^d)$, the It\^o-Tanaka formula \eqref{itotrick} is negligible.
    In fact, using integration by part, one has
    \begin{equation*}
        \int_r^T \partial_t W_n(s,X_s^{r,x})ds=W_n(T,X_T^{r,x})-W_n(r,x)-\int_r^T\nabla W_n(s,X_s^{r,x})dX_s^{r,x}
    \end{equation*}
    where the last integral is in Stratonovich sense. By passing through the limit $n\to\infty$, we obtain
    \begin{equation*}
        \int_r^T \partial_t W(s,X_s^{r,x})ds=W(T,X_T^{r,x})-W(r,x)-\int_r^T\nabla W(s,X_s^{r,x})dX_s^{r,x}\,.
    \end{equation*}
    Assuming $\nabla W$ has linear growth in the spatial variable and $\nabla^2 W$ is globally bounded, one can also show exponential integrability
    \begin{equation*}
        \EE^B \exp\left[\int_r^T \partial_t W(s,X_s^{r,x})ds\right] <\infty\,.
    \end{equation*}
    We consider $u$ as in \eqref{id.FK}. Using the approximation as in the proof of Theorem \ref{thm.feykac}, we can show that $u$ verifies
    \begin{align*}
        \int_{\RR^d}u(t,x)\varphi(x)dx&=\int_{\RR^d}e^{W(T,x)-W(t,x)}u(T,x)\varphi(x)dx\\
        &\quad+\int_t^T\int_{\RR^d}L^*[e^{W(s,x)-W(t,x)}\varphi(x)]u(s,x)dxds
    \end{align*}
    for all test functions $\varphi$ in $C^\infty_c(\RR^d)$, where $L^*$ is the adjoint of $L$.
\end{remark}

\subsection{Feynman-Kac formula II}\label{sec.feykacii}
In previous subsections, to obtain the Feynman-Kac solution \eref{id.FK}  (See Theorem
\ref{thm.feykac}) we assume that $W$ is only continuous in time but  satisfies
\eref{growth.daf}-\eref{growth.f} for $f=W$.  This  means that we suppose the the first spatial derivatives
of $W$ exist  and are H\"older continuous in order to compensate the lack of regularity in time.
For many other stochastic processes (such as Brownian sheet or fractional Brownian
sheets),  $W$ is H\"older continuous  in time.  In this case, we may use this time regularity to relax   the regularity requirement on space variable.
In this subsection we obtain a Feynman-Kac formula for the solution to
\eref{eqn.Ldw} when $W$ satisfies the conditions  of the type    given in  Section \ref{sec.pathint}.  For example, we do not require  $W$ to possess first derivatives.
More precisely, we assume $W: [0, T]\times
\RR^d\rightarrow \RR$ satisfies the following condition.
\begin{enumerate}[label=\textbf{(FK)}]
    \item\label{cond.fk} There are constants $\tau\,, \la\in (0\,,
 1]$ and  $\beta>0$ such that
\begin{equation}
  \tau +\frac12 \la >1\,,\quad \be+\la <2\ \label{e.exponents}
\end{equation}
   and such that   the seminorm
\begin{equation}
\begin{split}
  &\|W\|_{\be, \tau, \la  }\\
  :&=\sup_{\substack{0\le s<t\le T\\ x,y\in \RR^d;x\neq y}}\frac{
\left|W(s,x)-W(t,x)-W(s,y) + W(t,y)\right|}{
 (1+|x|+|y| )^\be  |t-s|^{\tau}|x-y|^{\lambda}}\\
 &\quad+ \sup_{\substack{0\le s<t\le T\\ x\in \RR^d}}\frac{
\left|W(s,x)-W(t,x)\right|}{
 (1+|x| )^{\be+\lambda}  |t-s|^{\tau}}+\sup_{\substack{0\le t\le T\\ x,y\in \RR^d;x\neq y}}\frac{
\left|W(t,y) - W(t,x)\right|}{
 (1+|x|+|y| )^\be  |x-y|^{\lambda}}
\end{split}\label{e.w-fk}
\end{equation}
is finite.
\end{enumerate}

We continue to use
the same notations introduced in  previous subsections. For
example,   $X_t=X_t^{r,x}$ denotes  the solution to the equation
\eref{eqn.diffusion}. The   objectives of this subsection are  to show
that the expression defined by \eref{id.FK}
is well-defined under the above condition  \ref{cond.fk}  and is the
solution to \eref{eqn.Ldw}.


From $\tau+\frac12 \la>1$, it follows that there is a $\ga\in (0,1/2)$ such that
$\tau+\ga \la>1$.  Since $X_t $ is H\"older continuous of exponent
$\ga$, from Proposition~\ref{prop.riemann.w}, we known that $\int_r^T W(ds,
X_s^{r,x})$ is well-defined and
\begin{equation}
\left|\int_r^T W(ds, X_s )\right| \le   C (1+\|X \|_\infty^\be )(1 +
\|X \|_\ga^\la ) \,.
\end{equation}
Since $\be+\la<2$,   Lemma \ref{exponential-int} yields  that
\[
\EE \exp \left\{c \int_r^T W(ds, X_s )\right\}<\infty
\]
for all $c \in \RR$.  Thus we have
\begin{prop}\label{p.fk}  Assume the conditions \ref{cond.L.elliptic}-\ref{cond.Lbreg}
are satisfied.
  Let
\eref{e.exponents}-\eref{e.w-fk} be satisfied. If there is an
$\al_0\in (0, 2)$ such that $|u_T(x)|\le C_2e^{C_1|x|^{\al_0}}$,
then $u(r,x)$ defined by \eref{id.FK} is finite. Namely,
\begin{equation}\label{id.FK-4}
  u(r,x)=\EE^B \left[u_T(X_T^{r,x})\exp\left(\int_r^T
  W(\ds,X_s^{r,x})\right) \right]
\end{equation}
is well-defined.
\end{prop}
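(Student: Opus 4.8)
The plan is to reduce everything to two ingredients that are already available: the pathwise bound on the nonlinear Young integral from Proposition \ref{prop.riemann.w}, and an exponential integrability statement for the $\gamma$-H\"older norm of the diffusion $X$ (Lemma \ref{exponential-int}, stated in the Appendix). First I would fix $r\le T$ and $x\in\RR^d$. Since $\tau+\tfrac12\lambda>1$, I can choose $\gamma\in(0,1/2)$ with $\tau+\gamma\lambda>1$; this is the regularity exponent in which I will view the sample paths of $X^{r,x}$. Under hypotheses \ref{cond.L.elliptic}--\ref{cond.Lbreg} the diffusion $X^{r,x}$ is, almost surely, $\gamma$-H\"older continuous on $[r,T]$ (Kolmogorov's criterion applied to the $L^p$-bounds on the increments of $X$), so that $\|X^{r,x}\|_{\gamma;r,T}$ and $\|X^{r,x}\|_{\infty;r,T}$ are a.s. finite. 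Hence condition \ref{cond.phi} holds for $\varphi_s=X_s^{r,x}$ with exponent $\gamma$, and \ref{cond.w} holds for $W$ with the given $\beta,\tau,\lambda$ by \eqref{e.w-fk}. Therefore Proposition \ref{prop.riemann.w} applies pathwise: $\int_r^T W(ds,X_s^{r,x})$ is well-defined and, taking $s=c=r$, $t=T$ in \eqref{est.W.c} and absorbing the boundary term $|W(T,\varphi_r)-W(r,\varphi_r)|\le \|W\|_{\beta,\tau,\lambda}(1+|x|)^{\beta+\lambda}(T-r)^\tau$ using the second seminorm in \eqref{e.w-fk}, one gets the pathwise estimate
\begin{equation*}
\Bigl|\int_r^T W(ds,X_s^{r,x})\Bigr|\le \kappa\,\|W\|_{\beta,\tau,\lambda}\bigl(1+\|X^{r,x}\|_{\infty;r,T}^{\beta}\bigr)\bigl(1+\|X^{r,x}\|_{\gamma;r,T}^{\lambda}\bigr)(T-r)^{\tau}\,.
\end{equation*}

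Next I would convert this pathwise bound into an exponential moment bound. Write $N=\|X^{r,x}\|_{\infty;r,T}$ and $H=\|X^{r,x}\|_{\gamma;r,T}$; from the bound above, for any real $c$,
\begin{equation*}
\exp\Bigl\{c\int_r^T W(ds,X_s^{r,x})\Bigr\}\le \exp\bigl\{C(1+N^{\beta})(1+H^{\lambda})\bigr\}
\end{equation*}
with $C$ depending on $c$, $\kappa$, $\|W\|_{\beta,\tau,\lambda}$ and $T-r$. Since $\beta+\lambda<2$, Young's inequality gives $N^{\beta}H^{\lambda}\le \varepsilon(N^{2}+H^{2})$ up to a constant; more to the point, $(1+N^\beta)(1+H^\lambda)\lesssim 1+N^{\beta+\lambda}+H^{\beta+\lambda}\lesssim 1+N^{p}+H^{p}$ for any $p\in[\beta+\lambda,2)$, so it suffices to show $\EE\exp\{\theta N^{p}\}<\infty$ and $\EE\exp\{\theta H^{p}\}<\infty$ for all $\theta>0$ and some $p<2$. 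The first is classical for diffusions with linear-growth coefficients (Gaussian-type tails for $\sup_{[r,T]}|X|$, hence sub-Gaussian, in particular $\exp\{\theta N^p\}$ integrable for $p<2$); the second is exactly the content of Lemma \ref{exponential-int} in the Appendix (exponential integrability of the H\"older seminorm of $X$). Combining these via H\"older's inequality (split $\exp\{C(N^p+H^p)\}$ into two factors and apply Cauchy--Schwarz) yields $\EE\exp\{c\int_r^T W(ds,X_s^{r,x})\}<\infty$ for every $c\in\RR$.

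Finally, to conclude that $u(r,x)$ in \eqref{id.FK-4} is finite I would handle the terminal factor $u_T(X_T^{r,x})$ by Cauchy--Schwarz:
\begin{equation*}
\EE^B\Bigl[|u_T(X_T^{r,x})|\exp\Bigl(\int_r^T W(ds,X_s^{r,x})\Bigr)\Bigr]\le \Bigl(\EE |u_T(X_T^{r,x})|^2\Bigr)^{1/2}\Bigl(\EE\exp\Bigl(2\int_r^T W(ds,X_s^{r,x})\Bigr)\Bigr)^{1/2}\,,
\end{equation*}
where the second factor is finite by the previous paragraph (take $c=2$), and the first factor is finite because $|u_T(x)|\le C_2 e^{C_1|x|^{\alpha_0}}$ with $\alpha_0<2$ forces $\EE|u_T(X_T^{r,x})|^2\le C_2^2\,\EE\exp\{2C_1|X_T^{r,x}|^{\alpha_0}\}<\infty$, again by the sub-Gaussian tail of $\sup_{[r,T]}|X^{r,x}|$ and $\alpha_0<2$. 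This proves $u(r,x)$ is well-defined. I expect the only genuinely nontrivial step to be the exponential integrability of the H\"older seminorm $\|X^{r,x}\|_{\gamma}$ — everything else is routine — and that step is precisely what Lemma \ref{exponential-int} is there to supply, so in the present write-up it is invoked as a black box.
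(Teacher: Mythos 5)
Your proposal is correct and follows essentially the same route as the paper: choose $\gamma\in(0,1/2)$ with $\tau+\gamma\lambda>1$, bound $\int_r^T W(ds,X_s^{r,x})$ pathwise via Proposition \ref{prop.riemann.w} by $C(1+\|X\|_\infty^\beta)(1+\|X\|_\gamma^\lambda)$, and then use $\beta+\lambda<2$ together with Lemma \ref{exponential-int} to get exponential integrability, finishing with the sub-quadratic growth of $u_T$. The paper leaves the final Cauchy--Schwarz step with $u_T$ implicit, but your fleshed-out version is exactly what is intended.
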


Now, let $W_n(t,x)$ be a sequence of  functions   in $C^\infty_0([0,
T]\times \RR^d)$ convergent to $W(t,x)$ under the norm
$\|W\|_\infty+\|W\|_{\be, \tau, \la}$.  Denote $v_n(r,x)=\int_r^T W_n(ds, X_s^{r,x})$ and
$v(r,x)=\int_r^T W (ds, X_s^{r,x})$  and
$
\tilde v_n(r,x)=v_n(r,x)-v(r,x)$.   Thus, for any $0\le r<t\le T$, we have
\begin{align*}
  |\tilde v_n(t,x)- \tilde v_n(r,x) |
 &= \left|\int_t^T \tilde W_n(ds, X_s^{t,x} ) - \int_r^T \tilde W_n (ds, X_s^{r,x})   \right|\\
&\le
\left|\int_r^t \tilde W_n(ds, X_s^{r,x} )    \right|  +
\left|\int_t^T \left[ \tilde W_n(ds, X_s^{t,x})-\tilde W_n(ds, X_s^{r,x}) \right]\right|\\
&=: I_1(r, t)+I_2(r,t) \,.
\end{align*}
 Applying  the estimate in Proposition~\ref{prop.riemann.w}   to $\tilde W_n=W_n-W$,  we obtain
\begin{align*}
\frac{I_1(r,t) } {(t-r)^\tau }
&\le \kappa \|\tilde W_n\|_{\tau, \lambda}  (1+\| X_\cdot ^{r,x} \|_\infty )  \left[ 1
+ \|X_\cdot ^{r,x}\|_\gamma (t-r)^{\lambda \gamma }\right]\\
&\le  C  \|\tilde W_n\|_{\tau, \lambda}  (1+\| X_\cdot ^{r,x} \|_\infty )  \left[ 1
+ \|X_\cdot ^{r,x}\|_\gamma \right]  \,.
 \end{align*}
 Lemma \ref{exponential-int} states that
  $\| X_\cdot ^{r,x} \|_\infty$  is bounded in $L^p$ for any $p\ge 1$.   Thus
 \begin{equation}
 \lim_{n\rightarrow \infty} \EE ^B \left\{
 \sup_{0\le r<t\le T} \left|\frac{ I_1(r,t) } {(t-r)^\tau }\right|^p \right\}=0
  \end{equation}
 for any $p\ge 1$.

From Proposition   \ref{prop.int.phi12}  we have with $\tau+\th \la \gamma >1$,
\begin{align}
 I_2(r,t)  &\le C\|\tilde W_n\|_{\be, \tau, \la} \|X_\cdot^{t,x}-X_\cdot^{r,x}\|_\infty ^\la  (T-t)^\tau\nonumber\\
 &\quad
 +C\|\tilde W_n\|_{\be, \tau, \la} \|X_\cdot^{t,x}-X_\cdot^{r,x}\|_\infty ^{\la(1-\theta)}
  (T-t)^{\tau+\theta \la \ga}\nonumber\\
  &   \le
  C\|\tilde W_n\|_{\be, \tau, \la} \|X_\cdot^{t,x}-X_\cdot^{r,x}\|_\infty ^{\la(1-\theta)}
 \left[1+  \|X_\cdot^{t,x}-X_\cdot^{r,x}\|_\infty ^{\la\th}  \right]\,.\label{e.i2rt}
 \end{align}
Notice  that $X^{r,x}_s=X_s^{t, X_t^{r,x}}$.  We have  for any $p\ge
1$ and $\ga'<1$, by using the  Markov property of the process
$X_t^{r,x}$,
\begin{align*}
\EE  \sup_{0\le r<t\le T} \frac{ \|X_\cdot^{t,x}-X_\cdot^{r,x}
    \|_\infty^p} {(t-r)^{\ga' p/2} }
&=\EE\sup_{0\le r<t\le T} \frac{  \|X_\cdot^{t,x}-X_\cdot ^{t, X_t^{r,x}} \|_\infty^p} {(t-r)^{\ga'p/2} }  \\
&\le   C\EE \sup_{0\le r<t\le T} \frac{  \|X_t^{r,x}-x\|^p
}{(t-r)^{\ga'p/2} } \le C  \,,
\end{align*}
where the last inequality follows from a similar argument as the proof
of \eref{e.6.11}.
Combining this with
\eref{e.i2rt}
  implies
\[
\EE \sup_{0\le r<t\le T} \left|\frac{I_2(r,t)} { (t-r)^{\ga'\la (1-\th)/2} } \right|^p
\le C
\,.
\]
Assume $\lambda/2+ \tau-1>0$.
For any  $\tau'\in (0,\lambda/2+ \tau-1)$ it is possible to find
$\th\in(0, 1)$ and $0<\ga<1/2$ such that $\tau+\th \la \gamma >1$ and
$\tau'<\ga'\la (1-\th)/2 $.  We see that
$v(\cdot,x)$  is H\"older continuous of exponent $\tau'$
and
\[
\lim_{n\rightarrow \infty}  \|v_n(\cdot, x)-v(\cdot, x)\|_{\tau'}=0\,
\]
uniformly in compact set $K$ of $\RR^d$. From \eref{id.FK-4}
it is easy to see that
\[
\lim_{n\rightarrow \infty}  \|u_n(\cdot, x)-u(\cdot, x)\|_{\tau'}=0\,
\]
uniformly in compact set $K$ of $\RR^d$.
Thus we have
\begin{proposition} Let $W_n$ be a sequence of smooth functions such that
$W_n$ converges to $W$ in the norm $\|W\|_\infty+\|W\|_{\be, \tau, \la}$ and $u_n$
is the solution to \eref{eqn.Ldw}  and $u$ is given by \eref{id.FK-4}.  Then for
any $ \tau'<\lambda/2+ \tau-1$,
$u(t, x)$ is H\"older continuous of exponent
$\tau'$ in time variable $t$ and on any compact set $K$ of $\Rd$,
{
    \begin{equation}
\lim_{n\rightarrow \infty}  \|u_n(\cdot, x)-u(\cdot, x)\|_{\tau'}=0\,
\end{equation}
}
uniformly on $x\in K$.
\end{proposition}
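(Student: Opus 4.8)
The plan is to realise both $u$ and $u_n$ as exponential functionals, along the diffusion $X^{r,x}$, of the nonlinear Young integrals $v(r,x):=\int_r^TW(ds,X_s^{r,x})$ and $v_n(r,x):=\int_r^TW_n(ds,X_s^{r,x})$, and to reduce the statement to the convergence of $v_n(\cdot,x)$ to $v(\cdot,x)$ in $L^p\bigl(\Omega;C^{\tau'}([0,T])\bigr)$, locally uniformly in $x$. For a smooth $W_n$ the Riemann sums defining $\int_r^TW_n(ds,X_s^{r,x})$ telescope into $\int_r^T\partial_sW_n(s,X_s^{r,x})\,ds$, so the classical Feynman--Kac theorem gives $u_n(r,x)=\EE^B\bigl[u_T(X_T^{r,x})e^{v_n(r,x)}\bigr]$, while $u(r,x)=\EE^B\bigl[u_T(X_T^{r,x})e^{v(r,x)}\bigr]$ is well defined by Proposition~\ref{p.fk} (since $X^{r,x}$ is $\gamma$--H\"older for some $\gamma\in(0,\tfrac12)$ with $\tau+\lambda\gamma>1$, Proposition~\ref{prop.riemann.w} applies, and exponential integrability follows from $\beta+\lambda<2$ and Lemma~\ref{exponential-int}).

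\emph{Convergence of the integrals.} Set $\tilde W_n=W_n-W$ and $\tilde v_n=v_n-v$; note $\sup_n\|W_n\|_{\beta,\tau,\lambda}<\infty$ and $\|\tilde W_n\|_\infty+\|\tilde W_n\|_{\beta,\tau,\lambda}\to0$. Using the flow identity $X_s^{r,x}=X_s^{t,X_t^{r,x}}$ for $s\ge t$, for $0\le r<t\le T$ we write
\[
\tilde v_n(t,x)-\tilde v_n(r,x)=-\int_r^t\tilde W_n(ds,X_s^{r,x})-\Bigl(\int_t^T\tilde W_n(ds,X_s^{t,x})-\int_t^T\tilde W_n(ds,X_s^{r,x})\Bigr)=:-I_1(r,t)-I_2(r,t).
\]
I would bound $I_1$ by the estimate of Proposition~\ref{prop.riemann.w} applied to $\tilde W_n$, and $I_2$ by Proposition~\ref{prop.int.phi12} applied to $\tilde W_n$ with the integrands $X^{t,x}$ and $X^{r,x}$ (exactly as in \eqref{e.i2rt}), then convert $\|X_\cdot^{t,x}-X_\cdot^{r,x}\|_\infty=\|X_\cdot^{t,x}-X_\cdot^{t,X_t^{r,x}}\|_\infty$ into a power of $|t-r|$ through the Markov property and the bound $\EE\sup_{0\le r<t\le T}\bigl(\|X_t^{r,x}-x\|/|t-r|^{\gamma'/2}\bigr)^p<\infty$ (the argument behind \eqref{e.6.11}), using also the $L^p$--bounds on $\|X_\cdot^{r,x}\|_\infty$ and $\|X_\cdot^{r,x}\|_\gamma$ of Lemma~\ref{exponential-int}. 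One can choose $\theta\in(0,1)$ and $\gamma,\gamma'$ with $\gamma<\tfrac12$, $\tau+\theta\lambda\gamma>1$ and $\tau'<\gamma'\lambda(1-\theta)/2$ for any prescribed $\tau'<\lambda/2+\tau-1$, because $\tau+\lambda/2>1$ forces $\theta>2(1-\tau)/\lambda$, hence $\lambda(1-\theta)/2<\lambda/2+\tau-1$; with such a choice one obtains
\[
\EE\Bigl[\sup_{0\le r<t\le T}\frac{|\tilde v_n(t,x)-\tilde v_n(r,x)|^p}{|t-r|^{\tau'p}}\Bigr]+\sup_{0\le r\le T}\EE|\tilde v_n(r,x)|^p\longrightarrow0
\]
locally uniformly in $x$, each summand carrying a factor $\|\tilde W_n\|^p$. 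Running the same computations with $W_n$ (resp.\ $W$) replacing $\tilde W_n$ yields $\sup_n\sup_{x\in K}\EE\|v_n(\cdot,x)\|_{\tau''}^p<\infty$ and $\sup_{x\in K}\EE\|v(\cdot,x)\|_{\tau''}^p<\infty$ for every $p$ and every $\tau''\in(\tau',\lambda/2+\tau-1)$, while $\beta+\lambda<2$ and Lemma~\ref{exponential-int} give uniform exponential integrability: $\sup_n\sup_{r,x\in K}\EE e^{c|v_n(r,x)|}<\infty$, $\sup_{r,x\in K}\EE e^{c|v(r,x)|}<\infty$ and $\sup_{r,x\in K}\EE|u_T(X_T^{r,x})|^p<\infty$ for all $c,p$.

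\emph{Passage to $u$.} From $|e^a-e^b|\le|a-b|(e^a+e^b)$, H\"older's inequality and the bounds just listed, $\sup_{0\le r\le T}|u_n(r,x)-u(r,x)|=\sup_r\bigl|\EE^B[u_T(X_T^{r,x})(e^{v_n(r,x)}-e^{v(r,x)})]\bigr|\to0$, locally uniformly in $x$. To upgrade this to convergence in the $C^{\tau'}$--norm I would show that $\{u_n(\cdot,x)\}_n$, and $u(\cdot,x)$, are bounded in $C^{\tau''}([0,T])$ uniformly in $n$ and $x\in K$: from the Markov identity $u_n(r,x)=\EE^B\bigl[u_n(t,X_t^{r,x})\,e^{\int_r^tW_n(ds,X_s^{r,x})}\bigr]$ one splits $u_n(r,x)-u_n(t,x)$ into a term of size $O(|t-r|^\tau)$ coming from $e^{\int_r^tW_n(ds,\cdot)}-1$ (controlled by Proposition~\ref{prop.riemann.w}) and a term $\EE^B[u_n(t,X_t^{r,x})-u_n(t,x)]$ controlled by $\EE\|X_t^{r,x}-x\|^p\lesssim|t-r|^{p/2}$ together with the spatial regularity of $u_n(t,\cdot)$, uniform in $n$ by the spatial H\"older bounds on $v_n$ (Proposition~\ref{prop.int.phi12}) and the smoothing of the transition density of $X$ under \ref{cond.L.elliptic}--\ref{cond.L.areg}. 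Interpolating the $C^0$--convergence against this uniform $C^{\tau''}$--bound then gives $\|u_n(\cdot,x)-u(\cdot,x)\|_{\tau'}\to0$ locally uniformly in $x$; in particular $u(\cdot,x)=\lim_nu_n(\cdot,x)$ lies in $C^{\tau'}$.

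The main obstacle is the estimate of $I_2$: one must quantify how the nonlinear Young integral $\int_t^T\tilde W_n(ds,\cdot)$ depends on its (random) integrand --- precisely the content of Proposition~\ref{prop.int.phi12} --- and then exploit that the paths $X^{t,x}$ and $X^{r,x}$ share the driving Brownian motion after time $t$, differing only through their value at time $t$, so that $\|X_\cdot^{t,x}-X_\cdot^{r,x}\|_\infty\lesssim|X_t^{r,x}-x|$ and the latter is $O(|t-r|^{\gamma'/2})$ in every $L^p$; the bookkeeping of $\theta,\gamma,\gamma'$ is what pins the range $\tau'<\lambda/2+\tau-1$. A secondary delicate point is that $u(r,x)$ also depends on $r$ through the starting point of $X^{r,x}$, which contributes only a term H\"older of any exponent $<1/2$ (harmless since $\tau'<\lambda/2+\tau-1\le 1/2$), but the spatial regularity used to control it degenerates as $t\uparrow T$ unless $u_T$ is itself H\"older, in which case the estimates hold uniformly up to the terminal time.
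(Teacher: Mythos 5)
Your argument is essentially the paper's: the heart of the proof --- writing $\tilde v_n(t,x)-\tilde v_n(r,x)$ as $I_1+I_2$, bounding $I_1$ via Proposition \ref{prop.riemann.w} and $I_2$ via Proposition \ref{prop.int.phi12} exactly as in \eqref{e.i2rt}, converting $\|X^{t,x}_\cdot-X^{r,x}_\cdot\|_\infty$ into a power of $|t-r|$ through the Markov property as in the argument for \eqref{e.6.11}, and choosing $\theta,\gamma,\gamma'$ to reach every $\tau'<\lambda/2+\tau-1$ --- coincides with the paper's computation, with the same moment and exponential-integrability input from Lemma \ref{exponential-int}. The only divergence is the final passage from $v_n\to v$ to $u_n\to u$, which the paper dispatches with ``from \eqref{id.FK-4} it is easy to see'': you go through sup-norm convergence, uniform $C^{\tau''}$ bounds and interpolation instead of estimating the H\"older seminorm of $\EE^B\bigl[u_T(X_T^{r,x})(e^{v_n(r,x)}-e^{v(r,x)})\bigr]$ directly, and your observation that this step (in either form) tacitly uses some temporal regularity of $r\mapsto u_T(X_T^{r,x})$, which degenerates near $T$ for merely measurable $u_T$, points to a detail the paper itself leaves implicit rather than to a defect of your route.
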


If $\tau+\tau'>1$, then for any $\varphi \in C_{0}^{\infty }\left( \mathbb{R}^{d}\right) $,
we have
\[
\int_{0}^{t}\int_{\mathbb{R}^{d}}u_n(s,x)\left( s,x\right) \varphi (x)\frac{\partial}{\partial s}
W_n(s,x)dsdx
\]
converges to the Young integral
\[
\int_{0}^{t}\int_{\mathbb{R}^{d}}u(s,x)\left( s,x\right) \varphi (x)W(ds,x)dx\,.
\]
It is obvious that the existence of $\tau'>0$ such that $\tau+\tau'>1$ and
$ \tau'<\lambda/2+ \tau-1$ is equivalent to $\la+4\tau>4$.
The above argument  means  that $u(t,x)$ is
a weak solution to \eref{eqn.Ldw}, in the sense of   next theorem.

%
%

 \begin{theorem}Assume the conditions \ref{cond.L.elliptic}-\ref{cond.Lbreg}
are satisfied and assume  there is an
$\al_0\in (0, 2)$ such that $|u_T(x)|\le C_2e^{C_1|x|^{\al_0}}$.
  Let  $\|W\|_{\be, \tau, \la}$ defined by  \eref{e.w-fk} be finite,  where the H\"older
exponents $\la$ and $\tau$ and the growth exponent $\be$ satisfy
\begin{equation}
\tau>1/2\,,  \quad \be+\la <2\,, \quad     \la +4\tau>4\,.
\label{h.exponents}
\end{equation}
Then $u$ defined by \eref{id.FK-4} is  a weak solution to
 \eref{eqn.Ldw} in the sense that $u$ satisfies
 \begin{align}
\int_{\mathbb{R}^{d}} u\left( t,x\right)  \varphi (x)dx
&=\int_{\mathbb{R}^{d}} u_{0}(x) \varphi (x)dx
+\int_{0}^{t}\int_{\mathbb{R}^{d}}u\left( s,x\right) L^*\varphi (x)dxds  \notag \\
&\quad+\int_{0}^{t}\int_{\mathbb{R}^{d}}u\left( s,x\right) \varphi (x)W(ds,x)dx\,,
\label{e.weak-sol}
\end{align}
where  $\varphi$ is any smooth function with compact support and where
the last integral
is a Young integral.
\end{theorem}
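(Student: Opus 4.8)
The plan is to obtain \eqref{e.weak-sol} by approximation: replace $W$ by mollified noises $W_n$, write the classical weak formulation for the corresponding solutions $u_n$, and pass to the limit, the only delicate point being the term carrying the noise. First I would check that \eqref{h.exponents} is exactly what makes everything meaningful. Since $\la+4\tau>4$ and $\tau\le1$ force $\tfrac12\la+\tau>2-\tau\ge1$, there is $\ga\in(0,1/2)$ with $\tau+\ga\la>1$; as $X^{r,x}$ is a.s.\ $\ga$-H\"older, Proposition \ref{prop.riemann.w} makes $\int_r^TW(\ds,X_s^{r,x})$ well defined, and because $\be+\la<2$, Lemma \ref{exponential-int} gives $\EE\exp\{c\int_r^TW(\ds,X_s^{r,x})\}<\infty$ for every $c$; together with $|u_T(x)|\le C_2e^{C_1|x|^{\al_0}}$, $\al_0<2$, this shows (Proposition \ref{p.fk}) that $u$ in \eqref{id.FK-4} is finite and locally bounded. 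After the time reversal $t\mapsto T-t$ indicated before the statement, \eqref{eqn.Ldw} turns into $\partial_tu=Lu-u\,\partial_tW$ with $u(0,\cdot)=u_0$, and \eqref{id.FK-4} is the associated Feynman--Kac candidate.

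Next I would choose $W_n\in C_0^\infty([0,T]\times\RR^d)$ with $W_n\to W$ in the norm $\|\cdot\|_\infty+\|\cdot\|_{\be,\tau,\la}$ (a mollification works, as in Lemma \ref{lem.fepf}; in particular $\sup_n\|W_n\|_{\be,\tau,\la}<\infty$), and let $u_n$ be \eqref{id.FK} with $W_n$ in place of $W$. Since $W_n$ is smooth, the classical Feynman--Kac formula shows that $u_n$ is the classical solution of $\partial_tu_n+Lu_n+u_n\partial_tW_n=0$ with $u_n(T,\cdot)=u_T$; testing against $\varphi\in C_c^\infty(\RR^d)$ and integrating by parts in space, $u_n$ satisfies the smooth counterpart of \eqref{e.weak-sol},
\begin{align*}
\int_{\RR^d}u_n(t,x)\varphi(x)\,\dx
&=\int_{\RR^d}u_0(x)\varphi(x)\,\dx
+\int_0^t\int_{\RR^d}u_n(s,x)L^*\varphi(x)\,\dx\,\ds\\
&\quad+\int_0^t\int_{\RR^d}u_n(s,x)\varphi(x)\,\partial_sW_n(s,x)\,\ds\,\dx\,,
\end{align*}
where the last term is an ordinary Lebesgue integral. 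Then I would let $n\to\infty$. By the Proposition stated just before this theorem, for each $\tau'<\la/2+\tau-1$ and each compact $K$ one has $u_n(\cdot,x)\to u(\cdot,x)$ in $C^{\tau'}([0,T])$ uniformly in $x\in K$, hence $u_n\to u$ uniformly on compacts; and since the exponential integrability above is uniform in $n$ (because $\sup_n\|W_n\|_{\be,\tau,\la}<\infty$ and $\al_0<2$), one also has $\sup_n\|u_n\|_{L^\infty([0,T]\times K)}<\infty$. As $\varphi$ has compact support, the left-hand side and the first two terms on the right converge to the same expressions with $u_n$ replaced by $u$, by uniform convergence and bounded convergence.

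The hard part is the noise term. For a fixed $x\in\operatorname{supp}\varphi$, the map $s\mapsto u_n(s,x)\varphi(x)$ is $\tau'$-H\"older and $s\mapsto W_n(s,x)$ is $\tau$-H\"older, both with seminorms bounded uniformly in $n$ and in $x\in\operatorname{supp}\varphi$. Because \eqref{h.exponents} contains $\la+4\tau>4$, the constraints $1-\tau<\tau'$ and $\tau'<\la/2+\tau-1$ on $\tau'$ are compatible, so I may fix $\tau'$ with the extra property $\tau+\tau'>1$. Then each $\int_0^tu_n(s,x)\varphi(x)\,\partial_sW_n(s,x)\,\ds$ is a Young integral, and by continuity of the Young integral under joint convergence of the integrand in $C^{\tau'}$ and of the integrator in $C^{\tau}$ with $\tau+\tau'>1$ (the one dimensional instance of Propositions \ref{prop.int.w12}--\ref{prop.int.phi12} applied to $(t,z)\mapsto W(t,x)z$), it converges to $\int_0^tu(s,x)\varphi(x)\,W(\ds,x)$, with a bound on the error that is uniform over $x\in\operatorname{supp}\varphi$; the same H\"older estimates keep these integrals uniformly bounded in $n$ and $x$. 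Dominated convergence on $\operatorname{supp}\varphi$ then yields
\[
\int_0^t\int_{\RR^d}u_n(s,x)\varphi(x)\,\partial_sW_n(s,x)\,\ds\,\dx\longrightarrow\int_0^t\int_{\RR^d}u(s,x)\varphi(x)\,W(\ds,x)\,\dx\,,
\]
which is the Young integral appearing in \eqref{e.weak-sol}. Combining the four limits gives \eqref{e.weak-sol}. The main obstacle, then, is purely analytic: to carry the stability of the Young integral under the spatial integral one needs the H\"older norms (in $s$) of $u_n(\cdot,x)$ and $W_n(\cdot,x)$, together with the convergence rate, to be controlled uniformly in $x$ over $\operatorname{supp}\varphi$, which is precisely where the uniform estimates of the preceding subsection and the uniform exponential integrability enter.
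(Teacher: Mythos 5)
Your proposal is correct and follows essentially the same route as the paper: approximate $W$ by smooth $W_n$, use the preceding Proposition (convergence of $u_n(\cdot,x)$ to $u(\cdot,x)$ in $C^{\tau'}$ uniformly on compacts, for any $\tau'<\lambda/2+\tau-1$), and pass to the limit in the classical weak formulation, where the noise term converges as a Young integral precisely because $\lambda+4\tau>4$ allows a choice of $\tau'$ with $\tau+\tau'>1$. The paper's own proof is exactly this limiting argument (stated as the discussion preceding the theorem), so no further comment is needed.
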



\begin{remark}  Equation \eref{e.weak-sol} is the definition of the weak solution
used in \cite{hu-lu-nualart12},  \cite{hunualart09}
and \cite{hunualartsong}.
\end{remark}

%


\section{Asymptotic growth of Gaussian
sample paths}\label{sec.cov-path}
\def\TT{\mathbf{T}}

In Sections \ref{sec.pathint}, \ref{sec.diffeqn} and
\ref{sec.feykac}, we  assume the pathwise H\"older continuity and
pathwise growth conditions on $W$ in order to define and  to solve
(partial) differential equations related to the nonlinear integral
$\int W(ds,\varphi_s)$.  For instance, the conditions
\eqref{eq:cond.w2}, \eqref{growth.daf}, \eqref{growth.df},
\eqref{growth.f} are essential in various parts of the paper. {\replacecolorred  In probability
theory,   it is usually hard to obtain  properties for (almost) every sample path
of a stochastic process  from its average properties (from its  probability law).     In this section,
we   investigate  these pathwise H\"older continuity and
pathwise growth   problems for a stochastic process.} We shall focus
 on
Gaussian random fields.  However, our method works well for other processes
provided they satisfy some suitable normal concentration
inequalities (for instance, see the assumptions in Theorem \ref{thm.maj.gaussian}).

Let $W$ be a stochastic process on $[0,T]\times\RR^d$. An
application of  our results yields the asymptotic growth of the
quantity
\begin{equation*}
    I(\delta,R)= \sup_{{{t\in[0,T]}}}\sup_{|x|,|y|\le R;|x-y|\le \delta}
    \frac{|W(t,\square[x,y])|}{|x_1-y_1|^{\lambda_1}\cdots|x_d-y_d|^{\lambda_d}}
\end{equation*}
as $R\to\infty$, where $W(t,\square[x,y])$ denotes the $d$-increment
of $W(t,\cdot)$ over the rectangle $[x,y]$.  More precise definition
is given in Subsection \ref{subsec.majmeas}. If $R$ is fixed, the
quality $I(\delta,R)$ is the objective in our previous work
\cite{hule2012} via a multiparameter version of
Garsia-Rodemich-Rumsey inequality.

Let us mention some historical remarks.  (Pathwise) boundedness and continuity
for stochastic processes have been studied thoroughly in literature.
One of the central ideas is originated in an important early paper
by Garsia, Rodemich and Rumsey  (1970) \cite{garsiarodemich}. This
was developed further by Preston (1971,1972) \cite{preston71,preston72}, Dudley (1973) \cite{dudley73} and
Fernique (1975) \cite{fernique75}.  In these considerations, the parameter space $\TT$
is bounded and treated as a  ``single-dimension" object.  For
instance, the well-known Dudley bound
\[
\EE \sup_{s,t\in \TT}|W(t)-W(s)| \lesssim  \int_0^{d_W(s,t)} \sqrt{\log N(\TT, d_W, \vare)} d\vare
\]
yields modulus of continuity in terms of the entropy number
$N(T,d,\ep)$.  This is extended to a more precise bound in terms of
majorizing measure
\begin{equation*}
    \EE \sup_{\substack{s,t\in\TT\\d_W(s,t)\le \delta}}|W(t)-W(s)|
    \lesssim\sup_{t\in\TT}\int_0^\delta\log^{1/2}\frac1{\mu(B_{d_W}(t,u))}du\,.
\end{equation*}
The majorizing-measure bound turns out to be necessary for processes
which  satisfy normal concentration inequalities. This result by M.
Talagrand is the milestone in theory of Gaussian processes. We refer
the readers to \cite[Chapter 6]{marcus-rosen} and references therein
for details and more historical facts. See also Talagrand's
monograph \cite{talagrand-book} in which the role of majorizing
measure is replaced by a variational quality called
$\gamma_2(\TT,d_W)$.

Estimates for the $d$-increment of $W$ over a rectangle are quite
different.  Difficulties arise since $W(\square[s,t])$ does not
behave nicely as increments. In particular, the corresponding entropic  ``metric"
\begin{equation*}
    (\EE W(\square[s,t])^2)^{1/2}
\end{equation*}
does not satisfy the triangle inequality, but rather behaves like  a
volume metric.  {\replacecolorred  To elaborate  this point, let us consider the
two dimensional case:
\begin{eqnarray*}
W(\square[s,t])
&=&W(s_2, t_2)-W(s_2, t_1)-W(s_1, t_2)+W(s_1, t_1)\\
&=&\Delta_{[t_2, t_1]} W(s_2 )- \Delta_{[t_2, t_1]} W(s_1 )=\Delta_{[s_2, s_1]}\Delta_{[t_2, t_1]} W
\,,
\end{eqnarray*}
where $\tilde W(s):=\Delta_{[t_2, t_1]} W(s )=W(s; t_2)-W(s; t_1)$. This product-like property is essential in our current approach (see for instance inequality \eqref{ineq.mkmk} below). Alternatively, to obtain a sharp bound for the
difference,  one can repeatedly apply the Garsia-Rodemich-Rumsey inequality first
$ \Delta_{[s_2, s_1]}\tilde W$ and then to $\Delta_{[s_2, s_1]}\Delta_{[t_2, t_1]} W$. Indeed,  for bounded parameter domains equipped with Lebesgue measure, this direction was developed by the authors in \cite{hule2012}. This idea, while might be feasible, seems to be more complicated in our current setting with general
(unbounded)  parameter domains equipped with a general measure.}

In Subsection \ref{subsec.real.var.ineq},
we will prove a deterministic inequality, which is more precise than the multiparameter Garsia-Rodemich-Rumsey inequality obtained in \cite{hule2012}. We then apply it to obtain a majorizing-measure bound on the $d$-increments of stochastic processes in Subsection
\ref{subsec.majmeas}. Our formulations are benefited from the treatment in
\cite{marcus-rosen}. We however did not consider the necessary
conditions for these bounds (i.e. lower bounds). {\replacecolorred  Results in these two subsections
are applicable to general stochastic processes.}

Given a well-developed toolbox to treat the case when $\TT$ is
bounded  (or for example, $R$ is fixed in $I(\delta,R)$), the
asymptotic growth for $I(\delta,R)$ as $R\to\infty$ can be obtained
using concentration inequalities {\replacecolorred  for Gaussian processes.
More precise results are given for fractional Brownian fields.
This is done in
Subsection \ref{subsec.asym.growth}.}

\subsection{A deterministic inequality}\label{subsec.real.var.ineq}
Throughout the current subsection,  we put $\Psi(u)=\exp(u^2)-1$.
Suppose $\mu$ is a nonnegative measure on $\TT$ and $X$ is a
measurable function on $\TT$.  We define
\begin{equation*}
    [X]_{\Psi,(\TT,\mu)}:=\inf\left\{\alpha>0: \int_\TT \Psi
    \left(\frac{X(t)}{\alpha}\right)\mu(dt)\le 1 \right\}\,.
\end{equation*}
When the parameter space $\TT$ and the measure $\mu$ are clear from the
context, we often suppress them and write $[X]_{\Psi}$ instead.
The following result, whose proof is given in \cite[pg. 256-258]{marcus-rosen}, is an application of the Young inequality
\begin{equation*}
    ab\le\int_0^a g(x)dx+\int_0^b g^{-1}(x)dx\,,
\end{equation*}
where $g$ is a real-valued, continuous and strictly increasing function.
\begin{lemma}\label{lemma.young.ineq}
    Let $X$ and $f$ be measurable functions on $\TT$, $\mu$ be a
    nonnegative measure on $\TT$. Assume that $[X]_{\Psi,(T,\mu)}$ is finite and $0<\int|f|d \mu<\infty$. Then
    \begin{equation*}
        \int_\TT |X(t)f(t)|\mu(dt)\le 3 [X]_{\Psi,(\TT,\mu)} \int_\TT |f(t)|\log^{1/2}
        \left(1+\frac{|f(t)|}{\int|f(s)|\mu(ds) } \right)\mu(dt)\,.
    \end{equation*}
\end{lemma}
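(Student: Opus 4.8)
The plan is to exhibit the bound as a single application of the quoted Young inequality $ab\le\int_0^a g(x)\,dx+\int_0^b g^{-1}(y)\,dy$ with $g$ tailored to the Young function $\Psi(u)=e^{u^2}-1$, and then to convert the resulting additive constant into the multiplicative constant $3$ by one use of Jensen's inequality. Since both sides of the asserted inequality involve only $|X|$ and $|f|$, I may assume $X\ge 0$ and $f\ge 0$; and since under $f\mapsto f/\!\int f\,d\mu$ both sides are multiplied by the same factor $\big(\!\int f\,d\mu\big)^{-1}$ (which is precisely the quantity occurring inside the logarithm), I may further normalize so that $\int_\TT f\,d\mu=1$, in which case the asserted inequality reads $\int_\TT Xf\,d\mu\le 3\alpha\int_\TT f\log^{1/2}(1+f)\,d\mu$ with $\alpha:=[X]_{\Psi,(\TT,\mu)}$. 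If $\alpha=0$ then $X=0$ $\mu$-a.e.\ and there is nothing to prove; if $\int_\TT f\log^{1/2}(1+f)\,d\mu=\infty$ the bound is trivial; so I assume $\alpha\in(0,\infty)$ and this integral is finite. Finally, by continuity of $\Psi$ and monotone convergence, the definition of $\alpha$ gives $\int_\TT\Psi(X/\alpha)\,d\mu\le 1$.

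First I would take $g(x)=2x\,e^{x^2}$ on $[0,\infty)$. It is continuous, vanishes at $0$, and is strictly increasing (indeed $g'(x)=2e^{x^2}(1+2x^2)>0$), so the Young inequality applies; moreover $\int_0^a g(x)\,dx=e^{a^2}-1=\Psi(a)$. The one non-routine elementary fact is the pointwise estimate
\[
g^{-1}(y)\le\log^{1/2}(1+y)\qquad\text{for all }y\ge 0 .
\]
To see this, put $x=\log^{1/2}(1+y)$, so that $e^{x^2}=1+y$ and $g(x)=2x(1+y)$; since $g$ is increasing it suffices to check $g(x)\ge y$, i.e.\ $2\log^{1/2}(1+y)\ge\frac{y}{1+y}$. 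For $y\ge e^{1/4}-1$ the left side is $\ge 1>\frac{y}{1+y}$, while for $0\le y<e^{1/4}-1$ one has $\frac{y}{1+y}\le y\le 2\sqrt{\log(1+y)}$ by a direct (monotonicity) check. Consequently, since $g^{-1}$ is increasing,
\[
\int_0^b g^{-1}(y)\,dy\le b\,g^{-1}(b)\le b\log^{1/2}(1+b)\qquad\text{for all }b\ge 0 .
\]

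Next I would apply the Young inequality pointwise with $a=X(t)/\alpha$ and $b=f(t)$, obtaining
\[
\frac{X(t)f(t)}{\alpha}\le\Big(e^{X(t)^2/\alpha^2}-1\Big)+f(t)\log^{1/2}\!\big(1+f(t)\big),
\]
and then integrate against $\mu$. Using $\int_\TT\Psi(X/\alpha)\,d\mu\le 1$ this yields
\[
\int_\TT Xf\,d\mu\le\alpha\Big(1+\int_\TT f\log^{1/2}(1+f)\,d\mu\Big).
\]
To replace the additive $1$ by a factor of $3$ it then remains to show $\int_\TT f\log^{1/2}(1+f)\,d\mu\ge\tfrac12$. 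This is where one uses that $\mu$ is (normalized to be) a probability measure, as is the case in the majorizing-measure applications of the lemma: the function $\phi(s)=s\log^{1/2}(1+s)$ is convex on $[0,\infty)$ — a direct computation gives $\phi''(s)>0$, since after clearing positive factors it reduces to $4\log(1+s)+s\big(2\log(1+s)-1\big)\ge 0$ — so Jensen's inequality together with $\int_\TT f\,d\mu=1$ gives
\[
\int_\TT f\log^{1/2}(1+f)\,d\mu=\int_\TT\phi(f)\,d\mu\ \ge\ \phi(1)=\log^{1/2}2\ >\ \tfrac12 .
\]
Hence $1\le 2\int_\TT f\log^{1/2}(1+f)\,d\mu$, and the displayed bound becomes $\int_\TT Xf\,d\mu\le 3\alpha\int_\TT f\log^{1/2}(1+f)\,d\mu$, which is the claim after undoing the normalization.

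The measure-theoretic reductions and the monotone-convergence step are routine; the two points deserving care are the choice of $g$ together with the inequality $g^{-1}(y)\le\log^{1/2}(1+y)$, and the convexity of $\phi(s)=s\log^{1/2}(1+s)$ used in the final Jensen step (which is also precisely where the normalization of $\mu$ to a probability measure enters).
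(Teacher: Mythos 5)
Your argument is correct, and it is essentially the route the paper points to (it only cites Marcus--Rosen and describes the proof as "an application of the Young inequality"): one pointwise use of Young's inequality, here with $g=\Psi'$ so that $\int_0^a g=\Psi(a)$ exactly, the elementary bound $\int_0^b g^{-1}\le b\,g^{-1}(b)\le b\log^{1/2}(1+b)$, and then absorption of the additive constant coming from $\int_\TT\Psi(X/\alpha)\,d\mu\le 1$. The two points you flag as delicate both check out: $g^{-1}(y)\le\log^{1/2}(1+y)$ reduces to $2(1+y)\log^{1/2}(1+y)\ge y$, and $\phi(s)=s\log^{1/2}(1+s)$ is convex on $[0,\infty)$, since your reduced inequality $4\log(1+s)+s\bigl(2\log(1+s)-1\bigr)\ge0$ holds (on the range where $2\log(1+s)<1$ one has $4\log(1+s)\ge s$). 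With the normalization you use, your constant is in fact $1+\log^{-1/2}2\approx 2.2<3$.

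The only divergence from the statement as printed is the final Jensen step, which requires $\mu$ to be a probability measure (or $\mu(\TT)\le1$, by adding mass where $f=0$), whereas the lemma is stated for an arbitrary nonnegative measure. You flagged this yourself, and you should know it is not a removable defect of your proof: the statement is false for general nonnegative $\mu$, with any constant in place of $3$. Take $\TT=A\sqcup B$, $f=\delta$ on $A$ with $\delta\,\mu(A)=1-\ep$, $f=1$ on $B$ with $\mu(B)=\ep$, and $X=0$ on $A$, $X=\log^{1/2}(1+\ep^{-1})$ on $B$. Then $\int f\,d\mu=1$ and $\int\Psi(X)\,d\mu=1$, so $[X]_{\Psi,(\TT,\mu)}=1$; the left-hand side equals $\ep\log^{1/2}(1+\ep^{-1})$, while the right-hand side is $3\bigl[(1-\ep)\log^{1/2}(1+\delta)+\ep\log^{1/2}2\bigr]\to 3\,\ep\log^{1/2}2$ as $\delta\downarrow0$, so the ratio of the two sides grows like $\log^{1/2}(1/\ep)$ as $\ep\downarrow0$. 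Hence the probability-measure hypothesis must be read into the lemma; this is harmless for the paper, since the only invocation (in the proof of Theorem \ref{thm.maj.meas}) is on $(\TT\times\TT,\mu\otimes\mu)$ with $\mu$ a product of probability measures and with $\iint\rho_n(s,\cdot)\rho_n(t,\cdot)\,d\mu\,d\mu=1$, and the same normalization is in force in the cited source. With that hypothesis made explicit, your proof is complete.
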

We consider the case when $\TT$ has the form
$\TT=\TT_1\times\cdots\times\TT_\ell$.  A parameter $t$ in $\TT$ has
$\ell$ components, $t=(t_1,\dots,t_\ell)$. For each $i=1,\dots,\ell$, the space $\TT_i$ is equipped with a metric $d_i$. We also denote $d^*(s,t)=d_1(s_1,t_1)\dots d_\ell(s_\ell,t_\ell)$ for every $s,t$ in $\TT$. Let $X$ be a function on
$\TT$. We define the $\ell$-increment of $X$ over a ``rectangle"
$[s,t]$ as
\begin{equation*}
    X(\square[s,t])=\prod_{j=1}^{\ell}(I-V_{j,s})X(t)\,.
\end{equation*}
In the above expression, $I$ is the identity operator, $V_{j,s}$ is
the substitution operator which substitutes the $j$-th component of
a function on $\TT$ by $s_j$, more precisely,
\[
V_{j,s}X(t)=X(t_{1},\dots,t_{j-1},s_{j},t_{j+1},\dots,t_{\ell})\,.
\]
We refer to \cite{hule2012} for a more detailed description on this
$\ell$-increment.

For each $i$, $B^i(t_i,u)$ denotes the open ball with radius $u$ in
the metric  space $(\TT_i,d_i)$ centered at $t_i$. For each $t$ in
$\TT$, we denote $B(t,u)=B^1(t_1,u)\times\cdots\times
B^\ell(t_\ell,u)$.  For each $j$, put $D_j=\sup_{s_j,t_j\in\TT_j}d_j(s_j,t_j)$.

For each $i=1,\dots,\ell$, let $\mu_i$ be a probability measure on $\TT_i$. Let $k=(k_1,\dots,k_\ell)$ be a multi-index in $\NN^\ell$. We define
    \begin{align*}
        &\mu_k^i(t_i)=\mu^i(B^i(t_i,D_i2^{-k_i}))\,,
        & &\rho_{k_i}(t_i,\cdot)=\frac1{\mu_k^i(t_i)}1_{B^i(t_i,D_i2^{-k_i})}(\cdot)
        \\&\mu_k(t)=\prod_{i=1}^\ell \mu_k^i(t_i) \,,
        & &\rho_k(t,\cdot)=\prod \rho_{k_i}(t_i,\cdot)
    \end{align*}
    and
    \begin{equation}\label{eqn.def.mk}
        M_k(t)=\int_\TT \rho_k(t,u)X(u)\mu(du)\,.
    \end{equation}

We use the notations $k+1=(k_1+1,\dots,k_\ell+1)$, $k+1_j=(k_1,\dots,k_{j-1},k_j+1,k_{j+1},\dots,k_\ell)$, $\hat t_i=(t_1,\dots,t_{i-1},t_{i+1},\dots,t_\ell)$ and $\hat\TT_i=\TT_1\times\dots\times\TT_{i-1}\times\TT_{i+1}\times\dots\times\TT_\ell$.

\begin{theorem}\label{thm.maj.meas}
    Let $\{X(t),t\in \TT\}$ be a measurable  function on $\TT$.
    We put  $\mu=\mu^1\times\cdots\times \mu^\ell$ and
    \begin{equation*}
        Z=\inf\left\{\alpha>0:\iint_{\TT\times \TT}\Psi\left(\frac{X(\square[u,v])}{\alpha d^*(u,v)}\right) \mu(du)\mu(dv)\le1 \right\}\,.
    \end{equation*}
    Assume that $D_j$, $j=1,\dots,d$, and $Z$ are finite. Then, for every $s,t$ in $\TT$ such that the integral
    \begin{equation*}
        \int_0^{d_1(s_1,t_1)}du_1\cdots\int_0^{d_\ell(s_\ell,t_\ell)}du_\ell
        \left(\log^{1/2}\frac1{\mu(B(s,u))}+\log^{1/2}\frac1{\mu(B(t,u))} \right)
    \end{equation*}
    is finite, $M_k(\square [s,t])$ converges to a limit, denoted by $X'(\square [s,t])$, as $k_1,\dots,k_\ell$ go to infinity. In addition, $X'(\square [s,t])$ satisfies
    \begin{multline}\label{ineq.maj.meas}
        |X'(\square[s,t])|\le C^\ell Z\int_0^{d_1(s_1,t_1)}du_1\cdots\int_0^{d_\ell(s_\ell,t_\ell)}du_\ell
        \\\left(\log^{1/2}\frac1{\mu(B(s,u))}+\log^{1/2}\frac1{\mu(B(t,u))} \right)\,.
    \end{multline}
\end{theorem}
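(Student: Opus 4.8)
The plan is to run the chaining / majorizing-measure argument of \cite[Chapter~6]{marcus-rosen}, but applied to the $\ell$-increment $X(\square[\cdot,\cdot])$ rather than to ordinary increments, making essential use of the multiplicative structure of the operator $\square$ recorded in the introduction. The first step is to recast the averages $M_k$ operator-theoretically: write $P^i_m$ for the operator that replaces the $i$-th coordinate of a function on $\TT$ by its $\mu^i$-average over $B^i(\cdot,D_i2^{-m})$. Then $M_k=P^1_{k_1}\cdots P^\ell_{k_\ell}X$, the $P^i_m$ commute, and $P^i_0$ averages over all of $\TT_i$ (because $D_i$ is the diameter, so $B^i(\cdot,D_i)=\TT_i$). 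Telescoping each coordinate via the exact identity $P^i_{K_i}=\sum_{m=0}^{K_i}(P^i_m-P^i_{m-1})$ with $P^i_{-1}:=0$, and then applying $\prod_{j}(I-V_{j,s})$ to $M_K(t)$, gives
\[
 M_K(\square[s,t])=\sum_{0\le k\le K}\Delta_k(s,t),\qquad \Delta_k(s,t):=\Big[\prod_{j=1}^{\ell}(I-V_{j,s})\Big]\Big[\prod_{i=1}^{\ell}(P^i_{k_i}-P^i_{k_i-1})\Big]X(t).
\]
Since $P^i_0X$ does not depend on the $i$-th coordinate, $(I-V_{i,s})$ annihilates it, so $\Delta_k=0$ unless every $k_i\ge1$. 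Hence everything reduces to showing that $\sum_{k\ge(1,\dots,1)}|\Delta_k(s,t)|<\infty$ under the stated integrability hypothesis (which then yields convergence of $M_k(\square[s,t])$, so that $X'(\square[s,t])$ is well defined as this series) together with the quantitative bound \eqref{ineq.maj.meas}.

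The crux is the estimate of a single mixed difference $\Delta_k(s,t)$ with $k\ge(1,\dots,1)$. Unfolding the operators, $\Delta_k(s,t)=\int_\TT\prod_{j=1}^{\ell}\big(\varphi_{k_j}(t_j,u_j)-\varphi_{k_j}(s_j,u_j)\big)X(u)\,\mu(du)$, where $\varphi_m(w,\cdot):=\rho_m(w,\cdot)-\rho_{m-1}(w,\cdot)$ is supported in $B^j(w,D_j2^{-(k_j-1)})$ and has zero $\mu^j$-mass. Using this vanishing-mean property in each coordinate, I would rewrite $\Delta_k(s,t)$ as a signed double integral $\iint_{\TT\times\TT}G^{s,t}_k(u,v)\,X(\square[v,u])\,\mu(du)\mu(dv)$: the point is that, since $\int(\varphi_{k_j}(t_j,\cdot)-\varphi_{k_j}(s_j,\cdot))\,d\mu^j=0$, the function $X(u)$ may be replaced by the full increment $\prod_j(I-V_{j,v})X(u)=X(\square[v,u])$ without changing the integral, and one then integrates $v$ against a suitable product of the $\rho$'s to obtain a kernel $G^{s,t}_k$ supported where each coordinate pair $(u_i,v_i)$ sits inside a ball of radius $\sim D_i2^{-k_i}$ about $s_i$ or $t_i$. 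Applying Lemma~\ref{lemma.young.ineq} on $(\TT\times\TT,\mu\times\mu)$ with the role of $X$ played by $X(\square[\cdot,\cdot])/d^*(\cdot,\cdot)$ — whose $\Psi$-norm is, by definition, the finite constant $Z$ — and with $f=G^{s,t}_k\,d^*$, one gets
\[
 |\Delta_k(s,t)|\le 3Z\iint_{\TT\times\TT}\big|G^{s,t}_k\,d^*\big|\,\log^{1/2}\!\Big(1+\frac{|G^{s,t}_k\,d^*|}{\iint|G^{s,t}_k\,d^*|\,\mu(du)\mu(dv)}\Big)\,\mu(du)\mu(dv).
\]
Evaluating the two integrals — using that on the support $|G^{s,t}_k|\sim\prod_i(\mu_{k_i}^i(s_i)\mu_{k_i}^i(t_i))^{-1}$ and that the ratio $|f|/\|f\|_{L^1}$ is essentially a product over the $\ell$ coordinates, which is precisely why the product measure $\mu$ appears inside a single logarithm — yields a product-type bound
\[
 |\Delta_k(s,t)|\le C^{\ell}Z\Big(\prod_{i=1}^{\ell}D_i2^{-k_i}\Big)\Big(\log^{1/2}\tfrac1{\mu_k(s)}+\log^{1/2}\tfrac1{\mu_k(t)}\Big).
\]

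It remains to sum over $k\ge(1,\dots,1)$ and convert the dyadic sum into the integral in \eqref{ineq.maj.meas}. For scales with $D_i2^{-(k_i-1)}\gtrsim d_i(s_i,t_i)$ the balls $B^i(s_i,\cdot)$ and $B^i(t_i,\cdot)$ overlap, and by a Lipschitz-type cancellation $\|\varphi_{k_i}(t_i,\cdot)-\varphi_{k_i}(s_i,\cdot)\|_{L^1(\mu^i)}\lesssim d_i(s_i,t_i)/(D_i2^{-k_i})$, so the $i$-th factor is effectively $\lesssim\min(D_i2^{-k_i},d_i(s_i,t_i))$. Summing each coordinate then produces a Riemann-type sum comparable to $C^{\ell}Z\int_0^{d_1(s_1,t_1)}\!\!\cdots\!\int_0^{d_\ell(s_\ell,t_\ell)}\big(\log^{1/2}\frac1{\mu(B(s,u))}+\log^{1/2}\frac1{\mu(B(t,u))}\big)\,du_1\cdots du_\ell$, exactly as one passes from the dyadic chaining sum to Dudley's entropy integral in the one-parameter case; since this integral is finite by hypothesis, the series converges absolutely, $M_k(\square[s,t])$ has a limit, and \eqref{ineq.maj.meas} follows. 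I expect the main obstacle to be the representation of $\Delta_k(s,t)$ as a double integral against the genuine $\ell$-increment $X(\square[v,u])$ with a kernel one can control — that is, collapsing the product of mean-zero one-dimensional kernels onto a single $\ell$-increment so that the constant $Z$ (which is itself defined through a double integral over $\TT\times\TT$) can be brought to bear — since this is where the multiplicative structure of the rectangle increment is genuinely needed and where the argument departs from classical one-parameter chaining. The remaining ingredients — handling atoms of the $\mu^i$ (so that a zero-radius ball may carry positive mass), keeping track of the factor $C^\ell$, and the dyadic-sum-to-integral comparison with its $d_i(s_i,t_i)$ cutoffs — are routine once that representation is in place.
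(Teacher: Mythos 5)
Your reduction of $M_K(\square[s,t])$ to the sum of the single-scale blocks $\Delta_k$, and the zero-mean trick that lets you replace $X(u)$ by the genuine rectangle increment $X(\square[v,u])$ so that Lemma \ref{lemma.young.ineq} and the constant $Z$ can be applied, are sound (the part you flag as the main obstacle is in fact fine). The genuine gap is the coarse-scale step that you dismiss as routine: the claimed cancellation $\|\varphi_{k_i}(t_i,\cdot)-\varphi_{k_i}(s_i,\cdot)\|_{L^1(\mu^i)}\lesssim d_i(s_i,t_i)/(D_i2^{-k_i})$ for your kernels $\varphi_m(w,\cdot)=\rho_m(w,\cdot)-\rho_{m-1}(w,\cdot)$. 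The theorem allows an arbitrary probability measure $\mu^i$ on an arbitrary metric space, and for normalized ball indicators the map ``center $\mapsto$ kernel'' is not $L^1$-Lipschitz in that generality: if $\mu^i$ has an atom lying just outside $B^i(s_i,D_i2^{-k_i})$ but inside $B^i(t_i,D_i2^{-k_i})$, then $\|\rho_{k_i}(s_i,\cdot)-\rho_{k_i}(t_i,\cdot)\|_{L^1(\mu^i)}$ is of order one no matter how small $d_i(s_i,t_i)$ is. Without the resulting factor $\min(D_i2^{-k_i},d_i(s_i,t_i))$, your sum over the scales $D_i2^{-k_i}\gtrsim d_i(s_i,t_i)$ produces an integral up to the diameter $D_i$ rather than up to $d_i(s_i,t_i)$, so the bound \eqref{ineq.maj.meas} is not obtained. (A secondary, repairable, imprecision: with a single $v$-kernel the support of your $G^{s,t}_k$ contains pairs with $u_j$ near $t_j$ and $v_j$ near $s_j$, so $d^*(u,v)$ is only of order $\prod_j(D_j2^{-k_j}+d_j(s_j,t_j))$, not $\prod_j D_j2^{-k_j}$; you need to split into $2^\ell$ pieces with matched centers, which works because each $\varphi_{k_j}(w_j,\cdot)$ separately has zero mean.)

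The paper's proof avoids the coarse scales altogether, and this is the structural point you are missing. It fixes $n$ with $D_j2^{-n_j-1}\le d_j(s_j,t_j)\le D_j2^{-n_j}$, bounds the base term $M_n(\square[s,t])$ directly through the identity $M_n(\square[s,t])=\iint X(\square[u,v])\rho_n(s,u)\rho_n(t,v)\,\mu(du)\mu(dv)$ and Lemma \ref{lemma.young.ineq} applied to $Y=X(\square[\cdot,\cdot])/d^*$ (this gives \eqref{ineq.mn.square}), and then only sums the refinements $M_{k+1}-M_k$ for $k\ge n$, decomposed into single-coordinate moves in which the two kernels being compared are centered at the same point. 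Hence no regularity of $\mu^i$ with respect to the center of the ball is ever invoked, and it is the choice of the starting scale $n$ that produces the upper limits $d_i(s_i,t_i)$ in the entropy integral. If you wish to keep your operator-telescoping formulation, you must start the telescope at $k=n$ rather than $k=0$; the base term is then exactly the paper's $M_n(\square[s,t])$ and must be estimated directly as above, at which point your argument essentially becomes the paper's.
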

\begin{proof}
    Fix $s,t$ in $\TT$.  We choose the multi-index $n$ such that $D_j2^{-n_j-1}\le d_j(s_j,t_j)\le D_j2^{-n_j}$ for each $j=1,\dots,\ell$. It suffices to show that the following series satisfies the bound in \eqref{ineq.maj.meas}
    \begin{equation}\label{ineq.xsquare}
        |M_n(\square[s,t])|+\sum_{k\ge n}|M_{k+1}(\square[s,t])-M_k(\square[s,t])|\,.
    \end{equation}
    We estimate the first term. Notice that we can write
    \begin{equation*}
        M_n(\square[s,t])=\iint_{\TT\times\TT}X(\square[u,v])\rho_n(s,u)\rho_n(t,v)\mu(du)\mu(dv)\,.
    \end{equation*}
    We consider the function $\{Y(u,v),(u,v)\in\TT\times\TT\}$ defined by
    \begin{equation*}
        Y(u,v)=
        \begin{dcases*}
            \frac{X(\square[u,v])}{d^*(u,v)}& when $d^*(u,v)\neq 0\,,$
            \\0& otherwise\,.
        \end{dcases*}
    \end{equation*}
    It is clear that
    \begin{align*}
        |M_n(\square[s,t])|
        &\le\iint_{\TT\times\TT}|Y(u,v)|d^*(u,v)\rho_n(s,u)\rho_n(t,v)\mu(du)\mu(dv)
        \\&\lesssim(D_12^{-n_1})\cdots(D_\ell2^{-n_\ell}) \iint_{\TT\times\TT}|Y(u,v)|\rho_n(s,u)\rho_n(t,v)\mu(du)\mu(dv)
        \,,
    \end{align*}
    since the support of $\rho_n(s,\cdot)\rho_n(t,\cdot)$, $d^*(u,v)\lesssim (D_12^{-n_1})\cdots(D_\ell2^{-n_\ell})$.
    We now apply Lemma \ref{lemma.young.ineq}  to  the functions $Y$ and $\rho_n(s,\cdot)\otimes\rho_n(t,\cdot)$ on the product space $(\TT\times\TT,\mu\otimes \mu)$, observing that $Z=[Y]_{\Psi}$, $\iint\rho_n(s,\cdot)\rho_n(t,\cdot)=1$ and $\rho_n(s,u)\rho_n(t,v)\le(\mu_n(s,u)\mu_n(t,v))^{-1}$,
    \begin{align*}
        &|M_n(\square[s,t])|
        \\&\lesssim Z(D_12^{-n_1})\cdots(D_\ell2^{-n_\ell})  \iint_{\TT\times\TT} \rho_n(s,u)\rho_n(t,v)\log^{1/2}\left(1+\rho_n(s,u)\rho_n(t,v) \right) \mu(du)\mu(dv)
        \\&\lesssim Z(D_12^{-n_1})\cdots(D_\ell2^{-n_\ell}) \log^{1/2}\left(1+\frac1{\mu_n(s)\mu_n(t)} \right) \,.
    \end{align*}
    Since $d^*(s,t)\asymp(D_12^{-n_1})\cdots(D_\ell2^{-n_\ell})$, this shows
    \begin{align}
        &|M_n(\square[s,t])|
        \nonumber\\&\lesssim Z\int_0^{d_1(s_1,t_1)}du_1\cdots\int_0^{d_\ell(s_\ell,t_\ell)}du_\ell\log^{1/2}\left(\frac1{\mu(B(s,u))}+\frac1{\mu(B(t,u))} \right)\,.
        \label{ineq.mn.square}
    \end{align}
    We now estimate each term in the sum appear in \eqref{ineq.xsquare}. We denote $\tau_0k=k$ and recursively $\tau_jk=\tau_{j-1}k+1_j$ for each $j=1,\dots,\ell$. For example, $\tau_1k=(k_1+1,k_2,\dots,k_\ell)$ and $\tau_\ell k=k+1$. We then write
    \begin{align}\label{ineq.mkmk}
        |M_{k+1}(\square[s,t])-M_k(\square[s,t])|\le\sum_{j=1}^\ell|M_{\tau_jk}(\square[s,t])-M_{\tau_{j-1}k}(\square[s,t])|\,.
    \end{align}
    Note that the multi-indices $\tau_jk$ and $\tau_{j-1}k$ differs by exactly 1 unit at the $j$-th component. Without loss of generality, we consider the case
    \begin{equation*}
        |M_{\tilde k}(\square[s,t])-M_{k}(\square[s,t])|\,,
    \end{equation*}
    where $\tilde k=k+1_\ell=(k_1,\dots,k_{\ell-1},k_\ell+1)$.
    We adopt the notations $w=(w',w_\ell)$ for every $w$ in $\TT$, $$\rho'_k(s',u')=\rho_{k_1}(s_1,u_1)\cdots\rho_{k_{\ell-1}}(s_{\ell-1},u_{\ell-1}) $$ and similarly for $\rho'_k(t',v')$. We then write
    \begin{align*}
        M_k(\square[s,t])=M_k(\square^{\ell-1}[s',t'],s_\ell)-M_k(\square^{\ell-1}[s',t'],t_\ell)
    \end{align*}
    and similarly for $M_{\tilde k}(\square[s,t])$. Thus
    \begin{align}
        &|M_{\tilde k}(\square[s,t])-M_{k}(\square[s,t])|
        \nonumber\\&\le |M_{k+1_\ell}(\square^{\ell-1}[s',t'],s_\ell)-M_k(\square^{\ell-1}[s',t'],s_\ell)|
        \nonumber\\&\quad+|M_{k+1_\ell}(\square^{\ell-1}[s',t'],t_\ell)-M_k(\square^{\ell-1}[s',t'],t_\ell)|
        \label{ineq.mktilde}\\&=I_1+I_2\,.
        \nonumber
    \end{align}
    We only need to estimate $I_1$ since $I_2$ is analogous. We have
    \begin{multline*}
        M_k(\square^{\ell-1}[s',t'],s_\ell)
        \\=\iint_{\TT\times\TT}X(\square^{\ell-1}[u',v'],v_\ell)\rho'_{k}(s',u')\rho'_{k}(t',v')\rho_{\ell+1}(s_\ell,u_\ell)\rho_{\ell}(s_\ell,v_\ell)\mu(du)\mu(dv)
    \end{multline*}
    and similarly
    \begin{multline*}
        M_{\tilde k}(\square^{\ell-1}[s',t'],s_\ell)
        \\=\iint_{\TT\times\TT}X(\square^{\ell-1}[u',v'],u_\ell)\rho'_{k}(s',u')\rho'_{k}(t',v')\rho_{\ell+1}(s_\ell,u_\ell)\rho_{\ell}(s_\ell,v_\ell)\mu(du)\mu(dv)\,.
    \end{multline*}
    Note how the dummy variables $v_\ell$ and $u_\ell$ have
     been switched between the two formulas. Hence
    \begin{multline*}
        |M_{\tilde k}(\square^{\ell-1}[s',t'],s_\ell)-M_k(\square^{\ell-1}[s',t'],s_\ell)|
        \\\le\iint_{\TT\times\TT}|X(\square^{\ell}[u,v])|\rho'_{k}(s',u')\rho'_{k}(t',v')\rho_{\ell+1}(s_\ell,u_\ell)\rho_{\ell}(s_\ell,v_\ell)\mu(du)\mu(dv)\,.
    \end{multline*}
  Similarly to the term $M_n(\square[s,t])$ one can obtain
    \begin{align*}
        |M_{\tilde k}(\square^{\ell-1}[s',t'],s_\ell)&-M_k(\square^{\ell-1}[s',t'],s_\ell)|
        \\&\lesssim Z(D_12^{-k_1})\cdots(D_\ell2^{-k_\ell}) \log^{1/2}\left(1+\frac1{\mu_{\tilde k}(s)\mu_{k}(s)} \right)
        \\&\lesssim  Z(D_12^{-k_1})\cdots(D_\ell2^{-k_\ell}) \log^{1/2}\left(1+\frac1{\mu_{k}(s)} \right)\,.
    \end{align*}
    Therefore, combining altogether \eqref{ineq.mkmk}, \eqref{ineq.mktilde} and the previous estimate, we get
    \begin{equation*}
        |M_{k+1}(\square[s,t])-M_k(\square[s,t])|
        \lesssim Z\ell(D_12^{-k_1})\cdots(D_\ell2^{-k_\ell}) \log^{1/2}\left(1+\frac1{\mu_{k}(s)} \right)\,,
    \end{equation*}
    and hence,
    \begin{align*}
        \sum_{k\ge n}&|M_{k+1}(\square[s,t])-M_k(\square[s,t])|
        \\&\lesssim Z\ell(D_12^{-n_1})\cdots(D_\ell2^{-n_\ell}) \log^{1/2}\left(1+\frac1{\mu_{n}(s)} \right)
        \\&\lesssim Z\ell\int_0^{d_1(s_1,t_1)}du_1\cdots\int_0^{d_\ell(s_\ell,t_\ell)}du_\ell\log^{1/2}\left(\frac1{\mu(B(s,u))}+\frac1{\mu(B(t,u))} \right)\,.
    \end{align*}
    Together with the bound for $M_n(\square[s,t])$ (inequality \eqref{ineq.mn.square}) and \eqref{ineq.xsquare}, this completes the proof.
\end{proof}
\begin{remark}
    In Theorem \ref{thm.maj.meas}, $X'$ may not be defined as a function on $\TT$, that is for each $t$ in $\TT$, there is no a priory reason for $X'(t)$ to be defined. However, in order to keep the representation compact, we have abused of notations and denote the limit as $X'(\square[s,t])$. This object is well-defined for every fixed $s,t$ in $\TT$.
\end{remark}

\subsection{Majorizing measure}\label{subsec.majmeas}
We now suppose that $X$ is a stochastic process with the probability space $(\Omega,\cF,P)$. We introduce the $\ell$-fold volumetric
\begin{equation*}
    d^\ell(s,t)=\left(\EE [X(\square[s,t])]^2\right)^{1/2}\,.
\end{equation*}
Assume that $\sup_{s,t\in\TT}d^\ell(s,t)$ is finite. In addition, for each $i$, there exists a metric $d_i$ on $\TT_i$
such that
\begin{equation*}
    d^\ell(s,t)\le d_1(s_1,t_1)\cdots d_\ell(s_\ell,t_\ell)\,.
\end{equation*}
This is not a restriction since such collection of metrics always exists. For instance, one can choose
\begin{equation*}
    d_1(s_1,t_1)=\sup_{\hat s_1,\hat t_1\in\hat\TT_1}d^\ell(s,t)
\end{equation*}
and recursively
\begin{equation*}
    d_k(s_k,t_k)=\sup_{\hat s_k,\hat t_k\in\hat\TT_k}\frac{d^\ell(s,t)}{\prod_{i=1}^{k-1}d_i(s_i,t_i)}
\end{equation*}
with the convention $0/0=0$.

We denote $Z$ as in Theorem \ref{thm.maj.meas}, that is
\begin{equation}\label{eqn.def.Z}
        Z=\inf\left\{\alpha>0:\iint_{\TT\times \TT}\Psi\left(\frac{X(\square[u,v])}{\alpha d^*(u,v)}\right) \mu(du)\mu(dv)\le1 \right\}\,.
\end{equation}
We assume that $Z$ is finite almost surely.
\begin{example}
    Suppose $X$ is a centered Gaussian process. Then $Z$ has exponential tail. More precisely $P(Z>u)\le (e\log2)^{1/2}u2^{-u^2/2}$ for all $u>(2+1/\log 2)^{1/2}$. This comes from a standard argument by Chebyshev inequality and H\"older inequality, see \cite[pg. 256-258]{marcus-rosen} for details.
\end{example}

As an application of Theorem \ref{thm.maj.meas}, we have
\begin{theorem}\label{thm.maj.gaussian}
    Let $\{X(t),t\in\TT\}$ be a stochastic process such that $Z$, defined in \eqref{eqn.def.Z}, is finite a.s.\   Then $X$ has a version $X'$ such that for all $\omega\in\Omega$ and $s,t$ in $\TT$
    \begin{multline*}
        |X'(\omega,\square[s,t])|\le C^\ell Z(\omega) \int_0^{d_1(s_1,t_1)}du_1\cdots\int_0^{d_\ell(s_\ell,t_\ell)}du_\ell
        \\\left(\log^{1/2}\frac1{\mu(B(s,u))}+\log^{1/2}\frac1{\mu(B(t,u))} \right)\,.
    \end{multline*}
    In particular, if $\EE Z$ is finite,  then
    \begin{align*}
        \EE\sup_{d_i(s_i,t_i)\le \delta_i, 1\le i\le \ell} |X(\square[s,t])|\le C^\ell(\EE Z)  \sup_{s\in\TT} \int_0^{\delta_1}du_1\cdots\int_0^{\delta_\ell}du_\ell\log^{1/2}\frac1{\mu(B(s,u))}\,.
    \end{align*}
\end{theorem}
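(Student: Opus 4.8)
The plan is to derive the theorem from the deterministic estimate of Theorem~\ref{thm.maj.meas}, applied one sample path at a time, and then to identify the resulting limiting object with the $\ell$-increment of $X$ itself. Set $\Omega_0=\{Z<\infty\}$; by hypothesis $P(\Omega_0)=1$. Recall also that the metrics $d_i$ from Subsection~\ref{subsec.majmeas} satisfy $d^\ell(s,t)\le d_1(s_1,t_1)\cdots d_\ell(s_\ell,t_\ell)$ together with $d^\ell(s,t)\le d_1(s_1,t_1)$ and $d_i\le 1$ for $i\ge 2$, so that all the diameters $D_i$ are finite. For $\omega\in\Omega_0$ the deterministic function $u\mapsto X(\omega,u)$ satisfies the hypotheses of Theorem~\ref{thm.maj.meas} with the same value $Z(\omega)$; hence for every $s,t$ with finite entropy integral the nets $M_k(\square[s,t])$ converge, as $k_1,\dots,k_\ell\to\infty$, to a limit $X'(\omega,\square[s,t])$ obeying
\begin{multline*}
|X'(\omega,\square[s,t])|\le C^\ell Z(\omega)\int_0^{d_1(s_1,t_1)}\!\!du_1\cdots\int_0^{d_\ell(s_\ell,t_\ell)}\!\!du_\ell\\
\left(\log^{1/2}\tfrac1{\mu(B(s,u))}+\log^{1/2}\tfrac1{\mu(B(t,u))}\right).
\end{multline*}
On $\Omega\setminus\Omega_0$, and for pairs $(s,t)$ with divergent entropy integral (where the bound is vacuous), one simply sets $X'(\square[s,t])=X(\square[s,t])$. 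This already yields the first displayed estimate of the theorem.

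It then remains to check that $X'$ is a version of $X$, i.e.\ that $X'(\square[s,t])=X(\square[s,t])$ a.s.\ for each fixed $s,t$. Using $\int_\TT\rho_k(s,u)\mu(du)=\int_\TT\rho_k(t,v)\mu(dv)=1$ one has
\begin{equation*}
M_k(\square[s,t])-X(\square[s,t])=\iint_{\TT\times\TT}\big(X(\square[u,v])-X(\square[s,t])\big)\rho_k(s,u)\rho_k(t,v)\,\mu(du)\mu(dv),
\end{equation*}
so by Jensen's inequality $\EE|M_k(\square[s,t])-X(\square[s,t])|^2$ is at most the supremum of $\EE|X(\square[u,v])-X(\square[s,t])|^2$ over $u$ in the support of $\rho_k(s,\cdot)$ and $v$ in the support of $\rho_k(t,\cdot)$, i.e.\ over $u,v$ with $d_i(u_i,s_i)\le D_i2^{-k_i}$ and $d_i(v_i,t_i)\le D_i2^{-k_i}$ for all $i$. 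Expanding $X(\square[u,v])-X(\square[s,t])$ as a telescoping sum of $2\ell$ terms, each produced by replacing one of the $2\ell$ parameters $u_1,\dots,u_\ell,v_1,\dots,v_\ell$ by the matching one of $s_1,\dots,s_\ell,t_1,\dots,t_\ell$, a direct computation shows that each such term equals $\pm X(\square[p,q])$ for a rectangle $[p,q]$ one of whose edges, say the $i$-th, has $d_i$-length $\le D_i2^{-k_i}$; hence its $L^2$-norm is $d^\ell(p,q)\le(D_i2^{-k_i})\prod_{j\ne i}D_j$. Summing, $\EE|M_k(\square[s,t])-X(\square[s,t])|^2$ tends to $0$, so $M_k(\square[s,t])\to X(\square[s,t])$ in $L^2$, hence in probability. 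Combined with the $\omega$-wise convergence $M_k(\square[s,t])\to X'(\square[s,t])$ on $\Omega_0$, this forces $X'(\square[s,t])=X(\square[s,t])$ a.s.; note that, as remarked after Theorem~\ref{thm.maj.meas}, $X'$ need not arise from a genuine point function on $\TT$.

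For the expectation bound, restrict to $\omega\in\Omega_0$. If $d_i(s_i,t_i)\le\delta_i$ for every $i$, then enlarging the domain of integration in the pathwise estimate from $\prod_i[0,d_i(s_i,t_i)]$ to $\prod_i[0,\delta_i]$ (the integrand being nonnegative) and bounding the two entropy terms separately by $\sup_{w\in\TT}\int_0^{\delta_1}\!\cdots\!\int_0^{\delta_\ell}\log^{1/2}\tfrac1{\mu(B(w,u))}\,du$ — with $w=s$ for the first and $w=t$ for the second — gives, uniformly over $\{d_i(s_i,t_i)\le\delta_i,\ 1\le i\le\ell\}$,
\begin{equation*}
|X'(\omega,\square[s,t])|\le 2C^\ell Z(\omega)\sup_{w\in\TT}\int_0^{\delta_1}\!\cdots\!\int_0^{\delta_\ell}\log^{1/2}\tfrac1{\mu(B(w,u))}\,du.
\end{equation*}
Taking the supremum over that index set, then expectations, invoking $\EE Z<\infty$, absorbing the factor $2$ into the constant, and replacing $X'$ by the version $X$ (legitimate off a null set, taking $X$ separable so that the supremum over the uncountable index set is a measurable random variable a.s.\ unchanged) yields the claimed inequality.

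The genuinely substantive input is Theorem~\ref{thm.maj.meas} itself, which we are free to invoke; among the remaining steps I expect the $L^2$-continuity estimate for $(u,v)\mapsto X(\square[u,v])$ — the telescoping-into-thin-rectangles argument that pins the limit $X'$ down to $X$ — to demand the most care, since it is precisely there that the product structure $d^\ell(s,t)\le d_1(s_1,t_1)\cdots d_\ell(s_\ell,t_\ell)$ and the finiteness of the diameters $D_i$ enter in an essential way.
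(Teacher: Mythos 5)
Your proof is correct, and its skeleton coincides with the paper's: apply Theorem \ref{thm.maj.meas} pathwise on $\{Z<\infty\}$ to produce $X'(\square[s,t])$ with the stated bound, then identify $X'(\square[s,t])$ with $X(\square[s,t])$ almost surely by showing that $M_k(\square[s,t])$ also converges to $X(\square[s,t])$ in a probabilistic sense. Where you genuinely diverge is in that identification step. The paper proves a.s.\ pointwise convergence $M_k(t)\to X(t)$ for each fixed $t$, using the single-point bound \eqref{ineq.aniso}, namely $(\EE|X(t)-X(v)|^2)^{1/2}\le\sum_i d_i(t_i,v_i)$, together with the Markov-type estimate $\EE|X(t)-M_k(t)|\le\sum_i D_i2^{-k_i}$ and Borel--Cantelli, and then passes to increments since $M_k(\square[s,t])$ is a finite alternating sum of values of $M_k$ at the corners of $[s,t]$. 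You instead prove $M_k(\square[s,t])\to X(\square[s,t])$ in $L^2$ directly, via Jensen and the telescoping of $X(\square[u,v])-X(\square[s,t])$ into $2\ell$ increments over rectangles with one thin edge, each controlled by $d^\ell(p,q)\le\prod_i d_i(p_i,q_i)$ and the finiteness of the diameters $D_j$; convergence in probability, combined with the pathwise limit, is all that is needed. Your route has a concrete advantage: it stays entirely at the level of $\ell$-increments and uses only the standing hypothesis $d^\ell\le d_1\cdots d_\ell$, whereas \eqref{ineq.aniso} does not follow from that hypothesis (telescoping $X(t)-X(v)$ coordinate by coordinate produces lower-order increments that $d^\ell$ does not control), so the paper's argument implicitly assumes extra structure -- harmless in the Gaussian application of Subsection \ref{subsec.asym.growth}, but your argument covers the theorem as literally stated. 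What the paper's route buys is the stronger intermediate fact that $M_k(t)\to X(t)$ a.s.\ at every point and a shorter computation. Your auxiliary remarks -- defining $X'(\square[s,t])=X(\square[s,t])$ when the entropy integral diverges, absorbing the factor $2$ into $C^\ell$, and the separability caveat when replacing $X'$ by $X$ under the uncountable supremum in the expectation bound -- are legitimate and, on the last point, more careful than the paper itself.
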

\begin{proof}
    First note that for every $t,v$ in $\TT$
    \begin{align}\label{ineq.aniso}
        (\EE|X(t)-X(v)|^2)^{1/2}\le \sum_{i=1}^\ell d_i(t_i,v_i)\,.
    \end{align}
    We recall the notation $M_k(t)$ in \eqref{eqn.def.mk}. We have
    \begin{align*}
        \EE|X(t)-M_k(t)|
        &\le  \int_\TT \EE|X(t)-X(v)|\rho_k(t,v)\mu(dv)
        \\&\le \int_\TT \sum_{i=1}^\ell d_i(t_i,v_i)\rho_k(t,v)\mu(dv)
        \le \sum_{i=1}^\ell D_i2^{-k_i} \,.
    \end{align*}
    Together with Borel-Cantelli lemma, this shows for all $t\in\TT$, $M_k(t)$ converges to $X(t)$ almost surely. On the other hand, Theorem \ref{thm.maj.meas} shows for all $s,t\in\TT$, $M_k(\square[s,t])$ converges to a limit, denoted by $X'(\square[s,t])$. This implies $X(\square[s,t])=X'(\square[s,t])$ almost surely. The result is now followed from Theorem \ref{thm.maj.meas}.
\end{proof}

\subsection{Asymptotic growth}\label{subsec.asym.growth}
Let $W(t,x)$ be a continuous Gaussian process on $[0,T]\times\RR^d$ with mean 0. As in the previous subsection, we define the $d$-fold volumetric
\begin{equation*}
    d(x,y)=\sup_{t\in[0,T]}\left(\EE [W(t,\square[x,y])]^2\right)^{1/2}\,.
\end{equation*}
Without loss of generality, we assume there are metrics $d_1,\dots,d_d$ on $\RR$ such that $d^*(x,y)=d_1(x_1,y_1)\dots d_d(x_d,y_d)$ satisfies $d(x,y)\le d^*(x,y)$.

Let $\delta=(\delta_1,\dots,\delta_\ell)$ be in $(0,\infty)^\ell$. The notation $d^*(x,y)\le\delta$ means $d_i(x_i,y_i)\le \delta_i$ for all $i=1,2,\dots,\ell$. We denote $|x|^*=\max_{1\le i\le d} d_i(0,x_i)$ for every $x\in\RR^d$. We are interested in the asymptotic growth of the process
\begin{equation*}
    W^*(\delta,R)=\sup_{t\in [0,T]}\sup_{\substack{d^*(x,y)\le \delta\\ |x|^*, |y|^*\le R}}|W(t,\square[x,y])|
\end{equation*}
as $R$ gets large and $\delta$ can range freely in a bounded neighborhood of $0$. $W^*$ also depends on $T$.  However since $T$ will always be fixed in our consideration, we suppress the dependence on $T$ in our notations. We put
\begin{equation*}
    \SS_R=\{x\in\RR^d:|x|^*\le R\}\,,
\end{equation*}
\begin{equation*}
    m(\delta,R)=\EE W^*(\delta,R)\,,
\end{equation*}
and
\begin{equation*}
    \sigma(\delta,R)=\sup_{t\le [0,T]}\sup_{\substack{d^*(x,y)\le \delta\\ x,y\in\SS_R}}(E|W(t,\square[x,y])|^2)^{1/2}\,.
\end{equation*}
We first prove the following concentration inequality
\begin{lemma} For any $r>0$,
    \begin{equation}\label{ineq.con.wstar}
        P\left(\frac1{\sigma(\delta,R)}|W^*(\delta,R)-m(\delta,R)|>r\right)\le 2e^{-r^2/2}\,.
    \end{equation}
    As a consequence,
    \begin{equation}\label{ineq.e.psi.w}
        \EE \psi_{\rho}\left( \frac{|W^*(\delta,R)-m(\delta,R)|}{\sigma(\delta,R)}\right)\le c_{\rho}<\infty
    \end{equation}
    for every $\rho<1/2$, where $\psi_{\rho}=\exp(\rho x^2)$.
\end{lemma}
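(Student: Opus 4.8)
The plan is to recognise $W^*(\delta,R)$ as the supremum of a centered Gaussian family over its index set and then to invoke the Gaussian concentration (Borell--Sudakov--Tsirelson, or Borell--TIS) inequality. First I would write
\[
    W^*(\delta,R)=\sup_{(t,x,y,\theta)\in U}\theta\,W(t,\square[x,y])\,,
\]
where $U=\{(t,x,y,\theta):t\in[0,T],\ x,y\in\SS_R,\ d^*(x,y)\le\delta,\ \theta\in\{-1,1\}\}$, using the identity $\sup|a|=\sup_{\theta\in\{\pm1\}}\theta a$ to turn the supremum of $|W(t,\square[x,y])|$ into a genuine Gaussian supremum. Since $W$ is a continuous Gaussian field, the map $(t,x,y)\mapsto W(t,\square[x,y])$ is continuous, so this supremum equals almost surely the supremum over any fixed countable dense subset of $U$; in particular $\{\theta W(t,\square[x,y])\}_{(t,x,y,\theta)\in U}$ is a separable centered Gaussian process, and its variance proxy is precisely
\[
    \sup_{(t,x,y,\theta)\in U}\EE\bigl(\theta W(t,\square[x,y])\bigr)^2=\sup_{t\in[0,T]}\ \sup_{\substack{d^*(x,y)\le\delta\\ x,y\in\SS_R}}\EE\bigl(W(t,\square[x,y])\bigr)^2=\sigma(\delta,R)^2\,,
\]
which is finite by hypothesis.

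Next I would apply the Borell--TIS inequality to this process. Almost-sure boundedness of $W^*(\delta,R)$ (which follows from continuity of $W$ on the parameter set under consideration, or alternatively from Theorem~\ref{thm.maj.gaussian}) together with finiteness of $\sigma(\delta,R)$ guarantee that $m(\delta,R)=\EE W^*(\delta,R)$ is finite and that, for every $r>0$,
\[
    P\bigl(W^*(\delta,R)-m(\delta,R)>r\bigr)\le e^{-r^2/(2\sigma(\delta,R)^2)}\,,\qquad P\bigl(m(\delta,R)-W^*(\delta,R)>r\bigr)\le e^{-r^2/(2\sigma(\delta,R)^2)}\,.
\]
Summing the upper and lower tails yields the two-sided bound \eqref{ineq.con.wstar}.

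Finally, to obtain \eqref{ineq.e.psi.w} from \eqref{ineq.con.wstar}, I would set $\xi=\sigma(\delta,R)^{-1}|W^*(\delta,R)-m(\delta,R)|$, so that $P(\xi>r)\le 2e^{-r^2/2}$, and compute by a layer-cake argument (after the substitution $s=e^{\rho r^2}$)
\[
    \EE\,\psi_\rho(\xi)=\EE\,e^{\rho\xi^2}=1+\int_0^\infty 2\rho r\,e^{\rho r^2}\,P(\xi>r)\,dr\le 1+4\rho\int_0^\infty r\,e^{-(1/2-\rho)r^2}\,dr=1+\frac{2\rho}{1/2-\rho}\,,
\]
which is finite exactly when $\rho<1/2$; this gives \eqref{ineq.e.psi.w} with $c_\rho=1+\tfrac{2\rho}{1/2-\rho}$.

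The computations here are routine; the only point requiring a little care is the verification of the hypotheses of Borell--TIS, namely measurability/separability of the family $\{\theta W(t,\square[x,y])\}$ and almost-sure finiteness of the Gaussian supremum $W^*(\delta,R)$ (hence finiteness of $m(\delta,R)$), which I would settle using the continuity of $W$ together with, if needed, the majorizing-measure bound of Theorem~\ref{thm.maj.gaussian}.
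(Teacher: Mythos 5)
Your proof is correct and follows essentially the same route as the paper: both reduce the claim to the standard Gaussian concentration inequality for suprema with variance proxy $\sigma(\delta,R)$ (the paper applies the two-sided bound for $\sup|X|$ over finite index sets, cited from Ledoux/Marcus--Rosen, and then passes to the full parameter set by approximation using continuity of $W$, while you symmetrize with $\theta\in\{\pm1\}$ and invoke Borell--TIS on the separable process directly). Your explicit layer-cake computation for \eqref{ineq.e.psi.w} just spells out the step the paper treats as immediate, so there is no substantive difference.
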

\begin{proof} It suffices to show \eqref{ineq.con.wstar}.
Let $\{X(u),u\in\TT\}$ be a Gaussian process. Assume that $\TT$ is finite. The following concentration inequality is standard
\begin{equation}\label{ineq.concentrate.x}
    P\left(\frac1{\sigma}\left|\sup_{u\in\TT} |X(u)|-\EE
    \left[ \sup_{u\in\TT} |X(u)|\right] \right|>r\right)\le 2e^{-r^2/2}\,,
\end{equation}
for every $\sigma\ge\sup_{u\in\TT}(EX^2(u))^{1/2}$. We refer to \cite{ledoux-book} or \cite[Theorem 5.4.3]{marcus-rosen} for a proof of \eqref{ineq.concentrate.x}. We now fix $(t_1,x_1),\dots,(t_m,x_m)$ in $[0, T]\times \RR^d$ such that $d^*(x_j,x_k)\le \delta$ and $|x_j|^*, |x_k|^*\le R$ for all $j,k$. We denote $x_j\shuffle x_k$ the collection of points $z$ in $\RR^d$ such that each component of $z$ is the corresponding component of either $x_j$ or $x_k$. We consider the centered
Gaussian random process $X(t_i,x_j\shuffle x_k) :=W(t_i,\square[x_j,x_k])$ indexed by the parameters $\{t_i\}_{1\le i\le m}$ and $\{x_j\shuffle x_k\}_{1\le j,k\le m}$. It is clear that
\begin{equation*}
    \EE X^2(t_i,x_j\shuffle x_k)\le \sigma^2(\delta,R)\,.
\end{equation*}
Thus, the inequality \eqref{ineq.concentrate.x} becomes
\begin{equation*}
    P\left(\frac1{\sigma(\delta,R)}\left|\sup_{i,j,k\le m}|W(t_i,\square [x_j,x_k])|-\EE\left[\sup_{i,j,k\le m}|W(t_i,\square [x_j,x_k])|\right]
    \right|>r \right)\le 2e^{-r^2/2}\,.
\end{equation*}
An approximation procedure yields \eqref{ineq.con.wstar}.
\end{proof}

\begin{theorem} With probability one,
    \begin{equation}\label{ineq.asymp}
        \sup_{\delta\in(0,1]^\ell}\limsup_{R\rightarrow \infty}\frac{|W^*(\delta,R)-m(2\delta,2R)|}{\sigma(2\delta,2R)\sqrt{\log(\delta_1^{-1}\cdots\delta_\ell^{-1}\log R)}}\le\sqrt2
    \end{equation}
\end{theorem}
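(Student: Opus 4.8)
The plan is to reduce the statement, via the monotonicity of $W^*(\delta,R)$, $m(\delta,R)$ and $\sigma(\delta,R)$ in each $\delta_i$ and in $R$, to a countable family of the concentration estimates \eqref{ineq.con.wstar} evaluated on a dyadic grid, and then to apply Borel--Cantelli together with a monotone interpolation between grid points. Throughout write $L(\delta,R)=\log\bigl(\delta_1^{-1}\cdots\delta_\ell^{-1}\log R\bigr)$, and note $L(\delta,R)\ge\log\log R\to\infty$ uniformly over $\delta\in(0,1]^\ell$.

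First I would fix $\epsilon>0$ and discretize: take $R_n=2^n$ and, for $k=(k_1,\dots,k_\ell)\in\NN_0^\ell$, the grid point $\delta^{(k)}=(2^{-k_1},\dots,2^{-k_\ell})$, and set
\[
r_{n,k}^2=2(1+\epsilon)\Bigl(\log n+(\log 2)\textstyle\sum_{i=1}^\ell k_i+1\Bigr).
\]
By \eqref{ineq.con.wstar} with $(\delta,R)=(\delta^{(k)},R_n)$ one has $P\bigl(|W^*(\delta^{(k)},R_n)-m(\delta^{(k)},R_n)|>r_{n,k}\,\sigma(\delta^{(k)},R_n)\bigr)\le 2e^{-r_{n,k}^2/2}$, and
\[
\sum_{n\ge1}\sum_{k\in\NN_0^\ell}2e^{-r_{n,k}^2/2}=2e^{-(1+\epsilon)}\Bigl(\sum_{n\ge1}n^{-(1+\epsilon)}\Bigr)\Bigl(\sum_{j\ge0}2^{-(1+\epsilon)j}\Bigr)^{\!\ell}<\infty .
\]
Hence there is an a.s.\ event $\Omega_\epsilon$ on which, for all $n$ exceeding some random $n_0(\omega)$ and all $k$, $|W^*(\delta^{(k)},R_n)-m(\delta^{(k)},R_n)|\le r_{n,k}\,\sigma(\delta^{(k)},R_n)$. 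The point of the choice of $r_{n,k}$ is that $\log n+(\log2)\sum k_i=L(\delta^{(k)},R_n)+O(1)$, so $r_{n,k}=\sqrt{2(1+\epsilon)}\,(1+o(1))\sqrt{L(\delta^{(k)},R_n)}$.

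Next comes the interpolation. Given $\delta\in(0,1]^\ell$ and $R$ large, let $R_n$ be the smallest power of $2$ with $R_n\ge R$ (so $R\le R_n<2R$) and let $\delta^{(k)}$ be the componentwise smallest grid point with $\delta^{(k)}_i\ge\delta_i$ (so $\delta_i\le\delta^{(k)}_i<2\delta_i$). Monotonicity gives $W^*(\delta,R)\le W^*(\delta^{(k)},R_n)$, $m(\delta^{(k)},R_n)\le m(2\delta,2R)$ and $\sigma(\delta^{(k)},R_n)\le\sigma(2\delta,2R)$, while $L(\delta^{(k)},R_n)=L(\delta,R)(1+o(1))$ uniformly in $\delta$ as $R\to\infty$. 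Therefore, on $\Omega_\epsilon$ and for $R$ large,
\[
W^*(\delta,R)-m(2\delta,2R)\le W^*(\delta^{(k)},R_n)-m(\delta^{(k)},R_n)\le r_{n,k}\,\sigma(\delta^{(k)},R_n)\le\sqrt{2(1+\epsilon)}\,(1+o(1))\,\sigma(2\delta,2R)\sqrt{L(\delta,R)},
\]
with the $o(1)$ depending on $R$ only. The reverse inequality is obtained symmetrically: bound $W^*(\delta,R)$ below by its value at the componentwise largest grid point $(\delta^{(k')},R_{n'})$ with $\delta^{(k')}_i\le\delta_i$, $R_{n'}\le R$, invoke the Borel--Cantelli estimate there, and absorb the (uniformly bounded) scale mismatch into the enlargements $2\delta$, $2R$ in $m$ and $\sigma$. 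Dividing by $\sigma(2\delta,2R)\sqrt{L(\delta,R)}$, taking $\limsup_{R\to\infty}$ and then $\sup_{\delta\in(0,1]^\ell}$ yields, on $\Omega_\epsilon$, a bound $\le\sqrt{2(1+\epsilon)}$; intersecting the events $\Omega_{1/m}$ over $m\in\NN$ and letting $m\to\infty$ gives $\sqrt2$.

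The main obstacle is the bookkeeping that produces \emph{exactly} the constant $\sqrt2$: the grid must be fine enough and the thresholds $r_{n,k}$ small enough that the double series over $(n,k)$ still converges, which forces $r_{n,k}^2/2$ to match (up to $O(1)$) the logarithm of the number of comparable grid cells, i.e.\ $L$ itself; and the passage from a grid point back to an arbitrary $(\delta,R)$ must cost only a factor $1+o(1)$ — this is precisely what dictates replacing $m(\delta,R),\sigma(\delta,R)$ by $m(2\delta,2R),\sigma(2\delta,2R)$, and it is where the uniformity over $\delta$ (as opposed to a pointwise-in-$\delta$ statement) has to be checked, using $L(\delta,R)\ge\log\log R\to\infty$.
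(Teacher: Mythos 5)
Your scheme is, for the \emph{upper} deviation, the paper's argument in different clothing: the paper feeds the concentration inequality \eqref{ineq.con.wstar} (through the moment bound \eqref{ineq.e.psi.w}) into a single random variable $\Theta$ with $\EE\Theta<\infty$ over the dyadic blocks $2^{-j-1}\le\delta\le2^{-j}$, $2^{k-1}\le R\le 2^{k}$ and then inverts $\psi_\rho$, letting $\rho\uparrow 1/2$ at the end; you instead sum explicit tail bounds over the same dyadic grid and use Borel--Cantelli, letting $\epsilon\downarrow0$ at the end. In both versions the doubled arguments $2\delta$, $2R$ in $m$ and $\sigma$ exist precisely so that the monotone comparisons $W^*(\delta,R)\le W^*(\delta^{(k)},R_n)$, $m(\delta^{(k)},R_n)\le m(2\delta,2R)$, $\sigma(\delta^{(k)},R_n)\le\sigma(2\delta,2R)$ cost nothing, and your bookkeeping for that half ($r_{n,k}^2=2(1+\epsilon)(\log n+(\log 2)\sum_i k_i+1)$, summability of the double series, $L(\delta^{(k)},R_n)\le L(\delta,R)+O(1)$ with $L\ge\log\log R\to\infty$ uniformly in $\delta$) is correct.

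The gap is the ``reverse inequality''. Your symmetric step reduces $m(2\delta,2R)-W^*(\delta,R)$ to the sum of $m(\delta^{(k')},R_{n'})-W^*(\delta^{(k')},R_{n'})$, which \eqref{ineq.con.wstar} does control, and the deterministic term $m(2\delta,2R)-m(\delta^{(k')},R_{n'})$, which is \emph{not} a ``uniformly bounded scale mismatch'' that can be absorbed: it is a difference of expected suprema at scales differing by a bounded factor, and nothing in the hypotheses makes it $o\big(\sigma(2\delta,2R)\sqrt{L(\delta,R)}\big)$. Concretely, for the fractional Brownian sheet of the paper's example one has $\sigma(\delta,R)=\delta_1\cdots\delta_d$ and $m(\delta,R)\asymp\delta_1\cdots\delta_d\,\sqrt{\log(R\,\delta_1^{-1}\cdots\delta_d^{-1})}$, so for a fixed $\delta$ the gap $m(2\delta,2R)-m(\delta,R)$ grows like $\sqrt{\log R}$ (increments at scale $2\delta$ carry standard deviation $2^{d}\delta_1\cdots\delta_d$ over of order $R^{\sum_i 1/H_i}$ essentially independent cells), whereas your denominator $\sigma(2\delta,2R)\sqrt{L(\delta,R)}$ grows only like $\sqrt{\log\log R}$; no refinement of the grid repairs this, because the obstruction is deterministic. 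What monotonicity plus concentration genuinely delivers --- and, you may note, this is also all that the corresponding step in the paper's proof actually delivers, since $|a-b|\le|A-B|$ does not follow from $a\le A$ and $b\ge B$ --- is the one-sided bound on the positive part $W^*(\delta,R)-m(2\delta,2R)$, which is the content used afterwards (e.g.\ to get the pathwise growth bound in the example). So either restrict your statement and proof to that upper deviation, or you need an additional input bounding $m(2\delta,2R)-m(\delta,R)$ by a constant multiple of $\sigma(2\delta,2R)\sqrt{L(\delta,R)}$, which fails in the natural examples.
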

\begin{proof} We put $p(\delta,R)=\delta_1^{-1}\cdots\delta_\ell^{-1}(\log R)^{2}$ and consider the random variable
\begin{equation*}
    \Theta=\sup_{\delta\in(0,1]^\ell,R\ge1}\frac1{p(\delta,R)}\psi_{\rho}\left( \frac1{\sigma(2\delta,2R)}|W^*(\delta,R)-m(2\delta,2R)|\right)\,.
\end{equation*}
For each multi-index $j=(j_1,\dots,j_\ell)$ in $\NN^\ell$, we denote $2^{-j}=(2^{-j_1},\dots,2^{-j_\ell})$. The notation $\delta\le 2^{-j}$ means $\delta_i\le 2^{-j_i}$ for all $i=1,2,\dots,\ell$. Then using the monotonicity of $p$, $\psi_{\rho}$, $W^*$ and $\sigma$, and \eqref{ineq.e.psi.w} we have
\begin{align*}
    \EE\Theta&\le\sum_{k\in\NN,j\in\NN^\ell}\EE\sup_{\substack{2^{-j-1}\le\delta\le 2^{-j}\\2^{k-1}\le R\le 2^k }}\frac1{p(\delta,R)}\psi_{\rho}\left( \frac1{\sigma(2\delta,2R)}|W^*(\delta,R)-m(2\delta,2R)|\right)
    \\&\le\sum_{k\in\NN,j\in\NN^\ell}\frac1{p(2^{-j},2^{k-1})}\EE\psi_{\rho}\left(\frac1{\sigma(2^{-j},2^k)}|W^*(2^{-j},2^k)-m(2^{-j},2^k)|\right)
    \\&\le c_{\rho}\sum_{k\in\NN,j\in\NN^\ell}\frac1{p(2^{-j},2^{k-1})}<\infty\,.
\end{align*}
  Hence, with probability one, $\Theta $ is finite and
  \begin{equation*}
   \psi_{\rho}\left( \frac1{\sigma(2\delta,2R)}|W^*(\delta,R)-m(2\delta,2R)|\right)\le \Theta p(2\delta,R) \,,\quad \forall \delta>0\,,\ \forall R\ge1\,.
  \end{equation*}
In particular,
  \begin{equation*}
    \frac{|W^*(\delta,R)-m(2\delta,2R)|}{\sigma(2\delta,2R)}\le \sqrt{\frac{\log[\Theta p(2\delta,R)]}{\rho} }\,,\quad \forall \delta>0\,,\ \forall R\ge1\,.
  \end{equation*}
  We then use the trivial estimate
  \begin{equation*}
    \sqrt{\log(\Theta p)}\le\sqrt{|\log \Theta |}+\sqrt{|\log p|}
  \end{equation*}
  to get
  \begin{equation*}
    \frac{|W^*(\delta,R)-m(2\delta,2R)|}{\sigma(2\delta,2R)\sqrt{\log(\delta_1^{-1}\cdots\delta_\ell^{-1}\log R)}}\le\sqrt{\frac{ |\log \Theta|}{\rho {|\log\log R|} }} +\sqrt{\frac{\log(\delta_1^{-1}\cdots\delta_\ell^{-1}(\log R)^{2})}{\rho\log(\delta_1^{-1}\cdots\delta_\ell^{-1}\log R)}} \,,
  \end{equation*}
  for all $\delta>0$ and $R\ge1$. Since $\rho$ can be chosen to be any constant less than $1/2$, we can choose a  sequence $\rho_n$ convergent to $1/2$.  Since countable unions of events with probability zero still have probability zero,  we can pass through the limit $n\to\infty$ to get, with probability one,
\begin{equation*}
    \frac{|W^*(\delta,R)-m(2\delta,2R)|}{\sigma(2\delta,2R)\sqrt{\log(\delta_1^{-1}\cdots\delta_\ell^{-1}\log R)}}\le\sqrt{\frac{ 2|\log \Theta|}{ {|\log\log R|} }} +\sqrt{\frac{2\log(\delta_1^{-1}\cdots\delta_\ell^{-1}(\log R)^{2})}{\log(\delta_1^{-1}\cdots\delta_\ell^{-1}\log R)}}\,,
  \end{equation*}
  for all $\delta>0$ and $R\ge1$. Finally, let $R\to\infty$ to complete the proof.
\end{proof}


In general, it is hard to say anything about the growth of $m(\delta,R)$ as $R$ gets large. In what follows, we restrict ourselves to  a particular (but still sufficiently large) class of Gaussian random fields. To be more precise, for each $i=1,\dots,\ell$, let $\phi_i$ be a majorant for $d_i$, that is, $\phi_i$ is strictly increasing with $\phi_i(0)=0$ and
\begin{equation}\label{eqn.def.phii}
    d_i(x_i,y_i)\le \phi_i(|y_i-x_i|)\,.
\end{equation}
Define
\begin{equation*}
    \tilde\omega_i(\delta_i)=\delta_i\log^{1/2}\frac1{\phi_i^{-1}(\delta_i)}+\int_0^{\phi_i^{-1}(\delta_i)}\frac{\phi_i(u)}{u\log^{1/2}(1/u)}du\,.
\end{equation*}
We will always presume $\tilde\omega_i$'s are finite wherever they appear.
\begin{proposition} Denote  $\tilde\delta_i=\prod_{j\neq i}\delta_j$. Then we have
    \begin{equation}\label{ineq.m.delta.r}
        m(\delta,R)\lesssim \delta_1\cdots \delta_\ell\log^{1/2}\left(\prod_{i=1}^\ell 2\phi_i^{-1}(R)\right)+\sum_{i=1}^\ell \tilde\delta_i\tilde\omega_i(\delta_i)
    \end{equation}
    where the implied constant is independent of $R$ and $\delta$.
\end{proposition}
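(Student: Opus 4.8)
The plan is to derive \eqref{ineq.m.delta.r} from the majorizing--measure bound of Theorem~\ref{thm.maj.gaussian}, applied to $W$ in its $\ell$ spatial coordinates. Since $W$ is a centered Gaussian field, the example preceding that theorem shows $Z$ has a Gaussian tail, hence $\EE Z<\infty$, so the expectation form of the bound is available. As parameter box I would take $\TT=\TT_1^R\times\cdots\times\TT_\ell^R$ with $\TT_i^R=[-\phi_i^{-1}(R),\phi_i^{-1}(R)]$; this contains the $i$-th slice $\{x_i:d_i(0,x_i)\le R\}$ of $\SS_R$ once one (if necessary) enlarges each $d_i$ to the majorant metric $\phi_i(|y_i-x_i|)$, which still dominates the volumetric because $d(x,y)\le d^*(x,y)\le\prod_i\phi_i(|y_i-x_i|)$. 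Equip each $\TT_i^R$ with the normalized Lebesgue probability measure $\mu^i$ and set $\mu=\mu^1\times\cdots\times\mu^\ell$. The supremum over $t\in[0,T]$ I would incorporate by adjoining $[0,T]$ as a zeroth coordinate: writing $W(t,\square[x,y])=W(0,\square[x,y])+W(\square[(0,x),(t,y)])$, one applies Theorem~\ref{thm.maj.gaussian} once in dimension $\ell$ to the field $W(0,\cdot)$ (whose volumetric is still $\le d\le d^*$) and once in dimension $\ell+1$, using that the $(\ell+1)$-volumetric of $W(\square[(s_0,x),(t_0,y)])$ is $\le 2\,d^*(x,y)$ and that, $W$ being continuous and $[0,T]$ of fixed finite length, the extra time factor produces only terms dominated, for $R$ large, by the right-hand side of \eqref{ineq.m.delta.r}; it therefore does not affect the form of the bound.

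The arithmetic core is a lower bound on $\mu(B(s,u))$. As $\phi_i$ is a majorant, $d_i(x_i,y_i)\le u$ whenever $|x_i-y_i|\le\phi_i^{-1}(u)$, so $B^i(s_i,u)$ contains the Euclidean ball of radius $\phi_i^{-1}(u)$ about $s_i$; intersecting with $\TT_i^R$ and using $s_i\in\TT_i^R$ gives, for $0<u\le R$,
\[
  \mu^i(B^i(s_i,u))\gtrsim\frac{\phi_i^{-1}(u)}{\phi_i^{-1}(R)}\,,
\]
with an implied constant accounting only for a possible loss near an endpoint of $\TT_i^R$, so the estimate is uniform in $s_i$. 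Hence, for $u=(u_1,\dots,u_\ell)$,
\[
  \log\frac1{\mu(B(s,u))}=\sum_{i=1}^\ell\log\frac1{\mu^i(B^i(s_i,u_i))}\lesssim\log\Bigl(\prod_{i=1}^\ell 2\phi_i^{-1}(R)\Bigr)+\sum_{i=1}^\ell\log\frac1{\phi_i^{-1}(u_i)}\,,
\]
and, by subadditivity of $r\mapsto r^{1/2}$,
\[
  \log^{1/2}\frac1{\mu(B(s,u))}\lesssim\log^{1/2}\Bigl(\prod_{i=1}^\ell 2\phi_i^{-1}(R)\Bigr)+\sum_{i=1}^\ell\log^{1/2}\frac1{\phi_i^{-1}(u_i)}\,.
\]

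It then remains to integrate this over the box $\{0\le u_i\le\delta_i,\ 1\le i\le\ell\}$, as the conclusion of Theorem~\ref{thm.maj.gaussian} prescribes (the two occurrences of $\log^{1/2}(1/\mu(B(\cdot,u)))$ contributing equally). The constant term integrates to $\delta_1\cdots\delta_\ell\,\log^{1/2}\bigl(\prod_i 2\phi_i^{-1}(R)\bigr)$, which is the first term of \eqref{ineq.m.delta.r}; by Fubini, the $i$-th remaining summand integrates to $\tilde\delta_i\int_0^{\delta_i}\log^{1/2}(1/\phi_i^{-1}(u))\,du$ with $\tilde\delta_i=\prod_{j\ne i}\delta_j$. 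For that one-dimensional integral I would substitute $u=\phi_i(v)$ and integrate by parts, noting that $\frac{d}{dv}\log^{1/2}(1/v)=-\tfrac{1}{2v\log^{1/2}(1/v)}$ and that the boundary term at $v=0$ is nonnegative, hence only strengthens the inequality:
\[
  \int_0^{\delta_i}\log^{1/2}\frac1{\phi_i^{-1}(u)}\,du\le\delta_i\log^{1/2}\frac1{\phi_i^{-1}(\delta_i)}+\tfrac12\int_0^{\phi_i^{-1}(\delta_i)}\frac{\phi_i(v)}{v\log^{1/2}(1/v)}\,dv\le\tilde\omega_i(\delta_i)\,.
\]
Collecting the two contributions and absorbing the factor $C^\ell\EE Z$ into the implied constant yields \eqref{ineq.m.delta.r}.

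The step I expect to require the most care is the reduction of the supremum over $t\in[0,T]$: one must exhibit on the zeroth (time) factor a pseudometric dominating $(\EE|W(t,\square[x,y])-W(s,\square[x,y])|^2)^{1/2}/d^*(x,y)$ together with a probability measure for which the associated one-dimensional majorizing--measure integral over $[0,T]$ is finite --- available from the basic theory of continuous Gaussian processes, and, $[0,T]$ being fixed, harmless for the dependence on $R$ and $\delta$. The remaining ingredients --- the inclusion $\SS_R\subseteq\TT$ through the majorant metrics, the endpoint regularity of $\mu^i$, and the convergence of the integrals defining $\tilde\omega_i$ --- are routine.
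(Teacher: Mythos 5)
Your spatial computation coincides with the paper's proof of this proposition: the same choice of majorizing measure $\mu^i=\lambda/(2\phi_i^{-1}(R))$ (normalized Lebesgue measure on the $i$-th slice of $\SS_R$), the same lower bound $\mu^i(B^i(x_i,u_i))\ge \phi_i^{-1}(u_i)/(2\phi_i^{-1}(R))$ obtained from the majorant property \eqref{eqn.def.phii}, the same subadditive splitting of $\log^{1/2}$ into the "$R$-term" plus the $u$-dependent terms, and the same substitution $u=\phi_i(v)$ followed by integration by parts to bound the one-dimensional integral by $\tilde\omega_i(\delta_i)$, with the factor $\tilde\delta_i$ produced by Fubini. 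That part is correct and is exactly the published argument.

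Where you genuinely depart from the paper is the supremum over $t\in[0,T]$, and that is where your sketch has a gap. The paper never adjoins time as an $(\ell+1)$-st coordinate: its proof works with the purely spatial parameter box and only computes the spatial entropy integral, the time supremum being absorbed into the normalizing variable $Z$ (Theorem \ref{thm.maj.meas} is a pathwise, deterministic statement, so it can be applied to $x\mapsto W(t,x)$ for every $t$ simultaneously with one $Z$ built from $\sup_t|W(t,\square[u,v])|/d^*(u,v)$ --- recall that $d$ itself is defined as a supremum over $t$ --- and $\EE Z$ is then swallowed by the implied constant). Your route instead requires a pseudometric $d_0$ on $[0,T]$ dominating $\bigl(\EE|W(t,\square[x,y])-W(s,\square[x,y])|^2\bigr)^{1/2}/d^*(x,y)$ uniformly in $x,y$, together with a probability measure on $[0,T]$ whose one-dimensional majorizing-measure integral is finite. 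This is not supplied by "the basic theory of continuous Gaussian processes": continuity of $W$ controls the canonical metric of the field on compacts, not the increments renormalized by $d^*(x,y)$, and that ratio can degenerate as $x\to y$ without extra covariance hypotheses; moreover your bound "$\le 2\,d^*(x,y)$" for the $(\ell+1)$-volumetric has no genuine time factor, so the product-form domination $d^{\ell+1}\le d_0\cdot d_1\cdots d_\ell$ needed by Theorem \ref{thm.maj.gaussian} would force an essentially discrete $d_0$, for which the time entropy integral is not finite for a nonatomic measure. So, as written, the reduction of the time supremum does not go through; it is also unnecessary, since folding $\sup_t$ into $Z$ (at the price of the finiteness of $\EE Z$ for this sup-version, a point the paper leaves implicit) yields \eqref{ineq.m.delta.r} with a constant independent of $R$ and $\delta$, which is all the proposition asserts.
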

\begin{proof}
    We take for the majorizing measure $\mu_i=\lambda/(2 \phi_i^{-1}(R))$,
    where $\lambda$ is the Lebesgue measure.   By \eqref{eqn.def.phii}, the ball $B^i(x_i,u_i)$ contains the interval $(x_i-\phi_i^{-1}(u_i),x_i+\phi_i^{-1}(u_i))\cap \{z_i:d_i(z_i,0)\le R\}$, thus,
    \begin{equation*}
        \mu_i(B^i(x_i,u_i))\ge\frac{\phi_i^{-1}(u_i)}{2\phi_i^{-1}(R)}\,.
    \end{equation*}
    Hence, for all $x$ in $\SS_R$,
    \begin{equation*}
        \log\frac1{\mu(B(x,u))}\le\log\left(\prod_{i=1}^d \frac{2\phi_i^{-1}(R)}{\phi_i^{-1}(u_i)}\right)\,.
    \end{equation*}
    Therefore, for $\delta$ sufficiently small,
    \begin{align*}
        &\int_0^{\delta_1}du_1\cdots\int_0^{\delta_\ell} du_\ell\log^{1/2}\frac1{\mu(B(x,u))}
        \\&\le\int_0^{\delta_1}du_1\cdots\int_0^{\delta_\ell} du_\ell\log^{1/2}\left(\prod_{i=1}^d \frac{2\phi_i^{-1}(R)}{\phi_i^{-1}(u_i)}\right)
        \\&\le \delta_1\cdots \delta_\ell \log^{1/2}\left(\prod_{i=1}^\ell 2\phi_i^{-1}(R)\right)+ \int_0^{\delta_1}du_1\cdots\int_0^{\delta_\ell} du_\ell\log^{1/2}\left(\prod_{i=1}^d \frac{1}{\phi_i^{-1}(u_i)}\right)\,.
    \end{align*}
    The last integral in the above formula can be estimated as following
    \begin{align*}
        &\int_0^{\delta_1}du_1\cdots\int_0^{\delta_\ell} du_\ell\log^{1/2}\left(\prod_{i=1}^d \frac{1}{\phi_i^{-1}(u_i)}\right)
        \\&=\int_0^{\phi^{-1}_1(\delta_1)}d\phi_1(u_1)\cdots\int_0^{\phi^{-1}_\ell(\delta_\ell)} d\phi_\ell(u_\ell)\log^{1/2}\left(\prod_{i=1}^d \frac{1}{u_i}\right)
        \\&\le\sum_{i=1}^\ell \tilde\delta_i\int_0^{\phi_i^{-1}(\delta_i)}\log^{1/2}(1/{u_i})d \phi_i(u_i)\,.
    \end{align*}
    Using integration by parts, $\int_0^{\phi_i^{-1}(\delta_i)}\log^{1/2}(1/{u_i})d \phi_i(u_i)\le \tilde\omega(\delta_i)$, which completes the proof.
\end{proof}
\begin{example}
  Let $W=(W(x),x\in\RR^d)$ be a factional Brownian sheet with Hurst parameter
   $H=(H_1,\dots,H_d)\in(0,1)^d$. In particular, the covariance of $W$ is given by
\begin{equation*}
  \EE W(x)W(y)=\prod_{i=1}^d R_{H_i}(x_i,y_i)
\end{equation*}
where
\begin{equation*}
  R_{H_i}(s,t)=\frac12 (|s|^{2H_i}+|t|^{2H_i}-|s-t|^{2H_i})\,.
\end{equation*}
We see that
\begin{equation*}
  (\EE|W(\square[x,y])|^2)^{1/2}=\prod_{i=1}^d |x_i-y_i|^{H_i}\,,
\end{equation*}
thus $\phi_i(\delta_i)=|\delta_i|^{H_i}$ and $\sigma(\delta,R)=\delta_1\cdots \delta_d$. We put
\begin{equation*}
  m(\delta,R)=\EE\sup|W(\square[x,y])|\,.
\end{equation*}
where the supremium is taken over the domain $\{x,y:|x_i|^{H_i},|y_i|^{H_i}\le R \mbox{ and } |x_i-y_i|^{H_i}\le \delta_i\,\forall 1\le i\le d\}$. Note that
\begin{equation*}
    \tilde\omega_i(\delta_i)\lesssim\delta_i\log^{1/2}\frac1{\phi_i^{-1}(\delta_i)}
\end{equation*}
The bound \eqref{ineq.m.delta.r} yields
\begin{equation*}
    m(\delta,R)\lesssim \delta_1\cdots \delta_d\sqrt{\log (R\delta_1^{-1}\cdots\delta_d^{-1})}\,.
\end{equation*}
Theorem \ref{thm.maj.gaussian}  yields
\begin{equation}
    \sup_{|x_i|^{H_i},|y_i|^{H_i}\le R; d^*(x,y)\le \delta}|W(\square[x,y])|\lesssim \delta_1\cdots\delta_d\sqrt{\log (R\delta_1^{-1}\cdots\delta_d^{-1})}\,,
\end{equation}
when $R$ get large.   This implies the inequality of the form
\eqref{eq:cond.w2}   for $W$.

\end{example}
\begin{remark}
    Fractional Brownian sheet belongs to a larger class of random fields called anisotropic random fields. That is,
    \begin{equation}\label{eqn.cond.aniso}
        (\EE |W(y)-W(x)|^2)^{1/2}\asymp\sum_i |y_i-x_i|^{H_i}\,.
    \end{equation}
    These random fields may have different behavior along different directions. In \cite{mwy}, the authors investigate the global moduli of continuity for anisotropic Gaussian random fields. As a result, they establish a sharp result for the global modulus of continuity for fractional Brownian sheets. The conditions considered in the current paper are somewhat more general. For instance, the estimate \eqref{ineq.aniso} implies the upper bound in the anisotropic condition \eqref{eqn.cond.aniso}.  We believe our method (Theorems \ref{thm.maj.meas}, \ref{thm.maj.gaussian}) provides similar results as \cite{mwy} though we do not report them here.
\end{remark}

\begin{appendix}
\renewcommand{\theequation}{\Alph{section}.\arabic{equation}}
\section{Other types of nonlinear stochastic  integral}\label{sec.prelim}

{\replacecolorred
The It\^o  integral is
a fundamental concept in     stochastic analysis.  This integral can be defined
under less condition than the Stratonovich one and has a completely
different  feature   such as the  famous It\^o formula.
From the modeling point of view,  It\^o type stochastic differential equations are more popular since all terms in  the It\^o equation $dx_t=b(x_t) dt+\si(x_t) \de B_t$ (see also \eref{eqn.diffusion}) have clear meaning: $b(x_t) $ represents the mean rate of change and $\si(x_t) \de B_t$ represents the fluctuation  (it has zero mean contribution).

In this section,  we will introduce    nonlinear It\^o-Skorohod integral.  This integral is  a   probabilistic one  and is defined for almost every sample path while nonlinear Young integral is defined for every sample path. The relation between these two integral is through the nonlinear symmetric (Stratonovich) integral.

}

\subsection{Nonlinear It\^o-Skorohod integral}\label{subsection.noise}
	Let $H\in (\frac12,1)$ and   denote by $R_H(s,t)
	=\frac12\left(s^{2H}+t^{2H}-|t-s|^{2H}\right)$ the covariance function of
	a fractional Brownian motion of Hurst parameter $H$.   
	Let $q(x,y)$ be a continuous and positive definite function, namely,
	for any $x_i\in \RR^d\,, \ i=1, 2, \cdots,
	m$ and complex numbers $\xi_i, i=1, 2\cdots, m$,  not all $0$, we have
	\[
	\sum_{i,j=1}^m q(x_i, x_j) \bar \xi_i\xi_j\ge 0\,,
	\]
	where $\bar \xi_i$ is the conjugate number of $\xi_i$. For every $s,t\ge0$ and $x,y\in\RR^d$, we denote
	\[
	Q(s,t,x,y)=\frac{\partial ^2 R_H}{\partial s\partial
	t}(s,t)q(x,y)=\alpha_H |s-t|^{2H-2} q(x,y)\,,
	 \]
	 where $\alpha_H=H(2H-1)$.
	Let  $\SS$ be  the
	set of all smooth functions $f: [0,T]\times \RR^d\rightarrow \RR$ such that $f(t,\cdot)$ has
	compact support for every $t\in[0,T]$.  We introduce a scalar product on $\SS$ in the
	following way:
	\begin{equation}\label{inn.hh}
	\sprod{\phi}{\psi}_\HH= \int_{[0, T]^2\times \RR^{2d}}
	\phi(s,x)\psi(t,y)Q(s,t,x,y) d xd yd sd t\,.
	\end{equation}
	%

	We denote by $\HH$ the Hilbert space
	of the closure of $\SS$ with respect to this inner
	product.   {\replacecolorred  Let $T$ be a bijective Hilbert-Schmidt  operator
	on $\HH$.  Define the Banach  space (in fact, it is a Hilbert space)
	$\Omega$ as the completion of $\HH$ with respect to the norm $\|x\|_\Om:=\sqrt{\langle Tx\,,Tx\rangle_\HH}$.
	Then, it follows from the Bochner-Minlos theorem
 	(see \cite{hidabrownianmotion},  Theorem 3.1)
} that there is a
	probability measure $P$ on $(\Omega, \mathcal F)$ such that  $\langle h,\omega\rangle
	$ is a centered Gaussian random variable  with covariance $\EE
	\left[\langle h,\cdot\rangle \langle h',\cdot\rangle \right]=
	\langle h, h'\rangle_\HH$\,,   $\forall \ h, h'\in \Om'$,  where $\Omega'$ is the Banach space
	of all continuous linear functionals on $\Omega$\,;  \ $\cF$
	is the Borel $\si$-algebra generated by the open sets of $\Om$,    and
	$\langle h,\om\rangle $  the pairing between $h\in \Omega'\subset \HH$
	and $\Omega$.  We identity  	$\HH'=\HH$  so that the embeddings
 $\Omega'\subset\HH'=\HH\subset \Om$ are continuous.
	We can define Gaussian random variable $\langle
	h,\omega\rangle$ for all $h\in \HH$ by limiting  argument.

	First we give some specific elements in $\cH$.  For any $x\in \Rd$.
	we denote by $\de_x$  the  Dirac function on $\Rd$.  Namely,
	 $\de_x$ is defined by $\int_{\Rd} \de_x(y) f(y) dy=f(x)$
	 for any smooth function of compact support on $\Rd$.
	 \begin{proposition}\label{lem.delta}  For any $s>0$ and $x\in \Rd$,
	$I_{(0,s]}\delta_{x}$ is an element in $\cH$ and
	\begin{equation}\label{eqn.app1}
	\sprod{I_{(0,s]}\delta_{x}}{I_{(0,t]}\delta_{y}}_\HH=R_H(s,t)q(x,y)
	\end{equation}
	and
	\begin{equation}\label{eqn.app2}
	\|I_{(0,s]}\delta_{x}-I_{(0,t]}\delta_{y}\|^2_\HH\\=s^{2H}q(x,x)
	+t^{2H}q(y,y)-2R_H(s,t)q(x,y).
	\end{equation}
	\end{proposition}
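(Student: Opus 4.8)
The plan is to obtain $I_{(0,s]}\de_x$ as a limit in $\HH$ of smooth approximations, and then read off the two identities by continuity of the inner product. Fix once and for all a nonnegative $p\in C_c^\infty(\Rd)$ with $\int_{\Rd}p=1$; for $\ep>0$ put $p^\ep(\cdot)=\ep^{-d}p(\cdot/\ep)$ and $\de_x^\ep(\cdot)=p^\ep(\cdot-x)$, so that $(u,z)\mapsto I_{(0,s]}(u)\de_x^\ep(z)$ belongs to $\SS$. Because $Q(s,t,x,y)=\ah|s-t|^{2H-2}q(x,y)$ factors into a time part and a space part, Fubini gives
\[
\sprod{I_{(0,s]}\de_x^\ep}{I_{(0,t]}\de_y^{\ep'}}_\HH
=\Bigl(\ah\int_0^s\!\!\int_0^t|u-v|^{2H-2}\,du\,dv\Bigr)\Bigl(\int_{\Rd\times\Rd}\de_x^\ep(a)\de_y^{\ep'}(b)\,q(a,b)\,da\,db\Bigr).
\]

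First I would identify the time factor: since $\partial_s\partial_t R_H(s,t)=\ah|s-t|^{2H-2}$ (this is exactly where $H>1/2$ enters, guaranteeing $2H-2>-1$ and hence integrability of the singular kernel) and $R_H(s,0)=R_H(0,t)=0$, the first factor equals $R_H(s,t)$. The space factor converges to $q(x,y)$ as $\ep,\ep'\to0$ by continuity of $q$. Next I would check that $\{I_{(0,s]}\de_x^\ep\}_\ep$ is Cauchy in $\HH$: the displayed identity with $t=s$, $y=x$ yields
\[
\|I_{(0,s]}\de_x^\ep-I_{(0,s]}\de_x^{\ep'}\|_\HH^2
=R_H(s,s)\int_{\Rd\times\Rd}(\de_x^\ep-\de_x^{\ep'})(a)(\de_x^\ep-\de_x^{\ep'})(b)\,q(a,b)\,da\,db,
\]
and expanding the right-hand integral into its four pieces, each converges to $q(x,x)$, so the whole expression tends to $q(x,x)-2q(x,x)+q(x,x)=0$. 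Positive definiteness of $q$, hence of $Q$, makes the left side nonnegative, so this limit computation is legitimate; running the same argument with two different bump functions shows the limit is independent of the mollifier. Thus $I_{(0,s]}\de_x:=\lim_{\ep\to0}I_{(0,s]}\de_x^\ep$ exists in $\HH$, which is the first assertion.

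Finally, \eqref{eqn.app1} follows by letting $\ep,\ep'\to0$ in the first display and using continuity of $\sprod{\cdot}{\cdot}_\HH$, and \eqref{eqn.app2} follows by expanding $\|I_{(0,s]}\de_x-I_{(0,t]}\de_y\|_\HH^2$ via bilinearity and substituting \eqref{eqn.app1} together with $R_H(s,s)=s^{2H}$ and $R_H(t,t)=t^{2H}$. The only step that is not completely routine is the evaluation $\ah\int_0^s\int_0^t|u-v|^{2H-2}\,du\,dv=R_H(s,t)$ and, with it, the observation that the whole construction relies on $H>1/2$ to keep this singular double integral finite; granting that, the Cauchy property and the two limiting passages are immediate consequences of the continuity and positive-definiteness of $q$.
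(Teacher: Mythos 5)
Your proof is correct and follows essentially the same route as the paper: approximate $\delta_x$ by a mollifier (the paper uses the normalized indicators $(2\ep)^{-d}I_{(x-\ep,x+\ep]}$ instead of a smooth bump), factor the kernel $Q$ into its time and space parts so the time integral is exactly $R_H(s,t)$, use continuity of $q$ to pass to the limit in the spatial factor, deduce the Cauchy property in $\HH$, and read off \eqref{eqn.app1}--\eqref{eqn.app2} by continuity of the inner product. The extra detail you supply (the four-term expansion for the Cauchy step and the explicit evaluation $\ah\int_0^s\int_0^t|u-v|^{2H-2}\,du\,dv=R_H(s,t)$, valid since $H>1/2$) is exactly what the paper leaves implicit.
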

	\begin{proof} For every $\ep>0$ and $x\in \Rd$,  we denote the elementary function
	\[
	\delta^\ep_{x}=(2\ep)^{-d}I_{(x-\ep,x+\ep]}.
	\]
	If $\ep$ tends to 0, the function $\delta^\ep_{x}$ converges in
	$\HH$ to the  generalized function $\delta_{x}$.
	Indeed, fix $(s,x)$ and $(t,y)$ in $[0,T]\times \Rd$.  For any
	positive numbers $\ep$ and $\ep'$, we have
	\[
	\sprod{I_{(0,s]}\delta_{x} ^\vare}{I_{(0,t]}\delta_{y}^{\vare'} }_\HH
	=R_H(s,t)(4\ep\ep')^{-d}\int_{y-\ep'}^{y+\ep'}\!\int_{x-\ep}^{x+\ep}\!q(x',y')d	x'd y'\,.
	\]
	Since $q(\cdot,\cdot)$ is continuous,  the above
	right hand side   converges to $q(x,y)$ as $\ep$ and $\ep'$
	tend to 0.   This shows easily that $I_{(0,s]}\delta_{x} ^\vare$
	is a Cauchy sequence in $\cH$ when $\vare\rightarrow 0$.  The limit
	of $I_{(0,s]}\delta_{x} ^\vare$ in $\cH$ as $\vare\rightarrow 0$
	is  $I_{(0,s]}\delta_{x}  $.  The equations \eref{eqn.app1}
	and \eref{eqn.app2} are immediate.
	\end{proof}
	Since $I_{(0,s]}\delta_{x}\in \cH$,  we can define
	\begin{equation}\label{map.b}
	W(s,x, \om)= \langle I_{(0,s]}\delta_{x}, \om\rangle\,, \ \  \om \in
	\Om
	\end{equation}
	Thus $\{W(s, x), t\ge 0\,, x\in \Rd\}$
	 is a multiparameter centered Gaussian process with the
	following covariance
	\[
	\EE \left[ W(s, x)W(t,
	y)\right]=\sprod{I_{(0,s]}\delta_{x}}{I_{(0,t]}\delta_{y}}_\HH=R_H(s,t)q(x,y)\,.
	\]
	We also denote
	\[
	W(\phi):=\int_0^T\!\int_{\RR^d}  \phi(s,x)W(d s,x)d x:=\langle \phi,
	\om\rangle\quad \forall \ \phi\in \HH\,.
	\]


	%

	We denote by $\cP$ the set of smooth and cylindrical random variables
	of the following form
	\begin{equation}\label{eqn.ss}
	F=f(W(\phi_1),\dots,W(\phi_n)),
	\end{equation}
	$\phi_i\in \HH$, $f\in C_p^\infty(\RR^n)$ ($f$ and all its partial
	derivatives have polynomial growth). $D$  denotes   the  Malliavin
	derivative. That is, if $F$ is of  the form \eqref{eqn.ss}, then
	$DF$ is the $\HH$-valued random variable defined by
	\begin{equation*}
	DF=\sum_{j=1}^n\frac{\partial f}{\partial x_j}(W(\phi_1),\dots,W(\phi_n))\phi_j.
	\end{equation*}
	The operator $D$ is closable from $L^2(\Omega)$ into
	$L^2(\Omega;\HH)$ and we define the Sobolev space $\DD^{1,2}$ as the
	closure of  $\cP$  under the norm
	\[
	\|F\|_{1,2}=\sqrt{\EE(F^2)+\EE(\|DF\|_\HH^2)}.
	\]
	$D$ can be extended uniquely to an operator  from $\DD^{1,2}$ into
	$L^2(\Omega;\HH)$.   The {\it divergence operator}  $\delta$ is the
	adjoint of the Malliavin derivative operator $D$. We say that a
	random variable $u$ in $L^2(\Omega;\HH)$ belongs to the domain of
	the divergence operator, denoted by $\dom \delta$, if there is a
	constant $c_u\in (0, \infty)$ such that
	\[
	|\EE(\sprod{DF}{u}_\HH)|\le c_u\|F\|_{L^2(\Omega)}\quad \forall \ F\in
	\DD^{1,2}\,.
	\]
	 In this case $\delta(u)$ is defined by the duality
	relationship
	\begin{equation}\label{eqn.dual}
	\EE(\delta(u)F)=\EE(\sprod{DF}{u}_\HH)\quad \forall \ F\in \DD^{1,2}\,.
	\end{equation}

	The following are two basic properties of the divergence operator
	$\delta$.
	\begin{itemize}
	  \item[(i)] $\DD^{1,2}(\HH)\subset \dom\delta$ and for any
	  $u\in\DD^{1,2}(\HH) $
	  \begin{equation}\label{id.covdel}
	  \EE\left(\delta(u)^2\right)=\EE\left(\|u\|_\HH^2\right)
	  +\EE\left(\sprod{Du}{(Du)^*}_{\HH\otimes\HH}\right),
	  \end{equation}
	  where $(Du)^*$ is the adjoint of $Du$ in the Hilbert space
	  $\HH\otimes\HH$.
	    \item[(ii)] For any $F$ in $\DD^{1,2}(\HH)$ and any $u$ in the
	    domain of $\delta$ such that $Fu$ and
	    $F\delta(u)-\sprod{DF}{u}_{\HH}$ are square integrable, then $Fu$
	    is in the domain of $\delta$ and \begin{equation}\label{id.fu}
	    \delta(Fu)=F\delta(u)-\sprod{DF}{u}_{\HH}.
	    \end{equation}
	\end{itemize}

	The operator $\delta$ is also called the Skorokhod integral because
	in the case of Brownian motion, it coincides with the generalization
	of the It\^o stochastic integral to anticipating integrands
	introduced by Skorokhod \cite{skorohod}. On the relation between $\delta$ and $D$, we have the identity
	\begin{equation}\label{id.comm}
	    D \delta(u)=u+\delta(Du)\,.
	 \end{equation}
	We refer to
	Nualart's book \cite{nuabook} for a detailed account of the
	Malliavin calculus with respect to a Gaussian process. Using the
	specific definition of our $\cH$, we also denote
	$\de(u)=\int_0^T \int_{\RR^d} u(t, x) W(\delta t, x)d x$. In addition, we can write the identity
	\eref{id.covdel} as
	\begin{align}
	&\EE \left[ \int_0^T \int_{\Rd}u(t, x) W(\delta t, x)d x\right]^2\nonumber\\
	=& \int_{[0, T]^2\times \RR^{2d} }
	\EE \left[ u(t,x)u(s,y)\right]
	 Q(s,t,x,y) dsdtdxdy\nonumber\\
	  +&\int_{[0, T]^4\times \RR^{4d}} \EE \left[ D_{t_2, x_2} u(t_1,x_1)
	 D_{s_2, y_2}u(s_1,y_1)\right] Q(t_1,s_2, x_2,y_1) Q(t_2,s_1, x_1,y_2)
	 ds dt dx dy \,, \nonumber\\ \label{ito-iso}
	 \end{align}
	where in the rest of the paper we shall use $ds=ds_1\cdots ds_k$, $dx=dx_1\cdots dx_m$ and so on,
	the $k$ and $m$ being clear in the context.

	Let $\{ W(t,x)\,, t\ge 0\,, x\in \Rd\}$ be the Gaussian field
	introduced in Section \ref{subsection.noise},  whose mean is $0$ and
	whose covariance is
	\[
	\EE (W(s,x)W(t,y))=R_H(s,t) q(x,y)\,.
	\]
	Let  $\varphi=\{\varphi_t, t\in [0,T]\}$   be a $\Rd$-valued
	stochastic  process. Our aim in this section is to introduce and
	study the nonlinear stochastic integral $\int_0^T W(\delta t, \varphi_t)$.

	This stochastic integral was studied  earlier in order  to establish   the
	Feynman-Kac formula when $\varphi_t$ is a Brownian motion, independent
	of $W$.   The case $H>1/2$ is discussed in \cite{hunualartsong} and
	the case $H<1/2$ is discussed in \cite{hu-lu-nualart12}.  When $\{
	W(t,x)\,, t\ge 0\} $ is a semimartingale with respect to $t$ (for
	fixed $x\in \RR^d$), this type of stochastic integral  has been
	studied extensively  and generalized
	It\^o formulas have been established.  It has been applied to solve
	some stochastic partial differential equations. See for instance Kunita's book
	\cite{kunita} and the references therein.

	In this section, we will define the stochastic integral $\int
	W(\delta t,\varphi_t)$  based on the covariance structure of $W$. This
	method is closely  tied to the nature of $W$ as a Gaussian process.
	In particular, we introduce here two types of stochastic integrals,
	namely, the divergence type and symmetric type. We also study their
	properties and relation. The divergence type integral turns out to
	have zero mean, thus one can think of it as a generalization of
	It\^o-Skorohod integral. The symmetric integral does not have
	vanishing mean and differs from the divergence type integral by a
	correction term, related to the Malliavin derivative of some random
	variable. One can also view the symmetric integral as a
	generalization of Stratonovich integral.

{\replacecolorred  We shall  define the (nonlinear) It\^o-Skorohod (divergence)
	type integral $\int_0^T W(\delta t, \varphi_t)$ by
  the (linear) multi-parameter integral  $  \int_0^T\int_{\RR^d} \de(\varphi_t-y) W(\delta t,y)d y$.  Here and in the remaining part of the paper, the symbol $\de$ carries two meanings:
   the It\^o-Skorohod integral and the Dirac delta
	function. Difference between the two meanings will be clear from the context.

	Since $\de(\varphi_t-y) $ is a distribution valued random process,
	to define its stochastic integral we need to approximate the Dirac delta function by smooth functions.  Namely, we shall  define
	$  \int_0^T\int_{\RR^d} \de(\varphi_t-y) W(\delta t,y)d y$
 as  the limit $\displaystyle
	\lim_{\vare\downarrow 0}\int_0^T\int_{\RR^d} \eta_\vare(\varphi_t-y)
	W(\delta t, y)d y$,  where $\eta_\vare$ is an approximation of the Dirac delta function $\de$.  To define such sequence $\eta_\vare$, we
	}
	denote by $\eta$ the following bump function
	\[\eta(x)=c_d \exp\{(|x|^2-1)^{-1}\}1_{\{|x|<1\} }\,, x\in \Rd\,,
	\]
	where $|x|$ is the Euclidean distance in $\Rd$ and $c_d$ is the
	positive  constant so that $$\int_{\RR^d}\eta(x)dx =1.$$  The function
	$\eta$ is smooth and compactly supported. Its corresponding
	mollifier is
	\begin{equation}\label{def.eta.ep}
	\eta_\ep(x)=\ep^{-d}\eta\left(\frac x\ep\right).
	\end{equation}

	
	Here is our definition.
	\begin{definition}\label{def.itosko} Let $\varphi:[0, T]\times
	\Om\rightarrow \Rd$ be a measurable stochastic process.  If $I_\vare= \int_0^T\int_{\RR^d}
	\eta_\vare(\varphi_t-y) W(\delta t, y)d y$ is well-defined and it has a
	limit in $L^2(\Om, \cF, P)$ as $\ep\to0$,  then we define $\int_0^T W(\delta t,
	\varphi_t) $   as the aforementioned limit.
	\end{definition}

	Next, we shall give condition to ensure the existence of the
	stochastic integral  $\int_0^T W(\delta t, \varphi_t) $,
	{\replacecolorred  namely, to ensure the existence of the limit of
	$I_\vare$ in $L^2(\Om, \cF, P)$}.  To express the
	conditions in a more concise way we introduce the following
	notations.
	\begin{equation*}
	q_\varphi(x,y)
	=\alpha\int_0^T\int_0^T \EE q(x+\varphi_s,y+\varphi_t)|s-t|^{2H-2}dsdt
	\end{equation*}
	and
	\begin{multline*}
	q^*_{D\varphi}(x,y)=\alpha_H^2\int\limits_{[0,T]^4\times\RR^{2d}}\!
	\EE D_{s_1,x'}q(x+\varphi_{s_2},y') D_{t_2,y'}
	q(x',y+\varphi_{t_1})\\
	|s_1-t_1|^{2H-2}|s_2-t_2|^{2H-2}d {s_1}d {s_2}d {t_1}d {t_2}d x'd y'
	\end{multline*}
	whenever the integrals on the right hand side make    sense.
	We make the following assumptions on the process $\varphi_t$.
	\begin{enumerate}[label={ \bm{$(A\arabic*)$}}]
	\item\label{cond.qphi0} $\varphi_t$ belongs to $\DD^{1,2}$ for all $t$,
	{\replacecolorred  and for almost every $\om\in \Om$,  the sample path
	$\varphi_t$  is continuous in $t\in [0, T]$}.
	\item\label{cond.qphi1} $|q|_\varphi$ is integrable on a
	  neighborhood of $(0,0)$, that is there exists an open set $U$ in
	$\RR^{2d}$ containing $(0,0)$ such that
	\[\int_{U}
	\int_0^T\int_0^T  \EE |Q(s, t,  x+\varphi_s,y+\varphi_t)|dsdt dxdy<\infty\,.
	\]
	\item\label{cond.qphi3} $q_\varphi(x,y)$ is well-define  in neighborhood of
	 $(0, 0)$  and
	it  is continuous at $(0,0)$.
	\item\label{cond.qphi4} There exists an open set $U$
	in $\RR^{2d}$ containing $(0,0)$ such that
	\begin{multline*}
	\int\limits_{U}\int\limits_{[0,T]^4\times\RR^{2d}}\!
	\EE\left|D_{s_1,x'}q(x+\varphi_{s_2},y') D_{t_2,y'}
	q(x',y+\varphi_{t_1})\right|\\
	|s_1-t_1|^{2H-2}|s_2-t_2|^{2H-2}d {s_1}d {s_2}d {t_1}d {t_2}d
	x'd y'd xd y<\infty\,.
	\end{multline*}
	 \item\label{cond.qphi5}  $q^*_{D\varphi}(x,y) $ is well-defined  in neighborhood of
	 $(0, 0)$  and it
	  is  continuous at $(0,0)$.
	\end{enumerate}

	\begin{theorem}\label{thm.itosko} We assume the conditions \ref{cond.qphi0}-\ref{cond.qphi5} are satisfied.
	Then $\int_0^T\! W(\delta t, \varphi_t) $ is well-defined and
	\begin{multline}\label{id.ddel.phi}
	\EE\left[ \int_0^T\! W(\delta t, \varphi_t) \right]^2 =
	q_{D\varphi}^*(0,0)+q_\varphi(0,0)\\=
	\int_{[0,T]^4}\!\int_{\RR^{2d}}\EE
	D_{s_1,x}Q(s_1,t_1,\varphi_{s_2},y)
	D_{t_2,y}Q(s_2,t_2,x,\varphi_{t_1})
	d xd yd s_1d s_2d t_1 d t_2\\
	+\int_0^T\!\int_0^T\!\EE
	Q(s,t,\varphi_s,\varphi_t)\,d sd t\,.
	\end{multline}
	\end{theorem}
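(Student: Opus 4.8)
The strategy is the standard mollification argument for divergence-type (Skorohod) integrals: show that the approximating random variables $I_\vare = \int_0^T\!\int_{\RR^d}\eta_\vare(\varphi_t-y)\,W(\delta t,y)\,dy$ lie in the domain of $\delta$, compute $\EE[I_\vare^2]$ using the isometry-type formula \eqref{ito-iso}, and verify that this quadratic quantity converges as $\vare\downarrow 0$; the same computation applied to $I_\vare - I_{\vare'}$ gives a Cauchy sequence in $L^2(\Om)$, whose limit is by definition $\int_0^T W(\delta t,\varphi_t)$. First I would fix $\vare>0$ and note that, since $\varphi_t\in\DD^{1,2}$ by \ref{cond.qphi0} and $\eta_\vare$ is smooth with bounded derivatives, the field $u^\vare(t,y):=\eta_\vare(\varphi_t-y)$ is an $\HH$-valued process in $\DD^{1,2}(\HH)$ (after checking the relevant integrability, which follows from \ref{cond.qphi1} and \ref{cond.qphi4} once one writes $\eta_\vare$ as a bona fide bounded function), hence $u^\vare\in\dom\delta$ and $I_\vare=\delta(u^\vare)$ is well-defined with $\EE I_\vare=0$. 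By the isometry \eqref{id.covdel}, written in the concrete form \eqref{ito-iso}, we have
\begin{align*}
\EE[I_\vare^2]
&= \int_{[0,T]^2\times\RR^{2d}}\EE\big[\eta_\vare(\varphi_s-x)\eta_\vare(\varphi_t-y)\big]Q(s,t,x,y)\,dx\,dy\,ds\,dt\\
&\quad+\int_{[0,T]^4\times\RR^{4d}}\EE\big[D_{t_2,x_2}\eta_\vare(\varphi_{t_1}-x_1)\,D_{s_2,y_2}\eta_\vare(\varphi_{s_1}-y_1)\big]\\
&\qquad\qquad\times Q(t_1,s_2,x_2,y_1)\,Q(t_2,s_1,x_1,y_2)\,ds\,dt\,dx\,dy\,.
\end{align*}

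**Passing to the limit.** The next step is to identify the limit of each of the two terms. For the first term, by a change of variables $x\mapsto x+\varphi_s$, $y\mapsto y+\varphi_t$ inside the expectation (justified by Fubini, which is licensed by the absolute integrability in \ref{cond.qphi1}) the integral equals $\int_{\RR^{2d}}\big(\iint \eta_\vare(x)\eta_\vare(y)\,\wt q_\vare(x,y)\,dx\,dy\big)$-type expression, where the inner kernel converges pointwise to $q_\varphi(x-y)$... more precisely, writing $Q(s,t,u,v)=\alpha_H|s-t|^{2H-2}q(u,v)$ and recalling the definition $q_\varphi(x,y)=\alpha_H\int_0^T\!\int_0^T\EE q(x+\varphi_s,y+\varphi_t)|s-t|^{2H-2}\,ds\,dt$, the first term becomes $\iint_{\RR^{2d}}\eta_\vare(x)\eta_\vare(y)\,q_\varphi(x,y)\,dx\,dy$, which by \ref{cond.qphi3} (well-definedness near $(0,0)$ and continuity at $(0,0)$) converges to $q_\varphi(0,0)$ as $\vare\downarrow 0$, since $\eta_\vare\otimes\eta_\vare$ is an approximate identity at the origin. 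For the second term, using $D_{t_2,x_2}\eta_\vare(\varphi_{t_1}-x_1) = \partial_k\eta_\vare(\varphi_{t_1}-x_1)\,D_{t_2,x_2}\varphi_{t_1}^k$ together with the identity $\partial_{x_1}\eta_\vare(\varphi_{t_1}-x_1)=-\partial\big(\eta_\vare(\cdot)\big)$ and an integration by parts in $x_1$ moving the derivative off $\eta_\vare$ onto the kernel $q$ (here one uses that $D_{t_2,x_2}q(x+\varphi_{s_2},y')$ and its spatial derivatives appear, exactly matching the integrand defining $q^*_{D\varphi}$), the term is seen to equal $\iint_{\RR^{2d}}\eta_\vare(x)\eta_\vare(y)\,q^*_{D\varphi}(x,y)\,dx\,dy$, which converges to $q^*_{D\varphi}(0,0)$ by \ref{cond.qphi5}. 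Hypotheses \ref{cond.qphi4} supply the absolute integrability needed to justify the Fubini manipulations and the change of variables in this term. Combining, $\EE[I_\vare^2]\to q^*_{D\varphi}(0,0)+q_\varphi(0,0)$.

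**Cauchy property and conclusion.** To upgrade convergence of $\EE[I_\vare^2]$ to $L^2$-convergence of $I_\vare$ itself, I would run exactly the same computation on the bilinear form: by linearity, $\EE[I_\vare I_{\vare'}] = \delta$-isometry applied to the pair $(u^\vare,u^{\vare'})$, and the identical change-of-variables / integration-by-parts argument shows $\EE[I_\vare I_{\vare'}] = \iint \eta_\vare(x)\eta_{\vare'}(y)\,q_\varphi(x,y)\,dx\,dy + \iint\eta_\vare(x)\eta_{\vare'}(y)\,q^*_{D\varphi}(x,y)\,dx\,dy$, which by continuity of $q_\varphi$ and $q^*_{D\varphi}$ at $(0,0)$ also converges to $q^*_{D\varphi}(0,0)+q_\varphi(0,0)$ as $\vare,\vare'\downarrow 0$. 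Hence $\EE[(I_\vare-I_{\vare'})^2]=\EE[I_\vare^2]-2\EE[I_\vare I_{\vare'}]+\EE[I_{\vare'}^2]\to 0$, so $\{I_\vare\}$ is Cauchy in $L^2(\Om,\cF,P)$; its limit is, by Definition \ref{def.itosko}, $\int_0^T W(\delta t,\varphi_t)$, and taking $\vare\downarrow 0$ in $\EE[I_\vare^2]$ yields the stated identity \eqref{id.ddel.phi}, the second displayed equality there being just the expanded form of $q^*_{D\varphi}(0,0)+q_\varphi(0,0)$ after undoing the change of variables. I expect the main obstacle to be the careful justification of the integration by parts in the second term — one must move a spatial derivative off $\eta_\vare$ onto $q$ while keeping track of the Malliavin derivative $D\varphi$ and of the two separate $Q$-kernels, and confirm that all boundary terms vanish and that every interchange of integration and differentiation is licensed precisely by the integrability hypotheses \ref{cond.qphi1} and \ref{cond.qphi4}; the convergence of approximate identities and the abstract $\DD^{1,2}(\HH)\subset\dom\delta$ step are routine by comparison.
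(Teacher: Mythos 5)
Your proposal is correct and follows essentially the same route as the paper: mollify with $\eta_\vare$, use \eqref{id.covdel}/\eqref{ito-iso} to compute the second moments (and cross moments) of $\delta(\eta_\vare(\cdot-\varphi_\cdot))$, pass derivatives off $\eta_\vare$ onto $q$ by the integration-by-parts identity \eqref{id.Dibp}, and invoke \ref{cond.qphi1}--\ref{cond.qphi5} to get convergence of both terms to $q_\varphi(0,0)$ and $q^*_{D\varphi}(0,0)$, hence the Cauchy property in $L^2$. The only cosmetic difference is that the paper computes $\EE[\delta(\eta_\ep(\cdot-\varphi_\cdot))\delta(\eta_{\ep'}(\cdot-\varphi_\cdot))]$ for two mollification parameters at once (and additionally identifies the limit as $\delta(\delta_\varphi)$ via closedness of $\delta$), whereas you treat $\EE[I_\vare^2]$ first and then the bilinear form, which amounts to the same argument.
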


	Before proceeding to the proof, let us make the following remark
	which we will use several times in the future.
	\begin{remark}\label{rem.ibp}
	  Suppose that $f$ and $g$ are  smooth functions, $f$ has compact support,  and $\varphi$
	  is random variable in $\DD^{1,2}$. Then the following integration by parts formula
	  holds almost surely
	   \begin{equation}\label{id.Dibp}
	    \int_{\RR^{d}}D f(x-\varphi) g(x)dx=-\int_{\RR^{d}}f(x) Dg(x+\varphi)dx\,.
	  \end{equation}
	  \end{remark}
	  Indeed, the integration on the left hand side is
	  \begin{equation*}
	    \int_{\RR^{d}} \nabla f(x-\varphi)\cdot D \varphi g(x)dx\,.
	  \end{equation*}
	  Integrating by parts yields
	  \begin{equation*}
	    -\int_{\RR^{d}} f(x-\varphi) D \varphi\cdot \nabla g(x)dx\,.
	  \end{equation*}
	  With the change of the variable $x\mapsto x+\varphi $,
	  \begin{equation*}
	    -\int_{\RR^{d}} f(x) D \varphi\cdot \nabla g(x+\varphi)dx=
	    -\int_{\RR^{d}} f(x) D  g(x+\varphi)dx\,.
	  \end{equation*}

\noindent 	{\it Proof} \  of Theorem \ref{thm.itosko}.\ \
	For any  $\ep>0$, the $\HH$-valued random variable
	$\eta_\ep(\cdot-\varphi_\cdot)$ belongs to $\DDD$, hence belongs to
	$\dom\delta$. Thus, applying \eqref{id.covdel}, for every positive
	numbers $\ep$ and $\ep'$, we obtain
	\begin{multline}\label{eqn.eta.eta}
	\EE(\delta(\eta_\ep(\cdot-\varphi_\cdot))\delta(\eta_{\ep'}(\cdot-\varphi_\cdot)))
	=\EE\sprod{\eta_\ep(\cdot-\varphi_\cdot)}{\eta_{\ep'}(\cdot-\varphi_\cdot)}_\HH\\
	 +\EE\sprod{D\eta_\ep(\cdot-\varphi_\cdot)}{\left(D\eta_{\ep'}(\cdot-\varphi_\cdot)\right)^*}_{\HH\otimes
	\HH}=:E_1+E_2.
	\end{multline}
	Using a change of variable, we have
	\begin{align*}
	E_1&=\ah
	\EE\int_0^T\!\int_0^T\!\int_{\RR^{2d}}\!\eta_\ep(x-\varphi_s)\eta_{\ep'}
	(y-\varphi_t)q(x,y)|t-s|^{2H-2}d xd yd sd t\\ &=\ah
	\EE\int_0^T\!\int_0^T\! \int_{2\Rd}\!\eta_\ep(x)\eta_{\ep'}(y)q(x+\varphi_s,y+\varphi_t) |t-s|^{2H-2}d xd yd sd t
	\\&=\ah\int_0^T\!\int_0^T\!\int_{\RR^{2d}}\!\eta_\ep(x)\eta_{\ep'}(y)\EE
	q(x+\varphi_s,y+\varphi_t)|t-s|^{2H-2}d xd yd sd t.
	\end{align*}
	When $\ep$ and $\ep'$ tend to 0, using the conditons \ref{cond.qphi1}, \ref{cond.qphi3}, this quality converges to
	$$\ah\int_0^T\!\int_0^T\!\EE q(\varphi_s,\varphi_t)|t-s|^{2H-2}\,d
	sd t =q_\varphi(0,0).$$
	Hence, when $\ep$ tends to zero, $\eta_\ep(\cdot-\varphi_\cdot )$
	converges in $L^2(\Omega;\HH)$ to a $\HH$-valued random variable,
	denoted by {\replacecolorred  $\delta_\varphi=\de(\varphi_t-y)$}.

	For the second expectation in \eref{eqn.eta.eta}, we use \eref{ito-iso} to obtain
	\begin{multline*}
	E_2=\alpha^2_H \EE\!\!\!\!\!\!\!\!\int\limits_{[0,T]^4\times\RR^{4d}}
	\!\!\!\!\!\!\!\!\!D_{s_1,x_1}
	\eta_\ep(x_2-\varphi_{s_2})D_{t_2,y_2}\eta_{\ep'}(y_1-\varphi_{t_1})q(x_1,y_1)q(x_2,y_2)\\
	|s_1-t_1|^{2H-2}|s_2-t_2|^{2H-2} dsdtdxdy\,.
	\end{multline*}
	An application of \eqref{id.Dibp} yields
	\begin{multline*}
	  E_2=\alpha^2_H
	\EE\!\!\!\!\int\limits_{[0,T]^4\times\RR^{4d}}\!\!\eta_\ep(x_2)
	D_{s_1,x_1}q(x_2+\varphi_{s_2},y_2)   \eta_{\ep'}(y_1) D_{t_2,y_2}
	q(x_1,y_1+\varphi_{t_1})\\
	|s_1-t_1|^{2H-2}|s_2-t_2|^{2H-2} dsdtdxdy\,.
	\end{multline*}
	When $\ep$ and $\ep'$ tend to 0, this converges to $q_{D\varphi}^*(0,0)$ by using conditions \ref{cond.qphi4}, \ref{cond.qphi5}.

	Therefore,
	$\delta(\eta_\ep(\cdot-\varphi_\cdot))$ is a Cauchy sequence in $L^2(\Om)$. Since $\delta$ is a closed operator and $\eta_{\ep}(\cdot- \varphi_\cdot)$ converges to $\delta_{\varphi}$, we obtain   that $\delta_{\varphi}$ belongs to the domain of $\delta$.  As a consequence, $\delta(\eta_\ep(\cdot-\varphi_\cdot)) $
	converges to $\delta(\delta_{\varphi})$ when $\ep$
	tends to zero. Thus the integration $\int_0^T W(\delta t, \phi_t)$ is well-defined.
	The equation \eref{id.ddel.phi} is    immediate.\hfil $\Box$
	
	\begin{remark}
	Under the hypothesis of the above theorem, the $\HH$-valued random
	variable $\eta_\ep(\cdot-\varphi_\cdot)$ converges in
	$L^2(\Omega;\HH)$ to {\replacecolorred $\delta_\varphi=\de(\varphi_t-y)$}
	 as $\ep$ tends to zero. Moreover,
	  $\delta_\varphi$ also belongs to the
	domain of the divergence operator and the convergence also holds
	under the divergence $\delta$. Hence, in this case, the stochastic
	integral in Definition \ref{def.itosko} can be viewed as
	$\delta(\delta_\varphi)$, the divergence of $\delta_\varphi$.
	\end{remark}

\subsection{Nonlinear symmetric stochastic  integral}
\label{subsec.sym.int}
	We introduce and study symmetric type stochastic  integral
	by using appropriate approximation. This stochastic integral
	will be different than the It\^o-Skorohod type integral introduced
	in the previous subsection.

	Recall that  $W=\{W(s,x,\omega), \omega\in\Omega\}$ is the
	Gaussian random field (indexed by $(s,x)$) defined in the previous
	subsection. Throughout this subsection, we assume
	that $W$ is almost surely continuous  with respect to $s\ge 0$ and $x\in\Rd$. We define the composition of the random field $W$ and a
	$\Rd$-valued process $\varphi=\{\varphi_s,s\in[0,T]\}$ by
	\begin{equation}
	\begin{split}
	W(s,\varphi_s):\,&\Omega\to \RR\\
	&\omega\mapsto W(s,\varphi_s(\omega),\omega).
	\end{split}
	\end{equation}
	By convention, we will assume that all processes and functions vanish outside the interval $[0,T]$.

\begin{definition}\label{d.symmetric}
	The	symmetric  integral $\int_a^b W(\dsym
	s,\varphi_s)$ is defined as the limit as $\ep$ tends to
	zero of
	\begin{equation}\label{def.symint}
		(2\ep)^{-1}\int_a^b\!
		\left(W(s+\ep,\varphi_s)-W(s-\ep,\varphi_s)\right)\,d s,
	\end{equation}
	provided this limit exists in probability.
\end{definition}

	\begin{example} In the particular case, when $W(s,x)=B_s f(x)$, where $f$ is a nice deterministic function and $\{B_s, s\ge 0\}$ is a Brownian
	motion, the symmetric integral defined above coincides with Stratonovich integral. That is $\int_0^T\! W(\dsym
	s,\varphi_s)=\int_0^T f(\varphi_s) \dcir B_s$.
	\end{example}

	In the following proposition we will see that for a suitable class
	of $\Rd$-valued processes $\{\varphi_t\}$,  the symmetric stochastic integral
	$\int_0^T\! W(\dsym
	s,\varphi_s) $  exists almost surely. This result is an extension of \cite[Proposition 3]{alos-nualart}.
	\begin{prop}\label{prop.sym.int}
	Let $\varphi$ be a $\Rd$-valued process satisfying assumptions \ref{cond.qphi0}-\ref{cond.qphi5}. In addition, suppose that $\varphi$ satisfies
	\begin{equation} \label{cond.pw1}
	  \int_0^T\!\int_{|x|<1}\![\EE q(x+\varphi_s,x+\varphi_s)]^{1/2}\,d xd s <\infty,
	\end{equation}
	\begin{equation}\label{cond.pw2}
	\int_0^T\!\int_{|x|<1}\!\left[\EE\left|\sum_{i,j=1}^d\sprod{D\varphi_s^i}{D\varphi_s^j}_{\HH}\right| q(x+\varphi_s,x+\varphi_s)\right]^{1/2}\,d xd s<\infty
	\end{equation}
	and the function
	\begin{equation}\label{cond.pw3}
	x\mapsto\int_0^T\!\int_0^T\!\int_{\RR^{d}}\!\left|D_{t,y}q(x+\varphi_s,y)\right||s-t|^{2H-2}d
	td sd y
	\end{equation}
	is a.s. well-defined and continuous on a  neighborhood of 0. Assume
	also that the Gaussian field $W$ has continuous sample path. Then
	the symmetric integral \eqref{def.symint} exists and the
	following formula holds almost surely
	\begin{multline}\label{id.pw}
	\int_0^T\! W(\dsym s,\varphi_s)=\int_0^T\! W(\delta s,\varphi_s)\\+\ah\!
	\int_0^T\!\int_0^T\!\int_{\RR^{2d}}\!D_{t,y}q(\varphi_s,y)|s-t|^{2H-2}d
	td sd y.
	\end{multline}
	\end{prop}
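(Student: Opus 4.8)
The plan is to adapt the argument behind \cite[Proposition 3]{alos-nualart}: first mollify $W$ in the spatial variable so that the composition $W(u,\varphi_s)$ becomes Malliavin differentiable, then use the relation between the divergence $\delta$ and a Wick-type pairing to split the regularized symmetric quantity in \eqref{def.symint} into an It\^o--Skorohod term plus a trace term, and finally remove the spatial mollification. Since $H>\frac12$, the kernel $|s-t|^{2H-2}$ is locally integrable, which is what makes the temporal regularization harmless inside the inner product of $\HH$.

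Concretely, for $n\in\NN$ put $W_n(u,x)=\int_{\RR^d}W(u,y)\eta_{1/n}(x-y)\dy$, which is smooth in $x$ because $W(u,\cdot)$ is continuous. First I would check that, almost surely and for each $\ep>0$,
\[
\frac1{2\ep}\int_0^T\bigl(W(s+\ep,\varphi_s)-W(s-\ep,\varphi_s)\bigr)\ds=\lim_{n\to\infty}\frac1{2\ep}\int_0^T\bigl(W_n(s+\ep,\varphi_s)-W_n(s-\ep,\varphi_s)\bigr)\ds\,,
\]
by dominated convergence, using continuity of $W$ in $x$, \ref{cond.qphi0}, and a uniform-on-compacts bound for the sample paths of $W$. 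Next, since $W_n(s+\ep,\varphi_s)-W_n(s-\ep,\varphi_s)=\langle I_{(s-\ep,s+\ep]}\eta_{1/n}(\varphi_s-\cdot),\om\rangle$ and $u^{n,\ep}:=\frac1{2\ep}\int_0^T I_{(s-\ep,s+\ep]}\eta_{1/n}(\varphi_s-\cdot)\ds$ belongs to $\DDD$ (here \ref{cond.qphi0} together with \eqref{cond.pw1}--\eqref{cond.pw2} is used), the identity \eqref{id.fu} applied factorwise and extended by density yields
\[
\frac1{2\ep}\int_0^T\bigl(W_n(s+\ep,\varphi_s)-W_n(s-\ep,\varphi_s)\bigr)\ds=\delta\bigl(u^{n,\ep}\bigr)+\Tr\bigl(Du^{n,\ep}\bigr)\,,
\]
where $\Tr$ contracts the two $\HH$-slots of $Du^{n,\ep}\in\HH\otimes\HH$ and $Du^{n,\ep}$ comes from $D[\eta_{1/n}(\varphi_s-\cdot)]=\nabla\eta_{1/n}(\varphi_s-\cdot)\,D\varphi_s$.

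Then I would let $\ep\downarrow0$ with $n$ fixed. Using $\frac1{2\ep}\int_0^T I_{(s-\ep,s+\ep]}(u)f(s)\ds\to f(u)$ and dominated convergence in the singular but integrable $\HH$-inner product, one gets $u^{n,\ep}\to\eta_{1/n}(\varphi_\cdot-\cdot)$ in $\DDD$, hence $\delta(u^{n,\ep})\to\delta(\eta_{1/n}(\varphi_\cdot-\cdot))$ in $L^2(\Om)$ by \eqref{id.covdel}; a parallel computation, after an integration by parts of the type in Remark \ref{rem.ibp} to move $\nabla\eta_{1/n}$ onto $q$, identifies $\lim_{\ep\downarrow0}\Tr(Du^{n,\ep})$ with the $n$-mollified version of the trace term in \eqref{id.pw}. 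Finally, letting $n\to\infty$: by Theorem \ref{thm.itosko} and Definition \ref{def.itosko}, $\delta(\eta_{1/n}(\varphi_\cdot-\cdot))\to\int_0^T W(\delta s,\varphi_s)$, while \eqref{cond.pw3} (continuity near $0$ of the corresponding integral) lets the mollified trace term converge to $\ah\int_0^T\!\int_0^T\!\int_{\RR^{2d}}D_{t,y}q(\varphi_s,y)\,|s-t|^{2H-2}\,\dt\ds\dy$, which yields \eqref{id.pw}; the left-hand sides converge by the first step and Definition \ref{d.symmetric}.

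The hard part will be justifying the interchange of the two limits $\ep\downarrow0$ and $n\to\infty$ --- equivalently, showing that the \emph{unmollified} symmetric regularization converges at all and to the claimed limit, not merely the mollified one for fixed $n$. This is exactly the role of the extra hypotheses: \eqref{cond.pw1} bounds $\|\eta_{1/n}(\varphi_\cdot-\cdot)\|_\HH$ uniformly in $n$, \eqref{cond.pw2} does the same for its Malliavin derivative (hence for $\delta(u^{n,\ep})$ via \eqref{id.covdel}), and \eqref{cond.pw3} controls the trace term; together with \ref{cond.qphi0}--\ref{cond.qphi5} they furnish the uniform $L^2$-estimates needed for an equi-integrability argument to swap the limits. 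A secondary nuisance is the careful bookkeeping of which $\HH$-indices are contracted in $\Tr(Du^{n,\ep})$ and the singular-kernel estimates required to replace $\frac1{2\ep}I_{(s-\ep,s+\ep]}$ by a point evaluation inside $\langle\cdot,\cdot\rangle_\HH$.
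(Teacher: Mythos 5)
Your proposal follows essentially the same route as the paper's proof: the spatial mollification $W_n=W*\eta_{1/n}$ is exactly the paper's approximation of the Dirac delta by $\eta_{\ep'}(x-\varphi_s)$, the decomposition via $\delta(Fu)=F\delta(u)-\sprod{DF}{u}_\HH$ produces the same divergence term (identified with $\int_0^T W(\delta s,\varphi_s)$ through the construction in Theorem \ref{thm.itosko}, after the Bochner-integral interchange justified by \eqref{cond.pw1}--\eqref{cond.pw2}) and the same correction term (handled by the bound $(2\ep)^{-1}\int_{-\ep}^{\ep}|t-r-s|^{2H-2}dr\le d_H|t-s|^{2H-2}$, the integration by parts of Remark \ref{rem.ibp}, and dominated convergence under \eqref{cond.pw3}). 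The limit-interchange you flag as the hard part is resolved in the paper simply because all these estimates are uniform in the temporal parameter $\ep$, so both regularization parameters can be sent to zero jointly rather than in your iterated order.
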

	\begin{proof} We  shall  show the convergence in $L^2$
	of \eref{def.symint}.   For every positive $\ep$, since $W$ has
	continuous sample path, we can write
	\begin{align}
	W(s+\ep,\varphi_s)-W(s-\ep,\varphi_s)
	&=\lim_{\ep'\to0}\int_\Rd\!\left[W(s+\ep,x)
	-W(s-\ep,x)\right]\eta_{\ep'}(x-\varphi_s)\,d x\nonumber\\
	&= \lim_{\ep'\to0}\int_\Rd\!\delta(I_{[s-\ep,s+\ep]}\delta_x) \eta_{\ep'}(x-\varphi_s)\,d x\,, \label{integrands}
	\end{align}
	almost surely, where we have used \eqref{map.b} in the last equality. We notice $\eta_{\ep'}(x- \varphi_s)$ belongs to $\DD^{1,2}$ for every $s$ and $x$. Using \eqref{id.fu},  we see that the integrand on the right hand
	side of \eref{integrands} can be written as
	\begin{equation*}
	\delta\left(I_{(s-\ep,s+\ep]}\delta_x\eta_{\ep'}(x-\varphi_s)
	\right)+\sprod{D\eta_{\ep'}(x-\varphi_s)}{I_{(s-\ep,s+\ep]}\delta_x}_\HH.
	\end{equation*}
	 Taking integration with respect to $x$ and $s$, we obtain
	\begin{align}
	&(2\ep)^{-1}\int_0^T\!\int_\Rd\!\left[W(s+\ep,x)
	-W(s-\ep,x)\right]\eta_{\ep'}(x-\varphi_s)\,d xd s\nonumber \\
	&=(2\ep)^{-1}\int_0^T\!\int_\Rd\!\delta\left(I_{(s-\ep,s+\ep]}\delta_x\eta_{\ep'}(x-\varphi_s)
	\right)\,d xd s
	\nonumber\\
	&\quad +(2\ep)^{-1}\int_0^T\!\int_\Rd\!\sprod{D\eta_{\ep'}(x-\varphi_s)}
	{I_{(s-\ep,s+\ep]}\delta_x}_\HH\,d xd s\nonumber\\
	&=: I_1+I_2\,.\label{eqn.pw}
	\end{align}
	The proof is now decomposed into several steps.

	{\it Step 1.} Let us show that the integration with respect to $\dx\ds$ in $I_1$ can be
	interchanged with the divergence operator to obtain
	\begin{equation*}
	I_1=\delta\left((2\ep)^{-1}\int_0^T\!\int_{\Rd}\!I_{(s-\ep,s+\ep]}\delta_x\eta_{\ep'}(x-\varphi_s)\,
	d xd s \right)\,.
	\end{equation*}
	In fact, one can view
	the integral in $I_1$ in Bochner sense, that is integration with $L^2$-valued integrand. In this setting, we have
	\begin{equation*}
	  \int_0^T\int_{\RR^d}\delta(u(s,x))\dx\ds=\delta\left(\int_0^T\int_{\RR^d}u(s,x)dxds\right)
	\end{equation*}
	provided that
	\begin{equation}\label{tocheck.u}
	  \int_0^T\int_{\RR^d}\|u(s,x)\|_{\DD^{1,2}}dxds<\infty
	\end{equation}
	and $\delta$ is a bounded operator from $\DD^{1,2}$ to $L^2$. The later fact is automatically guaranteed by \eqref{id.covdel}. It remains to check that $u(s,x)=I_{(s-\ep,s+\ep]}\delta_x \eta_{\ep'}(x-\varphi_s)$ satisfies \eqref{tocheck.u}.
	\begin{align*}
	  \|u(s,x)\|_{\HH}^2&=\int_{s-\ep}^{s+\ep}\int_{s-\ep}^{s+\ep}\frac{\partial^2}{\partial s\partial t} R_H(s',t')\ds'\dt'q(x,x)\EE[\eta^2_{\ep'}(x- \varphi_s)]\\
	  &\le R_H([0,T]^2) q(x,x)\EE[\eta^2_{\ep'}(x- \varphi_s)]\,.
	\end{align*}
	Thus by a change of variable, we obtain
	\begin{align*}
	  \int_0^T\int_{\RR^d}\|u(s,x)\|_{\HH}\dx\ds&\le R_H^{1/2}([0,T]^2)\int_0^T\int_{\RR^d}(\EE q(x,x)\eta^2_{\ep'}(x- \varphi_s))^{1/2}\dx\ds\\
	  &=R_H^{1/2}([0,T]^2)\int_0^T\int_{\RR^d}(\EE q(x+\varphi_s,x+\varphi_s)\eta^2_{\ep'}(x))^{1/2}\dx\ds\\
	  &\le c({\ep',T})\int_0^T\int_{|x|<1}(\EE q(x+\varphi_s,x+\varphi_s))^{1/2}\dx\ds\,.
	\end{align*}
	The last integral is finite thanks to the condition \eqref{cond.pw1}.
	Similarly
	\begin{align*}
	  &\|Du(s,x)\|_{\HH}^2\\
	  &=\EE\int_{s-\ep}^{s+\ep}\int_{s-\ep}^{s+\ep}\frac{\partial^2}{\partial s\partial t} R_H(s',t')\ds'\dt'q(x,x)\partial_i \eta_{\ep'}(x-\varphi_s)\partial_j \eta_{\ep'}(x-\varphi_s)\langle D \varphi_s^i,D \varphi_s^j\rangle_{\HH}\\
	  &\le R_H([0,T]^2)q(x,x)\EE|\sum_{i,j}\partial_i \eta_{\ep'}(x-\varphi_s)\partial_j \eta_{\ep'}(x-\varphi_s)\langle D \varphi_s^i,D \varphi_s^j\rangle_{\HH}|\,.
	\end{align*}
	Thus, by a change of variable and by using the condition \eqref{cond.pw2},
	we obtain
	\begin{align*}
	  &\int_0^T\int_{\RR^d}\|D u(s,x)\|_{\HH\otimes\HH}dxds\\
	  &\le c(T)\int_{0}^T\int_{\RR^d}[\EE q(x,x)|\sum_{i,j}\partial_i \eta_{\ep'}(x-\varphi_s)\partial_j \eta_{\ep'}(x-\varphi_s)\langle D \varphi_s^i,D \varphi_s^j\rangle_{\HH}|]^{1/2}dxds\\
	  &\le c(\ep',T)\int_0^T\int_{|x|<1}[\EE q(x+\varphi_s,x+\varphi_s)|\sum_{i,j}\langle D \varphi_s^i,D \varphi_s^j\rangle_{\HH}|]^{1/2}dxds<\infty.
	\end{align*}

	{\it Step 2.} We show that
	\begin{multline*}
	\delta\left((2\ep)^{-1}\int_0^T\!\int_{\Rd}\!I_{(s-\ep,s+\ep]}\delta_x\eta_{\ep'}(x-\varphi_s)\,
	d xd s \right)\\
	=\delta\left((2\ep)^{-1}\int_0^T\!I_{(s-\ep,s+\ep]}\eta_{\ep'}(\cdot-\varphi_s)\,
	d s \right)\,.
	\end{multline*}
	It suffices to show for every smooth function $\phi$ with compact support
	\begin{equation}\label{id.phidelta}
	\phi=\int_\Rd\!\phi(y)\delta_{y}d y
	\end{equation}
is 	 in $\HH$,  since
 with the choice $\phi=\eta_{\ep'}$, \eqref{id.phidelta}
 will yield  the desired identity. Recall that $\SS$ is the space defined in Subsection \ref{subsection.noise} and is dense in $\HH$. Thus to show \eqref{id.phidelta}, we verify
	 \begin{equation*}
	    \langle\phi,\psi\rangle_{\HH}=\langle\int_\Rd\!\phi(y)\delta_{y}d y,\psi\rangle_{\HH}
	  \end{equation*}
	for every $\psi\in\SS$. Indeed, we have
	\begin{multline*}
	\sprod{\int_\Rd\!\phi(y)\delta_{y}\,d y}{\psi}_\HH
	=\int_\Rd\!\phi(y)\sprod{\delta_{y}}{\psi}_\HH d y\\=
	\int_{\RR^d}\int_0^T\int_0^T\int_{\RR^d}\phi(y)\psi(x) Q(s,t,y,x)\dx\ds\dt\dy =\sprod{\phi}{I_{(0,s]}\delta_{x}}_\HH\,,
	\end{multline*}
	by Fubini's theorem.

	{\it Step 3.} Combining the previous two steps, we obtain
	\begin{equation*}
	  I_1=\delta\left((2\ep)^{-1}\int_0^T\!I_{(s-\ep,s+\ep]}\eta_{\ep'}(\cdot-\varphi_s)\,d s \right)\,.
	\end{equation*}
	It is straightforward to check that when $\ep'$ and $\ep$ tend to zero,
	$I_1$ converges to $\int_0^T\!W(\delta s, \varphi_s)$ in $L^2$.

	{\it Step 4.}
	We now show the convergence of $I_2$. A direct computation shows that
	\begin{multline*}
	|I_2|=(2\ep)^{-1}\left|\int_0^T\!\int_\Rd\!\int_{-\ep}^{\ep}\!\int_0^T\!\int_\Rd
	D_{t,y}\eta_{\ep'}(x-\varphi_s)q(x,y)|t-r-s|^{2H-2}dyd td rd xd s\right| \\
	\le d_H\int_0^T\!\int_\Rd\!\int_0^T\!\int_\Rd
	\left|D_{t,y}\eta_{\ep'}(x-\varphi_s)q(x,y)\right||t-s|^{2H-2}dydtdxds,
	\end{multline*}
	where we have used the following inequality
	\begin{equation*}
	(2\ep)^{-1}\int_{-\ep}^{\ep}|t-r-s|^{2H-2}\,d r\le d_H|t-s|^{2H-2}
	\end{equation*}
	for some constant $d_H$, independent $\vare\in (0, 1)$ and $s, t\in \RR$.
	By a change of variable $x- \varphi_s\rightarrow x$, we obtain
	\begin{equation*}
	I_2\le d_H\int_0^T\!\int_0^T\!\int_{\RR^{2d}}
	\left|\eta_{\ep'}(x)D_{t,y}q(x+\varphi_s,y)\right||t-s|^{2H-2}dydxdsd t.
	\end{equation*}
	Hence, by the dominated convergence theorem, when $\ep'$ and $\ep$
	tend to zero, $I_2$ goes to $\int_0^T\!\int_0^T\!\int_{\RR^{d}}
	D_{t,y}q(x+\varphi_s,y)|t-s|^{2H-2}d yd sd t$.
	Therefore, passing through the limits in \eqref{eqn.pw}, we obtain \eqref{id.pw}
	\end{proof}

{\replacecolorred
If the limit  in  Definition \ref{d.symmetric} exists  for  almost   every sample path
of $W$,  then the symmetric integral can also be defined pathwise for a function $
(W(t,x)\,, t\ge 0\,, x\in \RR^d)$. We also call such integral the symmetric integral
and denoted by the same symbol $ \int_0^T W(\dsym s,\varphi_s)$.
}

The following proposition establishes the relation between symmetric integral and
nonlinear Young integral introduced in Section 2.

\begin{proposition}  Assume the hypothesis of Proposition \ref{prop.riemann.w}. Then the symmetric integral exists and the following relation holds
  \begin{equation*}
    \int_0^T W(\dsym s,\varphi_s)=\int_0^T W(ds,\varphi_s)\,.
  \end{equation*}
\end{proposition}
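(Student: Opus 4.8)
The plan is to compare the symmetric difference quotient directly with the increments of the indefinite nonlinear Young integral. First I would fix a small $\eta_0>0$ and write $F$ for an indefinite nonlinear Young integral on $[-\eta_0,T+\eta_0]$, i.e.\ a continuous function (unique up to a constant) with $F(b)-F(a)=\int_a^bW(\dr,\varphi_r)$ for all $a\le b$ in that interval; this is legitimate because conditions \ref{cond.w}, \ref{cond.phi} (and hence Proposition \ref{prop.riemann.w}) hold on every finite interval, and by Proposition \ref{prop.frac} the function $F$ is $\tau$-H\"older, in particular continuous. For $\epsilon\in(0,\eta_0\wedge\tfrac T2)$ let
\[
S_\epsilon:=\frac1{2\epsilon}\int_0^T\bigl(W(s+\epsilon,\varphi_s)-W(s-\epsilon,\varphi_s)\bigr)\ds
\]
be the quantity appearing in Definition \ref{d.symmetric}. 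The goal is to show $S_\epsilon\to F(T)-F(0)=\int_0^TW(\ds,\varphi_s)$ as $\epsilon\downarrow0$.

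The key step is a uniform comparison of the integrand of $S_\epsilon$ with the increment $F(s+\epsilon)-F(s-\epsilon)=\int_{s-\epsilon}^{s+\epsilon}W(\dr,\varphi_r)$. Applying the estimate \eqref{est.W.c} on the interval $[s-\epsilon,s+\epsilon]$ with the intermediate point $c=s$ gives
\[
\bigl|F(s+\epsilon)-F(s-\epsilon)-\bigl(W(s+\epsilon,\varphi_s)-W(s-\epsilon,\varphi_s)\bigr)\bigr|\le\kappa\,\|W\|\,(1+\|\varphi\|_\infty^\beta)\,\|\varphi\|_\gamma^\lambda\,(2\epsilon)^{\tau+\lambda\gamma},
\]
where the seminorms are taken on $[-\eta_0,T+\eta_0]$; crucially the right-hand side is independent of $s\in[0,T]$. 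Integrating over $s\in[0,T]$ and dividing by $2\epsilon$, the contribution of this error is $O(\epsilon^{\tau+\lambda\gamma-1})$, which tends to $0$ because $\tau+\lambda\gamma>1$. Hence it remains to show $\frac1{2\epsilon}\int_0^T\bigl(F(s+\epsilon)-F(s-\epsilon)\bigr)\ds\to F(T)-F(0)$.

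For this last limit I would simply change variables: the left-hand side equals $\frac1{2\epsilon}\bigl(\int_{T-\epsilon}^{T+\epsilon}F(u)\,du-\int_{-\epsilon}^{\epsilon}F(u)\,du\bigr)$, and since $F$ is continuous at $T$ and at $0$ the two averages converge to $F(T)$ and $F(0)$ respectively. Combining the two steps shows that the limit defining the symmetric integral exists and equals $\int_0^TW(\ds,\varphi_s)$, which is the assertion. I do not expect a genuine obstacle here; the only points requiring care are (i) enlarging the time interval slightly so that $W(s\pm\epsilon,\cdot)$ and $F(s\pm\epsilon)$ make sense for $s\in[0,T]$, and (ii) recognizing that \eqref{est.W.c} is exactly the inequality that bridges the symmetric difference quotient and the Young integral, the remainder being a uniform estimate together with a Lebesgue-point argument.
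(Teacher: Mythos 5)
Your proof is correct, but it follows a genuinely different route from the paper's. The paper mollifies the medium in time: it sets $W_{\epsilon}(s,x)=(2\epsilon)^{-1}\int_{-\epsilon}^{\epsilon}W(s+\eta,x)\,d\eta$, observes that the symmetric approximation equals the classical (hence nonlinear Young) integral $\int_0^T W_{\epsilon}(ds,\varphi_s)$, and then invokes the stability-in-the-medium estimate of Proposition \ref{prop.int.w12}, bounding the endpoint term by $O(\epsilon^{\tau})$ and proving $[W-W_{\epsilon}]_{\beta,\theta\tau,\lambda}\lesssim\epsilon^{\tau(1-\theta)}$ by interpolating two elementary bounds, with $\theta$ chosen so that $\theta\tau+\lambda\gamma>1$. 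You instead work directly with the defining Riemann-type quantity: the sewing estimate \eqref{est.W.c}, applied on $[s-\epsilon,s+\epsilon]$ with intermediate point $s$, compares the integrand $W(s+\epsilon,\varphi_s)-W(s-\epsilon,\varphi_s)$ with the increment $F(s+\epsilon)-F(s-\epsilon)$ of the indefinite Young integral uniformly in $s$, with error $O(\epsilon^{\tau+\lambda\gamma})$, and the remaining limit $\frac1{2\epsilon}\int_0^T(F(s+\epsilon)-F(s-\epsilon))\,ds\to F(T)-F(0)$ is a change of variables plus continuity of $F$ (Proposition \ref{prop.frac}). Your argument is more elementary — it needs only \eqref{est.W.c} and no interpolation or continuity-in-medium lemma — while the paper's route yields as a by-product a quantitative mollification rate for $W-W_{\epsilon}$ in the Hölder-type seminorm, which is reusable elsewhere. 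Both arguments use the same hypothesis $\tau+\lambda\gamma>1$, and both tacitly evaluate $W$ (and in your case $\varphi$, $F$) slightly outside $[0,T]$; you handle this explicitly with the enlarged interval $[-\eta_0,T+\eta_0]$, which is legitimate since condition \ref{cond.w} and \ref{cond.phi} are assumed on every finite interval.
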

\begin{proof}
  Fix $\epsilon>0$, we put
  \[  W_{\epsilon}(s,x)=(2 \epsilon)^{-1} \int_{-\epsilon}^{\epsilon} W(s+ \eta,x)d \eta\,.
  \]
  We recall that $\int_0^T W(\dsym s,\varphi_s)=\lim_{\epsilon\to0}\int_0^T \partial_t W_{\epsilon}(s,\varphi)\ds $. We put
  \begin{align*}
    & \mu_k(a,b) =W_{\epsilon_k}(b,\varphi_a)-W_{\epsilon_k}(a,\varphi_a)\,,\\
    & \mu(a,b) =W(b,\varphi_a)-W(a,\varphi_a)\,.
  \end{align*}
  Since $W_{\epsilon}$ is continuously differentiable in time, the integral $\int W_{\epsilon}(\ds,\varphi_s)$ is understood in classical sense and is equal  to $\int \partial_t W_{\epsilon}(s,\varphi_s)\ds $. Hence, applying Proposition \ref{prop.int.w12} we obtain, for any $\theta\in(0,1) $ such that $\theta \tau+\lambda \gamma>1$
  \begin{multline*}
    |\int_0^T W(\ds,\varphi_s)-\int_0^T\partial_t W_{\epsilon}(s,\varphi_s)\ds|\\
    \le |W(T,\varphi_0)-W(0,\varphi_0)-W_{\epsilon}(T,\varphi_0)+W_{\epsilon}(0,\varphi_0)|\\
    +c(\varphi)[W-W_{\epsilon}]_{\beta,\tau,\lambda}|b-a|^{\theta\tau+\lambda \gamma}\,.
  \end{multline*}
  It remains to estimate the terms on the right side and show that they all converge to 0 when $\epsilon$ goes to 0. For the first term
  \begin{align*}
    &|W(T,\varphi_0)-W(0,\varphi_0)-W_{\epsilon}(T,\varphi_0)+W_{\epsilon}(0,\varphi_0)|\\
    &\le (2 \epsilon)^{-1}\int_{-\epsilon}^\epsilon |W(T,\varphi_0)-W(0,\varphi_0)-W(T+\eta,\varphi_0)+W(\eta,\varphi_0)|d \eta\\
    &\lesssim (2 \epsilon)^{-1}\int_{-\epsilon}^\epsilon |\eta|^{\tau} d \eta\lesssim \epsilon^\tau\,.
  \end{align*}
  For the second term, we put $F=W-W_{\epsilon}$ and notice that
  \begin{align*}
    &|W_{\epsilon}(s,x)-W_{\epsilon}(s,y)-W_{\epsilon}(t,x)+W_{\epsilon}(t,y)|\\
    &\le(2 \epsilon)^{-1} \int_{-\epsilon}^\epsilon |W(s+\eta,x)-W(s+\eta,y)-W(t+\eta,x)+W(t+\eta,y)| d \eta\\
    &\le [W](1+|x|^\beta+|y|^\beta) (2 \epsilon)^{-1} \int_{-\epsilon}^\epsilon |s-t|^\tau|x-y|^ \lambda  d \eta\\
    &\le [W](1+|x|^\beta+|y|^\beta)  |s-t|^\tau|x-y|^ \lambda \,.
  \end{align*}
  Thus
  \begin{align*}
    |F(s,x)-F(s,y)-F(t,x)+F(t,y)|
    \le 2[W](1+|x|^\beta+|y|^\beta)  |s-t|^\tau|x-y|^ \lambda \,.
  \end{align*}
  On the other hand,
  \begin{align*}
    &|W_{\epsilon}(s,x)-W_{\epsilon}(s,y)-W_{\epsilon}(t,x)+W_{\epsilon}(t,y)|\\
    &\le(2 \epsilon)^{-1} \int_{-\epsilon}^\epsilon d \eta |W(s+\eta,x)-W(s,x)-W(s+\eta,y)+W(s,y)|\\
    &\qquad \qquad\qquad\qquad+|W(t,x)-W(t+\eta,x)-W(t,y)+W(t+\eta,y)|\\
    &\le 2[W](1+|x|^\beta+|y|^\beta)|x-y|^\lambda(2 \epsilon)^{-1} \int_{-\epsilon}^\epsilon  |\eta|^{\tau}d \eta\\
    &\le 2(1+\tau)^{-1} [W](1+|x|^\beta+|y|^\beta)|x-y|^\lambda \epsilon^{\tau}\,.
  \end{align*}
  Hence, combining these two bounds, we get
  \begin{multline*}
    |F(s,x)-F(s,y)-F(t,x)+F(t,y)|\\
    \lesssim [W](1+|x|^\beta+|y|^\beta)  |s-t|^{\theta\tau}|x-y|^ \lambda \epsilon^{\tau(1-\theta)} \,.
  \end{multline*}
  Thus $[W-W_{\epsilon}]_{\beta,\theta \tau,\lambda}\lesssim \epsilon^{\tau(1-\theta)}$ which converges to 0 as $\epsilon\to0$.
\end{proof}

\section{Estimates for diffusion process}\label{app.est.diff}
{\replacecolorred  In this section, we prove the exponential integrability
 of the H\"older norm and the supremum norm of a diffusion
process which is needed in proving the existence of the Feynman-Kac
solution in Section \ref{sec.feykac}. The results obtained here are known in
literature (see for instance   \cite{cerrai-book},
\cite{flandoli-bull}, \cite{nuabook}). However, it is difficult to
find a single-source treatment that suits our purpose. Besides, our method is straightforward and unified.   We  present them here.
}

We recall that $X_t^{r,x}$ satisfies the equation \eref{eqn.diffusion}. We denote
\begin{equation}
	M_t^{r,x} =\sum_{j=1}^d \int_r^t \sigma^{ij}(s,X_s^{r,x})\delta B_s^j\,.
\end{equation}
Since $\sigma$ is bounded (by condition \ref{cond.L.elliptic}), $(M_t^{r,x} ;t\ge r)$ is a continuous $L^2$ martingale. In addition, we have the following properties.
\begin{lemma}
	Let $\alpha$ be a number in $(0,1/2)$.
	There exist some positive constants $\gamma_0$ and $\gamma_ \alpha$ such that
	\begin{equation}\label{exp.M}
		\EE\exp \left\{\gamma_0 \sup_{r\le t\le T} |M_t^{r,x}|^2\right\}\le C(T-r,\Lambda)<\infty
	\end{equation} 	
	and
	\begin{equation}\label{exp.Ma}
		\EE \exp \left\{ \gamma_ \alpha \left(\sup_{r\le s,t\le T}\frac{|M_t^{r,x}-M_s^{r,x}|}{|t-s|^\alpha}\right)^2\right\}\le C(T-r,\Lambda,\alpha)<\infty\,.
	\end{equation}
\end{lemma}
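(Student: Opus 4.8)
The plan is to reduce both estimates to the case of a single scalar stochastic integral against Brownian motion and then invoke the classical exponential maximal inequalities for continuous martingales with bounded quadratic variation. First I would recall that $M_t^{r,x}$ is a continuous martingale whose quadratic variation satisfies $\langle M^{r,x}\rangle_t = \int_r^t \sum_{i,j}(\sigma^{ij}\sigma^{ij})(s,X_s^{r,x})\,ds$, which by the boundedness of $\sigma$ coming from \ref{cond.L.elliptic}--\ref{cond.L.areg} is bounded by $C(\Lambda)(t-r)$, uniformly in $x$ and $\omega$. For \eqref{exp.M}, by the Dambis--Dubins--Schwarz theorem we may write $M_t^{r,x} = \beta_{\langle M^{r,x}\rangle_t}$ for a Brownian motion $\beta$, so $\sup_{r\le t\le T}|M_t^{r,x}|^2 \le \sup_{0\le u\le C(\Lambda)(T-r)}|\beta_u|^2$. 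The exponential integrability of the square of the running maximum of Brownian motion over a finite interval — via the reflection principle, $\sup_{0\le u\le a}|\beta_u|$ has Gaussian-type tails — then gives a $\gamma_0 = \gamma_0(\Lambda,T-r)$ for which the exponential moment is finite, with an explicit bound $C(T-r,\Lambda)$. (Alternatively one can argue directly: expand $\EE\exp\{\gamma_0\sup|M|^2\}$ using $\EE\exp\{\theta M_T - \tfrac{\theta^2}{2}\langle M\rangle_T\}=1$ together with Doob's maximal inequality, choosing $\gamma_0$ small relative to $1/(C(\Lambda)(T-r))$.)

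For \eqref{exp.Ma} I would proceed by a Garsia--Rodemich--Rumsey (GRR) argument applied pathwise. With $\Psi(u)=e^{u^2}-1$ and $p(u)=u^{\alpha+1/q}$ for a suitable large $q$, the GRR inequality yields, for $s<t$ in $[r,T]$,
\begin{equation*}
  |M_t^{r,x}-M_s^{r,x}| \le C_{\alpha,q}\, |t-s|^{\alpha}\, \Xi^{1/q}, \qquad
  \Xi := \int_r^T\!\!\int_r^T \Psi\!\left(\frac{|M_u^{r,x}-M_v^{r,x}|}{\kappa |u-v|^{\alpha+2/q}}\right) du\,dv
\end{equation*}
for an appropriate normalizing constant $\kappa$, provided $\alpha+2/q<1/2$, which is possible since $\alpha<1/2$. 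Thus $\sup_{s,t}|M_t^{r,x}-M_s^{r,x}|/|t-s|^\alpha \le C\,\Xi^{1/q}$, and it suffices to bound $\EE \exp\{\gamma_\alpha C^2 \Xi^{2/q}\}$; since $\Xi^{2/q}\le 1+\Xi$ up to a constant when $q\ge 2$, it is enough to control $\EE\exp\{c\,\Xi\}$, and by Jensen's inequality (the double integral being over a finite measure space, after normalization) this reduces to a uniform bound on $\EE\exp\{c\,|u-v|^{-2\alpha-4/q}|M_u^{r,x}-M_v^{r,x}|^2\}$ for $u,v\in[r,T]$. Each increment $M_u^{r,x}-M_v^{r,x}$ is a martingale increment with quadratic variation bounded by $C(\Lambda)|u-v|$, so by the same DDS/reflection argument as above, $\EE\exp\{\lambda |M_u^{r,x}-M_v^{r,x}|^2\}\le C$ whenever $\lambda C(\Lambda)|u-v| \le c_0 <1/2$. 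Taking $q$ large enough that $2\alpha+4/q<1$, we have $|u-v|^{-2\alpha-4/q}|u-v| = |u-v|^{1-2\alpha-4/q}\le (T-r)^{1-2\alpha-4/q}$ bounded, so choosing $c$ (hence $\gamma_\alpha$) small the exponential moment is finite and bounded by $C(T-r,\Lambda,\alpha)$, uniformly in $x$.

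The main obstacle is the second estimate: one must be careful to extract a \emph{uniform-in-}$x$ bound and to keep track of how the Hölder exponent $\alpha$ constrains the auxiliary exponent $q$ in GRR so that both the requirement $\alpha+2/q<1/2$ (for GRR applicability with a Hölder-type gain) and $2\alpha+4/q<1$ (for the exponential moment of the rescaled increment to be finite) hold simultaneously — both are satisfiable precisely because $\alpha<1/2$. The boundedness of $\sigma$, and hence the deterministic bound $\langle M^{r,x}\rangle_t - \langle M^{r,x}\rangle_s \le C(\Lambda)(t-s)$ independent of $\omega$ and $x$, is what makes the exponential-moment step work uniformly; everything else is routine once this observation and the DDS representation are in place. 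The interchange of $\EE$ and the double integral defining $\Xi$ is justified by Tonelli's theorem since the integrand is nonnegative.
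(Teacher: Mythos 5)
Your argument for \eqref{exp.M} is fine: bounded $\sigma$ gives $\langle M^{r,x}\rangle_t-\langle M^{r,x}\rangle_s\le C(\Lambda)|t-s|$ uniformly in $x$ and $\omega$, and either your DDS/reflection argument or the paper's one-line route (Doob's maximal inequality plus Burkholder--Davis--Gundy) then yields the uniform exponential bound.

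The proof of \eqref{exp.Ma}, however, has a genuine gap at the reduction ``it is enough to control $\EE\exp\{c\,\Xi\}$.'' With your choice $\Psi(u)=e^{u^2}-1$, the quantity $\Xi=\iint(e^{Y(u,v)^2}-1)\,du\,dv$, where $Y(u,v)=|M_u-M_v|/(\kappa|u-v|^{\alpha+2/q})$ is (sub-)Gaussian with bounded variance proxy. A sub-Gaussian $Y$ makes $e^{Y^2}$ only \emph{polynomially} tailed, hence $\Xi$ itself has polynomial tails and $\EE\exp\{c\,\Xi\}=\infty$ for every $c>0$; indeed even $\EE\exp\{c\,\Xi^{2/q}\}$ diverges, so the chain cannot be closed this way. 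The Jensen step compounds the problem: normalizing the double integral and pulling the exponential inside produces $\EE\exp\{c'(e^{Y^2}-1)\}$, a double-exponential moment, not the Gaussian-type moment $\EE\exp\{\lambda Y^2\}$ you then estimate. There is also a mismatch in the GRR statement itself: with the exponential Orlicz function the conclusion carries $\Psi^{-1}(4\Xi/u^2)=\log^{1/2}(1+4\Xi/u^2)$, i.e.\ only a \emph{logarithmic} dependence on $\Xi$, and discarding this in favour of the power bound $\Xi^{1/q}$ is exactly what makes the final integrability fail. Two standard repairs: (i) keep the exponential GRR honestly, so that $\sup_{s<t}|M_t-M_s|/|t-s|^{\alpha}\lesssim \log^{1/2}(1+\Xi)+C$; exponentiating the square then only requires $\EE(1+\Xi)^{2\gamma_\alpha}<\infty$, which follows from $\EE\,\Xi<\infty$ (Fubini plus the sub-Gaussian increment bound, after choosing $\kappa$ large); or (ii) do what the paper does: apply GRR with $\Psi(x)=x^p$ and $p(x)=x^{\alpha+2/p}$, use BDG to get $\|M_u-M_v\|_p\le 2\Lambda^{1/2}p^{1/2}|u-v|^{1/2}$, deduce that the $p$-th moment of the H\"older seminorm is at most $C^pp^{p/2}(T-r)^{(1/2-\alpha)p}$ for all large $p$, and conclude the Gaussian-type tail and hence \eqref{exp.Ma}, uniformly in $x$.
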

\begin{proof}
	\eqref{exp.M} is well-known and is a direct application of Doob's maximal inequality and Burkholder-Davis-Gundy inequality. \eqref{exp.Ma} is proved in \cite[Lemma 2]{benarous}. However, for readers' convenience, we present a proof of \eqref{exp.Ma} in the following. We will omit the upper indices $r,x$. Applying the Garsia-Rodemich-Rumsey theorem (See \cite{garsiarodemich} and \cite{hule2012}, specifically \cite[Theorem 2.1.3] {stroockvaradhan}) with $\Psi(x)=x^p$ and $p(x)=x^{\al+2/p}$, we have
	\[
	|M_t-M_s|\le 8(1+\frac{2}{\al p}) 4^{1/p} |t-s|^\al \left\{\int_r^T\int_r^T \left(\frac{
	|M_u-M_v| }{|u-v|^{\al+2/p}}\right)^p dudv\right\}^{1/p}\,.
	\]
	Dividing both sides by $|t-s|^\al$ and taking the sup on $r\le s<t\le T$,
	we see that  there is a constant $C=C(\al)$, independent of  $p\ge 1$,  such that
	\begin{equation*}
	\EE \left(\sup_{r\le s<t\le T} \frac{|M_t-M_s|}{|t-s|^\alpha}\right) ^p
	\le  C^p   \int_r^T\int_r^T  \frac{\EE
	|M_u-M_v|^p  }{|u-v|^{\al p+2 }} dudv\,.
	\end{equation*}
	An application of  the Burkholder-Davis-Gundy inequality   gives
	  \begin{equation*}
	    \|M_u-M_v \|_p \le 2 p^{1/2}\|\int_u^v a^{ii}(s,X_s^{r,x})\ds\|_{p/2}^{1/2}\le 2 \Lambda^{1/2} p^{1/2}(t-r)^{1/2}\,.
	  \end{equation*}
	It follows that there is a constant $C$, which may be
	different than the above one, such that the $p$-moments of $\sup_{r\le s<t\le T} \frac{|M_t-M_s|}{|t-s|^\alpha}$ is at most $C^p p^{p/2}(T-r)^{(\frac12-\al)p}$ for all $p>(\frac12- \alpha)^{-1}$, which yields \eqref{exp.Ma}.
\end{proof}
\begin{lemma}\label{exponential-int}
	Fix $\alpha\in(0,1/2)$. There exist positive constants $C_0$, $\gamma_0$ and $\gamma_ \alpha$ such that
	\begin{equation}\label{exp.supX}
		\EE\exp \left\{\gamma_0 \sup_{r\le t\le T} |X_t^{r,x}|^2\right\}\lesssim e^{C_0 |x|^2}
	\end{equation} 	
	and
	\begin{equation}\label{e.6.11}
		\EE \exp \left\{ \gamma_ \alpha \left(\sup_{r\le s,t\le T}\frac{|X_t^{r,x}-X_s^{r,x}|}{|t-s|^\alpha}\right)^2\right\}\lesssim e^{C_0 |x|^2}\,.
	\end{equation}
\end{lemma}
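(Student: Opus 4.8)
The plan is to transfer the exponential estimates \eqref{exp.M}--\eqref{exp.Ma} for the stochastic integral part $M^{r,x}$ to the process $X^{r,x}$ itself, by exploiting the linear growth of $b$ and a pathwise Gronwall argument. First I would write the decomposition
\[
X_t^{r,x}=x+M_t^{r,x}+\int_r^tb(s,X_s^{r,x})\ds\,,\qquad r\le t\le T\,,
\]
which holds almost surely for all $t$ simultaneously by continuity of the process. By condition \ref{cond.Lbreg}, $|b(s,y)|\le\kappa(b)(1+|y|)$, so $|X_t^{r,x}|\le|x|+\sup_{r\le u\le T}|M_u^{r,x}|+\kappa(b)(T-r)+\kappa(b)\int_r^t|X_s^{r,x}|\ds$. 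Since $\sup_{r\le u\le T}|M_u^{r,x}|<\infty$ a.s.\ by \eqref{exp.M}, Gronwall's inequality applied pathwise produces a constant $C_1=C_1(\kappa(b),T)$ with
\[
\sup_{r\le t\le T}|X_t^{r,x}|\le C_1\Big(|x|+\sup_{r\le u\le T}|M_u^{r,x}|+1\Big)\,.
\]

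To obtain \eqref{exp.supX}, I would square this, use $(a+b+c)^2\le 3(a^2+b^2+c^2)$, and write, for any $\gamma_0>0$,
\[
\EE\exp\Big\{\gamma_0\sup_{r\le t\le T}|X_t^{r,x}|^2\Big\}\le e^{3C_1^2\gamma_0(|x|^2+1)}\,\EE\exp\Big\{3C_1^2\gamma_0\sup_{r\le u\le T}|M_u^{r,x}|^2\Big\}\,.
\]
Choosing $\gamma_0$ sufficiently small (depending only on $C_1$ and on the admissible constant in \eqref{exp.M}), the last expectation is bounded by $C(T-r,\Lambda)$, which gives \eqref{exp.supX} with $C_0$ proportional to $\gamma_0$.

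For \eqref{e.6.11}, from the decomposition one has, for $r\le s<t\le T$, using $\int_s^t|b(u,X_u^{r,x})|\,du\le\kappa(b)(1+\sup_u|X_u^{r,x}|)|t-s|$ and $|t-s|^{1-\alpha}\le(T-r)^{1-\alpha}$,
\[
\frac{|X_t^{r,x}-X_s^{r,x}|}{|t-s|^\alpha}\le\frac{|M_t^{r,x}-M_s^{r,x}|}{|t-s|^\alpha}+\kappa(b)(T-r)^{1-\alpha}\Big(1+\sup_{r\le u\le T}|X_u^{r,x}|\Big)\,.
\]
Taking the supremum over $s<t$, squaring, and using $(a+b)^2\le 2a^2+2b^2$ and $(1+w)^2\le 2+2w^2$, one gets, for any $\gamma_\alpha>0$, a pointwise bound of $\gamma_\alpha(\sup_{s\ne t}|X_t^{r,x}-X_s^{r,x}|/|t-s|^\alpha)^2$ by $2\gamma_\alpha(\sup_{s\ne t}|M_t^{r,x}-M_s^{r,x}|/|t-s|^\alpha)^2+c\gamma_\alpha+c\gamma_\alpha\sup_{u}|X_u^{r,x}|^2$ with $c=c(\kappa(b),T,\alpha)$. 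Exponentiating and applying the Cauchy--Schwarz inequality decouples the last expression into a product of $(\EE\exp\{4\gamma_\alpha(\sup_{s\ne t}|M_t^{r,x}-M_s^{r,x}|/|t-s|^\alpha)^2\})^{1/2}$ and $(\EE\exp\{2c\gamma_\alpha\sup_{u}|X_u^{r,x}|^2\})^{1/2}$, times $e^{c\gamma_\alpha}$. Choosing $\gamma_\alpha$ small enough that $4\gamma_\alpha$ is admissible in \eqref{exp.Ma} and $2c\gamma_\alpha$ is admissible in the already established \eqref{exp.supX}, the first factor is finite and the second is $\lesssim e^{C_0|x|^2/2}$, which yields \eqref{e.6.11}.

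\textbf{Main difficulty.} I do not expect a genuine obstacle: the substantive estimate is the preceding lemma on $M^{r,x}$ (whose delicate point is the Garsia--Rodemich--Rumsey bound combined with Burkholder--Davis--Gundy with sharp $p$-dependence). Here the only points needing care are: (i) legitimizing the pathwise Gronwall step, which is fine once \eqref{exp.M} guarantees $\sup_u|M_u^{r,x}|<\infty$ a.s.; (ii) the a priori finiteness of moments of $X^{r,x}$ making the decomposition meaningful, which is the known well-posedness of \eqref{eqn.diffusion} under \ref{cond.L.elliptic}--\ref{cond.Lbreg}; and (iii) bookkeeping the absolute constants so that $\gamma_0$ and $\gamma_\alpha$ can be chosen independently of $x$ (they depend only on $T-r$, $\Lambda$, $\kappa(b)$, $\alpha$), which is what produces the uniform-in-$x$ bounds $\lesssim e^{C_0|x|^2}$.
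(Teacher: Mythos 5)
Your proof is correct, and it follows the same overall strategy as the paper for the second bound (split the H\"older quotient into the drift part, controlled by $\kappa(b)(T-r)^{1-\alpha}(1+\sup_u|X_u^{r,x}|)$, and the martingale part, controlled by \eqref{exp.Ma}, then combine with \eqref{exp.supX}; the paper leaves the final Cauchy--Schwarz decoupling implicit, which you spell out). For \eqref{exp.supX}, however, your transfer step is genuinely more direct than the paper's. The paper also starts from the decomposition $X_t=x+M_t+\int_r^t b(s,X_s)\,ds$ and Gronwall, but it keeps the Gronwall output in its integral form, works with $\EE\exp\{pX_T^*\}$ for a \emph{linear} exponent $p$, uses Jensen's inequality to push the exponential inside the time integral and Cauchy--Schwarz to reduce to $\EE\exp\{2Cp(M_T^*+|x|)\}$ with constants uniform in $p$, and only then upgrades to the quadratic exponential by the randomization trick $p\sim|N(0,a)|$ together with the elementary equivalence $\tfrac12 e^{a^2A^2/2}\le\EE^N e^{pA}\le 2e^{a^2A^2/2}$ and \eqref{exp.M}. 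You short-circuit all of this by extracting the pathwise domination $\sup_{r\le t\le T}|X_t^{r,x}|\le C_1(|x|+\sup_u|M_u^{r,x}|+1)$ (legitimate, since $\sup_u|M_u^{r,x}|<\infty$ a.s.\ already by path continuity) and then simply squaring and invoking \eqref{exp.M} for a sufficiently small $\gamma_0$; this yields the same conclusion with less machinery, the random-exponent device being really needed only when one has mgf-type bounds with explicit $p$-dependence rather than an almost sure domination. Your bookkeeping of constants ($\gamma_0,\gamma_\alpha,C_0$ depending only on $T-r$, $\Lambda$, $\kappa(b)$, $\alpha$, uniformly in $x$) is exactly what the statement requires.
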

\begin{proof}
	We denote $X_t^*=\sup_{r\le s\le t} |X_s^{r,x}|$ and $M_t^*=\sup_{r\le s\le t}|M_s^{r,x}|$. We first prove \eqref{exp.supX}. Since $b$ has linear growth (by \ref{cond.Lbreg}), from equation \eqref{eqn.diffusion}, we see that
	\begin{equation*}
		|X_t|\le |M_t|+ |x|+\kappa(b) \int_r^t|X_s|ds\,.
	\end{equation*}
	An application of Gronwall's inequality yields
	\begin{equation*}
		|X_t|\le |M_t|+|x|+\kappa(b) \int_r^t (|M_s|+|x|)e^{\kappa(b)(t-s)}ds\,.
	\end{equation*}
	Hence, for all $p\ge0$, applying Jensen's inequality,
	\begin{align*}
		\exp\{pX_T^*\}&\le \exp\{p(M_T^*+|x|)\}\exp\{p\kappa(b)\int_r^T(|M_s|+|x|)e^{\kappa(b)(t-s)}ds\}
		\\&\le  \frac{\exp\{p(M_T^*+|x|)\}}{e^{\kappa(b)(T-r)}-1} \int_r^T\exp\{p(e^{\kappa(b)(t-r)}-1)(|M_s|+|x|)e^{\kappa(b)(T-s)}\}ds
		\\&\lesssim \exp\{p(M_T^*+|x|)\}\int_r^T \exp\{Cp(|M_s|+|x|)\}ds
	\end{align*}
	for some constant $C$ depending on $T-r$ and $\kappa(b)$. We then apply Cauchy-Schwartz inequality
	\begin{align*}
		\EE e^{pX_T^*}&\lesssim \EE e^{2p(M_T^*+|x|)}+\int_r^T \EE e^{2Cp(|M_s|+|x|)}ds\\
		&\lesssim \EE e^{2Cp(M_T^*+|x|)}\,,
	\end{align*}
where the constants (including the implied constant) are independent of $p$.
	Now we choose $p$ according to the distribution $|N(0,a)|$ with $a$ sufficient small, where $N(0,a)$ is a normal distribution independent of $B$. Using \eqref{exp.M}, the elementary estimate $\frac12 e^{\frac{a^2}2 A^2}\le \EE^N e^{pA}\le 2 e^{\frac{a^2}2 A^2}$ (with $A>0$), and the previous estimate, we obtain \eqref{exp.supX}.

	From \eref{eqn.diffusion}, we have
	\begin{eqnarray*}
	\frac{X_t -X_s }{(t-s)^\al }=\frac{\int_s^t b(u,X_u ) du}{(t-s)^\al
	}+\frac{M_t-M_s }{(t-s)^\al } =:I_1+I_2\,.
	\end{eqnarray*}
	Since $b$ has linear growth, $ \sup_{r\le s<t\le T}|I_1|\le c(\kappa(b),T,\alpha)(1+X_T^* )$. \eqref{e.6.11} follows from \eqref{exp.supX} and \eqref{exp.Ma}.
\end{proof}

\section{Schauder estimates} 
\label{sec:schauder_estimates}
We present the proof of Lemma~\ref{lem.estv}. The estimates \eqref{est.vw}-\eqref{est.dvwa} are similar to Schauder estimates in the classical theory of parabolic equations. Besides the results obtained in Appendix \ref{app.est.diff}, the method adopted here also makes use of Malliavin calculus. For this purpose, we need some preparations.

It is well-known (see e.g. \cite{nuabook})  that $X_t^{r,x}$ is differentiable (in Malliavin
sense) with respect  to the Brownian motion $B_t$. We denote the
Malliavin derivative  of $X$ with respect to $B^j$ by $D^jX$. It is
shown in \cite[Theorem 2.2.1]{nuabook} that $DX=(D^1X, \cdots,
D^dX)^T$ has finite moments of all orders and satisfies
\begin{equation*}
  d D_{\tau}X^{i,r,x}_t=\sigma^{ij}_k(t)D_{\tau}X_t^{k,r,x}\delta B^j_t
  +b^{i}_k(t)D_{\tau}X^{k,r,x}_t\dt\,,\quad D^j_{\tau}X^{r,x}_\tau
  =\sigma^{ij}(\tau,X_{\tau}^{r,x})
\end{equation*}
for $t\ge \tau\ge r$, $D_{\tau}X_t^{r,x}=0$ if $t< \tau\le T$. In
the above equation, we have used the notations
\begin{equation*}
  \sigma^{ij}_k(t)=\partial_{x_k}\sigma^{ij}(t,X_t^{r,x})\,,\quad b^i_k(t)=\partial_{x_k}b^i(t,X_t^{r,x})\,.
\end{equation*}

The matrix $DX$ is understood
as $[DX]^{ij}=D^jX^i$.
Following the proof of \cite[Theorem 2.2.1]{nuabook}, one can show
that the map  $x\mapsto X^{r,x}_t$ is differentiable. We denote
$Y{(t;r,x)}=\frac{\partial}{\partial x}X^{r,x}_t$, the Jacobian of
$x\mapsto X_t^{r,x}$. The matrix $Y$ is understood as $[Y]^{ij}=Y^i_j=\partial_j X^i$.  It follows that the $d\times d$-matrix valued
process $t\mapsto Y{(t;r,x)}$ satisfies
\begin{align}
  d Y_\bullet^i(t;r,x)&=\sigma^{ij}_k(t)Y^k_\bullet(t;r,x)\delta B^j_t+b^{i}_k(t)Y^k_\bullet(t;r,x) \dt\,,\label{eqn.Y}\\
  Y(r;r,x)&=I_{d\times d}\,. \nonumber
\end{align}
Let $Z(t)$ be the $d\times d$ matrix-valued process defined by
\begin{equation*}
  d Z^\bullet_i(t)=-Z^\bullet_\theta(t)\sigma^{\theta l}_i \delta B^l_t-
  Z^\bullet_\theta(t)\left[b^\theta_i(t) - \sigma^{\alpha l}_i(t) \sigma^{\theta l}_\alpha(t)
  \right]\dt\,,\quad Z(r;r,x)=I_{d\times d}\,.
\end{equation*}
By means of It\^o's formula,  we have
\begin{align*}
  d(Z_i^k Y^i_j)=&-Y^i_j Z^k_\theta \sigma^{\theta l}_i \delta B^l_t-Y^i_j Z^k_\theta
   b_i^\theta\dt+ Z^k_\theta Y^i_j \sigma^{\alpha l}_i \sigma^{\theta l}_\alpha\dt\\
  & +Z^k_i Y^\theta_i \sigma^{il}_\theta \delta B^l_t+Z^k_i Y^\theta_j b^i_\theta\dt
  - Z^k_\theta Y^\alpha_j \sigma^{ij}_\alpha \sigma^{\theta l}_i\dt=0
\end{align*}
and similarly for $Y_tZ_t$.  Thus we obtain $Y_tZ_t=Y_tZ_t=I$.   As
a consequence, for every $t\ge r$, the matrix $Y(t;r,x)$ is
invertible and its inverse is $Z(t;r,x)$. It is a standard fact that
$Y$ and $Z$ have  finite moments of all orders. More precisely, one
has
\begin{equation}\label{moment.Y}
   \sup_{{t\in[r,T],x\in\RR^d}}\EE \left[ |Y(t;r,x)|^p+|Y^ {-1} (t;r,x)  |^p\right]\le c(p,T)\,.
\end{equation}
Since the coefficients of $L$ are twice differentiable with bounded
derivatives, $DY$ exists and has finite moment of all orders and
\begin{equation}\label{moment.DY}
  \sup_{t\in[r,T],x\in\RR^d}  \EE\sup_{\tau\in[r,T]} \left[
  |D_{\tau}Y  (t;r,x)|^p+ |D_{\tau}Y^{-1} (t;r,x)|^p\right]\le
  c(p,T)\,.
\end{equation}
Moreover, it is well-known that the following representation holds
(see, for instance \cite[pg. 126]{nuabook})
\begin{equation}\label{id.dx}
  D_{\tau}X_t^{r,x}=Y(t;r,x)Z(\tau;r,x)\sigma(\tau,X_{\tau}^{r,x})\,,\quad\forall \tau\in[r,t]\,.
\end{equation}
As a consequence, if $f$ is a smooth function, we have
\begin{equation}\label{eqn.DfY}
    D_ \tau f(s,X_s^{r,x})^T=\nabla f(s,X_s^{r,x})^TY(s;r,x)Y^{-1}(\tau;r,x)\sigma(\tau,X_{\tau}^{r,x})\,.
  \end{equation}
(where and  in what follows we denote $\nabla
f(s,X_s^{r,x})=\left(\nabla f\right) (s,X_s^{r,x})$).  Later on, we
occasionally make use of its variant
\begin{equation}\label{id.ydy}
  \nabla f(s,X_s^{r,x})^TY(s;r,x)=D_ \tau f(s,X_s^{r,x})^T
  \sigma^{-1}(\tau,X_{\tau}^{r,x})Y(\tau;r,x)\,,\quad\forall \tau\in[r,t]\,.
\end{equation}

\begin{lemma}[Bismut formula]
	Suppose $f$ belongs to $C^2(\RR^{d+1})$ and suppose  $f$ and its derivatives have polynomial growth. Then
  \begin{multline}\label{bismut1}
    \EE \left[\left(\partial_if\right) (s,X_s^{r,x})\right]\\
    =\frac1{s-r}\EE \left[f(s,X_s^{r,x})\int_r^s [\sigma^{-1}(\tau,X_{\tau}^{r,x})
    Y(\tau;r,x)Y^{-1}(s;r,x)]^{ji}\delta B^j_{\tau}\right]
  \end{multline}
	and
  \begin{equation}\label{bismut2}
    \partial_i\EE f(s,X_s^{r,x})=\frac1{s-r}\EE \left[f(s,X_s^{r,x})\int_r^s [\sigma^{-1}
    (\tau,X_{\tau}^{r,x})Y(\tau;r,x)]^{ji}\delta B^j_{\tau}\right] \,.
  \end{equation}
\end{lemma}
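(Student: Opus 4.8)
The plan is to derive both identities from the Malliavin chain rule \eqref{eqn.DfY} (equivalently its rearrangement \eqref{id.ydy}) together with the duality relation \eqref{eqn.dual} between the Malliavin derivative $D$ and the divergence operator $\delta$. Throughout I would use that, since $f$ and its derivatives have polynomial growth and $X_s^{r,x}$, $Y$, $Y^{-1}$, $DX$, $DY$ have finite moments of all orders (by \eqref{moment.Y}--\eqref{moment.DY}), the random variable $f(s,X_s^{r,x})$ belongs to $\DD^{1,2}$ and the $\HH$-valued processes appearing below belong to $\DD^{1,2}(\HH)\subset\dom\delta$.

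The engine of the argument is a purely algebraic telescoping. Fix $i$; for each $\tau\in[r,s]$, multiplying \eqref{eqn.DfY} on the right by $[\sigma^{-1}(\tau,X_\tau^{r,x})Y(\tau;r,x)Y^{-1}(s;r,x)]^{\bullet i}$ and summing (Einstein convention) makes the matrix factors cancel, since $\sigma(\tau)\sigma^{-1}(\tau)=I$ and $Y(s)Y^{-1}(s)=I$:
\[
D_\tau^j f(s,X_s^{r,x})\,[\sigma^{-1}(\tau,X_\tau^{r,x})Y(\tau;r,x)Y^{-1}(s;r,x)]^{ji}=(\partial_i f)(s,X_s^{r,x}),
\]
which, remarkably, is independent of $\tau$. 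Likewise, from \eqref{id.ydy}, $D_\tau^j f(s,X_s^{r,x})[\sigma^{-1}(\tau,X_\tau^{r,x})Y(\tau;r,x)]^{ji}$ is independent of $\tau\in[r,s]$ and equals $[\nabla f(s,X_s^{r,x})^T Y(s;r,x)]_i$. I would then integrate each identity over $\tau\in[r,s]$, so that in $\HH$-language
\[
\sprod{Df(s,X_s^{r,x})}{u}_\HH=(s-r)(\partial_i f)(s,X_s^{r,x}),\qquad u_\tau^j=[\sigma^{-1}(\tau,X_\tau^{r,x})Y(\tau;r,x)Y^{-1}(s;r,x)]^{ji},
\]
and analogously with $v_\tau^j=[\sigma^{-1}(\tau,X_\tau^{r,x})Y(\tau;r,x)]^{ji}$ and right-hand side $(s-r)[\nabla f(s,X_s^{r,x})^TY(s;r,x)]_i$. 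Taking expectations and applying the duality \eqref{eqn.dual} with $F=f(s,X_s^{r,x})$ turns $\EE\sprod{Df(s,X_s^{r,x})}{u}_\HH$ into $\EE[f(s,X_s^{r,x})\delta(u)]=\EE[f(s,X_s^{r,x})\int_r^s[\sigma^{-1}(\tau)Y(\tau)Y^{-1}(s)]^{ji}\delta B_\tau^j]$ (note $u$ is genuinely anticipating, because $Y^{-1}(s)$ is not adapted, so $\delta(u)$ is a true Skorohod integral), which is \eqref{bismut1} after dividing by $s-r$. For \eqref{bismut2} I would additionally use the chain rule $\partial_i\EE f(s,X_s^{r,x})=\EE[(\partial_k f)(s,X_s^{r,x})Y_i^k(s;r,x)]=\EE[\nabla f(s,X_s^{r,x})^TY(s;r,x)]_i$ (differentiation under the expectation, justified by dominated convergence and the moment bounds), and then the same duality applied to $v$ (which is adapted, so here $\delta(v)$ reduces to an It\^o integral).

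The only genuinely delicate point is the integrability and domain bookkeeping that legitimizes the duality: that $u,v\in\DD^{1,2}(\HH)$, i.e. $\EE\|u\|_\HH^2$, $\EE\|Du\|_{\HH\otimes\HH}^2$ (and the analogues for $v$) are finite, and that $f(s,X_s^{r,x})\delta(u)\in L^1$. I expect to handle this by proving the formulas first for $f$ of bounded support — where all these quantities are trivially finite using \eqref{moment.Y}--\eqref{moment.DY}, the ellipticity \ref{cond.L.elliptic} (which makes $\sigma^{-1}$ bounded) and the regularity \ref{cond.L.areg} (which makes $\sigma$, $\sigma^{-1}$ bounded $C^2$ functions, hence gives moment bounds for $D\sigma^{-1}(\tau,X_\tau^{r,x})$) — and then removing the support restriction by truncating $f$ to $f\chi_R$ and letting $R\to\infty$, using the polynomial growth of $f$ and $\nabla f$ together with the all-order moments of $X_s^{r,x}$, $Y$, $Y^{-1}$ to pass to the limit on both sides. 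This truncation step is essentially the only place that needs care; everything else is a direct consequence of the chain rule and the Malliavin integration-by-parts formula.
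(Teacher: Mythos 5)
Your proposal is correct and follows essentially the same route as the paper's proof: rearrange the chain rule \eqref{eqn.DfY} (respectively \eqref{id.ydy}) so the matrix factors cancel, use the resulting $\tau$-independence to average over $[r,s]$, and conclude by the duality \eqref{eqn.dual} between $D$ and $\delta$ (plus differentiation under the expectation for \eqref{bismut2}). The additional truncation and domain-membership bookkeeping you describe is just detail that the paper leaves implicit, not a different argument.
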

\begin{proof}
	Fix $\tau\in[r,s]$.	The identity \eqref{eqn.DfY} yields
	\begin{equation*}
	  \nabla f(s,X_s)^T= \left[D_{\tau}f(s,X_s) \right]^T \sigma^{-1}(\tau)Y(\tau)Y^{-1}(s) \,.
	\end{equation*}
	Integrating  with respect to $\tau$ from $r$ to $s$ and taking the
	expectation  give 
	  \begin{equation*}
	    \EE \nabla f(s,X_s)^T=\frac{1}{s-r}\EE\left[\int_r^s \left[D_{\tau}f(s,X_s) \right]^T
	    \sigma^{-1}(\tau)Y(\tau)Y^{-1}(s)d \tau\right] \,.
	  \end{equation*}
	  Formula \eqref{bismut1} is then followed from the dual relationship
	  \eqref{eqn.dual} between the divergence operator $\delta$ and the Malliavin derivative $D$.

	  To show \eqref{bismut2}, we use \eqref{id.ydy}.
	 We integrate with respect to $\tau$ from $r$ to $s$ and then take the expectation to obtain
	  \begin{equation*}
	    \nabla\EE f(s,X_s)=\frac1{s-r}\EE \left[\int_r^s \left[D_{\tau} f(s,X_s)\right]^T \sigma^{-1}(\tau) Y(\tau)d \tau
\right] \,.
	  \end{equation*}
	  Formula \eqref{bismut2}  follows  from the dual relationship \eqref{eqn.dual} between $\delta$ and $D$.
\end{proof}
\begin{lemma}\label{lem.estdf}  Suppose that $f$ is differentiable and satisfies
  \begin{equation*}
    \sup_{s\in[0,T],x\in\RR^d}\frac{|f(s,x)|}{1+|x|^\beta} \le \kappa
  \end{equation*}
  for some nonnegative constants $\kappa$ and $\beta$. Then we have
  \begin{equation}\label{est.bismut1}
    |\EE \left(\nabla f\right) (s,X_s^{r,x})|\le c(T,\Lambda,\lambda)\kappa(1+|x|^\beta)[1+(s-r)^{-1/2}
    ]
  \end{equation}
  and
  \begin{equation}\label{est.bismut2}
    |\nabla\EE  f(s,X_s^{r,x})|\le c(T,\Lambda,\lambda)\kappa(1+|x|^\beta)(s-r)^{-1/2}\,.
  \end{equation}
\end{lemma}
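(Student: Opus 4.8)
The plan is to read both inequalities off the two Bismut formulas \eqref{bismut1} and \eqref{bismut2}, which are already available for $f\in C^{2}(\RR^{d+1})$ with polynomially growing derivatives, by applying the Cauchy--Schwarz inequality and estimating the two resulting factors separately. For \eqref{est.bismut1} one writes
\[
\bigl|\EE[(\partial_i f)(s,X_s^{r,x})]\bigr|\le\frac{1}{s-r}\,\bigl\|f(s,X_s^{r,x})\bigr\|_2\;\Bigl\|\int_r^s\bigl[\sigma^{-1}(\tau,X_\tau^{r,x})Y(\tau;r,x)Y^{-1}(s;r,x)\bigr]^{ji}\,\delta B_\tau^j\Bigr\|_2 .
\]
The first factor is elementary: the growth hypothesis on $f$ together with the standard polynomial moment bound $\EE|X_s^{r,x}|^{p}\le c(p,T)(1+|x|)^{p}$ for the diffusion (which follows from Gronwall's inequality and the Burkholder--Davis--Gundy inequality exactly as in Appendix \ref{app.est.diff}) give $\|f(s,X_s^{r,x})\|_2\le c(T)\,\kappa\,(1+|x|^\beta)$.

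The substance of the argument is the second factor. Since $Y^{-1}(s;r,x)$ is not $\mathcal F_\tau$-measurable, the stochastic integral is a Skorohod integral $\delta(u)$ with $u(\tau)=\sigma^{-1}(\tau,X_\tau^{r,x})Y(\tau;r,x)Y^{-1}(s;r,x)$ on $[r,s]$, and its $L^2$ norm is computed from the isometry \eqref{id.covdel},
\[
\EE[\delta(u)^2]=\EE\|u\|_{\HH}^2+\EE\langle Du,(Du)^*\rangle_{\HH\otimes\HH}.
\]
Using ellipticity (condition \ref{cond.L.elliptic}, so $|\sigma^{-1}|\le\lambda^{-1/2}$), Cauchy--Schwarz in $\tau$ and in $\omega$, and the spatially uniform moment bounds \eqref{moment.Y}, \eqref{moment.DY} for $Y$, $Y^{-1}$, $DY$, $DY^{-1}$ (and the analogous bounds for $DX$), each term on the right-hand side is at most $c(T,\lambda,\Lambda)(s-r)$, whence $\|\delta(u)\|_2\le c(T,\lambda,\Lambda)(s-r)^{1/2}$. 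Combining the two factors gives $\bigl|\EE[(\partial_i f)(s,X_s^{r,x})]\bigr|\le c(T,\lambda,\Lambda)\,\kappa\,(1+|x|^\beta)(s-r)^{-1/2}$, which yields \eqref{est.bismut1} since $(s-r)^{-1/2}\le 1+(s-r)^{-1/2}$. The estimate \eqref{est.bismut2} is obtained the same way from \eqref{bismut2}, where now $u(\tau)=\sigma^{-1}(\tau,X_\tau^{r,x})Y(\tau;r,x)$ is adapted, so the ordinary It\^o isometry suffices, no correction term appears, and one reaches the sharper right-hand side without the ``$1+$''.

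It remains to pass from $C^2$ functions to a merely differentiable $f$ satisfying only the growth bound on $f$ itself. I would mollify, $f_\ep=f*\eta_\ep$ with $\eta_\ep$ a standard mollifier; then $f_\ep\in C^\infty$ and $\sup_{s,x}|f_\ep(s,x)|/(1+|x|^\beta)\le c\,\kappa$ uniformly in $\ep$. The right-hand sides of \eqref{bismut1}--\eqref{bismut2} involve $f_\ep$ but not its derivatives, so they converge as $\ep\to0$ by dominated convergence (the integrands are dominated by $c\kappa(1+|X_s^{r,x}|^\beta)\,|\delta(u)|\in L^1$, uniformly in $\ep$), while $f_\ep\to f$ in $C^1_{\loc}$ forces convergence of the left-hand sides; hence both Bismut identities, and the bounds just established, carry over to $f$. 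The step I expect to be the main obstacle is the anticipating $L^2$ estimate above: one must retain and control the Malliavin-derivative correction term in \eqref{id.covdel}, and it is precisely there that the hypotheses that $a$ and $b$ possess two bounded derivatives (conditions \ref{cond.L.areg}, \ref{cond.Lbreg}) are used, through the finiteness and spatial uniformity of the moments of $DX$, $DY$ and $DY^{-1}$.
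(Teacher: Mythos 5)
Your argument is correct and follows the same skeleton as the paper's proof (Bismut formulas \eqref{bismut1}--\eqref{bismut2}, Cauchy--Schwarz, growth of $f$ plus moments of $X$), but the central technical step is handled by a different device. The paper bounds the anticipating integral $\int_r^s[\sigma^{-1}(\tau)Y(\tau)Y^{-1}(s)]^{ji}\delta B^j_\tau$ by first applying the product rule \eqref{id.fu} to pull the non-adapted factor $Y^{-1}(s)$ out: the integral becomes an adapted It\^o integral multiplied by $Y^{-1}(s)$ minus a Lebesgue-integral correction involving $D_\tau Y^{-1}(s)$, and then \eqref{moment.Y}, \eqref{moment.DY} together with Burkholder--Davis--Gundy give the bound $c(p,T)[(s-r)^{1/2}+(s-r)]^p$ in every $L^p$; dividing by $(s-r)$ produces exactly the factor $1+(s-r)^{-1/2}$ in \eqref{est.bismut1}. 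You instead keep the integrand anticipating and invoke the Skorohod isometry (the Brownian analogue of \eqref{id.covdel}; note the formula as stated in the paper is for the field $W$, so you should cite the standard isometry for $B$ and check $u\in\DD^{1,2}$, which again rests on \eqref{moment.Y}, \eqref{moment.DY} and the $x$-regularity of $\sigma^{-1}$). Your trace term is of order $(s-r)^2\le T(s-r)$, so you get $\|\delta(u)\|_2\lesssim(s-r)^{1/2}$ and in fact a slightly cleaner bound $(s-r)^{-1/2}$ in place of $1+(s-r)^{-1/2}$ — which of course implies \eqref{est.bismut1}. What the paper's decomposition buys is control in all $L^p$, which is reused later (for the random variables $G$ and $H$ in the $C^1$- and $C^{1,\alpha'}$-estimates of Lemma \ref{lem.estv}); your isometry route only yields $L^2$, which is all that is needed here. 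Two small remarks: for \eqref{est.bismut2} your observation that the integrand is adapted, so plain It\^o isometry/BDG suffices, matches the paper's implicit "proved similarly, perhaps easier". Your final mollification step is extra (the paper applies the lemma only to functions built from $C^2$ data, so it never discusses it), and the claim that $f_\ep\to f$ in $C^1_{\loc}$ tacitly assumes $f\in C^1$ rather than merely differentiable; in the context in which the lemma is used this is harmless, but as a standalone justification of the statement for "differentiable $f$" you would need an a.e.\ convergence plus density-of-$X_s$ argument instead.
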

\begin{proof}
  We only provide details for the proof of  \eqref{est.bismut1}.   The estimate \eqref{est.bismut2}
  is proved similarly, perhaps in an easier manner. Motivated by the formula \eqref{bismut1},
  we first estimate the moment of $\int_r^s[\sigma^{-1}(\tau)Y(\tau)Y^{-1}(s)]^{ji}\delta B_{\tau}^j $.
   From \eqref{id.fu}, we see that
  \begin{align*}
    \int_r^s[\sigma^{-1}(\tau)Y(\tau)Y^{-1}(s)]^{ji}\delta B_{\tau}^j=
    &\int_r^s[\sigma^{-1}(\tau)Y(\tau)]^{jk}\de B_{\tau}^j[Y^{-1}(s)]^{ki}\\
    &-\int_r^s[\sigma^{-1}(\tau)Y(\tau)]^{jk}D^j_{\tau}[Y^{-1}(s)]^{ki}d \tau\,.
  \end{align*}
  From \eqref{moment.Y} and \eqref{moment.DY}, it follows that
  \begin{equation*}
    \sup_{s\in[r,T],x\in\RR^d} \EE |\int_r^s[\sigma^{-1}(\tau)Y(\tau)Y^{-1}(s)]^{ji}\delta B_{\tau}^j|^p\le c(p,T)[(s-r)^{1/2}+(s-r) ]^p\,.
  \end{equation*}
  Hence, applying H\"older inequality in \eqref{bismut1},
  \begin{equation*}
    |\EE \nabla f(s,X_s^{r,x})|\lesssim(1+(s-r)^{-1/2}) [\EE (1+|X_s^{r,x}|^\beta )^2]^{1/2}\,.
  \end{equation*}
  Together with \eqref{exp.supX}, this completes the proof of \eqref{est.bismut1}.
\end{proof}
\begin{proof}[Proof of Lemma~\ref{lem.estv}]
	Throughout the proof, we denote $\kappa_1=[\nabla W]_{\beta_1,\infty}$, $\kappa_2=[\nabla W]_{\beta_2,\alpha}$, $Y=\nabla \varphi$.

  {\bf Uniqueness:} Suppose $v$ is a solution in $C^1([0,T];C^{2}(\RR^d))$.
  We apply It\^o formula to  the process $s\mapsto (v+W)(s,\varphi_s^{r,x})$,
  taking into account the fact that $L_0$ is the generator of
  $\varphi_s^{r,x}$.
  \begin{equation}\label{eqn.itovw}
  \begin{split}
    d (v+W)(s,\varphi_s^{r,x})=&(\partial_t+L_0)(v+W)(s,\varphi_s^{r,x})ds\\
    &+\sigma^{ij}(s,\varphi_s^{r,x})\partial_{x_i}(v+W)(s,\varphi_s^{r,x})\delta B_s^j\,.
  \end{split}
  \end{equation}
  Since $v$ is a strong solution, we see that $v+W$ satisfies
  \begin{equation*}
    (\partial_t+L_0)(v+W)=L_0W\,,\quad (v+W)(T,x)=0\,.
  \end{equation*}
  Thus, integrating \eqref{eqn.itovw} from $r$ to $T$ yields
  \begin{equation*}
    -(v+W)(r,x)=\int_r^T L_0W(s,X_s^{r,x})\ds+\int_r^T\sigma^{ij}(s,X_s^{r,x})\partial_{x_i}(v+W)(s,X_s^{r,x})\delta B_s^j\,.
  \end{equation*}
  Taking expectation in the above identity, we obtain \eqref{eqn.vw}, which also shows the uniqueness of $v$.

  {\bf $C^0$-estimate:} To prove the estimate \eqref{est.vw}, we write $L_0W=\partial_i\left(\frac12a^{ij}\partial_j W \right)+c^j\partial_j W$
  where $c^j=-1/2\partial_ia^{ij}$. Then
  \begin{equation*}
    \EE \int_r^T L_0W(s,X_s^{r,x})\ds=I_1+I_2
  \end{equation*}
  where
  \begin{equation*}
    I_1=\EE\int_r^T \partial_i\left(\frac12a^{ij}(s,X_s^{r,x}) \partial_j W(s,X_s^{r,x}) \right)\ds
  \end{equation*}
  and
  \begin{equation*}
    I_2=\EE \int_r^T c^j(s,X_s^{r,x})\partial_jW(s,X_s^{r,x})\ds\,.
  \end{equation*}
  It follows from our conditions on $L_0$ and $W$ that
  \begin{equation*}
    \sup_{t\in[0,T],x\in \RR^d}\frac{|a^{ij}(t,x)\partial_iW(t,x)|}{1+|x|^{\beta_1}} \le \Lambda \kappa_1\,\mbox{ and }\sup_{t\in[0,T],x\in \RR^d}\frac{|c^j(t,x)\partial_jW(t,x)|}{1+|x|^{\beta_1}} \le \Lambda\kappa_1 \,.
  \end{equation*}
  Applying Lemma \ref{lem.estdf}, we obtain
  \begin{equation*}
    I_1\lesssim \kappa_1 \int_r^T ((s-r)^{-1/2}+1)\ds (1+|x|^{\beta_1})\lesssim \kappa_1 [(T-r)^{1/2}+(T-r)] (1+|x|^{\beta_1})\,.
  \end{equation*}
  For the second term, {\replacecolorred  we use \eqref{est.supx1}}
  \begin{equation*}
    I_2\lesssim \kappa_1 \int_r^T \EE(1+|\varphi_s^{r,x}|^{\beta_1} )\ds\lesssim \kappa_1(T-r)( 1+|x|^{\beta_1})\,.
  \end{equation*}
  These inequalities altogether imply \eqref{est.vw}.

  {\bf $C^1$-estimate:} To show \eqref{est.dvw}, we first apply \eqref{bismut2}
  \begin{equation*}
    \nabla \EE L_0W(s,\varphi_s^{r,x})=(s-r)^{-1} \EE [L_0W(s,\varphi_s^{r,x})H(s,x)  ]
  \end{equation*}
  where
  \begin{equation*}
    H(s,x)=\int_r^s [\sigma^{-1}(\tau,\varphi_{\tau}^{r,x})Y(\tau;r,x)]^T \delta B_{\tau}\,.
  \end{equation*}
  We denote $$A(\tau,x)=\sigma^{-1}(\tau,X_{\tau}^{r,x})Y(\tau;r,x)\,.$$ From \eqref{eqn.DfY}, we see that
  \begin{equation*}
    \partial^2_{ij}W(s,\varphi_s^{r,x})=D^k_{\tau}[\partial_j
    W(s,\varphi_s^{r,x})][A(\tau)Y^{-1}(s)]^{ki}\,,\quad \forall
    \tau\in[r,s]\,.
  \end{equation*}
 Thus
  \begin{align*}
    L_0W(s,\varphi_s^{r,x})&=\frac12a^{ij}(s,X_s^{r,x})\partial^2_{ij}W(s,\varphi_s^{r,x})\\
    &= \frac12D^k_{\tau}[\partial_jW(s,\varphi_s^{r,x})][A(\tau)Y^{-1}(s)]^{ki}a^{ij}(s,X_s^{r,x})\\
    &=\frac12(s-r)^{-1} \int_r^sD^k_{\tau}[\partial_jW(s,\varphi_s^{r,x})][A(\tau)Y^{-1}(s)a(s,X_s^{r,x})]^{kj}d \tau\,.
  \end{align*}
  Hence, applying \eqref{eqn.dual},
  \begin{align*}
    &\partial_l\EE L_0W(s,\varphi_s^{r,x})\\
    &=\frac12(s-r)^{-2}\EE\int_r^s D^k_{\tau}[\partial_jW(s,\varphi_s^{r,x})][A(\tau)Y^{-1}(s)a(s,X_s^{r,x})]^{kj}H^l(s,x)d \tau\\
    &=\frac12(s-r)^{-2}\EE \partial_jW(s,\varphi_s^{r,x})\int_r^s [A(\tau)Y^{-1}(s)a(s,X_s^{r,x})]^{kj}H^l(s,x)\delta B^k_\tau\,.
  \end{align*}
  Furthermore, since the random variable
  \begin{equation*}
    G^{jl} (s;r,x):=\int_r^s [A(\tau)Y^{-1}(s)a(s,X_s^{r,x})]^{kj}H^l(s,x)\delta B^k_\tau
  \end{equation*}
  has mean zero, we can write
  \begin{equation}\label{eqn.lEL}
     \partial_l\EE L_0W(s,\varphi_s^{r,x})
     =\frac12(s-r)^{-2}\EE [\partial_jW(s,\varphi_s^{r,x})-\partial_jW(s,x)]G^{jl} (s;r,x)\,.
  \end{equation}
  We now estimate the moment $G(s;r,x)$. Applying \eqref{id.fu}, we have
  \begin{align*}
    G^{jl} (s;r,x)
    &=\int_r^s [A(\tau)]^{km}[Y^{-1}(s)a(s,X_s^{r,x})]^{mj}H^l(s,x)\delta B^k_\tau\\
    &=[Y^{-1}(s)a(s,X_s^{r,x})]^{mj}H^l(s,x)\int_r^s [A(\tau)]^{km}\delta B^k_\tau\\
    &\quad-\int_r^s D^k_{\tau}([Y^{-1}(s)a(s,X_s^{r,x})]^{mj}H^l(s,x))[A(\tau)]^{km}d \tau\,.
  \end{align*}
    Using properties of Malliavin derivative, we have 
  \begin{multline*}
     D^k_{\tau}([Y^{-1}(s)a(s,X_s^{r,x})]^{mj}H^l(s,x))\\
     = D^k_{\tau}[Y^{-1}(s)a(s,X_s^{r,x})]^{mj}H^l(s,x)+[Y^{-1}(s)a(s,X_s^{r,x})]^{mj}D^k_ \tau H^l(s,x)\,.
  \end{multline*}
  Hence
  \begin{equation}\label{form.G}
  \begin{split}
    G^{jl} (s;r,x)
    &=[Y^{-1}(s)a(s,X_s^{r,x})]^{mj}H^l(s,x)\int_r^s [A(\tau)]^{km}\d B^k_\tau\\
    &\quad-\int_r^s  D^k_{\tau}[Y^{-1}(s)a(s,X_s^{r,x})]^{mj}H^l(s,x)[A(\tau)]^{km}d \tau\\
    &\quad-\int_r^s [Y^{-1}(s)a(s,X_s^{r,x})]^{mj}D^k_ \tau H^l(s,x) [A(\tau)]^{km}d \tau\,.
  \end{split}
  \end{equation}
  Since $a$ belongs to $C^2_b$, estimate \eqref{moment.DY} is valid, the moments of $A(\tau)$
  is also uniformly bounded (because $a$ is strictly elliptic), and all the terms appear in $G^{jl}$
  has finite moments of all orders. In addition, observe that
  \begin{equation*}
    D_ \tau^i H^l(s,x) =1_{\{r\le \tau\}}A(\tau)^{il}+\int_r^s D_{\tau}^iA(u)^{kl}\delta B^k_u\,.
  \end{equation*}
  Thus, the $L^p$-norm of $H(s,x)$ and $D H(s,x)$ will contribute a factor $(r-s)^{1/2}$. Therefore,
   it follows from Burkholder-Davis-Gundy  inequality and H\"older inequality that
  \begin{equation}\label{est.Gx}
    \sup_{x\in\RR^d}\|G^{jl}(s;r,x)\|_p\le c(p,\lambda,\Lambda) [(s-r)+(s-r)^{3/2} ]\,,\forall p\ge1\,.
  \end{equation}
  Using the H\"older continuity of $W$, for every $p\ge1$, we have
  \begin{equation*}
    \|\nabla W(s,\varphi_s^{r,x})-\nabla W(s,x)\|_p\le \kappa_2\|(1+|\varphi_s^{r,x}|^{\beta_2}+|x|^{\beta_2}) |\varphi_s^{r,x}-x|^\alpha \|_{p}\,.
  \end{equation*}
  Taking into account {\replacecolorred   the moment estimate \eqref{exp.supX} } and H\"older inequality, this gives
  \begin{equation}\label{est.wphiw}
    \|\nabla W(s,\varphi_s^{r,x})-\nabla W(s,x)\|_p\le c(\alpha,\beta_2,p,\Lambda) \kappa_2(1+|x|^{\beta_2}) (s-r)^{\alpha/2}\,.
  \end{equation}
  Thus, applying Cauchy-Schwartz inequality in \eqref{eqn.lEL} yields
  \begin{equation*}
    |\partial_l\EE L_0W(s,\varphi_s^{r,x})|\le c(\lambda,\Lambda)(s-r)^{-2}\|\nabla W(s,\varphi_s^{r,x})-\nabla W(s,x)\|_2\|G(s;r,x)\|_2\,.
  \end{equation*}
  Applying the moment estimate for $G$ and \eqref{est.wphiw}, we obtain
  \begin{equation*}
    |\partial_l\EE L_0W(s,\varphi_s^{r,x})|\le c(\lambda,\Lambda) [(s-r)^{\alpha/2-1}+(s-r)^{\alpha/2-1/2}]\kappa_2(1+|x|^{\beta_2}) \,,
  \end{equation*}
  which together with \eqref{eqn.vw} implies \eqref{est.dvw}

  {\bf $C^{1,\alpha'}$-estimate:} This is the only place where we use the fact
  that the second derivatives of $a$ are H\"older continuous. Each  term
  appeared
on  the right hand side \eqref{form.G} is either differentiable or H\"older continuous
  in  the $x$-variable. Thus, we obtain easily the estimate
  \begin{equation}\label{est.Gxy}
    \|G(s;r,x)-G(s;r,y)\|_p\le c(p,\lambda,\Lambda)[(s-r)+(s-r)^{3/2}]|x-y|^{\alpha}\,.
  \end{equation}
  From \eqref{est.wphiw}, we see that
  \begin{align*}
    &\|\nabla W(s,\varphi_s^{r,x})-\nabla W(s,x)-\nabla W(s,\varphi_s^{r,y})+\nabla W(s,y)\|_p\\
    &\le\|\nabla W(s,\varphi_s^{r,x})-\nabla W(s,x)\|_p+\|\nabla W(s,\varphi_s^{r,y})-\nabla W(s,y)\|_p\\
    &\le c(\alpha,p,\Lambda)\kappa_2(1+|x|^{\beta_2}+|y|^{\beta_2})(s-r)^{\alpha/2}\,.
  \end{align*}
  On the other hand, we also have
  \begin{align*}
    &\|\nabla W(s,\varphi_s^{r,x})-\nabla W(s,x)-\nabla W(s,\varphi_s^{r,y})+\nabla W(s,y)\|_p\\
    &\le\|\nabla W(s,\varphi_s^{r,x})-\nabla W(s,\varphi_s^{r,y})\|_p+\|\nabla W(s,x)-\nabla W(s,y)\|_p\\
    &\le\kappa_2(W)\|(1+|\varphi_s^{r,x}|^{\beta_2}+|\varphi_s^{r,y}|^{\beta_2} )|\varphi_s^{r,x}-\varphi_s^{r,y}|^{\alpha}\|_p\\
    &\quad+\kappa_2(W)(1+|x|^{\beta_2}+|y|^{\beta_2})|x-y|^{\alpha}\\
    &\le c(\alpha,p,\Lambda)\kappa_2(1+|x|^{\beta_2}+|y|^{\beta_2})|x-y|^{\alpha}\,,
  \end{align*}
  where the last estimate comes from \eqref{exp.supX} and that fact that the derivative of the map $x\mapsto \varphi_s^{r,x}$ has finite moments uniformly in $x$. Interpolating these two inequalities we obtain
  \begin{multline}\label{est.wtheta}
    \|\nabla W(s,\varphi_s^{r,x})-\nabla W(s,x)-\nabla W(s,\varphi_s^{r,y})+\nabla W(s,y)\|_p\\
    \le c(\alpha,p,\Lambda)\kappa_2(1+|x|^{\beta_2}+|y|^{\beta_2})|x-y|^{\vartheta\alpha}(s-r)^{(1- \vartheta)\alpha/2}
  \end{multline}
  for any $\vartheta\in[0,1]$. Thus, from \eqref{eqn.lEL}, applying Cauchy-Schwartz inequality we see that
  \begin{align*}
    &|\nabla \EE L_0W(s,\varphi_s^{r,x})-\nabla\EE L_0W(s,\varphi_s^{r,y})|\\
    &\le (s-r)^{-2}\|\nabla W(s,\varphi_s^{r,x})-\nabla W(s,x)-\nabla W(s,\varphi_s^{r,y})+\nabla W(s,y)\|_2\|G(s;r,x)\|_2\\
    &\quad+(s-r)^{-2}\|\nabla W(s,\varphi_s^{r,y})-\nabla W(s,y)\|_2\|G(s;r,x)-G(s;r,y)\|_2\,.
  \end{align*}
  Using \eqref{est.wtheta}, \eqref{est.wphiw}, \eqref{est.Gx} and \eqref{est.Gxy}, we obtain
  \begin{align*}
    &|\nabla \EE L_0W(s,\varphi_s^{r,x})-\nabla\EE L_0W(s,\varphi_s^{r,y})|\\
    &\le c(\alpha,\lambda,\Lambda)\kappa_2(1+|x|^{\beta_2}+|y|^{\beta_2} )|x-y|^{\vartheta \alpha}[(s-r)^{(1- \vartheta)\alpha/2-1}+(s-r)^{(1- \vartheta)\alpha/2-1/2}]\\
    &\quad+c(\alpha,\lambda,\Lambda)\kappa_2(1+|y|^{\beta_2} )|x-y|^\alpha [(s-r)^{\alpha/2-1}+(s-r)^{\alpha/2-1/2}]\,.
  \end{align*}
  Therefore, choosing $\vartheta<1$, this estimate together with \eqref{eqn.vw} implies \eqref{est.dvwa}.
\end{proof}

\end{appendix}

\noindent{\bf Acknowledgment}.  The authors sincerely thank the referee for the many constructive and detailed comments which were of great help in revising the manuscript. We also thank Samy Tindel and Massimiliano Gubinelli for their interest in our paper.

 \bibliographystyle{abbrv}
\bibliography{bib}

\begin{thebibliography}{10}

\bibitem{alos-nualart}
E.~Al{\`o}s and D.~Nualart.
\newblock Stochastic integration with respect to the fractional {B}rownian
  motion.
\newblock {\em Stoch. Stoch. Rep.}, 75(3):129--152, 2003.

\bibitem{ambrosio}
L.~Ambrosio.
\newblock Transport equation and {C}auchy problem for {$BV$} vector fields.
\newblock {\em Invent. Math.}, 158(2):227--260, 2004.

\bibitem{bahouri}
H.~Bahouri, J.-Y. Chemin, and R.~Danchin.
\newblock {\em Fourier analysis and nonlinear partial differential equations},
  volume 343 of {\em Grundlehren der Mathematischen Wissenschaften [Fundamental
  Principles of Mathematical Sciences]}.
\newblock Springer, Heidelberg, 2011.

\bibitem{benarous}
G.~Ben~Arous and M.~Ledoux.
\newblock Grandes d\'eviations de freidlin-wentzell en norme h\"olderienne.
\newblock {\em S\'eminaire de Probabilit\'es XXVIII, Lecture Notes in Math.,
  1583 (1994), Springer}, pages 293--299.

\bibitem{catellier2012averaging}
R.~Catellier and M.~Gubinelli.
\newblock Averaging along irregular curves and regularisation of odes.
\newblock {\em arXiv preprint arXiv:1205.1735}, 2012.

\bibitem{cerrai-book}
S.~Cerrai.
\newblock {\em Second order {PDE}'s in finite and infinite dimension}, volume
  1762 of {\em Lecture Notes in Mathematics}.
\newblock Springer-Verlag, Berlin, 2001.
\newblock A probabilistic approach.

\bibitem{chouk2013nonlinear}
K.~Chouk and M.~Gubinelli.
\newblock Nonlinear pdes with modulated dispersion.
\newblock {\em arXiv preprint arXiv:1303.0822}, 2013.

\bibitem{chouk2014nonlinear}
K.~Chouk and M.~Gubinelli.
\newblock Nonlinear pdes with modulated dispersion ii: Korteweg--de vries
  equation.
\newblock {\em arXiv preprint arXiv:1406.7675}, 2014.

\bibitem{crippa}
G.~Crippa and C.~De~Lellis.
\newblock Estimates and regularity results for the {D}i{P}erna-{L}ions flow.
\newblock {\em J. Reine Angew. Math.}, 616:15--46, 2008.

\bibitem{davie}
A.~M. Davie.
\newblock Uniqueness of solutions of stochastic differential equations.
\newblock {\em Int. Math. Res. Not. IMRN}, (24):Art. ID rnm124, 26, 2007.

\bibitem{delmoralbook}
P.~Del~Moral.
\newblock {\em Feynman-{K}ac formulae}.
\newblock Probability and its Applications (New York). Springer-Verlag, New
  York, 2004.
\newblock Genealogical and interacting particle systems with applications.

\bibitem{delmoralbook2}
P.~Del~Moral.
\newblock {\em Mean field simulation for {M}onte {C}arlo integration}, volume
  126 of {\em Monographs on Statistics and Applied Probability}.
\newblock CRC Press, Boca Raton, FL, 2013.

\bibitem{diperna-lions}
R.~J. DiPerna and P.-L. Lions.
\newblock Ordinary differential equations, transport theory and {S}obolev
  spaces.
\newblock {\em Invent. Math.}, 98(3):511--547, 1989.

\bibitem{driverhu1996}
B.~K. Driver and Y.~Hu.
\newblock On heat kernel logarithmic {S}obolev inequalities.
\newblock In {\em Stochastic analysis and applications ({P}owys, 1995)}, pages
  189--200. World Sci. Publ., River Edge, NJ, 1996.

\bibitem{dudley73}
R.~M. Dudley.
\newblock Sample functions of the {G}aussian process.
\newblock {\em Ann. Probability}, 1(1):66--103, 1973.

\bibitem{fernique75}
X.~Fernique.
\newblock Regularit\'e des trajectoires des fonctions al\'eatoires gaussiennes.
\newblock In {\em \'{E}cole d'\'{E}t\'e de {P}robabilit\'es de {S}aint-{F}lour,
  {IV}-1974}, pages 1--96. Lecture Notes in Math., Vol. 480. Springer, Berlin,
  1975.

\bibitem{FeyelPradelle06}
D.~Feyel and A.~de~La~Pradelle.
\newblock Curvilinear integrals along enriched paths.
\newblock {\em Electron. J. Probab.}, 11:no. 34, 860--892 (electronic), 2006.

\bibitem{flandoli-bull}
F.~Flandoli, M.~Gubinelli, and E.~Priola.
\newblock Flow of diffeomorphisms for {SDE}s with unbounded {H}\"older
  continuous drift.
\newblock {\em Bull. Sci. Math.}, 134(4):405--422, 2010.

\bibitem{flandoli10}
F.~Flandoli, M.~Gubinelli, and E.~Priola.
\newblock Well-posedness of the transport equation by stochastic perturbation.
\newblock {\em Invent. Math.}, 180(1):1--53, 2010.

\bibitem{freidlin}
M.~Freidlin.
\newblock {\em Functional integration and partial differential equations}.
\newblock Annals of Mathematics Studies, 109. Princeton University Press,
  Princeton, NJ, 1985.

\bibitem{garsiarodemich}
A.~M. Garsia, E.~Rodemich, and H.~Rumsey, Jr.
\newblock A real variable lemma and the continuity of paths of some {G}aussian
  processes.
\newblock {\em Indiana Univ. Math. J.}, 20:565--578, 1970/1971.

\bibitem{Gilbarg-Trudinger}
D.~Gilbarg and N.~S. Trudinger.
\newblock {\em Elliptic partial differential equations of second order}.
\newblock Classics in Mathematics. Springer-Verlag, Berlin, 2001.
\newblock Reprint of the 1998 edition.

\bibitem{grigoriu}
M.~Grigoriu.
\newblock {\em Stochastic systems. Uncertainty quantification and propagation}.
\newblock Springer Series in Reliability Engineering. Springer-Verlag, London,
  2012.

\bibitem{Gubinelli04}
M.~Gubinelli.
\newblock Controlling rough paths.
\newblock {\em J. Funct. Anal.}, 216(1):86--140, 2004.

\bibitem{GubinelliTindel}
M.~Gubinelli and S.~Tindel.
\newblock Rough evolution equations.
\newblock {\em Ann. Probab.}, 38(1):1--75, 2010.

\bibitem{hidabrownianmotion}
T.~Hida.
\newblock {\em Brownian motion}, volume~11 of {\em Applications of
  Mathematics}.
\newblock Springer-Verlag, New York-Berlin, 1980.
\newblock Translated from the Japanese by the author and T. P. Speed.

\bibitem{holdenhu}
H.~Holden and Y.~Hu.
\newblock Finite difference approximation of the pressure equation for fluid
  flow in a stochastic medium---a probabilistic approach.
\newblock {\em Comm. Partial Differential Equations}, 21(9-10):1367--1388,
  1996.

\bibitem{huchaos1997}
Y.~Hu.
\newblock It\^o-{W}iener chaos expansion with exact residual and correlation,
  variance inequalities.
\newblock {\em J. Theoret. Probab.}, 10(4):835--848, 1997.

\bibitem{hu-hu-nu-ti}
Y.~Hu, J.~Huang, D.~Nualart, and S.~Tindel.
\newblock Stochastic heat equations with general multiplicative gaussian
  noises: H\"older continuity and intermittency.
\newblock {\em arXiv preprint arXiv:1402.2618}, 2014.

\bibitem{hule2012}
Y.~Hu and K.~Le.
\newblock A multiparameter {G}arsia-{R}odemich-{R}umsey inequality and some
  applications.
\newblock {\em Stochastic Process. Appl.}, 123(9):3359--3377, 2013.

\bibitem{hule2015}
Y.~Hu and K.~Le.
\newblock Nonlinear young integrals via fractional calculus.
\newblock {\em arXiv preprint arXiv:1503.00328}, 2015.

\bibitem{hu-lu-nualart12}
Y.~Hu, F.~Lu, and D.~Nualart.
\newblock Feynman-{K}ac formula for the heat equation driven by fractional
  noise with {H}urst parameter {$H<1/2$}.
\newblock {\em Ann. Probab.}, 40(3):1041--1068, 2012.

\bibitem{hunualart}
Y.~Hu and D.~Nualart.
\newblock Differential equations driven by {H}\"older continuous functions of
  order greater than 1/2.
\newblock In {\em Stochastic analysis and applications}, volume~2 of {\em Abel
  Symp.}, pages 399--413. Springer, Berlin, 2007.

\bibitem{hunualart09}
Y.~Hu and D.~Nualart.
\newblock Rough path analysis via fractional calculus.
\newblock {\em Trans. Amer. Math. Soc.}, 361(5):2689--2718, 2009.

\bibitem{hunualartsong}
Y.~Hu, D.~Nualart, and J.~Song.
\newblock Feynman-{K}ac formula for heat equation driven by fractional white
  noise.
\newblock {\em Ann. Probab.}, 39(1):291--326, 2011.

\bibitem{krylov-priola10}
N.~V. Krylov and E.~Priola.
\newblock Elliptic and parabolic second-order {PDE}s with growing coefficients.
\newblock {\em Comm. Partial Differential Equations}, 35(1):1--22, 2010.

\bibitem{kunita}
H.~Kunita.
\newblock {\em Stochastic flows and stochastic differential equations},
  volume~24 of {\em Cambridge Studies in Advanced Mathematics}.
\newblock Cambridge University Press, Cambridge, 1990.

\bibitem{ledoux-book}
M.~Ledoux.
\newblock {\em The concentration of measure phenomenon}, volume~89 of {\em
  Mathematical Surveys and Monographs}.
\newblock American Mathematical Society, Providence, RI, 2001.

\bibitem{lyons94}
T.~Lyons.
\newblock Differential equations driven by rough signals. {I}. {A}n extension
  of an inequality of {L}. {C}. {Y}oung.
\newblock {\em Math. Res. Lett.}, 1(4):451--464, 1994.

\bibitem{Lyons98}
T.~J. Lyons.
\newblock Differential equations driven by rough signals.
\newblock {\em Rev. Mat. Iberoamericana}, 14(2):215--310, 1998.

\bibitem{marcus-rosen}
M.~B. Marcus and J.~Rosen.
\newblock {\em Markov processes, {G}aussian processes, and local times}, volume
  100 of {\em Cambridge Studies in Advanced Mathematics}.
\newblock Cambridge University Press, Cambridge, 2006.

\bibitem{mwy}
M.~M. Meerschaert, W.~Wang, and Y.~Xiao.
\newblock Fernique-type inequalities and moduli of continuity for anisotropic
  {G}aussian random fields.
\newblock {\em Trans. Amer. Math. Soc.}, 365(2):1081--1107, 2013.

\bibitem{nuabook}
D.~Nualart.
\newblock {\em The {M}alliavin calculus and related topics}.
\newblock Probability and its Applications (New York). Springer-Verlag, Berlin,
  second edition, 2006.

\bibitem{preston71}
C.~Preston.
\newblock Banach spaces arising from some integral inequalities.
\newblock {\em Indiana Univ. Math. J.}, 20:997--1015, 1970/1971.

\bibitem{preston72}
C.~Preston.
\newblock Continuity properties of some {G}aussian processes.
\newblock {\em Ann. Math. Statist.}, 43:285--292, 1972.

\bibitem{simon}
B.~Simon.
\newblock {\em Functional integration and quantum physics}.
\newblock Second edition. AMS Chelsea Publishing, Providence, RI, 2005.

\bibitem{skorohod}
A.~V. Skorohod.
\newblock On a generalization of the stochastic integral.
\newblock {\em Theory of Probability and its Applications}, 20:219, 1976.

\bibitem{stroockvaradhan}
D.~W. Stroock and S.~R.~S. Varadhan.
\newblock {\em Multidimensional diffusion processes}.
\newblock Classics in Mathematics. Springer-Verlag, Berlin, 2006.
\newblock Reprint of the 1997 edition.

\bibitem{talagrand-book}
M.~Talagrand.
\newblock {\em The generic chaining}.
\newblock Springer Monographs in Mathematics. Springer-Verlag, Berlin, 2005.

\bibitem{xin}
J.~Xin.
\newblock {\em An introduction to fronts in random media}.
\newblock Surveys and Tutorials in the Applied Mathematical Sciences, 5.
  Springer-Verlag, New York, 2009.

\bibitem{young}
L.~C. Young.
\newblock An inequality of the {H}\"older type, connected with {S}tieltjes
  integration.
\newblock {\em Acta Math.}, 67(1):251--282, 1936.

\end{thebibliography}
\end{document}